\newtheorem{theorem}{Theorem}[section]
\newtheorem{corollary}[theorem]{Corollary}
\newtheorem{defi}[theorem]{Definition}
\newenvironment{definition}{\begin{defi}\rm}{\end{defi}}
\newtheorem{ex}[theorem]{Example}
\newtheorem{lemma}[theorem]{Lemma}
\newtheorem{prob}{Problem}[section]
\newtheorem{rem}[theorem]{Remark}
\newenvironment{remark}{\begin{rem}\rm}{\end{rem}}
\newtheorem{fantoma}[theorem]{\phantom{i}}
\newcommand{\bl}{\color{blue}}
\newcommand\BibTeX{{\rmfamily B\kern-.05em \textsc{i\kern-.025em b}\kern-.08em
T\kern-.1667em\lower.7ex\hbox{E}\kern-.125emX}}
\newcommand{\comment}[1]{}
\numberwithin{equation}{section}
\begin{document}


\title[Mixed problem for Darcy-Forchheimer-Brinkman PDE system] %
{On the mixed problem for the semilinear Darcy-Forchheimer-Brinkman PDE system in 
Besov spaces on creased Lipschitz domains}



\author[R.~Gutt]{Robert~Gutt}
\address{R.~Gutt, Faculty of Mathematics and Computer Science, Babe\c{s}-Bolyai University, 1 M. Kog\u alniceanu Str., 400084 Cluj-Napoca, Romania}

\author[M.~Kohr]{Mirela~Kohr}
\address{M.~Kohr, Faculty of Mathematics and Computer Science, Babe\c{s}-Bolyai University,
1 M. Kog\u alniceanu Str., 400084 Cluj-Napoca, Romania}

\author[S.E.~Mikhailov]{Sergey E.~Mikhailov}
\address{S.E.~Mikhailov, Department of Mathematics, Brunel University London,
Uxbridge, UB8 3PH, United Kingdom}

\author[W.L.~Wendland]{Wolfgang~L.~Wendland}
\address{W.L.~Wendland, Institut f\"ur Angewandte Analysis und Numerische
Simulation, Universit\"at Stuttgart, Pfaffenwaldring 57, 70569
Stuttgart, Germany}



\comment{\cgs{
"Scientific Grant for Excellence in Research", GSCE--30259/2015, of the Babe\c{s}-Bolyai University
{and}
grant {EP/M013545/1} "Mathematical Analysis of Boundary-Domain Integral Equations for Nonlinear PDEs" from the EPSRC, UK.}}
\thanks{The work of M. Kohr was supported by the "Scientific Grant for Excellence in Research", GSCE--30259/2015, of the Babe\c{s}-Bolyai University. The work of S.E. Mikhailov and W.L. Wendland was supported by the grant EP/M013545/1DSM: "Mathematical Analysis of Boundary-Domain Integral Equations for Nonlinear PDEs" from the EPSRC, UK.}

\begin{abstract}
The purpose of this paper is to study the mixed Dirichlet-Neumann boundary value problem for the semilinear Darcy-Forchheimer-Brinkman system in $L_p$-based Besov spaces on a bounded Lipschitz domain in ${\mathbb R}^3$, with $p$ in a neighborhood of $2$. This system is obtained by adding the semilinear term $|{\bf u}|{\bf u}$ to the linear Brinkman equation. {First, we provide some results about} equivalence between the Gagliardo and non-tangential traces, as well as between the weak canonical conormal derivatives and the non-tangential conormal derivatives. Various mapping and invertibility properties of some integral operators of potential theory for the linear Brinkman system, and well posedness results for the Dirichlet and Neumann problems in $L_p$-based Besov spaces on bounded Lipschitz domains in ${\mathbb R}^n$ ($n\geq 3$) are also presented.
Then, employing integral potential operators, we show the well-posedness in $L_2$-based Sobolev spaces for the mixed problem of Dirichlet-Neumann type for the linear Brinkman system on a bounded Lipschitz domain in ${\mathbb R}^n$ $(n\geq 3)$.
Further, by using some stability results of Fredholm and invertibility properties and exploring invertibility of the associated Neumann-to-Dirichlet operator, we extend the well-posedness property to some $L_p$-based Sobolev spaces. Next we use the well-posedness result in the linear case combined with a fixed point theorem in order to show the existence and uniqueness for a mixed boundary value problem of Dirichlet and Neumann type for the semilinear Darcy-Forchheimer-Brinkman system in $L_p$-based Besov spaces, with $p\in (2-\varepsilon ,2+\varepsilon)$ and some parameter $\varepsilon >0$.
\end{abstract}

\subjclass[2010]{Primary 35J25, 42B20, 46E35; Secondary 76D, 76M}
\keywords{Semilinear Darcy-Forchheimer-Brinkman system; mixed Dirichlet-Neumann problem; $L_p$-based Besov spaces; layer potential operators; Neumann-to-Dirichlet operator; existence and uniqueness.}

\maketitle

\section{Introduction}
Boundary integral methods are a powerful tool to investigate linear elliptic boundary value problems
{that} appear in {various} areas of science and engineering (see, e.g., \cite{B-H,Co,D-M,Med-CVEE,M-T}). Among many valuable contributions in the field we mention the well-posedness result of the Dirichlet problem for the Stokes system in Lipschitz domains in $\mathbb{R}^{n}$ $(n \geq 3)$ with boundary data in $L_2$-based Sobolev spaces, which have been obtained by Fabes, Kenig and Verchota in \cite{Fa-Ke-Ve} by using a layer potential analysis. Also, Mitrea and Wright \cite{M-W} obtained the well-posedness results {for}
Dirichlet, Neumann and transmission problems for the Stokes system on arbitrary Lipschitz domains in $\mathbb{R}^{n}$ $(n\geq 2)$, with data in Sobolev and Besov-Triebel-Lizorkin spaces. By using a boundary integral method, Mitrea and Taylor \cite{M-T} obtained well-posedness results for the Dirichlet problem for the Stokes system on arbitrary Lipschitz domains on a compact Riemannian manifold, with {boundary data in $L_2$}. {Their results} extended the results {of} \cite{Fa-Ke-Ve} from the Euclidean setting to the case of compact Riemannian manifolds. Continuing the study {of} \cite{M-T}, Dindo\u{s} and Mitrea \cite{D-M} developed a layer potential analysis to obtain existence and uniqueness results for the Poisson problem for the Stokes and Navier-Stokes systems on $C^{1}$ domains, but also on Lipschitz domains in compact Riemannian manifolds. Medkov\'{a} in \cite{Med-CVEE} studied various transmission problems for the Brinkman system.

{Due} to many practical applications, the mixed problems for elliptic boundary value problems on smooth and  {Lipschitz domains} have been {also} intensively investigated. Let us mention {that} Mitrea and Mitrea in \cite{M-M} have proved sharp well-posedness results for the Poisson problem for the Laplace operator with mixed boundary conditions of Dirichlet and Neumann type on bounded Lipschitz domains in ${\mathbb R}^3$ whose boundaries satisfy a suitable geometric condition introduced by Brown \cite{Br}, and with data in Sobolev and Besov spaces. Brown et al. \cite{B-M-M-W} have obtained the well-posedness result of the mixed Dirichlet-Neumann problem for the Stokes system on creased Lipschitz domains in ${\mathbb R}^n$ $(n\geq 3)$. In order to prove the desired well-posedness result, the authors reduced such a boundary value problem to a boundary integral equation, obtained useful Rellich-type estimates, and used the well-posedness result of the mixed Dirichlet-Neumann problem for the Lam\'{e} system that has been obtained in \cite{B-M}. Costabel and Stephan in \cite{Co-S} analyzed mixed boundary value problems in polygonal domains by using a boundary integral approach. {In \cite{{CMN-1}, Ch-Mi-Na-3}}, direct segregated systems of boundary-domain integral equations equivalent to mixed boundary value problems of Dirichlet-Neumann type {for} a scalar second-order divergent elliptic partial differential equation with a variable coefficient, were analyzed in interior and exterior domains in ${\mathbb R}^3$ (see also \cite{CMN-NMPDE-crack} for the mixed problems with cracks and \cite{MikMMAS2006} for united boundary-domain integral equations).
An interesting boundary integral equation method for a mixed boundary value problem of the biharmonic equation has been developed in \cite{C-H-W}.

Boundary integral methods combined with fixed point theorems have been focused {on} the analysis of boundary value problems for linear elliptic systems with nonlinear boundary conditions and for nonlinear elliptic systems with various (linear or nonlinear) boundary conditions.
{Recently}, the authors in \cite{K-L-W} have used a boundary integral method to obtain existence results for a nonlinear problem of Neumann-transmission type for the Stokes and Brinkman systems on Lipschitz domains in Euclidean setting and with boundary data in various $L_p$, Sobolev, or Besov spaces. The techniques of layer potential theory for the Stokes and Brinkman systems was used in \cite{K-L-W1} to analyze Poisson problems for semilinear generalized Brinkman systems on Lipschitz domains in ${\mathbb R}^n$ with Dirichlet or Robin boundary conditions and given data in Sobolev and Besov spaces. Boundary value problems of Robin type for the Brinkman and Darcy-Forchheimer-Brinkman systems in Lipschitz domains in Euclidean setting have been investigated in \cite{K-L-W2} (see also \cite{K-L-W3,K-L-W4}). An integral potential method for transmission problems with Lipschitz interface in ${\mathbb R}^3$ for the Stokes and Darcy-Forchheimer-Brinkman systems and data in weighted Sobolev spaces has been recently obtained in \cite{K-L-M-W}. Transmission problems for the Navier-Stokes and Darcy-Forchheimer-Brinkman systems in Lipschitz domains on compact Riemannian manifolds have been recently analyzed in \cite{K-M-W}. Well-posedness results for semilinear elliptic problems on Lipschitz domains in compact Riemannian manifolds have been obtained by Dindo\u{s} and Mitrea in \cite{Dindos-semi}. Let us also mention that Russo and Tartaglione in \cite{Russo-Tartaglione-2,Russo-Tartaglione-4} used a double-layer integral method in order to obtain existence results for boundary problems of Robin type for the Stokes and Navier-Stokes systems in Lipschitz domains in Euclidean setting with data in Sobolev spaces. {Maz'ya and Rossmann \cite{Ma-Ro} obtained $Lp$ estimates of solutions to mixed boundary value problems for the Stokes system in polyhedral domains. Taylor, Ott and Brown in \cite{Ta-Br} studied $Lp$-mixed Dirichlet-Neumann problem for the Laplace equation in a a bounded Lipschitz domain in ${\mathbb R}^n$ with general decomposition of the boundary.}

In this paper we analyze the mixed Dirichlet-Neumann boundary value problem for the semilinear Darcy-Forchheimer-Brinkman system in $L_p$-based Besov spaces on a bounded Lipschitz domain in ${\mathbb R}^3$, when the given boundary data belong to $L_p$ spaces, with $p$ in a neighborhood of $2$. This system is obtained by adding {the semilinear term $|{\bf u}|{\bf u}$} to the linear Brinkman equation. {First, we provide some results about} {equivalence between the Gagliardo and non-tangential traces}, {as well as between the weak canonical conormal derivatives and the non-tangential conormal derivatives. Various} {mapping} and invertibility { properties} of some integral operators of potential theory for the {linear} Brinkman system, and well posedness results for the Dirichlet and Neumann problems in $L_p$-based Besov spaces on bounded Lipschitz domains in ${\mathbb R}^n$ ($n\geq 3$) {are also presented.} Based on these results we show the well-posedness result for the mixed problem of Dirichlet-Neumann type for the Brinkman system in a bounded domain in ${\mathbb R}^n$ $(n\geq 3)$ with given data in $L_2$-based Sobolev spaces. Further, by using some stability results of Fredholm and invertibility properties, we extend the well-posedness property to the case of boundary data in $L_p$-based Sobolev spaces, with {$p\in \left(\frac{2(n-1)}{n+1}-\varepsilon, 2+\varepsilon\right)\cap (1,+\infty )$}, for some $\varepsilon>0$. The main idea for showing this property is the invertibility of an associated Neumann-to-Dirichlet operator, inspired by the approach developed by Mitrea and Mitrea in \cite{M-M}. Next we use the well-posedness result in the linear case combined with a fixed point theorem in order to show the existence and uniqueness in $L_p$-based Besov spaces for a mixed boundary value problem of Dirichlet and Neumann type for the semilinear Darcy-Forchheimer-Brinkman system in a Lipschitz domain in ${\mathbb R}^3$, when the boundary data belong to some $L_p$ spaces, with {$p\in (2-\varepsilon ,2+\varepsilon )$} and some parameter $\varepsilon > 0$. The motivation of this work is based on some practical applications,
{where}
the semilinear Darcy-Forchheimer-Brinkman system describes the motion of viscous incompressible fluids in porous media. A suggestive example is given by a sandstone reservoir filled with oil, or the convection of a viscous fluid in a porous medium located in a bounded domain, where a part of the boundary is in contact with air and the remaining part is a solid surface or the interface with another immiscible material or fluid. All these problems are well described by the Brinkman system, the semilinear Darcy-Forchheimer-Brinkman system, or by the Darcy-Forchheimer-Brinkman system, the latter of these systems containing both the nonlinear convective term $({\bf u}\cdot \nabla ){\bf u}$ and the semilinear term $|{\bf u}|{\bf u}$. For further details we refer the reader to the book by Nield and Bejan \cite{Ni-Be} (see also the theoretical and numerical approach in \cite{G-K-W,Robert}).

{It is supposed that the methods presented in this paper can be developed further, to analyze  also the nonlinear boundary-domain integro-differential equations, e.g., the ones formulated in \cite{Mikh-2, Mikh-1} for some quasi-linear boundary value problems.}

{\section{Functional setting and useful results}}
\label{sectio1}

The purpose of this section is to provide main notions and results used in this paper. We recall the definition of a bounded Lipschitz domain and give a short review of the involved {Sobolev, Bessel potential and Besov} spaces. Also we present the main properties of the layer potential operators for the Stokes and Brinkman systems in Lipschitz domains in ${\mathbb R}^n$.

{For any point $x=(x_1,x_2,\ldots ,x_n)\in {\mathbb R}^n$, 
{we use the representation}
$x=(x',x_{n})$, where $x'\in {\mathbb R}^{n-1}$ and $x_n\in {\mathbb R}$.}
{First,} we recall the definition of Lipschitz domain (cf., e.g., \cite[Definition 2.1]{M-M1}).

\begin{defi}
\label{Lipschity domain}
A nonempty, open, bounded subset ${\Omega}$ of $\subset \mathbb{R}^{n}$ $(n\geq 3)$ is called a bounded {\em Lipschitz domain} if for any ${\bf x}\in \partial \Omega $ there exist some constants $r,h>0$ and a coordinate system in ${\mathbb R}^n$, $(y_1,\ldots ,y_n)=(y',y_n)\in \mathbb{R}^{n-1}\times \mathbb{R}$, which is isometric to the canonical one and has origin at ${\bf x}$, along with a Lipschitz function $\varphi : \mathbb{R}^{n-1} \rightarrow \mathbb{R}$, such that the following property holds. If ${\mathcal C}(r,h)$ denotes the open cylinder
$\left\{y=(y',y_{n})\in \mathbb{R}^{n-1}\times \mathbb{R}:|y'| < r, |y_{n}|< h\right\}\subseteq {\mathbb R}^n,$
then
\begin{equation}
\label{Lip-b}
{\Omega}\cap {\mathcal C}(r,h)=\{y=(y',y_{n})\in \mathbb{R}^{n-1}\times \mathbb{R}:|y'|<r \mbox{ and } \varphi (y')<y_n<h\}.
\end{equation}
\end{defi}
In view of the Definition \ref{Lipschity domain}, condition \eqref{Lip-b} implies that $\partial \Omega =\partial \overline{\Omega }$ and the characterization (cf. \cite[(2.4)-(2.6)]{M-M1})
\begin{equation}
\label{Lip-b1}
\begin{split}
&\partial {\Omega}\cap {\mathcal C}(r,h)=\{y=(y',y_{n})\in \mathbb{R}^{n-1}\times \mathbb{R}:|y'|<r \mbox{ and } y_n=\varphi (y')\},\\
&({\mathbb R}^n\setminus \overline{\Omega })\cap {\mathcal C}(r,h)=\{y=(y',y_{n})\in \mathbb{R}^{n-1}\times \mathbb{R}:|y'|<r \mbox{ and } -h<y_n<\varphi (y')\}.
\end{split}
\end{equation}

{\it Let all along the paper, {${\Omega}_{+}$} denote a bounded Lipschitz domain with a connected boundary $\partial \Omega $, and ${\Omega}_{-}:=\mathbb{R}^{n}\setminus \overline{\Omega_+}$ denote the corresponding exterior domain. Unless stated otherwise, it will be also assumed that $n\ge 3$.}

{Let} $\kappa =\kappa(\partial {\Omega})>1$ be a fixed sufficiently large constant. Then {\it the non-tangential maximal operator} of an arbitrary function $u:{\Omega}_{\pm} \rightarrow \mathbb{R}$ is defined by
\begin{equation}
\label{2.1.1}
{M}({u})(x):= \{\sup{|u(y)|: y \in {\mathfrak D}_{\pm}(x)}, \ x \in \partial {\Omega}\},
\end{equation}
where
\begin{equation}
\label{cone}
{{\mathfrak D}_{\pm}(x)\equiv {\mathfrak D}_{\kappa ;\pm}(x)}:=\{y\in {\Omega}_{\pm}: {\rm{dist}}(x,y) < \kappa \rm{dist}(y,\partial {\Omega}),\  x \in \partial {\Omega}\},
\end{equation}
are non-tangential approach cones located in ${\Omega}_{+}$ and ${\Omega}_{-}$, respectively (see, e.g., \cite{M-W}). Moreover,
\begin{equation}
\label{nt-trace}
{u^\pm_{\rm nt}}(x):= \lim_{{\mathfrak D}_{\pm}\ni y \rightarrow x}{u}(y)
\end{equation}
are the {\it non-tangential limits} of $u$ with respect to ${\Omega}_{\pm }$ at $x\in \partial {\Omega}$.
Note that if ${M}({u})\in L_p(\partial \Omega )$ for one choice of $\kappa $, where $p\in (1,\infty )$, then this property holds for arbitrary choice of $\kappa $ (see, e.g., \cite[p. 63]{Med-AAM}). For the sake of brevity, we use the notation ${\mathfrak D}_{\pm}(x)$ instead of ${\mathfrak D}_{\kappa ;\pm}(x)$.
We often need the property below (cf. \cite[page 80]{Necas2012}, \cite[Theorem 1.12]{Verchota1984}; see also \cite[Lemma 2.2]{M-M-P}).


\begin{lemma}
\label{2.13D}
If $\Omega\subset {\mathbb R}^n$ is a Lipschitz domain, then there exists a sequence of $C^\infty$ domains $\Omega_j$ approximating $\Omega$ $($$\Omega_j\to{\Omega}$ as $j\to \infty$$)$ in the following sense:
\begin{itemize}
\item[$(i)$]
$\overline{\Omega}_j\subset{\Omega}$, and there exists a covering of $\partial \Omega $ with finitely many coordinate cylinders $(${\em atlas}$)$ that also form a family of coordinate cylinders for $\partial \Omega _j$, for each $j$. Moreover, {for each such cylinder ${\mathcal C}(r,h)$, if $\varphi $ and $\varphi _j$ are the corresponding Lipschitz functions whose graphs describe the boundaries of $\partial \Omega $ and $\partial \Omega _j$, respectively, in ${\mathcal C}(r,h)$}, then $\|\nabla \varphi _j\|_{L_{\infty }({\mathbb R}^{n-1})}\leq \|\nabla \varphi \|_{L_{\infty }({\mathbb R}^{n-1})}$ and $\nabla \varphi _j\to \nabla \varphi $ pointwise a.e.
\item[$(ii)$]
There exist a sequence of Lipschitz diffeomorphisms $\Phi_j:\partial\Omega\to \partial\Omega_j$ such that the Lipschitz constants of $\Phi_j$, $\Phi_j^{-1}$ are uniformly bounded in $j$.
\item[$(iii)$]
There is a constant $\kappa >0$ such that for all $j\geq 1$ and all ${\bf x}\in \partial \Omega $, we have $\Phi_j({\bf x})\in {\mathfrak D}_{+}({\bf x})\equiv {\mathfrak D}_{\kappa ;\pm}(x)$, where ${\mathfrak D}_{+}({\bf x})\equiv {\mathfrak D}_{\kappa ;\pm}(x)$ is the non-tangential approach cone with vertex at ${\bf x}$.
Moreover,
\begin{align}
&\lim_{j\to \infty }|\Phi_j({\bf x})-{\bf x}|=0 \mbox{ \rm{uniformly in} } {\bf x}\in \partial \Omega ,\\
&\lim_{j\to \infty }\boldsymbol \nu^{(j)}(\Phi_j({\bf x}))=\boldsymbol \nu ({\bf x}) \mbox{ \rm{for a.e.} } {\bf x}\in \partial \Omega , {\mbox{ \rm{and in every space} } L_p(\partial \Omega ),\, p\in (1,\infty )},
\end{align}
where $\boldsymbol \nu^{(j)}$ is the outward unit normal to $\partial \Omega _j$, and $\boldsymbol \nu $ is the outward unit normal to $\partial \Omega $.
\item[$(iv)$]
There exist some positive functions $\omega_j:\partial \Omega \to {\mathbb R}$ $($the Jacobian related to $\Phi_j$, $j\in {\mathbb N}$$)$ bounded away from zero and infinity uniformly in $j$, such that, for any measurable set $A\subset \partial \Omega $, $\int _A\omega _jd\sigma =\int _{\Phi_j(A)}d\sigma _j$. In addition, $\lim _{j\to \infty }\omega _j=1$ a.e. on $\partial \Omega $ and in every space $L_p(\partial \Omega )$, $p\in (1,\infty )$.
\end{itemize}
\end{lemma}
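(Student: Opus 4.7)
The plan is to construct the approximating sequence $\{\Omega_j\}$ by local mollification of the Lipschitz graphs that describe $\partial\Omega$, glued by a partition of unity. By compactness, choose a finite atlas of coordinate cylinders $\{\mathcal{C}_k(r_k,h_k)\}_{k=1}^N$ covering $\partial\Omega$, in each of which $\partial\Omega$ is the graph of a Lipschitz function $\varphi_k:\mathbb{R}^{n-1}\to\mathbb{R}$ and $\Omega_+$ lies strictly above this graph. Fix a standard mollifier $\rho_\epsilon(y')=\epsilon^{-(n-1)}\rho(y'/\epsilon)$ on $\mathbb{R}^{n-1}$ with $\rho\ge 0$, $\int\rho=1$, $\mathrm{supp}\,\rho\subset\{|y'|\le 1\}$, and set
\begin{equation*}
\varphi_k^{(j)}(y'):=(\varphi_k*\rho_{\epsilon_j})(y')+c_0\,\epsilon_j,
\end{equation*}
where $\epsilon_j\searrow 0$ and $c_0>\max_k\|\nabla\varphi_k\|_{L_\infty(\mathbb{R}^{n-1})}$ is chosen so that $\varphi_k^{(j)}>\varphi_k$ pointwise on the base of $\mathcal{C}_k$, guaranteeing the interior inclusion needed for (i). The smoothness of $\varphi_k^{(j)}$ is standard, the bound $\|\nabla\varphi_k^{(j)}\|_{L_\infty}\le\|\nabla\varphi_k\|_{L_\infty}$ follows from $\nabla(\varphi_k*\rho_{\epsilon_j})=(\nabla\varphi_k)*\rho_{\epsilon_j}$ together with Young's inequality, and the a.e.\ pointwise convergence $\nabla\varphi_k^{(j)}\to\nabla\varphi_k$ is the Lebesgue differentiation theorem applied to $\nabla\varphi_k\in L_\infty$.

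For the diffeomorphism required in (ii) I would define $\Phi_j$ in each chart by the vertical projection $(y',\varphi_k(y'))\mapsto(y',\varphi_k^{(j)}(y'))$ and then combine these local maps by a partition of unity subordinate to the atlas; its Lipschitz norm and the norm of its inverse depend only on $\max_k\|\nabla\varphi_k\|_{L_\infty}$ and hence are uniform in $j$. For (iii), since the vertical shift satisfies $|\varphi_k^{(j)}(y')-\varphi_k(y')|\le C\epsilon_j$ uniformly, the uniform bound $|\Phi_j({\bf x})-{\bf x}|\le C\epsilon_j\to 0$ is immediate; moreover, in each chart the vector $\Phi_j({\bf x})-{\bf x}$ is purely transverse to the graph direction, so one can choose $\kappa$ depending only on $\max_k\|\nabla\varphi_k\|_{L_\infty}$ to ensure $\Phi_j({\bf x})\in\mathfrak{D}_{\kappa;+}({\bf x})$ for all $j$.

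The convergence of normals in (iii) and of Jacobians in (iv) is then a direct consequence of the explicit graph formulas
\begin{equation*}
\boldsymbol{\nu}(y',\varphi_k(y'))=\frac{(\nabla\varphi_k(y'),-1)}{\sqrt{1+|\nabla\varphi_k(y')|^2}},\qquad \omega_j(y')=\frac{\sqrt{1+|\nabla\varphi_k^{(j)}(y')|^2}}{\sqrt{1+|\nabla\varphi_k(y')|^2}}.
\end{equation*}
The pointwise a.e.\ convergence $\nabla\varphi_k^{(j)}\to\nabla\varphi_k$ combined with the uniform $L_\infty$ bound furnishes both pointwise a.e.\ convergence of $\boldsymbol{\nu}^{(j)}\circ\Phi_j$ to $\boldsymbol{\nu}$ and $\omega_j\to 1$, while the uniform bound of the integrands and the dominated convergence theorem upgrade these to convergence in every $L_p(\partial\Omega)$ with $p\in(1,\infty)$. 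The measure identity $\int_A\omega_j\,d\sigma=\int_{\Phi_j(A)}d\sigma_j$ is the usual area formula for the graph parametrization together with the fact that $\Phi_j$ is a vertical shift in each chart.

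The main technical obstacle is the transition between the local construction and a genuinely $C^\infty$ global domain whose boundary is covered by the \emph{same} atlas as $\partial\Omega$. One must ensure that mollifying on overlapping cylinders with different coordinate systems does not destroy either the Lipschitz bound or the inclusion $\overline{\Omega}_j\subset\Omega$; this is typically achieved by using a single scale $\epsilon_j$ across charts, interpolating the local smooth defining functions through a partition of unity in a slightly thickened tubular neighbourhood of $\partial\Omega$, and taking $c_0$ large enough to absorb the resulting errors from the patching, so that each cylinder of the original atlas still realizes $\partial\Omega_j$ as the graph of a smooth function with the claimed bounds.
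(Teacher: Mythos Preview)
The paper does not supply its own proof of this lemma; it is stated with citations to Ne\v{c}as \cite[p.~80]{Necas2012}, Verchota \cite[Theorem~1.12]{Verchota1984}, and Mitrea--Mitrea--Pipher \cite[Lemma~2.2]{M-M-P}. Your sketch follows precisely the classical Verchota construction (mollify the local graph functions and shift vertically inward), so in substance you are reproducing the cited argument rather than offering an alternative.

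One point deserves tightening. You propose to define $\Phi_j$ chart-by-chart as the vertical lift $(y',\varphi_k(y'))\mapsto(y',\varphi_k^{(j)}(y'))$ and then ``combine these local maps by a partition of unity.'' This is not quite how the construction goes: the vertical direction is chart-dependent, and a partition-of-unity average of such lifts will in general fail to land on $\partial\Omega_j$, since $\partial\Omega_j$ itself is only defined after the patching. In the standard argument one first constructs the global smooth hypersurface $\partial\Omega_j$ (e.g.\ as a level set of a mollified signed distance, or by patching defining functions as you indicate in your final paragraph), and only then defines $\Phi_j$ globally, typically as the flow for time $\epsilon_j$ along a single smooth vector field transversal to $\partial\Omega$ (or, equivalently, as the intersection of $\partial\Omega_j$ with the integral curve through ${\bf x}$). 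With $\Phi_j$ built this way, the cone condition in (iii) and the Jacobian identity in (iv) follow cleanly, and the graph representations $\varphi_k^{(j)}$ are recovered a posteriori in each chart with the claimed gradient bounds. Your last paragraph shows you are aware of this issue; just be sure the order of construction is: global $\Omega_j$ first, then $\Phi_j$, not the reverse.
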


{Lemma \ref{2.13D} implies that the Lipschitz characters of the domains $\Omega _j$ are uniformly controlled by the Lipschitz character of $\Omega $.}
The meaning of {Lipschitz character} of a Lipschitz domain is given below (cf., e.g., \cite[p. 22]{M-M1}).
\begin{definition}\label{LCh}
Let $\Omega\subset {\mathbb R}^n$ be a Lipschitz domain. Let $\{{\mathcal C}_k(r_k,h_k):1\leq k\leq N\}$ $($with associated Lipschitz functions $\{\varphi _k:1\leq k\leq N\}$$)$ be an atlas for $\partial \Omega $, i.e., a finite collection of cylinders covering the boundary $\partial \Omega $.
Having fixed such an atlas of $\partial \Omega $, the {{\it Lipschitz character}} of $\Omega$ is defined as the set consisting of the numbers $N$, $\max \{\|\nabla \varphi _k\|_{L_{\infty }({\mathbb R}^{n-1})}:1\leq k\leq N\}$, $\min \{r_k:1\leq k\leq N\}$, and $\min \{h_k:1\leq k\leq N\}$.
\end{definition}

\subsection{\bf Sobolev and Besov spaces and related results}
In this subsection we assume $n\ge 2$. We denote by ${\mathcal D}({\mathbb R}^n):=C^{\infty }_{{\rm{comp}}}({\mathbb R}^n)$ the space of infinitely differentiable functions with compact support in ${\mathbb R}^n$ and by ${\mathcal D}({\mathbb R}^n,{\mathbb R}^n):=C^{\infty }_{{\rm{comp}}}({\mathbb R}^n,{\mathbb R}^n)$ the space of infinitely differentiable vector-valued functions with compact support in ${\mathbb R}^n$.
Also, let
${\mathcal E}({\Omega}_{\pm }):=C^{\infty }({\Omega}_{\pm })$ denote the space of infinitely differentiable functions and let ${\mathcal D}({\Omega}_{\pm }):=C^{\infty }_{{\rm{comp}}}({\Omega}_{\pm })$ be the space of infinitely differentiable functions with compact support in ${\Omega}_{\pm }$, equipped with the inductive limit topology. Let\footnote{If $X$ is a topological space, then $X'$ denotes its dual.} ${\mathcal E}'({\mathbb R}^n)$ and ${\mathcal D}'({\mathbb R}^n)$ be the duals of ${\mathcal E}({\mathbb R}^n)$ and ${\mathcal D}({\mathbb R}^n)$, respectively, i.e., the spaces of distributions on ${\mathbb R}^n$. The spaces ${\mathcal E}'({\Omega}_{\pm })$ and ${\mathcal D}'({\Omega}_{\pm })$ can be similarly defined.

Let ${\mathcal F}$ denote the Fourier transform defined on the space of tempered distributions to itself, and ${\mathcal F}^{-1}$ be its inverse. For $p \in (1, \infty)$, $L_p({\mathbb R}^n)$ is the Lebesgue space of (equivalence classes of) measurable, $p^{th}$ integrable functions on ${\mathbb R}^n$, and $L_{\infty }({\mathbb R}^n)$ is the space of (equivalence classes of) essentially bounded measurable functions on ${\mathbb R}^n$. For $s\in {\mathbb R}$, the $L_p$-based {Bessel potential} spaces  $H_p^{s}(\mathbb{R}^{n})$ and $H_p^{s}(\mathbb{R}^{n},{\mathbb R}^n)$ are defined by
\begin{align}
\label{bessel-potential}
&H_p^{s}(\mathbb{R}^{n}):=\{{f:\,}(\mathbb{I} - \triangle )^{\frac{s}{2}}{f\in }L_p(\mathbb{R}^{n}) \}
=\{{f: J^s f\in} L_p(\mathbb{R}^{n})\},\\
&H_p^{s}(\mathbb{R}^{n}, \mathbb{R}^{n}):= \left\{{\bf \tilde f}=(f_{1}, f_{2},\ldots , f_{n}) : f_{i} \in H_p^{s}(\mathbb{R}^{n}),\ j = 1,\ldots, n \right\},
\end{align}
{where $J^s:{\mathcal S}'({\mathbb R}^n)\to {\mathcal S}'({\mathbb R}^n)$ is the Bessel potential operator of order $s$ defined by $J^s f =\mathcal F^{-1} ({\rho}^s\mathcal F f)$ with
\begin{align}\label{Jhat}
{\rho}(\xi)=(1+|\xi|^2)^{\frac{1}{2}}
\end{align}
(see, e.g., \cite[Chapter 3]{Lean}).} Note that $H_p^{s}(\mathbb{R}^{n})$ is a Banach space with respect to the norm
\begin{equation}{\label{E17}}
\|f\|_{H_p^{s}(\mathbb{R}^{n})}
{= \|J^s f \|_{L_p(\mathbb{R}^{n})} }
= \|\mathcal{F}^{-1}{({\rho}^{s}} \mathcal{F}f{)} \|_{L_p(\mathbb{R}^{n})}.
\end{equation}
For integer ${s\ge0}$, the spaces $H_p^{s}(\mathbb{R}^{n})$ coincide with the Sobolev spaces $W_p^s({\mathbb R}^n)$.

The {Bessel potential} spaces $H_p^{s}({\Omega})$ and $\widetilde H_p^s({\Omega})$ are defined by
\begin{align}
\label{Sobolev-1}
&H_p^s({\Omega}):=\{f\in {\mathcal D}'({\Omega}):\exists \ F\in H_p^s({\mathbb R}^n)
\mbox{ such that } F|_{{\Omega}}=f\},\\
\label{Sobolev-2}
&\widetilde H_p^s({\Omega}):=\left\{f\in H_p^s({\mathbb R}^n):{\rm{supp}}\
f\subseteq \overline{{\Omega}}\right\},
\end{align}
and the {Bessel potential} spaces $H_p^s({\Omega},{\mathbb R}^n)$ and $\widetilde H_p^s({\Omega},{\mathbb R}^n)$ are defined as the spaces of vector-valued functions (distributions) whose components belong to the spaces $H_p^s({\Omega})$ and $\widetilde H_p^s({\Omega})$, respectively (see, e.g., \cite{Lean}). For any $s\in {\mathbb R}$, $C^{\infty }(\overline{{\Omega}})$ is dense in $H_p^s({\Omega})$ and the following duality relations hold (see \cite[Proposition 2.9]{J-K1}, \cite[(1.9)]{Fa-Me-Mi}, \cite[(4.14)]{M-T2})
\begin{equation}
\label{duality-spaces}
\left(H_p^s({\Omega})\right)'=\widetilde H_{p'}^{-s}({\Omega}),\ \ H_{p'}^{-s}({\Omega})=\left(\widetilde H_{p}^{s}({\Omega})\right)'.
\end{equation}
{Here and further on $p,p'\in (1,\infty)$ are related as}
$\dfrac{1}{p}+\dfrac{1}{p'} = 1.$

Replacing ${\Omega}$ by ${\Omega}_{-}$ in \eqref{Sobolev-1} and \eqref{Sobolev-2}, one obtains the {Bessel potential} spaces $H_p^s({\Omega}_{-})$, $\widetilde H_p^s({\Omega}_{-})$.

{For $p\in (1,\infty)$ and $s\in ({-1},1)$, the boundary {Bessel potential} space $H_p^{s}(\partial {\Omega})$ can be defined by using the space $H_p^{s}(\mathbb{R}^{n-1})$, a partition of unity and pull-pack. In addition, $H_{p'}^{-s}(\partial {\Omega})=\left(H_p^s(\partial {\Omega})\right)'$.}
{We can also equivalently define $H^0_p(\partial {\Omega})= L_p(\partial {\Omega})$ as} the Lebesgue space of measurable, $p^{th}$ power integrable functions on $\partial {\Omega}$. In addition, {$H^1_p(\partial {\Omega})$ coincides, with equivalent norm, with the Sobolev space}
\begin{align}
\label{L1-p}
&W^1_p(\partial {\Omega}):=\left\{f\in L_p(\partial {\Omega}):\|f\|_{W^1_p(\partial {\Omega})}<\infty \right\},\nonumber\\ 
&\|f\|_{W^1_p(\partial {\Omega})}:=\|f\|_{L_p(\partial {\Omega})}+\|\nabla _{{\rm{tan}}}f\| _{L_p(\partial {\Omega})}.
\end{align}
Here the weak tangential gradient of a function $f$ locally integrable on $\partial\Omega$ is  $\nabla _{{\rm{tan}}}f:=\left(\nu _k\partial_{\tau _{kj}}f\right)_{1\leq j\leq n}$, where  $\partial_{\tau _{kj}}f$ is defined in the weak form as (cf. e.g., \cite[(2.9)]{M-W})
$\langle \partial_{\tau _{kj}}f,\phi\rangle_{\partial\Omega}:=-\langle f,\partial_{\tau _{kj}}\phi\rangle_{\partial\Omega}$ for any $\phi\in\mathcal D(\mathbb R^n)$ with
$\partial _{\tau _{kj}}\phi :=\nu _k\left(\partial _j\phi \right)|_{\partial {\Omega}}-\nu _j\left(\partial _k\phi \right)|_{\partial {\Omega}},\ j,k=1,\ldots ,n,$
and $\boldsymbol \nu =(\nu _1,\ldots ,\nu _n)$ is the outward unit normal to ${\Omega}$, which exists at almost every point on $ \partial {\Omega}$.
If $f$ is defined and smooth enough in the vicinity of $\partial\Omega$, then by integrating by parts it is possible to show that the weak definition coincides with the strong one, given by $\partial _{\tau _{kj}}f :=\nu _k\left(\partial _j f \right)|_{\partial {\Omega}}-\nu _j\left(\partial _k f \right)|_{\partial {\Omega}}$.
\comment{
Let us also set
\begin{align}
\label{duap-L1-p}
L_{-1}^p(\partial {\Omega}):=\left(L_1^{p'}(\partial {\Omega})\right)',
\end{align}
where $p,p'\in (1,\infty )$ satisfy \eqref{dual-exp}.
} 

\comment{
***SM: the following paragraph is already covered by the remark below \eqref{real-int} about \eqref{complex-int}.

Let $X$ and $Y$ be two Banach spaces and let $[X,Y]_{\theta}$ denote their complex interpolation. A main property of the Sobolev spaces $H^{\pm s}_p({\Omega})$, $\widetilde H^{\pm s}_p({\Omega})$ is that they are complex interpolation spaces for $s>0$ and $p\in (1,\infty )$. Hence, for $p_{1}, p_{2}\in (1,\infty )$, $s_{1}, s_{2}\in (0,\infty )$ and $\theta \in (0,1)$, we have that
\begin{equation}
[H^{s_1}_{p_1}({\Omega}), H^{s_2}_{p_2}({\Omega})]_{\theta} = H_p^{s}({\Omega}),\ \
[H^{-s_1}_{p_1}({\Omega}), H^{-s_2}_{p_2}({\Omega}) ]_{\theta} = H_p^{-s}({\Omega}),
\end{equation}
where $s=(1-\theta )s_{1}+\theta s_{2}$ and $\frac{1}{p}=\frac{1-\theta}{p_{1}}+\frac{\theta}{p_{2}}$ (see, e.g., \cite[Theorem 13.6.3]{Agr}, \cite{Triebel}).
} 

Now, for $s\in {\mathbb R}$ and $p,q\in (1,\infty )$, denote by $B_{p,q}^{s}({\mathbb R}^n)$ the scale of Besov spaces in ${\mathbb R}^n$, see Appendix A. Similar to \eqref{Sobolev-1} and \eqref{Sobolev-2}, the Besov spaces $B_{p,q}^{s}({\Omega})$ and $B_{p,q}^{s}({\Omega},{\mathbb R}^n)$ are defined by
\begin{align}
&B_{p,q}^{s}({\Omega}):=\{f\in {\mathcal D}'({\Omega}):\exists \ F\in B_{p,q}^{s}({\mathbb R}^n)
\mbox{ such that } F|_{{\Omega}}=f\},\\
&B_{p,q}^{s}({\Omega},\mathbb{R}^{n}):=\big\{{\bf f}=(f_{1}, f_{2}, \ldots , f_{n}) : f_{i} \in B_{p,q}^{s}({\Omega}), j = 1,\ldots, n \big\},\\
&\widetilde B_{p,q}^{s}({\Omega}):=\left\{f\in B_{p,q}^{s}({\mathbb R}^n):{\rm{supp}}\
f\subseteq \overline{{\Omega}}\right\}.
\end{align}

{For $s\in [0,1]$ and $p,q\in (1,\infty )$, the {Sobolev and Besov spaces $H_{p}^{s}({\partial\Omega})$ and $B_{p,q}^{s}({\partial\Omega})$ on the boundary $\partial\Omega$} can be defined by using the spaces
$H_{p}^{s}({\mathbb R}^{n-1})$ and $B_{p,q}^{s}({\mathbb R}^{n-1})$, a partition of unity and the pull-backs of the local parametrization of $\partial \Omega $. In addition, we note that $H_{p}^{-s}(\partial\Omega )=\left({H}_{p'}^s(\partial\Omega )\right)'$ and $B_{p,q}^{-s}=\left(B_{p',q'}^s(\partial\Omega  )\right)'$, where $p',q'\in (1,\infty )$ such that $\frac{1}{p}+\frac{1}{p'}=1$ and $\frac{1}{q}+\frac{1}{q'}=1$ (for further details about boundary Sobolev and Besov spaces see, e.g., \cite[p. 35]{M-W}).}



A useful result for the problems we are going to investigate in this paper is the following trace lemma (see \cite[Chapter VIII, Theorems 1,2]{Jonsson-Wallin1984},  \cite[Theorem 3.1]{J-K1} and also \cite[Lemma 3.6]{Co} for the case $p=2$ and a discussion on the critical smoothness index $s=1$).
\comment{
\begin{lemma}
\label{lem 1.5}
Assume that ${\Omega}_{+}:={\Omega}\subset {\mathbb R}^n$
is a bounded Lipschitz domain with connected boundary $\partial {\Omega}$ and let ${\Omega}_{-}:={\mathbb R}^n\setminus \overline{{\Omega}}$ be the corresponding exterior domain. Let $p\in (1, \infty)$ and $s\in (0,1)$ be given.
Then there exist linear and continuous Gagliardo trace operators
${\gamma}_{\pm }:H_p^{s+ \frac{1}{p}}({\Omega})\to {B_{p,p}^{s}}(\partial {\Omega})$ such that ${\gamma}_{\pm } f=f|_{{{\partial {\Omega}}}}$ for any $f\in C^{\infty }(\overline{{\Omega}}_{\pm })$.
These operators are surjective and have $($non-unique$)$ linear and continuous right inverse operators ${\gamma}^{-1}_{\pm }:{B_{p,p}^{s}}(\partial {\Omega})\to H_p^{s+ \frac{1}{p}}({\Omega}).$
\end{lemma}
The result below is an extension of Lemma \ref{lem 1.5} to Besov spaces (see, e.g., \cite[Theorem 2.5.2]{M-W}).
} 
\begin{lemma}
\label{trace-lemma-Besov}
Assume that ${\Omega}\subset {\mathbb R}^n$ is a bounded Lipschitz domain with connected boundary $\partial {\Omega}$ and let ${\Omega}_{-}:={\mathbb R}^n\setminus \overline{{\Omega}}$ be the corresponding exterior domain.
Let $p,q\in (1,\infty)$ and $s\in (0,1)$. Then there exist linear and continuous Gagliardo trace  operators ${\gamma}_{\pm }:H_p^{s+ \frac{1}{p}}({\Omega}_\pm )\to {B_{p,p}^{s}}(\partial {\Omega})$ and ${\gamma}_{\pm}:B_{p,q}^{s+\frac{1}{p}}({\Omega_\pm})\to B_{p,q}^{s}(\partial {\Omega})$, respectively, such that ${\gamma}_{\pm}f=f|_{{{\partial {\Omega}}}}$ for any $f\in C^{\infty }(\overline{{\Omega}}_\pm )$. These operators are surjective and have $($non-unique$)$ linear and continuous right inverse operators
${\gamma}^{-1}_{\pm }:{B_{p,p}^{s}}(\partial {\Omega})\to H_p^{s+ \frac{1}{p}}({\Omega}_\pm )$ and
${\gamma}^{-1}_{\pm}:B_{p,q}^{s}(\partial {\Omega})\to B_{p,q}^{s+\frac{1}{p}}({\Omega_\pm})$, respectively.
\end{lemma}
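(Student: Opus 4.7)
The plan is to prove this lemma by the classical localization-and-flattening strategy, reducing it to trace theorems on the half-space. Using the atlas $\{\mathcal C_k(r_k,h_k)\}_{k=1}^N$ covering $\partial \Omega$ from Definition \ref{Lipschity domain} together with a smooth partition of unity $\{\chi_k\}_{k=0}^N$ with $\chi_k\in C_c^\infty(\mathcal C_k)$ for $k\ge 1$ and $\mathrm{supp}\,\chi_0\subset\mathbb R^n\setminus\partial\Omega$, any $f\in H_p^{s+1/p}(\Omega_\pm)$ or $B_{p,q}^{s+1/p}(\Omega_\pm)$ splits as $f=\chi_0 f+\sum_{k=1}^N\chi_k f$, of which only the boundary-concentrated pieces contribute to the trace. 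Each $\chi_k f$ is pushed forward via the bi-Lipschitz flattening map $\Phi_k(y',y_n):=(y',y_n-\varphi_k(y'))$ and, after extension by zero, becomes a compactly supported function on the closed half-space $\overline{\mathbb R^n_+}$ or $\overline{\mathbb R^n_-}$. On the half-space, the classical trace theorem (see, e.g., \cite[Chapter VIII, Theorems 1,2]{Jonsson-Wallin1984} and \cite[Theorem 3.1]{J-K1}) yields the bounded surjective operators
\begin{equation*}
H_p^{s+1/p}(\mathbb R^n_\pm)\twoheadrightarrow B_{p,p}^s(\mathbb R^{n-1}),\qquad B_{p,q}^{s+1/p}(\mathbb R^n_\pm)\twoheadrightarrow B_{p,q}^s(\mathbb R^{n-1}),
\end{equation*}
each admitting a continuous right inverse given by a Peetre-type extension formula. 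Reassembling the local traces through $\{\chi_k\}$ and pulling back by $\Phi_k^{-1}$ yields the operators $\gamma_\pm$, and the identity $\gamma_\pm f=f|_{\partial \Omega}$ for $f\in C^\infty(\overline{\Omega}_\pm)$ follows by density of smooth functions together with continuity.

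The right inverses $\gamma_\pm^{-1}$ are constructed by reversing the procedure: given boundary datum $g$, I would pull back each localized piece $\chi_k g$ via the boundary chart to $\mathbb R^{n-1}$, apply the half-space Peetre-type extension operator to obtain an element of the appropriate bulk space on $\mathbb R^n_\pm$, push forward by $\Phi_k^{-1}$, multiply by a suitable cutoff supported inside $\Omega_\pm$ near $\partial\Omega$, and sum over $k$. This yields bounded operators from $B_{p,p}^s(\partial\Omega)$, respectively $B_{p,q}^s(\partial\Omega)$, into the corresponding bulk space on $\Omega_\pm$, and by construction their traces coincide with the input data, so they indeed right-invert $\gamma_\pm$.

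The main technical subtlety I expect is verifying that the bi-Lipschitz change of variables $\Phi_k$ acts as a bounded automorphism of the bulk space $H_p^{s+1/p}$ or $B_{p,q}^{s+1/p}$ in the relevant range $s+1/p\in(1/p,1+1/p)\subset(0,2)$, which may exceed $1$. For the subrange $(0,1)$ this follows directly from the difference characterisation of these spaces. For the remaining borderline range $[1,2)$ one identifies $H_p^{s+1/p}$ and $B_{p,q}^{s+1/p}$ with spaces of $L_p$ functions whose first-order weak derivatives lie in $H_p^{s+1/p-1}$, respectively $B_{p,q}^{s+1/p-1}$ (now with smoothness index in $(0,1)$), and combines the Lipschitz invariance on the reduced scale with the almost-everywhere chain rule for Lipschitz maps. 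The exterior case $\Omega_-$ is handled identically, with $\mathbb R^n_+$ replaced by $\mathbb R^n_-$ in each local chart; boundedness of $\Omega_+$ guarantees compact support of each localized piece on the exterior side and no decay condition is needed for the trace argument.
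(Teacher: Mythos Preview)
The paper does not actually prove this lemma; it is stated as a known result with citations to \cite[Chapter VIII, Theorems 1,2]{Jonsson-Wallin1984}, \cite[Theorem 3.1]{J-K1}, and \cite[Lemma 3.6]{Co}, and no argument is given in the text. Your proposal therefore goes beyond what the paper supplies.

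That said, your sketch is the standard route and is essentially correct: localization via a partition of unity subordinate to the Lipschitz atlas, bi-Lipschitz flattening, and reduction to the half-space trace theorem with a Peetre-type extension for the right inverse is exactly the mechanism behind the cited references. The one genuinely delicate point you flag---invariance of $H_p^{s+1/p}$ and $B_{p,q}^{s+1/p}$ under bi-Lipschitz changes of variables when the smoothness index lies in $[1,2)$---is real, and your proposed fix (characterising these spaces via first-order weak derivatives lying in a space of smoothness $s+1/p-1\in(0,1)$ and then using the a.e.\ chain rule for Lipschitz maps) is the standard way around it. An alternative, which the Jonsson--Wallin reference effectively takes, is to work directly with traces on Lipschitz surfaces as $(n-1)$-dimensional sets in $\mathbb R^n$ without ever flattening, thereby sidestepping the change-of-variables issue entirely; this buys a cleaner argument at the cost of heavier preliminary machinery on function spaces on $d$-sets.
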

{Lemma \ref{trace-lemma-Besov} holds also for vector-valued and matrix-valued functions $f$. If $f$ is such that $\gamma_+f=\gamma_-f$, we will often write $\gamma f$.}

We have the following trace equivalence assertion.
\begin{theorem}
\label{trace-equivalence-L}
Assume that ${\Omega}\subset {\mathbb R}^n$ is a bounded Lipschitz domain with connected boundary $\partial {\Omega}$ and let ${\Omega}_{-}:={\mathbb R}^n\setminus \overline{{\Omega}}$ be the corresponding exterior domain.
Let
$p,q\in (1,\infty)$, and let ${u}\in {B}_{p,q}^{s+\frac{1}{p}}(\Omega_\pm)$ or
${u}\in {H}_{p}^{s+\frac{1}{p}}(\Omega_\pm)$ for some $s>0$. Then the Gagliardo trace $\gamma_+{u}$ is well defined on $\partial\Omega$ and, moreover,
\begin{itemize}
\item[$(i)$] if
the pointwise non-tangential trace ${u}_{\rm nt}^\pm$ exists a.e. on $\partial\Omega$, then
${u}_{\rm nt}^\pm=\gamma_\pm{u}$;
\item[$(ii)$]
if the pointwise non-tangential trace ${u}_{\rm nt}^\pm$ exists a.e. on $\partial\Omega$ and $s\in(0,1)$ then ${u}_{\rm nt}^\pm=\gamma_\pm{u}\in B_{p,q}^{s}(\partial{\Omega})$;
\item[$(iii)$] if
${u}_{\rm nt}^\pm\in H_{p}^s(\partial {\Omega})$ for some $s\in (0,1]$, then
$\gamma_\pm{u}\in  H_{p}^{s}(\partial {\Omega})$ as well.
\end{itemize}
\end{theorem}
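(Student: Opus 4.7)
The well-definedness of $\gamma_\pm u$ on $\partial{\Omega}$ follows from Lemma~\ref{trace-lemma-Besov} after, if necessary, replacing $s$ by some $s'\in(0,1)$ with $s'\le s$ via the continuous embedding of the ambient space into $H_p^{s'+1/p}({\Omega}_\pm)$. I therefore concentrate on the a.e.\ identification $\gamma_\pm u=u_{\rm nt}^\pm$, reducing both traces to a common limit via the smooth approximating domains of Lemma~\ref{2.13D}. On the $+$ side that lemma supplies $C^\infty$ subdomains $\Omega_j\nearrow{\Omega}_+$ and bi-Lipschitz diffeomorphisms $\Phi_j:\partial{\Omega}\to\partial\Omega_j$ with $\Phi_j({\bf x})\in\mathfrak D_+({\bf x})$, $|\Phi_j({\bf x})-{\bf x}|\to 0$ uniformly, uniformly controlled Lipschitz characters of $\Omega_j$, and uniform bi-Lipschitz bounds on $\Phi_j,\Phi_j^{-1}$. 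Denoting by $\gamma_j$ the (classical) trace from $\Omega_j$ onto the smooth boundary $\partial\Omega_j$ and setting $u_j:=(\gamma_j u)\circ\Phi_j$ on $\partial{\Omega}$, I will show that $u_j\to\gamma_+ u$ in the appropriate boundary Besov norm and, for the pointwise representative of $u$ for which $u_{\rm nt}^+$ exists a.e., $u_j({\bf x})\to u_{\rm nt}^+({\bf x})$ a.e., which together force $\gamma_+ u=u_{\rm nt}^+$ a.e.\ on $\partial{\Omega}$.

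The pointwise convergence follows directly from $\Phi_j({\bf x})\in\mathfrak D_+({\bf x})$, $\Phi_j({\bf x})\to{\bf x}$, and the definition of the non-tangential limit, noting that on the smooth surface $\partial\Omega_j$ the trace $\gamma_j u$ coincides a.e.\ with the pointwise values of the chosen representative. To obtain the Besov convergence, I approximate $u$ by $u^{(k)}\in C^\infty(\overline{{\Omega}_+})$ in the ambient norm. The composition $u\mapsto u_j$ factors as the trace $H_p^{s'+1/p}({\Omega}_+)\to B_{p,p}^{s'}(\partial\Omega_j)$ (respectively, $B_{p,q}^{s'+1/p}({\Omega}_+)\to B_{p,q}^{s'}(\partial\Omega_j)$ in the Besov case) followed by pull-back through $\Phi_j^{-1}$; each factor is uniformly bounded in $j$ by Lemma~\ref{2.13D}. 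For smooth $u^{(k)}$, $u^{(k)}\circ\Phi_j\to u^{(k)}|_{\partial{\Omega}}$ uniformly on $\partial{\Omega}$, hence in any boundary Besov norm, so a standard $\varepsilon/3$ argument extends this to general $u$. Passing to an a.e.-converging subsequence then yields the identification. The exterior case is handled analogously, approximating ${\Omega}_-$ from within by $C^\infty$ domains with diffeomorphisms landing in $\mathfrak D_-({\bf x})$.

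Parts (ii) and (iii) then follow immediately from (i): for (ii), Lemma~\ref{trace-lemma-Besov} places $\gamma_\pm u$ in $B_{p,q}^{s}(\partial{\Omega})$ when $s\in(0,1)$, and (i) transfers this regularity to $u_{\rm nt}^\pm$; for (iii), the a.e.\ equality $\gamma_\pm u=u_{\rm nt}^\pm$ in $L_p(\partial{\Omega})$ allows the $H_p^{s}(\partial{\Omega})$-membership of $u_{\rm nt}^\pm$ to transfer to $\gamma_\pm u$. The main obstacle is the uniform-in-$j$ bound on the composition operator $u\mapsto(\gamma_j u)\circ\Phi_j$: this rests jointly on the uniform Lipschitz character of $\Omega_j$ (bounding the trace constants on $\partial\Omega_j$ independently of $j$) and on the uniform bi-Lipschitz bounds on $\Phi_j,\Phi_j^{-1}$ (keeping the pull-back a Besov isomorphism with controlled operator norms), both furnished by Lemma~\ref{2.13D}.
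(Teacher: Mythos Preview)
The paper's own proof is essentially a one-line citation: item (i) for $0<s<1$ invokes \cite[Theorem 8.7(iii)]{BMMM2014}, which identifies the Gagliardo trace with the Jonsson--Wallin ``strictly defined'' trace (limits of ball averages) and shows the latter coincides with the non-tangential limit whenever that limit exists; the case $s\ge 1$ then follows by embedding, and (ii), (iii) are read off from (i) and Lemma~\ref{trace-lemma-Besov}. Your approximating-domain argument via Lemma~\ref{2.13D} is a genuinely different, more constructive route, but as written it has a real gap.

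The problematic clause is ``on the smooth surface $\partial\Omega_j$ the trace $\gamma_j u$ coincides a.e.\ with the pointwise values of the chosen representative.'' A function in $B_{p,q}^{s+1/p}(\Omega_+)$ is an equivalence class modulo Lebesgue-null sets, and each $\partial\Omega_j$ has $n$-dimensional Lebesgue measure zero, so ``pointwise values on $\partial\Omega_j$'' are not determined by the class. You pin down a representative by the hypothesis that non-tangential limits exist for it---but non-tangential limits are \emph{representative-dependent} (modify $u$ on a null set that meets every cone and you can destroy them), and nothing ties your chosen representative to one for which the trace-equals-restriction identity holds on the interior surfaces. The standard fact that $\gamma_j u$ agrees $\mathcal H^{n-1}$-a.e.\ on $\partial\Omega_j$ with the restriction of the \emph{precise} (Lebesgue-point) representative $u^*$ is exactly the Jonsson--Wallin statement; to finish your argument you would then need that $u^*$ itself has the asserted non-tangential limits a.e., and establishing that is precisely the content of \cite[Theorem 8.7]{BMMM2014}. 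So the approximating-domain scheme does not actually bypass the external input---it relocates it.

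A smaller, fixable issue: ``uniformly on $\partial\Omega$, hence in any boundary Besov norm'' is false for positive smoothness index. For fixed smooth $u^{(k)}$ one can recover Besov convergence via a uniform Lipschitz bound on $u^{(k)}\circ\Phi_j$ and interpolation, but it is simpler---and sufficient for the a.e.\ identification you need---to run the $\varepsilon/3$ argument in $L_p(\partial\Omega)$ only.
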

\begin{proof}
Item (i) for $0<s<1$ is implied by Theorem 8.7(iii) in \cite{BMMM2014}, while for $s\ge 1$ the equality $\gamma_\pm{u}={u}_{\rm nt}^\pm$ still applies by an imbedding argument.
Item (ii) and (iii) follow from item (i) and the well known imbedding $\gamma_\pm{u}\in B_{p,q}^{s}(\partial{\Omega})$ for $s\in(0,1)$.
\hfill\end{proof}

Further on, $\langle \cdot, \cdot \rangle _{\Omega'}$ will denote the dual form between corresponding dual spaces defined on a set $\Omega'$.
For further details about Sobolev, {Bessel potential and Besov} spaces, we refer the reader to, e.g., \cite{Agr, H-W, Lean, Triebel, Triebel-1}.

\subsection{\bf The Brinkman system and conormal derivatives in Bessel-potential and Besov spaces}\label{BsBpBs}
In this subsection we also assume $n\ge 2$.
For a couple $({\bf u},\pi)$, and a real number $\alpha\ge 0$, let us consider the linear Brinkman system (in the incompressible case)
\begin{align}
\label{BrSyss}
{\mathcal L}_{\alpha}({\textbf u},\pi) ={\mathbf f},\
{\rm{div}}\ {\bf u} = 0,
\end{align}
where the Brinkman operator is defined as
\begin{equation}
\label{conormal}
{\mathcal L}_{\alpha}({\textbf u},\pi):= \triangle {\textbf u} - \alpha{\textbf u}- \nabla \pi.
\end{equation}
When $\alpha=0$, the Brinkman operator becomes the Stokes operator.

Now, for $({\bf u},\pi)\in C^1(\overline{{\Omega}}_{\pm},{\mathbb R}^n)\times C^0(\overline{{\Omega}}_{\pm})$, such that ${\rm{div}}\ {\bf u}={0}$ in ${\Omega}_{\pm }$, we define the classical conormal derivatives (tractions) for the Brinkman (or the Stokes) system, $\mathbf t_{\alpha}^{{\rm c}\pm}({\bf u},\pi )$, by using the well-known formula
\begin{align}
\label{2.37-}
\mathbf t^{{\rm c}\pm}({\bf u},\pi ):=\left({\gamma}_{\pm }
\boldsymbol{\sigma}({\bf u},\pi)\right)
\boldsymbol \nu ,
\end{align}
where
\begin{align}
\label{classical-stress}
\boldsymbol{\sigma}({\bf u},\pi):=-\pi{\mathbb I}+2{\mathbb E}({\bf u})
\end{align}
is the stress tensor,
${\mathbb E}({\bf u})$ is the {strain rate {tensor} (symmetric part of $\nabla {\bf u}$)}, and $\boldsymbol \nu{=\boldsymbol \nu^+}$ is the outward unit normal to ${\Omega}_{ +}$, defined a.e. on $\partial {\Omega}$. Then for any function $\boldsymbol \varphi \in {\mathcal D}({\mathbb R}^n,{\mathbb R}^n)$ we obtain by {integrating} by parts the first Green identity,
\begin{align}
\label{special-case-1}
{\pm}\left\langle \mathbf t^{{\rm c}\pm}({\bf u},\pi ),
\boldsymbol \varphi \right\rangle _{_{\!\partial {{\Omega}}}}= &2\langle {\mathbb E}({\bf
u}),{\mathbb E}(\boldsymbol \varphi )\rangle _{{{\Omega}_{\pm}}}+\alpha \langle {\bf
u},\boldsymbol \varphi \rangle _{{{\Omega}_{\pm}}}-\langle \pi,{\rm{div}}\ \boldsymbol \varphi \rangle _{{{\Omega}_{\pm}}}+\left\langle {\mathcal L}_{\alpha }({\bf u},\pi ),\boldsymbol \varphi\right\rangle _{{{{\Omega}_{\pm}}}}.
\end{align}

If the non-tangential traces of the stress tensor, $\boldsymbol{\sigma}^\pm_{\rm{nt}}({\bf u},\pi)$ and the normal vector $\boldsymbol \nu$ exist at a boundary point, then the non-tangential conormal derivatives are defined at this point as
\begin{align}
\label{2.37nt}
\mathbf t_{\rm{nt}}^\pm({\bf u},\pi):=\boldsymbol{\sigma}^\pm_{\rm{nt}}\,
\boldsymbol \nu .
\end{align}

For {$s\in\mathbb R$} and $p,q\in (1,\infty )$, we consider the spaces
\begin{align}
&H_{p;{\rm{div}}}^{s}(\Omega _\pm ,\mathbb{R}^{n})=\left\{{\bf u}_\pm \in H_p^{s}(\Omega _\pm, \mathbb{R}^{n}):{\rm{div}}\ {\bf u} = 0 \mbox{ in } \Omega _\pm\right\},\\
&{{B}_{p,q,\rm div}^{s}({\Omega_\pm},\mathbb{R}^{n}):=\left\{{\bf u}_\pm \in {B}_{p,q}^{s}(\Omega _\pm, \mathbb{R}^{n}):{\rm{div}}\ {\bf u} = 0 \mbox{ in } \Omega _\pm\right\}.}
\end{align}
We need also the following spaces (cf. \cite[Definition\, 3.3]{Mikh}).
\begin{defi}\label{2.5}
Let $\Omega$ be a Lipschitz domain $($bounded or unbounded$)$.
For $s\in\mathbb R$, $p,q\in (1,\infty )$ and $t\ge -1/p'$, let us {consider} the following spaces equipped with the corresponding graphic norms:
\begin{align*}
\mathfrak{H}_{p,\rm div}^{s+\frac{1}{p},t}({\Omega},\mathcal{L}_{\alpha}):=&
\Big\{({\textbf u},\pi)\in H^{s+\frac{1}{p}}_{p}({\Omega}, \mathbb{R}^{n})\times H_{p}^{s+\frac{1}{p}-1}({\Omega}):\\
&{\mathcal L}_{\alpha}({\textbf u},\pi)=\tilde{\bf f}|_\Omega,\ \tilde{\bf f}\in \widetilde H^t_p({\Omega},\mathbb{R}^{n}) \ \mbox{ and } \ {\rm{div}} \ {\bf u} = 0 \ \mbox{ in } \ {\Omega} \Big\},\\
\|({\textbf u},\pi)\|^2_{\mathfrak{H}_{p,\rm div}^{s+\frac{1}{p},t}({\Omega},\mathcal{L}_{\alpha})}
:=&\|{\textbf u}\|^2_{H^{s+\frac{1}{p}}_{p}({\Omega}, \mathbb{R}^{n})}
+\|\pi\|^2_{H_{p}^{s+\frac{1}{p}-1}({\Omega})}
+\|\tilde{\bf f}\|^2_{\widetilde H^t_p({\Omega},\mathbb{R}^{n})},\\
\mathfrak{B}_{p,q,\rm div}^{s+\frac{1}{p},t}({\Omega},\mathcal{L}_{\alpha}):=&
\Big\{({\textbf u},\pi)\in B^{s+\frac{1}{p}}_{p,q}({\Omega}, \mathbb{R}^{n})\times B_{p,q}^{s+\frac{1}{p}-1}({\Omega}):\\
&{\mathcal L}_{\alpha}({\textbf u},\pi)=\tilde{\bf f}|_\Omega,\ \tilde{\bf f}\in
\widetilde B_{p,q}^t({\Omega},\mathbb{R}^{n}) \ \mbox{ and } \ {\rm{div}} \ {\bf u} = 0 \ \mbox{ in } \ {\Omega} \Big\},\\
\|({\textbf u},\pi)\|^2_{\mathfrak{B}_{p,q,\rm div}^{s+\frac{1}{p},t}({\Omega},\mathcal{L}_{\alpha})}
:=&\|{\textbf u}\|^2_{B^{s+\frac{1}{p}}_{p,q}({\Omega}, \mathbb{R}^{n})}
+\|\pi\|^2_{B_{p,q}^{s+\frac{1}{p}-1}({\Omega})}
+\|\tilde{\bf f}\|^2_{\widetilde B^t_{p,q}({\Omega},\mathbb{R}^{n})},
\end{align*}
where ${\mathcal L}_{\alpha}({\textbf u},\pi)$ is defined in \eqref{conormal}.
\end{defi}
If $t_1>t_2$, the following continuous embeddings hold, $\mathfrak{H}_{p,\rm div}^{s+\frac{1}{p},t_1}({\Omega},\mathcal{L}_{\alpha})\hookrightarrow \mathfrak{H}_{p,\rm div}^{s+\frac{1}{p},t_2}({\Omega},\mathcal{L}_{\alpha})$, $\mathfrak{B}_{p,q,\rm div}^{s+\frac{1}{p},t_1}({\Omega},\mathcal{L}_{\alpha})\hookrightarrow \mathfrak{B}_{p,q,\rm div}^{s+\frac{1}{p},t_2}({\Omega},\mathcal{L}_{\alpha})$.

Let ${\mathcal D}_{{\rm{div}}}(\overline\Omega,{\mathbb R}^n):=\left\{{\bf v}\in {\mathcal D}(\overline\Omega,{\mathbb R}^n):{\rm{div}}\ {\bf v}=0\ \mbox{ in }\ \Omega \right\}$. Similar to \cite[Theorem 6.9]{Mikh-3}, one can prove the following assertion.
\begin{theorem}
\label{Dens}
If  $\Omega$ is a Lipschitz domain $($bounded or unbounded$)$ or $\Omega=\mathbb R^n$, $\alpha\ge 0$, $p,q\in (1,\infty )$, $s\in\mathbb R$ and $t>-\frac{1}{p'}$, then ${\mathcal D}_{{\rm{div}}}(\overline\Omega,{\mathbb R}^n)\times \mathcal D(\overline\Omega)$ is dense in $\mathfrak{H}_{p}^{s+\frac{1}{p},t}({\Omega},\mathcal{L}_{\alpha})$ and in $\mathfrak{B}_{p,q}^{s+\frac{1}{p},t}({\Omega},\mathcal{L}_{\alpha})$.
\end{theorem}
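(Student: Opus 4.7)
The proof follows the scheme of \cite[Theorem 6.9]{Mikh-3}, developed there for scalar second-order elliptic operators, and adapted to the Brinkman system to accommodate the divergence-free constraint on $\mathbf{u}$. I sketch the argument for $\mathfrak{H}_{p,\mathrm{div}}^{s+\frac{1}{p},t}(\Omega,\mathcal{L}_\alpha)$; the case $\mathfrak{B}_{p,q,\mathrm{div}}^{s+\frac{1}{p},t}(\Omega,\mathcal{L}_\alpha)$ is handled identically once the mollification and extension estimates are invoked in their Besov versions.

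Given $(\mathbf{u},\pi)\in \mathfrak{H}_{p,\mathrm{div}}^{s+\frac{1}{p},t}(\Omega,\mathcal{L}_\alpha)$ with associated $\tilde{\mathbf{f}}\in \widetilde{H}^t_p(\Omega,\mathbb{R}^n)$ satisfying $\mathcal{L}_\alpha(\mathbf{u},\pi)=\tilde{\mathbf{f}}|_\Omega$, I fix a bounded linear extension operator $E\colon H_p^r(\Omega)\to H_p^r(\mathbb{R}^n)$ (available since $\Omega$ is Lipschitz), set $\mathbf{U}:=E\mathbf{u}$ and $\Pi:=E\pi$, and regularize with a standard mollifier $\psi_\epsilon\in \mathcal{D}(\mathbb{R}^n)$: $\mathbf{U}_\epsilon:=\mathbf{U}\ast\psi_\epsilon$, $\Pi_\epsilon:=\Pi\ast\psi_\epsilon$. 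These are smooth on $\mathbb{R}^n$, but in general are neither divergence-free on $\Omega$ nor compatible with the Brinkman equation there. To restore the solenoidal condition, I subtract a Bogovskii correction on $\Omega$: $\mathbf{u}_\epsilon:=\mathbf{U}_\epsilon|_\Omega-\mathcal{B}(\mathrm{div}\,\mathbf{U}_\epsilon|_\Omega)$, where $\mathcal{B}$ denotes the classical Bogovskii operator solving $\mathrm{div}\,\mathbf{w}=g$ on the Lipschitz domain $\Omega$ (acting on the mean-zero subspace). It is continuous from the mean-zero subspace of $H_p^{s+\frac{1}{p}-1}(\Omega)$ into $H_p^{s+\frac{1}{p}}(\Omega,\mathbb{R}^n)$ and preserves $C^\infty(\overline{\Omega})$. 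Setting $\pi_\epsilon:=\Pi_\epsilon|_\Omega$, the pair $(\mathbf{u}_\epsilon,\pi_\epsilon)$ lies in $\mathcal{D}_{\mathrm{div}}(\overline{\Omega},\mathbb{R}^n)\times \mathcal{D}(\overline{\Omega})$; since $\mathrm{div}\,\mathbf{U}_\epsilon|_\Omega\to \mathrm{div}\,\mathbf{u}=0$ in $H_p^{s+\frac{1}{p}-1}(\Omega)$, the correction term vanishes in the limit, yielding $\mathbf{u}_\epsilon\to\mathbf{u}$ in $H_p^{s+\frac{1}{p}}(\Omega,\mathbb{R}^n)$ and $\pi_\epsilon\to\pi$ in $H_p^{s+\frac{1}{p}-1}(\Omega)$.

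The main technical obstacle is verifying that the zero extension $\tilde{\mathbf{f}}_\epsilon$ of $\mathcal{L}_\alpha(\mathbf{u}_\epsilon,\pi_\epsilon)$ converges to $\tilde{\mathbf{f}}$ in $\widetilde{H}_p^t(\Omega,\mathbb{R}^n)$. The hypothesis $t>-\tfrac{1}{p'}$ is essential at this stage, since it guarantees that zero extension is a bounded map from $H_p^t(\Omega)$ into $\widetilde{H}_p^t(\Omega)\subset H_p^t(\mathbb{R}^n)$. Because $\mathcal{L}_\alpha$ commutes with convolution, $\mathcal{L}_\alpha(\mathbf{U}_\epsilon,\Pi_\epsilon)=\mathcal{L}_\alpha(\mathbf{U},\Pi)\ast\psi_\epsilon$, whose restriction to $\Omega$ converges to $\mathcal{L}_\alpha(\mathbf{u},\pi)=\tilde{\mathbf{f}}|_\Omega$ in $H_p^t(\Omega)$; the Bogovskii contribution to $\mathcal{L}_\alpha(\mathbf{u}_\epsilon,\pi_\epsilon)$ is absorbed by the smoothness of $\mathcal{B}$ and the fact that its data tend to zero. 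If difficulties arise near the critical exponent $t=-\tfrac{1}{p'}$, I would instead localize via a finite atlas of $\partial\Omega$ and use boundary-shifted mollifiers inside each chart (in the Verchota--Kenig spirit), which gives sharper control on the supports of the approximating data relative to $\partial\Omega$.
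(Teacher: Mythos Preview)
The paper gives no proof of this theorem: it simply asserts that one argues ``similar to \cite[Theorem 6.9]{Mikh-3}''. Since you invoke exactly the same reference, your proposal matches the paper's treatment at the level the paper actually commits to.

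That said, the explicit sketch you add on top of the citation has real gaps. The assertion that $t>-\tfrac{1}{p'}$ alone guarantees that zero extension is bounded from $H_p^t(\Omega)$ into $\widetilde H_p^t(\Omega)$ is false: this holds only for $-\tfrac{1}{p'}<t<\tfrac{1}{p}$ (cf.\ the construction of $\widetilde E_\pm$ just before Definition~\ref{Dce}); once $t\ge \tfrac{1}{p}$ the zero extension of a generic $C^\infty(\overline\Omega)$ function fails to lie in $H_p^t(\mathbb R^n)$ because of the boundary jump, so your convergence argument for $\tilde{\mathbf f}_\epsilon$ breaks down there. Second, the Bogovskii operator is only standardly available on bounded Lipschitz domains, whereas the statement explicitly allows unbounded $\Omega$ and $\Omega=\mathbb R^n$; and even on a bounded domain, $\operatorname{div}\mathbf U_\epsilon|_\Omega$ need not have mean zero (the extension $E\mathbf u$ is not solenoidal outside $\Omega$), so you are not in the admissible data class for $\mathcal B$ without a further correction. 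Finally, on an unbounded $\Omega$ your mollified pair is not compactly supported, so a cut-off step is still missing before you can place $(\mathbf u_\epsilon,\pi_\epsilon)$ in $\mathcal D_{\rm div}(\overline\Omega,\mathbb R^n)\times\mathcal D(\overline\Omega)$. The argument in \cite{Mikh-3} proceeds instead by first approximating $\tilde{\mathbf f}$ in $\widetilde H_p^t(\Omega,\mathbb R^n)$ by test functions and then lifting via a parametrix, which sidesteps all three issues; pointing to that reference, as the paper does, is therefore the right move.
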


Let $p,q\in (1,\infty)$.
\comment{
Let {$\mathring E_\pm$} be the operator of extension of functions from $H_p^t(\Omega _{\pm })$  by zero on $\mathbb R^n\setminus \Omega_{\pm }$.
Following the proof of Theorem 2.16 in \cite{Mikh}, let us define the operator $\widetilde E_\pm$ on $H_p^t(\Omega_{\pm})$, for $0\le t<\frac{1}{p}$ as
$
\widetilde E_\pm:=\mathring E_\pm ,
$
and for $-\frac{1}{p'}<t<0$ as
\begin{align}
\label{ext-2}
\langle\widetilde E_\pm h,v\rangle_{\Omega _\pm}:=\langle h, \widetilde E_\pm v\rangle_{\Omega _\pm}={ \langle h, \mathring E_\pm v\rangle_{\Omega _\pm}},\
h\in H_p^t(\Omega _{\pm }),\, v\in H_{p'}^{-t}(\Omega _{\pm }).
\end{align}
Then, evidently ${\widetilde E}_\pm: H_p^t(\Omega_\pm)\to \widetilde H_p^t(\Omega_\pm)$, $-1/p'<t<1/p$, is a bounded linear {extension} operator.
} 
Let {$\mathring E_\pm$} be the operator of extension of functions defined on $\Omega _{\pm }$ by zero on $\mathbb R^n\setminus \Omega_{\pm }$.
Following the proof of Theorem 2.16 in \cite{Mikh}, let us define the operator $\widetilde E_\pm$ on $H_{p}^t(\Omega_{\pm})$ and $B_{p,q}^t(\Omega_{\pm})$ {as
$
\widetilde E_\pm:=\mathring E_\pm
$
for $0\le t<\frac{1}{p}$, and  as
\begin{align*}
\langle\widetilde E_\pm h,v\rangle_{\Omega _\pm}:=\langle h, \widetilde E_\pm v\rangle_{\Omega _\pm}={ \langle h, \mathring E_\pm v\rangle_{\Omega _\pm}},\quad \mbox{when }-\frac{1}{p'}<t<0,
\end{align*}
{for all \
$h\in H_{p}^t(\Omega _{\pm }),\, v\in H_{p'}^{-t}(\Omega _{\pm })$, or for all
$h\in B_{p,q}^t(\Omega _{\pm }),\, v\in B_{p',q'}^{-t}(\Omega _{\pm })$, respectively.}
Then, {for $-1/p'<t<1/p$, evidently
$$
{\widetilde E}_\pm: H_{p}^t(\Omega_\pm)\to \widetilde H_{p}^t(\Omega_\pm),\quad
{\widetilde E}_\pm: B_{p,q}^t(\Omega_\pm)\to \widetilde B_{p,q}^t(\Omega_\pm)
$$
are bounded linear {extension} operators.}
Similar definition and properties hold also for vector fields.

Analogously to the corresponding definition for Petrovskii-elliptic systems in \cite[Definition 3.6]{Mikh}, we can introduce an operator $\tilde {\mathcal L}_{\alpha}$ as follows.
\begin{defi}
\label{Dce}
Let $\Omega$ be a Lipschitz domain $($bounded or unbounded$)$, $p,q\in (1,\infty)$, $s\in\mathbb R$, $t\ge -1/p'$. The operator $\tilde {\mathcal L}_{\alpha}$ mapping
\begin{itemize}
\item[$(i)$]
functions $({\textbf u},\pi)\in\mathfrak{H}_{p,\rm div}^{s+\frac{1}{p},t}({\Omega},\mathcal{L}_{\alpha})$ to the extension of the distribution ${\mathcal L}_{\alpha}({\textbf u},\pi)\in H^t_p({\Omega},\mathbb{R}^{n})$ to $\widetilde H^t_p({\Omega},\mathbb{R}^{n})$
\end{itemize}
or
\begin{itemize}
\item[$(ii)$]
functions $({\textbf u},\pi)\in \mathfrak{B}_{p,q,\rm div}^{s+\frac{1}{p},t}(\Omega;{\mathcal L}_{\alpha})$ to the extension of the distribution ${\mathcal L}_{\alpha}({\textbf u},\pi)\in B^t_{p,q}({\Omega},\mathbb{R}^{n})$ to $\widetilde B^t_{p,q}({\Omega},\mathbb{R}^{n})$,
\end{itemize}
will be called {\em the canonical extension} of the operator ${\mathcal L}_{\alpha}$.
\end{defi}

\begin{remark}
Similar to the paragraph following Definition 3.3 in \cite{Mikh}, one can prove that the canonical extensions mentioned in Definition~\ref{Dce} {exist and} are unique.
If  $p,q\in (1,\infty)$, $s\in\mathbb R$, $t\ge -1/p'$, then
\begin{align*}
&\|\tilde {\mathcal L}_{\alpha}({\textbf u},\pi)\|_{\widetilde H^t_p({\Omega},\mathbb{R}^{n})}\le
\|({\textbf u},\pi)\|_{\mathfrak{H}_{p,\rm div}^{s+\frac{1}{p},t}({\Omega},\mathcal{L}_{\alpha})}, \quad\|\tilde {\mathcal L}_{\alpha}({\textbf u},\pi)\|_{\widetilde B^t_{p,q}({\Omega},\mathbb{R}^{n})}\le
\|({\textbf u},\pi)\|_{\mathfrak{B}_{p,q,\rm div}^{s+\frac{1}{p},t}({\Omega},\mathcal{L}_{\alpha})}
\end{align*}
by definition of the spaces
$\mathfrak{H}_{p,\rm div}^{s+\frac{1}{p},t}({\Omega},\mathcal{L}_{\alpha})$
and
$\mathfrak{B}_{p,q,\rm div}^{s+\frac{1}{p},t}({\Omega},\mathcal{L}_{\alpha})$. Hence the linear operators
$\tilde {\mathcal L}_{\alpha}: \mathfrak{H}_{p,\rm div}^{s+\frac{1}{p},t}({\Omega},\mathcal{L}_{\alpha})\to \widetilde H^t_p({\Omega},\mathbb{R}^{n})$
and
$\tilde {\mathcal L}_{\alpha}: \mathfrak{B}_{p,q,\rm div}^{s+\frac{1}{p},t}({\Omega},\mathcal{L}_{\alpha})\to \widetilde B^t_{p,q}({\Omega},\mathbb{R}^{n})$
are continuous.
Moreover, if $-1/p'<t<1/p$, and $\Omega$ is a Lipschitz domain $($bounded or unbounded$)$, then we have the representation $\tilde {\mathcal L}_{\alpha}:=\widetilde E^+{\mathcal L}_{\alpha}$, {or $\tilde {\mathcal L}_{\alpha}:=\widetilde E^-{\mathcal L}_{\alpha}$, respectively}, cf. \cite[Remark 3.7]{Mikh}.
\end{remark}

Formula \eqref{special-case-1} suggests the following definition of the {\em canonical} conormal derivative in the setting of Besov spaces, cf.,
\cite[Lemma 3.2]{Co},
\cite[Lemma 2.2]{K-L-W1},
\cite[Definition 3.8, Theorem 3.9]{Mikh}, \cite[Definition 6.5, Theorem 6.6]{Mikh-3},
\cite[Proposition 10.2.1]{M-W}).
}
\begin{defi}
\label{lem 1.6D}
{Let} $\alpha \ge 0$, $s\in (0,1)$, $p,q\in (1,\infty)$.
Then the {\em canonical} conormal derivative {operators $\mathbf t_{\alpha}^\pm$ are} defined on any
$({\textbf u},\pi )\in \mathfrak{H}_{p,\rm div}^{s+\frac{1}{p},-\frac{1}{p'}}({\Omega_\pm},\mathcal {L}_{\alpha})$, or
$({\textbf u},\pi )\in \mathfrak{B}_{p,q,\rm div}^{s+\frac{1}{p},-\frac{1}{p'}}({\Omega_\pm},\mathcal {L}_{\alpha})$,
in the weak sense, by the formula
\begin{align}
\pm\langle \mathbf t_{\alpha}^\pm({\bf u},\pi),\boldsymbol \varphi \rangle_{\partial {\Omega_\pm}}:=2\left\langle {\widetilde E_\pm}\mathbb{E}(\textbf{u}),\mathbb{E}({\gamma}^{-1}_\pm\boldsymbol \varphi)\right\rangle_{{\Omega_\pm}}\!+\!\alpha \langle {\widetilde E}_\pm{\textbf u},{\gamma}^{-1}_\pm\boldsymbol \varphi \rangle _{{\Omega_\pm}}\nonumber\\
-\!\left\langle {\widetilde E}_\pm\pi,{\rm{div}}({\gamma}^{-1}_\pm\boldsymbol \varphi)\right\rangle _{{\Omega_\pm}}\!
+\!\langle \tilde{\mathcal L}_{\alpha}({\textbf u},\pi),{\gamma}^{-1}_\pm\boldsymbol \varphi \rangle_{{\Omega_\pm}},
\label{conormal-can-def}\\
\forall \ \boldsymbol \varphi \in B_{p',p'}^{1-s}(\partial {\Omega},\mathbb{R}^{n}),\ \mbox{ or }\
\forall \ \boldsymbol \varphi \in B_{p',q'}^{1-s}(\partial {\Omega},\mathbb{R}^{n}),\ \mbox{respectively}.\nonumber
\end{align}
\end{defi}
Note that the {\em canonical} conormal derivative operators introduced in Definition~\ref{lem 1.6D} are different from the {\em generalized} conormal derivative operator, cf. \cite[Lemma 2.2]{K-L-W4},
\cite[Definition 3.1, Theorem 3.2]{Mikh}, \cite[Definition 5.2, Theorem 5.3]{Mikh-3}.
{Similar to \cite[Theorem 3.9]{Mikh}, one can prove the following assertion.}
\begin{lemma}
\label{lem 1.6}
Under the hypothesis of Definition \ref{lem 1.6D}, the {canonical} conormal derivative operators
\begin{align*}
\mathbf t_{\alpha}^\pm:\mathfrak{H}_{p,\rm div}^{s+\frac{1}{p},-\frac{1}{p'}}(\Omega_\pm,\mathcal {L}_{\alpha})\to B^{s-1}_{p,p}(\partial {\Omega},\mathbb{R}^{n}), \
\mathbf t_{\alpha}^\pm:\mathfrak{B}_{p,q,\rm div}^{s+\frac{1}{p},-\frac{1}{p'}}(\Omega_\pm,\mathcal {L}_{\alpha})\to B^{s-1}_{p,q}(\partial {\Omega},\mathbb{R}^{n}),
\end{align*}
are linear, bounded and independent of the choice of the operators ${\gamma}^{-1}_\pm$. In addition, the following first Green identity holds
\begin{align}
\label{Green formula}
\pm\langle \mathbf t_{\alpha}^\pm({\bf u},\pi),{\gamma}_+{\bf w} \rangle_{\partial {\Omega}}
=2\left\langle \widetilde E_\pm\mathbb{E}({\bf u}),\mathbb{E}({\textbf w})\right\rangle _{\Omega_\pm}
+\alpha \left\langle\widetilde E_\pm{\bf u},{\bf w}\right\rangle _{\Omega_\pm}
-\left\langle\widetilde E_\pm\pi ,{\rm{div}}\ {\bf w}\right\rangle _{\Omega_\pm}
+\left\langle \tilde{\mathcal L}_{\alpha}({\bf u},\pi ),{\bf w}\right\rangle _{\Omega_\pm}
\end{align}
for all $({\textbf u},\pi)\in \mathfrak{H}_{p,\rm div}^{s+\frac{1}{p},-\frac{1}{p'}}(\Omega_\pm,\mathcal {L}_{\alpha})$,
${\textbf w} \in H_{p'}^{1+\frac{1}{p'}-s}(\Omega_\pm, \mathbb{R}^{n})$
and all\\
$({\textbf u},\pi)\in \mathfrak{B}_{p,q,\rm div}^{s+\frac{1}{p},-\frac{1}{p'}}(\Omega_\pm,\mathcal {L}_{\alpha})$,
${\textbf w} \in B_{p',q'}^{1+\frac{1}{p'}-s}(\Omega_\pm, \mathbb{R}^{n})$,
and  the following second Green identity holds
\begin{align}
\label{Green-formula2}
\pm\left(\langle \mathbf t_{\alpha}^\pm({\bf u},\pi),{\gamma}_+{\bf v} \rangle_{\partial {\Omega}}
-\langle \mathbf t_{\alpha}^\pm({\bf v},q),{\gamma}_+{\bf u} \rangle_{\partial {\Omega}}\right)
=\left\langle \tilde{\mathcal L}_{\alpha}({\bf u},\pi ),{\bf v}\right\rangle _{\Omega_\pm}
-\left\langle \tilde{\mathcal L}_{\alpha}({\bf v},q ),{\bf u}\right\rangle _{\Omega_\pm}
\end{align}
for all $({\textbf u},\pi)\in \mathfrak{H}_{p,\rm div}^{s+\frac{1}{p},-\frac{1}{p'}}(\Omega_\pm,\mathcal {L}_{\alpha})$,
$({\textbf v},q) \in \mathfrak{H}_{p',\rm div}^{1+\frac{1}{p'}-s,-\frac{1}{p}}(\Omega_\pm, \mathbb{R}^{n})$
and all
$({\textbf u},\pi)\in \mathfrak{B}_{p,q,\rm div}^{s+\frac{1}{p},-\frac{1}{p'}}(\Omega_\pm,\mathcal {L}_{\alpha})$,
$({\textbf v},q) \in \mathfrak{B}_{p',q'}^{1+\frac{1}{p'}-s,-\frac{1}{p}}(\Omega_\pm, \mathbb{R}^{n})$.
\end{lemma}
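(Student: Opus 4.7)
My proof would closely follow the pattern of \cite[Theorem 3.9]{Mikh}, with four logical stages.

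\emph{Boundedness and linearity.} Linearity is immediate from the linearity of each term on the right-hand side of \eqref{conormal-can-def}. For boundedness, I would estimate each of the four terms on the right-hand side by a duality pairing. With $s\in(0,1)$, one has $s-\frac{1}{p'}\in(-\frac{1}{p'},\frac{1}{p})$, so the extension operator $\widetilde E_\pm$ maps $H^{s-\frac{1}{p'}}_p(\Omega_\pm)$ boundedly into $\widetilde H^{s-\frac{1}{p'}}_p(\Omega_\pm)$, and similarly for the Besov case. For $\mathbf u\in H^{s+\frac{1}{p}}_p(\Omega_\pm,\mathbb R^n)$ and $\pi\in H^{s-\frac{1}{p'}}_p(\Omega_\pm)$, applying the trace Lemma~\ref{trace-lemma-Besov} gives $\gamma_\pm^{-1}\boldsymbol\varphi\in H^{1+\frac{1}{p'}-s}_{p'}(\Omega_\pm,\mathbb R^n)$, so that $\mathbb E(\gamma_\pm^{-1}\boldsymbol\varphi),\;\mathrm{div}(\gamma_\pm^{-1}\boldsymbol\varphi)\in H^{\frac{1}{p'}-s}_{p'}(\Omega_\pm)$. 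The duality \eqref{duality-spaces} then bounds each of the first three terms; the last term is bounded using $\tilde{\mathcal L}_\alpha(\mathbf u,\pi)\in\widetilde H^{-\frac{1}{p'}}_p(\Omega_\pm,\mathbb R^n)$ paired against $\gamma_\pm^{-1}\boldsymbol\varphi$, together with the continuous embedding $H^{1+\frac{1}{p'}-s}_{p'}\hookrightarrow H^{\frac{1}{p'}}_{p'}$ since $s\le 1$. The Besov case is analogous.

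\emph{Independence of $\gamma_\pm^{-1}$.} For two right inverses $\gamma_\pm^{-1,(1)}$, $\gamma_\pm^{-1,(2)}$, I would set $\mathbf w:=\gamma_\pm^{-1,(1)}\boldsymbol\varphi-\gamma_\pm^{-1,(2)}\boldsymbol\varphi$, which has zero Gagliardo trace and hence lies in $\widetilde H^{1+\frac{1}{p'}-s}_{p'}(\Omega_\pm,\mathbb R^n)$, namely the closure of $\mathcal D(\Omega_\pm,\mathbb R^n)$ in that space. For $\mathbf w\in\mathcal D(\Omega_\pm,\mathbb R^n)$ the extension $\widetilde E_\pm\mathbf w$ is simply the zero extension in $\mathcal D(\mathbb R^n,\mathbb R^n)$, and $\tilde{\mathcal L}_\alpha(\mathbf u,\pi)$ acts on $\mathbf w$ as the distribution $\mathcal L_\alpha(\mathbf u,\pi)$. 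A direct integration by parts on test functions yields
\begin{equation*}
2\langle\mathbb E(\mathbf u),\mathbb E(\mathbf w)\rangle_{\Omega_\pm}+\alpha\langle\mathbf u,\mathbf w\rangle_{\Omega_\pm}-\langle\pi,\mathrm{div}\,\mathbf w\rangle_{\Omega_\pm}+\langle\mathcal L_\alpha(\mathbf u,\pi),\mathbf w\rangle_{\Omega_\pm}=0.
\end{equation*}
Passing to the limit via the continuity just established, the right-hand side of \eqref{conormal-can-def} evaluates to zero on every $\mathbf w\in\widetilde H^{1+\frac{1}{p'}-s}_{p'}(\Omega_\pm,\mathbb R^n)$, giving the independence.

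\emph{First Green identity.} Given $\mathbf w\in H^{1+\frac{1}{p'}-s}_{p'}(\Omega_\pm,\mathbb R^n)$, set $\boldsymbol\varphi:=\gamma_+\mathbf w\in B^{1-s}_{p',p'}(\partial\Omega,\mathbb R^n)$ by Lemma~\ref{trace-lemma-Besov}. Then $\mathbf w-\gamma_\pm^{-1}\boldsymbol\varphi$ has zero trace, so by the independence proved above we may replace $\gamma_\pm^{-1}\boldsymbol\varphi$ by $\mathbf w$ in \eqref{conormal-can-def}, which is precisely \eqref{Green formula}. The Besov analogue is identical.

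\emph{Second Green identity.} Applying \eqref{Green formula} with test field $\mathbf v$ and with test field $\mathbf u$ (against $(\mathbf v,q)$) and subtracting, the symmetric bilinear forms $2\langle\widetilde E_\pm\mathbb E(\mathbf u),\mathbb E(\mathbf v)\rangle_{\Omega_\pm}$ and $\alpha\langle\widetilde E_\pm\mathbf u,\mathbf v\rangle_{\Omega_\pm}$ cancel, while the two pressure terms $\langle\widetilde E_\pm\pi,\mathrm{div}\,\mathbf v\rangle_{\Omega_\pm}$ and $\langle\widetilde E_\pm q,\mathrm{div}\,\mathbf u\rangle_{\Omega_\pm}$ vanish because $\mathrm{div}\,\mathbf u=\mathrm{div}\,\mathbf v=0$. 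What remains is exactly \eqref{Green-formula2}.

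The main technical obstacle is the density and approximation argument in the independence step, which requires carefully tracking that the condition $s\in(0,1)$ puts $s-\frac{1}{p'}$ in the range where $\widetilde E_\pm$ is bounded and that $1+\frac{1}{p'}-s>\frac{1}{p'}$, so that $\gamma_+\mathbf w$ is well defined and the closure of $\mathcal D(\Omega_\pm,\mathbb R^n)$ coincides with the kernel of the trace; everything else reduces to bookkeeping with the dualities \eqref{duality-spaces} and the trace Lemma~\ref{trace-lemma-Besov}.
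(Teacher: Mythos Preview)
Your proposal is correct and takes essentially the same approach as the paper: the paper does not give a detailed proof but simply states that the assertion can be proved ``similar to \cite[Theorem 3.9]{Mikh}'', and you have faithfully carried out that program with the four standard stages (boundedness, independence of the right inverse via the zero-trace density argument, the first Green identity by replacing $\gamma_\pm^{-1}\boldsymbol\varphi$ with an arbitrary $\mathbf w$ having the same trace, and the second Green identity by subtraction). Your bookkeeping on the exponent ranges is accurate, in particular the observation that $s-\tfrac{1}{p'}\in(-\tfrac{1}{p'},\tfrac{1}{p})$ so that $\widetilde E_\pm$ is bounded, and that $1+\tfrac{1}{p'}-s\in(\tfrac{1}{p'},1+\tfrac{1}{p'})$ so that the kernel of the trace coincides with the closure of $\mathcal D(\Omega_\pm,\mathbb R^n)$.
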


\comment{
Consider the Dirichlet problem for the {homogeneous} Stokes system,
\begin{align}
\label{Stokes-homogeneous1}
&\triangle {\bf u}-\nabla \pi ={\bf 0},\ \ {\rm{div}}\ {\bf u}= 0 \ \mbox{ in } \ {\Omega},\\
\label{DirCond-Stokes}
& {\bf u}^+_{nt}={\bf h}_0 \ \mbox{ on } \ {\partial\Omega}.
\end{align}
Then we have the following assertion (cf. \cite[Theorems 9.2.2 and 9.2.5]{M-W} and \cite[Theorem 7.1]{M-T}).
\begin{lemma}
\label{M-H-D-1}
Let ${\Omega}\subset \mathbb{R}^{n}$ be a bounded Lipschitz domain and $p\in (1,\infty )$.
\begin{itemize}
\item[$(i)$]
Let $ \mathbf h_0\in L_{p;\nu}(\partial {\Omega},\mathbb{R}^{n})$. Then there exists $\varepsilon=\varepsilon(\partial\Omega)>0$ such that when
$
2-\varepsilon<p<\begin{cases}\infty\ \mbox{ if }\ n=2,3,\\
\frac{2(n-1)}{n-3}+\varepsilon \ \mbox{ if }\ n\ge 4
\end{cases},
$
the interior Dirichlet problem \eqref{Stokes-homogeneous1}-\eqref{DirCond-Stokes} has a solution such that
${M}(\textbf{u})\in L_p(\partial {\Omega})$, which is unique up to an arbitrary additive constant in $\pi$.
Moreover, there exists a constant $C>0$ such that
$
\|M({\bf u})\|_{L_p(\partial {\Omega})}\le C\| \mathbf h_0\|_{L_p(\partial {\Omega},\mathbb{R}^{n})}.
$
\item[$(ii)$]
Let $ \mathbf h_0\in H_{p;\nu}^1(\partial {\Omega},\mathbb{R}^{n})$. Then there exists $\varepsilon=\varepsilon(\partial\Omega)>0$ such that when
$
 \frac{2(n-1)}{n+1}-\varepsilon<p<2+\varepsilon,
$
the interior Dirichlet problem \eqref{Stokes-homogeneous1}-\eqref{DirCond-Stokes} has a solution such that
${M}(\nabla\textbf{u}),{M}(\pi)\in L_p(\partial {\Omega})$, which is unique up to an arbitrary additive constant in $\pi$.
Moreover, there exists a constant $C>0$ such that
\begin{equation*}
\|M({\bf u})\|_{L_p(\partial {\Omega})}+\|M(\nabla{\bf u})\|_{L_p(\partial {\Omega})}
+\|M(\pi)\|_{L_p(\partial {\Omega})/\mathbb R}\le C\| \mathbf h_0\|_{H^1_p(\partial {\Omega},\mathbb{R}^{n})}.
\end{equation*}
\item[$(iii)$] Let $0\le s\le 1$ and $ \mathbf h_0\in H_{p;\nu}^s(\partial {\Omega},\mathbb{R}^{n})$. Then there exists $\varepsilon=\varepsilon(\partial\Omega)>0$ such that,
when $2-\varepsilon<p<2+\varepsilon$, for the solution from item $($i$)$ we have $\textbf{u}\in B_{p,p^*}^{s+\frac{1}{p}}({\Omega},\mathbb{R}^{n})$ and
$
\|{\bf u}\|_{B_{p,p^*}^{s+\frac{1}{p}}({\Omega},\mathbb{R}^{n})}\le C\| \mathbf h_0\|_{H^s_p(\partial {\Omega},\mathbb{R}^{n})},
$
where  $p^*:=\max \{p,2\}.$
\end{itemize}
\end{lemma}
}
\begin{remark}
\label{CSD}
Similar to \cite[Remark 2.6]{K-L-M-W}, we note that by exploiting arguments analogous to those of the proof of Theorem 3.10 and the paragraph following it in \cite{Mikh}, one can see that the canonical conormal derivatives on $\partial\Omega$ can be equivalently defined as
$
{\bf t}_{\alpha}^{\pm}({\bf u},\pi)
=r_{_{\partial\Omega}}{\bf t}_{\alpha}^{\prime\pm}({\bf u},\pi).
$
Here ${\bf t}_{\alpha}^{\prime\pm}({\bf u},\pi)$ is defined by the dual form like \eqref{conormal-can-def} but only on Lipschitz subsets $\Omega'_{\pm}\subset\Omega_{\pm}$ such that $\partial \Omega \subset \partial \Omega _{\pm }'$ and closure of $\Omega _{\pm }\setminus \overline{\Omega _{\pm }'}$ coincides with $\Omega_\pm \setminus \Omega _{\pm }'$ (i.e.,  $\Omega'_{\pm}$ are some layers near the boundary $\partial\Omega$).
Moreover, such a definition is well applicable to the functions
$({\textbf u},\pi)$ from $\mathfrak{H}_{p,\rm div}^{s+\frac{1}{p},-\frac{1}{p'}}(\Omega'_\pm,\mathcal {L}_{\alpha})$ or $\mathfrak{B}_{p,q,\rm div}^{s+\frac{1}{p},-\frac{1}{p'}}(\Omega'_\pm,\mathcal {L}_{\alpha})$ that are not obliged to belong to $\mathfrak{H}_{p,\rm div}^{s+\frac{1}{p},-\frac{1}{p'}}(\Omega_\pm,\mathcal {L}_{\alpha})$ or
$\mathfrak{B}_{p,q,\rm div}^{s+\frac{1}{p},-\frac{1}{p'}}(\Omega_\pm,\mathcal {L}_{\alpha})$, respectively.
It is particularly useful for the functions $({\textbf u},\pi)$ that belong to $\mathfrak{H}_{p,\rm div}^{s+\frac{1}{p},-\frac{1}{p'}}(\overline{\Omega}_-,\mathcal {L}_{\alpha})$
or
$\mathfrak{B}_{p,q,\rm div}^{s+\frac{1}{p},-\frac{1}{p'}}(\overline{\Omega}_-,\mathcal {L}_{\alpha})$
 only locally.
\end{remark}

Now we prove the equivalence between canonical and non-tangential conormal derivatives (as well as classical conormal derivative, when appropriate).
\begin{theorem}
\label{2.13}
\label{trace-equivalence}
{Let $n\ge 2${,}
$\alpha \ge 0$, and $p,q\in (1,\infty)$.}
\begin{itemize}
\item[$(i)$] Let $s>1$ and
$({\bf u},\pi)\in {B}_{p,q,\rm div}^{s+\frac{1}{p}}({\Omega_\pm},\mathbb{R}^{n})\times {B}_{p,q}^{s-1+\frac{1}{p}}({\Omega_\pm})$ 
or\\ 
$({\bf u},\pi)\in {H}_{p,\rm div}^{s+\frac{1}{p}}({\Omega_\pm},\mathbb{R}^{n})\times {H}_{q}^{s-1+\frac{1}{p}}({\Omega_\pm})$.
Then the classical conormal derivative $\mathbf t^{{\rm c}\pm}({\bf u},\pi)$ and the canonical conormal derivative $\mathbf t_{\alpha}^\pm({\bf u},\pi)$ are well defined and
$\mathbf t^\pm_\alpha({\bf u},\pi)=\mathbf t^{{\rm c}\pm}({\bf u},\pi)\in L_{p}(\partial {\Omega}, \mathbb{R}^{n})$.

If, moreover, the non-tangential trace of the stress tensor,
$\boldsymbol{\sigma}^\pm_{\rm{nt}}({\bf u},\pi)$, exists a.e. on $\partial\Omega$, then the non-tangential conormal derivative, defined by \eqref{2.37nt}, also exists a.e. on $\partial\Omega$ and $\mathbf t_{\rm{nt}}^\pm({\bf u},\pi)=\mathbf t^\pm_\alpha({\bf u},\pi)=\mathbf t^{{\rm c}\pm}({\bf u},\pi)\in L_{p}(\partial {\Omega}, \mathbb{R}^{n})$.
\item[$(ii)$]
Let $0<s\le 1$, $({\bf u},\pi)\in \mathfrak{B}_{p,q,\rm div}^{s+\frac{1}{p},t}({ \Omega_\pm},\mathcal{L}_{\alpha})$
{or $({\bf u},\pi)\in \mathfrak{H}_{p,\rm div}^{s+\frac{1}{p},t}({ \Omega_\pm},\mathcal{L}_{\alpha})$}, for some $t>-\frac{1}{p'}$. Let also assume that the non-tangential maximal function $M(\boldsymbol{\sigma}({\bf u},\pi))$ and the non-tangential trace of the stress tensor, $\boldsymbol{\sigma}^\pm_{\rm{nt}}({\bf u},\pi)$, {exist and are finite a.e.} on $\partial\Omega$ and belong to the space $L_{p}(\partial {\Omega}, \mathbb{R}^{n\times n})$.
Then $\mathbf t^\pm_\alpha({\bf u},\pi)=\mathbf t_{\rm{nt}}^\pm({\bf u},\pi)\in L_{p}(\partial {\Omega}, \mathbb{R}^{n})$.
\end{itemize}
\end{theorem}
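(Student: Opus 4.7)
The plan is to reduce both items to the classical divergence-theorem identity on sufficiently smooth sets and then to match it against the definition of $\mathbf t_\alpha^\pm$ through the weak first Green identity \eqref{Green formula}.

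For item~(i), the hypothesis $s>1$ makes $\gamma_\pm\boldsymbol\sigma(\mathbf u,\pi)$ an $L_p(\partial\Omega,\mathbb R^{n\times n})$ function: Lemma~\ref{trace-lemma-Besov} (together with standard Besov/Sobolev embeddings when $s-1\ge 1$) applied to $\pi\in B_{p,q}^{s-1+\frac{1}{p}}(\Omega_\pm)$ and $\nabla\mathbf u\in B_{p,q}^{s-1+\frac{1}{p}}(\Omega_\pm,\mathbb R^{n\times n})$ places their traces in $B_{p,q}^{s-1}(\partial\Omega)\hookrightarrow L_p(\partial\Omega)$, and the Bessel-potential case is analogous. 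Hence $\mathbf t^{{\rm c}\pm}(\mathbf u,\pi)=(\gamma_\pm\boldsymbol\sigma(\mathbf u,\pi))\boldsymbol\nu\in L_p(\partial\Omega,\mathbb R^n)$. For a smooth test field $\boldsymbol\varphi$, the classical identity \eqref{special-case-1} holds and its right-hand side is precisely the expression defining $\mathbf t_\alpha^\pm$ in \eqref{conormal-can-def}, so $\mathbf t_\alpha^\pm(\mathbf u,\pi)=\mathbf t^{{\rm c}\pm}(\mathbf u,\pi)$ by density of such test fields. The final assertion of item~(i) then follows from Theorem~\ref{trace-equivalence-L}(i), which identifies the Gagliardo and non-tangential traces whenever the latter exist a.e.

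For item~(ii) I would deploy the Lipschitz approximation of Lemma~\ref{2.13D} (and its outer analogue, together with Remark~\ref{CSD} for a boundary layer in $\Omega_-$). On each $C^\infty$ domain $\Omega_j^\pm$ the classical Green identity
\begin{align*}
\pm\!\int_{\partial\Omega_j}\!\bigl(\boldsymbol\sigma(\mathbf u,\pi)\boldsymbol\nu^{(j)}\bigr)\!\cdot\!\mathbf w\,d\sigma_j
=2\langle\mathbb E(\mathbf u),\mathbb E(\mathbf w)\rangle_{\Omega_j^\pm}+\alpha\langle\mathbf u,\mathbf w\rangle_{\Omega_j^\pm}-\langle\pi,{\rm div}\,\mathbf w\rangle_{\Omega_j^\pm}+\langle\mathcal L_\alpha(\mathbf u,\pi),\mathbf w\rangle_{\Omega_j^\pm}
\end{align*}
holds for a suitable extension $\mathbf w\in B_{p',q'}^{1+\frac{1}{p'}-s}(\Omega_\pm,\mathbb R^n)$ of a given $\boldsymbol\varphi\in B_{p',q'}^{1-s}(\partial\Omega,\mathbb R^n)$. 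Letting $j\to\infty$, the right-hand side tends to the corresponding volume expression on $\Omega_\pm$ by the $\mathfrak B$- (or $\mathfrak H$-) regularity of $(\mathbf u,\pi)$ and the continuity of $\widetilde E_\pm$ and $\tilde{\mathcal L}_\alpha$; by Lemma~\ref{lem 1.6} this limit equals $\pm\langle\mathbf t_\alpha^\pm(\mathbf u,\pi),\boldsymbol\varphi\rangle_{\partial\Omega}$. The left-hand side is pulled back to $\partial\Omega$ through $\Phi_j$: items~(iii)--(iv) of Lemma~\ref{2.13D} give $\boldsymbol\nu^{(j)}\circ\Phi_j\to\boldsymbol\nu$ and $\omega_j\to 1$, while $\Phi_j(\mathbf x)\to\mathbf x$ inside~${\mathfrak D}_\pm(\mathbf x)$ combined with the assumed a.e.\ existence of $\boldsymbol\sigma_{\rm nt}^\pm$ and the domination $|\boldsymbol\sigma(\mathbf u,\pi)\circ\Phi_j|\le M(\boldsymbol\sigma(\mathbf u,\pi))\in L_p(\partial\Omega)$ allow Lebesgue dominated convergence to yield the limit $\pm\langle\mathbf t_{\rm nt}^\pm(\mathbf u,\pi),\boldsymbol\varphi\rangle_{\partial\Omega}$. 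Equating the two limits and varying $\boldsymbol\varphi$ over a dense subset of $B_{p',q'}^{1-s}(\partial\Omega,\mathbb R^n)$ yields $\mathbf t_\alpha^\pm(\mathbf u,\pi)=\mathbf t_{\rm nt}^\pm(\mathbf u,\pi)\in L_p(\partial\Omega,\mathbb R^n)$.

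The main obstacle is the low-regularity boundary limit in item~(ii): one has to simultaneously manage the a.e.\ non-tangential convergence of $\boldsymbol\sigma$, the $L_p$-domination by $M(\boldsymbol\sigma)$, and the correct $L_{p'}$ or $L_\infty$ convergence of $\boldsymbol\nu^{(j)}$ and $\omega_j$, so that dominated (or Vitali) convergence is legitimate inside the boundary duality pairing. The volume-side passage, in contrast, is essentially bookkeeping once Definitions~\ref{2.5} and~\ref{Dce} are unfolded.
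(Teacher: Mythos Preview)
Your approach is essentially the paper's: both items use the smooth approximation from Lemma~\ref{2.13D} and pass to the limit via dominated convergence against $M(\boldsymbol\sigma(\mathbf u,\pi))\in L_p(\partial\Omega)$ on the boundary side. One step the paper makes explicit that you gloss over in item~(ii) is that writing the classical Green identity on $\Omega_j$ requires interior elliptic regularity---from $\mathcal L_\alpha(\mathbf u,\pi)\in B^t_{p,q}(\Omega_\pm)$ with $t>-\tfrac{1}{p'}$ one gets $(\mathbf u,\pi)\in B^{t+2}_{p,q}(\Omega_j)\times B^{t+1}_{p,q}(\Omega_j)$, i.e.\ the setting of item~(i) on $\Omega_j$---and the paper then handles the volume-side limit by identifying $\mathbf t^{{\rm c}+}_{\partial\Omega_j}=\mathbf t^+_{\alpha,\partial\Omega_j}$ via item~(i) and invoking \cite[Lemma~3.15]{Mikh} for $\langle\mathbf t^+_{\alpha,\partial\Omega_j},\gamma_{\partial\Omega_j}\mathbf w\rangle\to\langle\mathbf t^+_{\alpha,\partial\Omega},\gamma_{\partial\Omega}\mathbf w\rangle$, rather than arguing term-by-term convergence of the four volume integrals as you propose.
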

\begin{proof}
We will give a proof in the case of {a bounded domain $\Omega_+$ and the Besov spaces}. For an unbounded domain $\Omega_-$ {and the Bessel potential spaces} the arguments are the same.

(i) Let $({\bf u},\pi)\in {B}_{p,q,\rm div}^{s+\frac{1}{p}}({\Omega_+}, \mathbb{R}^{n})\times {B}_{p,q}^{s-1+\frac{1}{p}}({\Omega_+})$ for some $p,q\in (1,\infty)$ and $s>1$. Evidently, the stress tensor $\boldsymbol{\sigma}({\bf u},\pi)$ belongs to ${B}_{p,q}^{s-1+\frac{1}{p}}(\Omega, \mathbb{R}^{n\times n})$, which for $1<s<2$ implies that  $\gamma_{-}\boldsymbol{\sigma}({\bf u},\pi)\in  {B}_{p,q}^{s-1}(\partial {\Omega}, \mathbb{R}^{n\times n})\subset L_p(\partial {\Omega}, \mathbb{R}^{n\times n})$.
Taking into account that the unit normal vector to the boundary, $\boldsymbol \nu$, belongs to $L_{\infty}(\partial {\Omega}, \mathbb{R}^{n})$, we obtain by \eqref{2.37-} that
$\mathbf t^{{\rm c}+}({\bf u},\pi)\in L_{p}(\partial {\Omega}, \mathbb{R}^{n})$.

On the other hand, the inclusion $({\bf u},\pi)\in {B}_{p,q,\rm div}^{s+\frac{1}{p}}({\Omega_+})\times {B}_{p,q}^{s-1+\frac{1}{p}}({\Omega_+})$ for $p,q\in (1,\infty)$ and $s>1$ implies that
$({\textbf u},\pi)\in \mathfrak{B}_{p,q,\rm div}^{s+\frac{1}{p},t}({\Omega_+},\mathcal{L}_{\alpha})$
for  $t\in(-1/p', s-1-1/p')$ and thus the canonical conormal derivative $\mathbf t^+_\alpha({\bf u},\pi)$ is well defined and belongs to $B^{s'-1}_{p,q}(\partial {\Omega},\mathbb{R}^{n})$ for any $s'\in (0,1)$.
For $1<s<2$, the proof that $\mathbf t^+_\alpha({\bf u},\pi)=\mathbf t_{\alpha}^{{\rm c}+}({\bf u},\pi)\in L_{p}(\partial {\Omega}, \mathbb{R}^{n})$ is similar to \cite[Corollary 3.14]{Mikh} (with evident modification to $L_p$-based spaces), while for $s\ge 2$ the {relation} $\mathbf t^+_\alpha({\bf u},\pi)=\mathbf t^{{\rm c}+}({\bf u},\pi)\in L_{p}(\partial {\Omega}, \mathbb{R}^{n})$ still {stays} by imbedding.

If, in addition, the non-tangential trace of the stress,
$\boldsymbol{\sigma}^+_{\rm nt}({\bf u},\pi)$, exists a.e. on $\partial\Omega$, then $\boldsymbol{\sigma}^+_{\rm nt}({\bf u},\pi)=\gamma^+\boldsymbol{\sigma}({\bf u},\pi)$ by Theorem \ref{trace-equivalence-L}(i) implying that $\mathbf t_{\rm{nt}}^+({\bf u},\pi)=\mathbf t^+_\alpha({\bf u},\pi)=\mathbf t^{{\rm c}+}({\bf u},\pi)\in L_{p}(\partial {\Omega}, \mathbb{R}^{n})$.

(ii) Let $0<s< 1$ first, and the case $s=1$ will follow by inclusion. Under the other hypotheses of item (ii), the canonical conormal derivative, $\mathbf t^+_\alpha({\bf u},\pi)$, is well defined on the boundary $\partial\Omega$ and belongs to  ${B}_{p,q}^{s-1}({\partial\Omega},\mathbb R^n)$.
Let $\{\Omega _j\}_{j\geq 1}$ be a sequence of sub-domains in $\Omega _{+}$ that converge to $\Omega _{+}$ in the sense of Lemma \ref{2.13D}, with the corresponding notations $\Phi_j$, $\nu^{(j)}$ and  $\omega_j$ also introduced there.

Similar to the proof of Lemma 3.15 in \cite{Mikh}, one can now prove that
the canonical conormal derivative on $\partial\Omega$ is a limit of the canonical conormal derivatives on $\partial\Omega_j$, i.e., $\langle \mathbf t^+_{\alpha,\partial \Omega}({\bf u},\pi),\gamma_{_{\partial\Omega +}}{\bf w} \rangle_{\partial\Omega}
=\lim_{j\to\infty}\langle \mathbf t^{+}_{\alpha,\partial \Omega_j}({\bf u},\pi),\gamma_{_{\partial\Omega_j}}{\bf w} \rangle_{\partial\Omega_j}$
for any ${\textbf w} \in B_{p',q'}^{1+\frac{1}{p'}-s}({\Omega_+}, \mathbb{R}^{n})$.

The inclusion $({\bf u},\pi)\in \mathfrak{B}_{p,q,\rm div}^{s+\frac{1}{p},t}({ \Omega_+},\mathcal{L}_{\alpha})$ means that the couple $({\bf u},\pi)$ satisfies the elliptic Brinkman PDE system \eqref{BrSyss} with a right hand side $\mathbf f\in {B}_{p,q}^{t}({{\Omega_+}, \mathbb{R}^{n}})$, which implies that $({\bf u},\pi)\in {B}_{p,q,\rm div}^{t+2}({\Omega_j})\times {B}_{p,q}^{t+1}({\Omega_j})$.\\
Then $\gamma_{\partial\Omega_j}\boldsymbol{\sigma}({\bf u},\pi)\in{B}_{p,q}^{t+1-\frac{1}{p}}(\partial \Omega_j, \mathbb{R}^{n\times n})\subset L_p({\partial \Omega_j},\mathbb{R}^{n\times n})$ and
$ 
\mathbf t^+_{\alpha,\partial \Omega_j}({\bf u},\pi)
=\mathbf t_{\partial \Omega_j}^{{\rm c}+}({\bf u},\pi)
=\gamma^+_{\partial\Omega_j}\boldsymbol{\sigma}({\bf u},\pi){\boldsymbol\nu}
\in L_{p}(\partial {\Omega_j}, \mathbb{R}^{n})$
by item (i).

On the other hand, for a.e. point $x\in\partial\Omega$ {the non-tangential function $M(\boldsymbol{\sigma}({\bf u},\pi))(x)$ exists and is finite}, which particularly implies that $\boldsymbol{\sigma}({\bf u},\pi)$ is well defined and bounded in {the approach cones} ${\mathfrak D}_{+}(x)$.
We can consider  $\boldsymbol{\sigma}({\bf u},\pi)(x)$ as strictly defined (by its limit mean values $\lim_{r\to 0}\fint_{B(x,r)}\boldsymbol{\sigma}({\bf u},\pi)(\xi)d\xi$ in the sense of Jonnson \& Wallin \cite[p.15]{Jonsson-Wallin1984}, see also \cite[Theorem 8.7]{BMMM2014});
then $\gamma_{\partial\Omega_j}\boldsymbol{\sigma}({\bf u},\pi)(y)=\boldsymbol{\sigma}({\bf u},\pi)(y)$ and hence
$\mathbf t^+_{\alpha,\partial \Omega_j}({\bf u},\pi)(y)
=\mathbf t_{\partial \Omega_j}^{{\rm c}+}({\bf u},\pi)(y)
=\boldsymbol{\sigma}({\bf u},\pi)(y)\cdot\boldsymbol{\nu}_{j}(y)$ for $y\in {\mathfrak D}_{+}(x)\cap \partial\Omega_j$.
In addition $\mathbf t^+_{\alpha,\partial \Omega_j}({\bf u},\pi)(\Phi_j(x))
=\mathbf t_{\partial \Omega_j}^{{\rm c}+}({\bf u},\pi)(\Phi_j(x))
=\boldsymbol{\sigma}({\bf u},\pi)(\Phi_j(x))\cdot\boldsymbol{\nu}(\Phi_j(x))$ tends to $\boldsymbol{\sigma}^+_{\rm nt}({\bf u},\pi)(x)\cdot\boldsymbol{\nu}(x)
=\mathbf t_{{\rm{nt}},\partial \Omega}^+({\bf u},\pi)(x)$ as $j\to\infty$ for a.e. $x\in \partial\Omega$, for which $\boldsymbol{\sigma}^+_{\rm nt}({\bf u},\pi)(x)$ does exist.

Let us now prove that $\mathbf t_{\partial \Omega_j}^{{\rm c}+}({\bf u},\pi)(\Phi_j(x))$ converges to $\mathbf t_{{\rm{nt}},\partial \Omega}^+({\bf u},\pi)(x)$ not only point-wise for a.e. $x\in \partial\Omega$ but also in the weak sense, i.e.,
$\lim_{j\to\infty}\langle \mathbf t^{{\rm c}+}_{\partial \Omega_j}({\bf u},\pi),
\gamma_{_\partial\Omega_j}{\bf w} \rangle_{\partial\Omega_j}=
\langle \mathbf t_{{\rm{nt}},\partial \Omega}^+({\bf u},\pi),
\gamma_{_{\partial\Omega+}}{\bf w} \rangle_{\partial\Omega}
$
for any ${\textbf w} \in B_{p',q'}^{1+\frac{1}{p'}-s}({\Omega_+}, \mathbb{R}^{n})$.
We have
\begin{multline}\label{34}
|\langle \mathbf t^{{\rm c}+}_{\partial \Omega_j}({\bf u},\pi),
\gamma_{_{\partial\Omega_j}}{\bf w} \rangle_{\partial\Omega_j}-
\langle \mathbf t_{{\rm{nt}},\partial \Omega}^+({\bf u},\pi),
\gamma_{_{\partial\Omega+}}{\bf w} \rangle_{\partial\Omega}|\\
=
|\langle \mathbf t^{{\rm c}+}_{\partial \Omega_j}({\bf u},\pi)\circ\Phi_j,
\omega_j\gamma_{_{\partial\Omega_j}}{\bf w}\circ\Phi_j \rangle_{\partial\Omega}-
\langle \mathbf t_{{\rm{nt}},\partial \Omega}^+({\bf u},\pi),
\gamma_{_{\partial\Omega+}}{\bf w} \rangle_{\partial\Omega}|\\
\le |\langle \mathbf t^{{\rm c}+}_{\partial \Omega_j}({\bf u},\pi)\circ\Phi_j
-\mathbf t_{{\rm{nt}},\partial \Omega}^+({\bf u},\pi),
\omega_j\gamma_{_{\partial\Omega_j}}{\bf w}\circ\Phi_j \rangle_{\partial\Omega}|\\
+|\langle \mathbf t_{{\rm{nt}},\partial \Omega}^+({\bf u},\pi),
(\omega_j-1)\gamma_{_{\partial\Omega_j}}{\bf w}\circ\Phi_j \rangle_{\partial\Omega}|
+|\langle \mathbf t_{{\rm{nt}},\partial \Omega}^+({\bf u},\pi),
\gamma_{_{\partial\Omega_j}}{\bf w}\circ\Phi_j -
\gamma_{_{\partial\Omega+}}{\bf w} \rangle_{\partial\Omega}|.
\end{multline}
Let us prove that the summands in the right hand side of \eqref{34} tend to zero as $j\to\infty$. To this end, we use the inequality
\begin{multline}\label{35}
|\langle \mathbf t^{{\rm c}+}_{\partial \Omega_j}({\bf u},\pi)\circ\Phi_j
\!-\!\mathbf t_{{\rm{nt}},\partial \Omega}^+({\bf u},\pi),
\omega_j\gamma_{_{\partial\Omega_j}}{\bf w}\circ\Phi_j \rangle_{\partial\Omega}|
\!\\
\le
\| \mathbf t^{{\rm c}+}_{\partial \Omega_j}({\bf u},\pi)\circ\Phi_j
\!-\mathbf t_{{\rm{nt}},\partial \Omega}^+({\bf u},\pi)\|_{L_p(\partial\Omega)}\
\|\omega_j\gamma_{_{\partial\Omega_j}}{\bf w}\circ\Phi_j \|_{L_{p'}(\partial\Omega)}.
\end{multline}
We have,
\begin{align}\label{36}
|\mathbf t^{{\rm c}+}_{\partial \Omega_j}({\bf u},\pi)(\Phi_j(x))
-\mathbf t_{{\rm{nt}},\partial \Omega}^+({\bf u},\pi)(x)|
\le M(\boldsymbol{\sigma}({\bf u},\pi))(x)+|\mathbf t_{{\rm{nt}},\partial \Omega}^+({\bf u},\pi)(x)|,
\end{align}
the both terms in the right hand side of \eqref{36} belong to $L_p(\partial\Omega)$ and $\mathbf t^{{\rm c}+}_{\partial \Omega_j}({\bf u},\pi)\circ\Phi_j
-\mathbf t_{{\rm{nt}},\partial \Omega}^+({\bf u},\pi)\to 0$ pointwise a.e. on $\partial\Omega$.
Then the Lebesgue dominated convergence theorem implies that the first multiplier in the right hand side of \eqref{35} tends to zero.
Since $\gamma_{_{\partial\Omega_j}}{\bf w}\in B_{p',q'}^{1-s}({\partial\Omega_j}, \mathbb{R}^{n})\subset L_{p'}^{1-s}({\partial\Omega_j}, \mathbb{R}^{n})$ and
$\gamma_{_{\partial\Omega_j}}{\bf w}\circ\Phi_j \to \gamma_{_{\partial\Omega+}}{\bf w}$ (cf. \cite[Chapter 2, Theorem 4.5]{Necas2012}), the second multiplier in the right hand side of \eqref{35} is bounded and hence the whole right hand side of \eqref{35} tends to zero.
The second summand in the right hand side of \eqref{34} tends to zero since $\omega_j\to 1$, and the third, again, because $\gamma_{_{\partial\Omega_j}}{\bf w}\circ\Phi_j \to \gamma_{_{\partial\Omega+}}{\bf w}$.

Combining this with the previous argument, we obtain,
\begin{align*}
\langle \mathbf t^+_{\alpha,\partial \Omega}({\bf u},\pi),\gamma_{_{\partial\Omega +}}{\bf w} \rangle_{\partial\Omega}
=\lim_{j\to\infty}\langle \mathbf t^{{\rm c}+}_{\partial \Omega_j}({\bf u},\pi),\gamma_{_{\partial\Omega_j}}{\bf w} \rangle_{\partial\Omega_j}=
\langle \mathbf t_{{\rm{nt}},\partial \Omega}^+({\bf u},\pi),
\gamma_{_{\partial\Omega+}}{\bf w} \rangle_{\partial\Omega}\quad
\forall\ {\textbf w} \in B_{p',q'}^{1+\frac{1}{p'}-s}({\Omega_+}, \mathbb{R}^{n})
\end{align*}
Taking $\mathbf w=\gamma_+^{-1}\boldsymbol\varphi$,
this gives
$\langle \mathbf t^+_{\alpha,\partial \Omega}({\bf u},\pi), \boldsymbol\varphi\rangle_{\partial\Omega}
=\langle \mathbf t_{{\rm{nt}},\partial \Omega}^+({\bf u},\pi), \boldsymbol\varphi \rangle_{\partial\Omega}$
for any
$\boldsymbol\varphi \in B_{p',q'}^{1-s}(\partial {\Omega},\mathbb{R}^{n})$, i.e.,
$\mathbf t^+_\alpha({\bf u},\pi)=\mathbf t_{\rm{nt}}^+({\bf u},\pi)$, and since
$\mathbf t_{\rm{nt}}^+({\bf u},\pi)=\boldsymbol{\sigma}^+_{\rm nt}({\bf u},\pi)\,\boldsymbol \nu \in  L_{p}(\partial {\Omega}, \mathbb{R}^{n})$, this completes the proof of item (ii) for $0<s<1$, while for $s=1$ the statement follows by inclusion.
\hfill\end{proof}

\begin{remark}
\label{2.13loc}
Due to Remark~\ref{CSD}, Theorem \ref{2.13} will still valid for $\Omega_-$ if the functions belong to the corresponding spaces only locally, i.e., if
$({\bf u},\pi)\in {B}_{p,q,\rm div,loc}^{s+\frac{1}{p}}(\overline{\Omega}_-,\mathbb{R}^{n})\times {B}_{p,q,\rm loc}^{s-1+\frac{1}{p}}(\overline{\Omega}_-)$
in item (i) and
$({\bf u},\pi)\in \mathfrak{B}_{p,q,\rm div,loc}^{s+\frac{1}{p},t}(\overline{\Omega}_-,\mathcal{L}_{\alpha})$
in item (ii).
\end{remark}

\section{Integral potentials for the Brinkman system}

This section is devoted to the main properties of Newtonian and layer potentials for the Brinkman system.

\subsection{\bf Newtonian {potential} for the Brinkman system}

Let $\alpha >0$ be a {constant}. Let us denote by $\mathcal{G}^{\alpha}$ and $\Pi $ \textit{the fundamental velocity tensor} and \textit{the fundamental pressure vector} for the Brinkman system in $\mathbb{R}^{n}$ ($n\geq 3$), with the components (see, e.g., \cite[(3.6)]{McCracken1981},
\cite[Section 3.2.1]{12}, \cite[(2.14)]{24})
\begin{equation}
\label{E41}
\mathcal{G}^{\alpha}_{jk}({\bf x})=\frac{1}{\tilde\omega_n} \left\{ \frac{\delta_{jk}}{|{\bf x}|^{n-2}}  A_{1}(\alpha |{\bf x}|)+ \frac{x_{j} x_{k}}{|{\bf x}|^{n}} A_{2}(\alpha |{\bf x}|) \right\},\ \
{\Pi_k}({\bf x})= \frac{1}{\tilde\omega_n}\frac{x_{k}}{|{\bf x}|^{n}}
\end{equation}
where $A_{1}(z)$ and $A_{2}(z)$ are defined by
\begin{align}
\label{E41-new}
A_{1}(z):= \frac{\big(\frac{z}{2}\big)^{\frac{n}{2} - 1} K_{\frac{n}{2} - 1}(z)}{\Gamma \big(\frac{n}{2}\big)}+2\frac{\big(\frac{z}{2}\big)^{\frac{n}{2}} K_{\frac{n}{2}}(z)}{\Gamma \big(\frac{n}{2}\big)z^{2}}-\dfrac{1}{z^{2}},\
A_{2}(z):= \frac{n}{z^{2}}-4\frac{\big(\frac{z}{2}\big)^{\frac{n}{2} +1} K_{\frac{n}{2} +1}(z)}{\Gamma \big(\frac{n}{2}\big)z^{2}},
\end{align}
$K_{{\varkappa} }$ is the Bessel function of the second kind and order ${\varkappa} \ge 0$, $\Gamma $ is the Gamma function, and $\tilde\omega_n$ is the area of the unit sphere in $\mathbb{R}^{n}$. The fundamental solution of the Stokes system, $(\mathcal{G},\Pi )$, which corresponds to $\alpha =0$, is given by (see, e.g., \cite[(1.12)]{24})
\begin{equation}
\label{E41-0}
\mathcal{G}_{jk}({\bf x})=\frac{1}{2\tilde\omega_n}\left\{\frac{1}{n-2}\frac{\delta_{jk}}{|{\bf x}|^{n-2}}+\frac{x_{j} x_{k}}{|{\bf x}|^{n}}\right\},\ \
{\Pi_k}({\bf x})= \frac{1}{\tilde\omega_n}\frac{x_{k}}{|{\bf x}|^{n}}.
\end{equation}

Next we use the notations ${\mathcal G}^{\alpha }({\bf x},{\bf y})={\mathcal G}^{\alpha }({\bf x}-{\bf y})$ and $ {\Pi}({\bf x},{\bf y})={ \Pi}({\bf x}-{\bf y})$.
Then
\begin{equation}
\label{2.2.1}
(\triangle _{{\bf x}}-\alpha \mathbb{I})\mathcal{G}^{\alpha}({\bf x},{\bf y}) - \nabla_{{\bf x}}{\Pi}({\bf x},{\bf y})={-\delta_{{\bf y}}({\bf x}){\mathbb I}}, \ \ {\rm{div}}_{{\bf x}}\mathcal{G}^{\alpha }({\bf x},{\bf y})= 0, \ \forall  \ {\bf y}\in \mathbb{R}^{n},
\end{equation}
where $\delta_{{{\bf x}}}$ is the Dirac distribution with mass in ${\bf y}$, and the subscript ${\bf x}$ added to a differential operator refers to the action of that operator with respect to the variable ${\bf x}$.

\textit{The fundamental stress tensor} $\textbf{S}^{\alpha}$ has the components
\begin{equation}
\label{stress-tensor-alpha}
S^{\alpha}_{ij\ell }({\bf x},{\bf y})=-{\Pi}_{j}({\bf x},{\bf y}) \delta_{i\ell }+\dfrac{\partial \mathcal{G}^{\alpha}_{ij}({\bf x},{\bf y})}{\partial x_{\ell}} + \dfrac{\partial \mathcal{G}^{\alpha}_{\ell j}({\bf x},{\bf y}) }{\partial x_{i}},
\end{equation}
where $\delta_{jk} $ is the Kronecker symbol. Let $\boldsymbol\Lambda^{\alpha}$ be \textit{the fundamental pressure tensor} with components $\Lambda^{\alpha}_{jk}$.
Then {for fixed $i$ and $k$, the pair $(S^{\alpha}_{ijk}, \Lambda^{\alpha}_{ik})$ satisfies} the Brinkman system in $\mathbb{R}^{n}$ {if ${\bf x}\neq {\bf y}$}, i.e.,
\begin{equation}
\label{2.2.1'}
\left\{
\begin{array}{ll}
\triangle _{{\bf x}}S^{\alpha}_{ijk}({\bf x},{\bf y})-\alpha S^{\alpha}_{ijk}({\bf x},{\bf y})-\dfrac{\partial \Lambda^{\alpha}_{ik}({\bf y},{\bf x})}{\partial x_{j}}=0,\\
\dfrac{\partial S^{\alpha}_{ijk}({\bf x},{\bf y})}{\partial x_{j}} = 0
\end{array}
\right.
\end{equation}
{The components $\Lambda^{\alpha}_{jk}({\bf x},{\bf y})$ are given by (see, e.g., \cite[(2.18)]{24})
\begin{align}
\label{dl-pressure}
\Lambda^{\alpha}_{ik}({\bf x},{\bf y})=\frac{1}{\omega _n}\left\{-(y_i-x_i)\frac{2n(y_k-x_k)}{|{\bf y}-{\bf x}|^{n+2}}+\frac{2\delta _{ik}}{|{\bf y}-{\bf x}|^n}-\alpha \frac{1}{(n-2)}\frac{1}{|{\bf y}-{\bf x}|^{n-2}}\delta _{ik}\right\}.
\end{align}

For $\alpha =0$, we use the notations $S_{ijk}:=S^{0}_{ijk}$ and $\Lambda_{ik}:=\Lambda^{0}_{ik}$.}

Let $*$ denote the convolution product.
Let us consider the velocity and pressure Newtonian potential operators for the Brinkman system,
\begin{align}
\label{Newtonian-Brinkman-vp}
&\left({\mathbf N}_{\alpha;{\mathbb R}^n}\boldsymbol\varphi\right)({\bf x})
:=-\left({\mathcal G}^{\alpha }* \boldsymbol\varphi\right)({\bf x})
=-\big\langle {\mathcal G}^{\alpha }({\bf x},\cdot),\boldsymbol\varphi
\big\rangle_{_{\!{\mathbb R^n}}},\quad
\left({\mathcal Q}_{\alpha;{\mathbb R}^n}\boldsymbol\varphi\right)({\bf x})
=\left({\mathcal Q}_{{\mathbb R}^n}\boldsymbol\varphi\right)({\bf x})
:=-\left(\boldsymbol{\Pi}*\boldsymbol\varphi\right)({\bf x})
=-\big\langle\boldsymbol{\Pi}({\bf x},\cdot),\boldsymbol\varphi\big\rangle _{_{\!{ \mathbb R^n}}},
\end{align}
where the fundamental tensor ${\mathcal G}^{\alpha }$ is presented through its components in \eqref{E41}.
Note that the Fourier transform of ${\mathcal G}^{\alpha }$-components {is given} by
\begin{align}
\label{Fourier}
\widehat{\mathcal G}^{\alpha }_{kj}(\xi )=\frac{(2\pi )^{-\frac{n}{2}}}{|\xi |^2+\alpha }\left(\delta _{kj}-\frac{\xi _k\xi _j}{|\xi |^2}\right).
\end{align}
Then we have the following property (cf. \cite[Theorem 3.10]{McCracken1981} in the case $n=3$, $s=0$).
\begin{lemma}
\label{Newtonian-potential}
Let
$\alpha > 0$. Then for all $p,q\in (1,\infty )$ and $s\in {\mathbb R}$ the following  linear operators are continuous
\begin{align}
\label{volume-potential-C}
{\mathbf N}_{\alpha ;{\mathbb R}^n}&:H^{s}_p({\mathbb R}^n,{\mathbb R}^n)\to H^{s+2}_p({\mathbb R}^n,{\mathbb R}^n), 
\\
\label{ss-2}
{\mathbf N}_{\alpha ;{\mathbb R}^n}&:B^{s}_{p,q}({\mathbb R}^n,{\mathbb R}^n)\to B^{s+2}_{p,q}({\mathbb R}^n,{\mathbb R}^n),\\
\label{volume-potential-pressure}
{\mathcal Q}_{{\mathbb R}^n}&:H^{s}_p({\mathbb R}^n,{\mathbb R}^n)\to H^{s+1}_{p\rm,loc}({\mathbb R}^n),\\
\label{ss-2-pressure}
{\mathcal Q}_{{\mathbb R}^n}&:B^{s}_{p,q}({\mathbb R}^n,{\mathbb R}^n)\to B^{s+1}_{p,q\rm,loc}({\mathbb R}^n).
\end{align}
\end{lemma}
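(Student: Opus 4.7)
The plan is to view both potentials as Fourier multipliers and apply Mikhlin--H\"ormander type multiplier theorems, separately on Bessel potential and on Besov scales.

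For the velocity potential ${\mathbf N}_{\alpha;{\mathbb R}^n}$, I would start from the explicit Fourier symbol \eqref{Fourier}. Using \eqref{E17}, to establish continuity of \eqref{volume-potential-C} it suffices to prove that the conjugated operator $J^{s+2}\circ {\mathbf N}_{\alpha;{\mathbb R}^n}\circ J^{-s}$ is bounded on $L_p({\mathbb R}^n,{\mathbb R}^n)$. This operator is multiplication on the Fourier side by a constant multiple of
\begin{equation*}
m^{\alpha}(\xi):=\frac{\rho(\xi)^2}{|\xi|^2+\alpha}\left({\mathbb I}-\frac{\xi\otimes\xi}{|\xi|^2}\right),
\end{equation*}
with $\rho$ as in \eqref{Jhat}. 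Since $\alpha>0$, the scalar prefactor $\rho(\xi)^2/(|\xi|^2+\alpha)$ is $C^\infty$ on ${\mathbb R}^n$ and bounded together with all derivatives; the projection matrix is bounded, $C^\infty$ away from the origin and homogeneous of degree zero. Hence $m^\alpha$ is a bounded matrix symbol that satisfies the Mikhlin condition $|\xi|^{|\beta|}|\partial^{\beta}m^\alpha(\xi)|\le C_\beta$ for all multi-indices $\beta$, so the Mikhlin multiplier theorem yields \eqref{volume-potential-C} for every $p\in(1,\infty)$ and $s\in{\mathbb R}$. For the Besov case \eqref{ss-2}, I would either invoke the Mikhlin-type multiplier theorem on Besov spaces (as in Triebel's monograph), or deduce the result from \eqref{volume-potential-C} by real interpolation between two Bessel scales.

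For the pressure potential ${\mathcal Q}_{{\mathbb R}^n}$, the analogous Fourier symbol is, up to a constant, $\xi_k/|\xi|^2$. This is bounded and satisfies Mikhlin's estimates \emph{away} from the origin, but is singular there; this is the reason one can only expect a \emph{local} conclusion. The plan is to split the symbol using a cutoff $\chi\in C^\infty_{\rm comp}({\mathbb R}^n)$ with $\chi\equiv 1$ in a neighborhood of the origin, writing $\xi_k/|\xi|^2=\chi(\xi)\,\xi_k/|\xi|^2+(1-\chi(\xi))\xi_k/|\xi|^2$. The second piece is $C^\infty$ and bounded, with $\rho(\xi)(1-\chi(\xi))\xi_k/|\xi|^2$ still satisfying Mikhlin's estimates, so the corresponding operator is globally bounded from $H^s_p$ to $H^{s+1}_p$ (respectively $B^s_{p,q}$ to $B^{s+1}_{p,q}$). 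The first piece corresponds to convolution with a function whose Fourier transform is compactly supported, which by Paley--Wiener is a $C^\infty$ function of Schwartz type; composing with multiplication by any compactly supported bump yields an operator mapping any $H^s_p$ (respectively $B^s_{p,q}$) into $C^\infty_{\rm comp}$. Summing the two contributions and localizing by an arbitrary bump gives \eqref{volume-potential-pressure} and \eqref{ss-2-pressure}.

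The main obstacle I expect is the treatment of the pressure potential near the origin in Fourier space: the symbol is not bounded there, so one cannot argue by a single Mikhlin multiplier estimate on the whole of ${\mathbb R}^n$. The cutoff/low-frequency splitting indicated above is the standard device, but one must check that the low-frequency convolution indeed respects the local Bessel/Besov scales (which follows, as noted, from Paley--Wiener combined with the density of test functions and the elementary fact that multiplication by a $C^\infty_{\rm comp}$ function is bounded on $H^s_{p,{\rm loc}}$ and $B^s_{p,q,{\rm loc}}$). The remaining subtlety, handling vector/matrix-valued symbols, is immediate because Mikhlin's theorem applies entry-wise.
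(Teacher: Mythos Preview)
Your treatment of the velocity potential \eqref{volume-potential-C} is essentially the paper's: both conjugate by $J^s$ and $J^{s+2}$, identify the resulting symbol $\rho^2\widehat{\mathcal G}^\alpha$ as a bounded matrix-valued Mikhlin multiplier (smooth away from the origin, bounded together with the projection), and then pass to Besov spaces \eqref{ss-2} by real interpolation between two Bessel-potential levels. No difference there.

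For the pressure potential your route diverges from the paper's, and there is a genuine gap in your low-frequency step. The paper does not analyse the symbol $\xi_k/|\xi|^2$ directly; instead it writes ${\mathcal Q}_{\mathbb R^n}\boldsymbol\varphi=\mathrm{div}\,{\mathcal N}_{\triangle;\mathbb R^n}\boldsymbol\varphi$ and cites the classical mapping property of the harmonic Newtonian potential (a pseudodifferential operator of order $-2$) from $H^s_p$ to $H^{s+2}_{p,\rm loc}$, then takes divergence to land in $H^{s+1}_{p,\rm loc}$; \eqref{ss-2-pressure} again follows by interpolation. This hides the low-frequency difficulty inside a well-documented black box.

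Your cutoff argument is a reasonable alternative in spirit, but the Paley--Wiener step is wrong as stated. The low-frequency symbol $\chi(\xi)\xi_k/|\xi|^2$ is compactly supported but \emph{singular} at the origin; Paley--Wiener for compactly supported distributions gives only that the kernel is an entire function of exponential type, not that it is Schwartz. In fact the kernel decays merely like $|x|^{-(n-1)}$ (the $|\xi|^{-1}$-type singularity of the symbol prevents any faster decay), so it is $C^\infty$ but far from rapidly decreasing. Hence the assertion that convolution with this kernel, followed by a spatial cutoff, sends every $H^s_p$ into $C^\infty_{\rm comp}$ is unjustified: for very negative $s$ the convolution of a tempered distribution with a merely polynomially decaying smooth kernel is not automatically defined, and even on $L_p$ one does not get Schwartz-type smoothing. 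The strategy can be repaired (for instance by proving bounds first on the dense subspace $\mathcal D$ and tracking norms, or by writing the low-frequency piece as $\partial_k$ composed with the multiplier $\chi(\xi)/|\xi|^2$ and invoking the Riesz-potential mapping), but not via the Schwartz-kernel shortcut you propose. The paper's reduction to ${\mathcal N}_{\triangle;\mathbb R^n}$ sidesteps precisely this issue.
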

\begin{proof}
Let $\boldsymbol \varphi \in H^{s}_p({\mathbb R}^n,{\mathbb R}^n)$. By \eqref{E17},
\begin{align}
\label{c1}
\|{\mathbf N}_{\alpha;{\mathbb R}^n}\boldsymbol \varphi\|_{H^{s+2}_p({\mathbb R}^n,{\mathbb R}^n)}
=\left\|{\mathcal F}^{-1}\left({\rho}\, ^{s+2}\mathcal F({\mathbf N}_{\alpha;{\mathbb R}^n}\boldsymbol \varphi)\right)\right\|_{{L_p({\mathbb R}^n,{\mathbb R}^n)}},
\end{align}
where $\rho$ is the weight function given by \eqref{Jhat}. In addition, we note that
\begin{align}
\label{c2}
{\mathcal F}\left({\mathbf N}_{\alpha;{\mathbb R}^n}\boldsymbol \varphi  \right)={\mathcal F}\left({\mathcal G}^\alpha *{\boldsymbol \varphi }\right)=
\widehat{\mathcal G}^\alpha \widehat {\boldsymbol \varphi }
\end{align}
and hence by \eqref{c1},
\begin{align}
\label{c3}
\|{\mathbf N}_{\alpha;{\mathbb R}^n}\boldsymbol \varphi\|_{H^{s+2}_p({\mathbb R}^n,{\mathbb R}^n)}
=\left\|{\mathcal F}^{-1}\left({\rho}\, ^{s+2}
\widehat{\mathcal G}^\alpha \widehat {\boldsymbol \varphi }\right)\right\|_{{L_p({\mathbb R}^n,{\mathbb R}^n)}}
=\left\|{\mathcal F}^{-1}(\widehat m\,\mathcal F(J^s{\boldsymbol\varphi})) \right\|_{{L_p({\mathbb R}^n,{\mathbb R}^n)}}.
\end{align}
In view of \eqref{Fourier}, the matrix-function $\widehat m:={\rho}\, ^2\widehat{\mathcal G}^\alpha$ has the components
$${\widehat m_{kj}(\xi )=(2\pi )^{-\frac{n}{2}}}\displaystyle\frac{1+|\xi |^2}{|\xi |^2+\alpha }\left(\delta _{kj}-\frac{\xi _k\xi _j}{|\xi |^2}\right),\ k,j=1,\ldots ,n,$$ and is smooth everywhere except the origin and uniformly bounded in $\mathbb R^n\times\mathbb R^n$. Hence it is a Fourier multiplier in $L_p({\mathbb R}^n)$ (cf. Theorem 2 in Appendix of \cite{Mikhlin}), i.e., there exists a constant $M>0$, (which depends on $p$ but is independent of $\boldsymbol \varphi )$ such that
$$
\|{\mathbf N}_{\alpha;{\mathbb R}^n}\boldsymbol \varphi\|_{H^{s+2}_p({\mathbb R}^n,{\mathbb R}^n)}
\leq M\left\|J^s{\boldsymbol\varphi} \right\|_{{L_p({\mathbb R}^n,{\mathbb R}^n)}}
= M\|\boldsymbol \varphi \|_{H^{s}_p({\mathbb R}^n,{\mathbb R}^n)}.
$$
and thus
$\left\|{\mathbf N}_{\alpha;{\mathbb R}^n}\right\|_{H^{s}_p({\mathbb R}^n,{\mathbb R}^n)\to H^{s+2}_p({\mathbb R}^n,{\mathbb R}^n)}\leq M,$
while operator \eqref{volume-potential-C} is continuous.
\comment{
In addition, the {continuity} of {operator} \eqref{volume-potential-C} (for all $p,q\in (1,\infty )$ and $s\in {\mathbb R}$)
implies that the operator {on the interpolation spaces,}
\begin{align}
\label{volume-potential-1}
{\mathbf N}_{\alpha ;{\mathbb R}^n}:\left(H^{s_1}_{p}({\mathbb R}^n,{\mathbb R}^n),H^{s_2}_{p}({\mathbb R}^n,{\mathbb R}^n)\right)_{\theta ,q}\to
\left(H^{s_1+2}_{p}({\mathbb R}^n,{\mathbb R}^n),H^{s_2+2}_{p}({\mathbb R}^n,{\mathbb R}^n)\right)_{\theta ,q},
\end{align}
is also continuous, for all $\theta \in (0,1)$, $p,q\in (1,\infty )$ and $s_1\neq s_2$ (see, e.g., \cite{M-W}).
} 

Moreover, by formula \eqref{real-int} we have the interpolation property
\begin{align}
\label{volume-potential-1a}
\left(H^{s_1}_{p}({\mathbb R}^n,{\mathbb R}^n),H^{s_2}_{p}({\mathbb R}^n,{\mathbb R}^n)\right)_{\theta ,q}=B^{s}_{p,q}({\mathbb R}^n,{\mathbb R}^n),\ \left(H^{s_1+2}_{p}({\mathbb R}^n,{\mathbb R}^n),H^{s_2+2}_{p}({\mathbb R}^n,{\mathbb R}^n)\right)_{\theta ,q}=B^{s+2}_{p,q}({\mathbb R}^n,{\mathbb R}^n),
\end{align}
where $s=(1-\theta )s_1+\theta s_2$.
Then by continuity of operator \eqref{volume-potential-C}, we obtain that operator \eqref{ss-2} is also continuous for $p,q\in (1,\infty )$ and any $s\in {\mathbb R}$.

Let us now show the continuity of operators \eqref{volume-potential-pressure} and \eqref{ss-2-pressure}. To this end, we note that the pressure Newtonian potential operator for the Brinkman system {coincides with the one for the Stokes system and for any $\boldsymbol\varphi \in {\mathcal D}({\mathbb R}^n,{\mathbb R}^n)$} can be written as
\begin{align}
\label{Laplace-Newtonian-potential-0}
{\mathcal Q}_{{\mathbb R}^n}\boldsymbol\varphi=\mathrm{div}\, {\mathcal N}_{\triangle ;{\mathbb R}^n}\boldsymbol\varphi,
\end{align}
where
\begin{align}
\label{Laplace-Newtonian-potential}
\left({\mathcal N}_{\triangle ;{\mathbb R}^n}\boldsymbol\varphi\right)({\bf x}):=-\left({\mathcal G}_\triangle *\boldsymbol\varphi \right)({\bf x}),
\end{align}
and ${\mathcal G}_\triangle ({\bf x},{\bf y}):=-\displaystyle\frac{1}{(n-2)\omega _n}\frac{1}{|{\bf x}-{\bf y}|^{n-2}}$ is the fundamental solution of the Laplace equation in ${\mathbb R}^n$. Therefore, the mapping properties of the pressure Newtonian potential are provided by those of the harmonic Newtonian potential ${\mathcal N}_{\triangle ;{\mathbb R}^n}$. Since ${\mathcal N}_{\triangle ;{\mathbb R}^n}$ is a pseudodifferential operator of order $-2$ in ${\mathbb R}^n${,
the following operator is continuous,}
\begin{align}
\label{Laplace-Newtonian-potential-1}
{\mathcal N}_{\triangle ;{\mathbb R}^n}:H^{s}_p({\mathbb R}^n)\to H^{s+2}_{p\rm,loc}({\mathbb R}^n),\ \forall \  s\in {\mathbb R},\, p\in (1,\infty ).
\end{align}
Then by {\eqref{Laplace-Newtonian-potential-0} and \eqref{Laplace-Newtonian-potential-1}} we deduce the {continuity} property of the pressure Newtonian potential operator in \eqref{volume-potential-pressure}. By using an interpolation argument as for \eqref{ss-2}, we also obtain {continuity of operator} \eqref{ss-2-pressure}.
\hfill\hfill\end{proof}

{Let} $\alpha \geq 0$ and $p\in (1,\infty )$ be given. {The Newtonian velocity and pressure potential operators of the Brinkman system in {Lipschitz domains $\Omega_\pm$} are defined as
\begin{align}
\label{Newtonian-1a-Lp}
&
{\mathbf N}_{\alpha;{\Omega}}=r_{\Omega}{\mathbf N}_{\alpha ;{\mathbb R}^{n}}\mathring E_\pm
\quad\mbox{and}\quad
{\mathcal Q}_{\Omega_\pm}=r_{\Omega_\pm}{\mathcal Q}_{{\mathbb R}^{n}}\mathring E_\pm.
\end{align}
Recall that
${\mathring E}_\pm$ is the operator of extension of vector fields defined in ${\Omega_\pm}$ by zero on ${\mathbb R}^{n}\setminus {\Omega_\pm}$, and $r_{\Omega_\pm}$ is the restriction operator from ${\mathbb R}^{n}$ to ${\Omega_\pm}$.
The operators ${\mathring E_\pm}:L_p({\Omega_\pm},\mathbb{R}^{n})\to L_p({\mathbb R}^{n},\mathbb{R}^{n})$ and $r_{\Omega_\pm}:H_p^{2}({\mathbb R}^{n},\mathbb{R}^{n})\to H_p^{2}({\Omega_\pm},\mathbb{R}^{n})$ are linear and continuous.
In addition, the volume potential operator ${\mathbf N}_{\alpha ;{\mathbb R}^{n}}:L_p({\mathbb R}^{ n},\mathbb{R}^{n})\to H_p^{2}({\mathbb R}^{n},\mathbb{R}^{n})$ is linear and continuous as well, for any $p\in (1,\infty )$ (cf., e.g., \cite[Theorem 3.10]{McCracken1981}, \cite[Lemma 1.3]{Deuring} {and Lemma~\ref{Newtonian-potential}}). Therefore, the velocity Newtonian potential {operators}
\begin{align}
\label{Newtonian-1a-Lpm}
{\mathbf N}_{\alpha;{\Omega_\pm}}:L_p({\Omega_\pm},\mathbb{R}^{n})\to H_{p}^{2}({\Omega_\pm},\mathbb{R}^{n}),\quad p\in (1,\infty ),
\end{align}
are continuous operators. A similar argument yields the continuity of the Newtonian pressure potential operators
\begin{align}
\label{Newtonian-1b-Lpm}
{\mathcal Q}_{\Omega_+}:L_p({\Omega_+},\mathbb{R}^{n})\to H^{1}_{p}({\Omega_+}),\quad
{\mathcal Q}_{\Omega_-}:L_p({\Omega_-},\mathbb{R}^{n})\to H^{1}_{p,\rm loc}({\Omega_-}),\quad
p\in (1,\infty ).
\end{align}
Next,
in view of \eqref{Z2}, \eqref{Z1} and the first inclusion in \eqref{embed-3} we obtain the inclusions
\begin{align}
\label{Nlp-1}
H^{2}_p({\mathbb R}^{n},{\mathbb R}^{n})=W^{2}_p({\mathbb R}^{n},{\mathbb R}^{n})\hookrightarrow W^{1+\frac{1}{p}}_p({\mathbb R}^{n},{\mathbb R}^{n})=B_{p,p}^{1+\frac{1}{p}}({\mathbb R^n},\mathbb{R}^{n})\hookrightarrow B_{p,p^*}^{1+\frac{1}{p}}({\mathbb R^n},\mathbb{R}^{n}), \quad \forall\ p\ge 1,\ p^*=\max\{p,2\},
\end{align}
which are continuous. Then relations \eqref{Newtonian-1a-Lpm} and \eqref{Nlp-1}
imply also the continuity of the velocity Newtonian potential operator
\begin{align}
\label{Newtonian-1}
&{\mathbf N}_{\alpha;{\Omega}_\pm }:L_p({\Omega_\pm},\mathbb{R}^{n})\to B_{p,p^*}^{1+\frac{1}{p}}({\Omega_\pm},\mathbb{R}^{n}),
\ \ p\in (1,\infty).
\end{align}
A similar argument yields the continuity property of the pressure Newtonian potential operator
\begin{align}
\label{Newtonian-1p}
{\mathcal Q}_{\alpha;{\Omega_+}}:L_p({\Omega_+},\mathbb{R}^{n})\to B_{p,p^*}^{\frac{1}{p}}({\Omega_+}),\quad
{{\mathcal Q}_{\alpha;{\Omega_-}}:L_p({\Omega_-},\mathbb{R}^{n})\to B_{p,p^*,\rm loc}^{\frac{1}{p}}({\overline{\Omega}_-}),}
\quad
p\in (1,\infty).
\end{align}
In addition, due to \eqref{Newtonian-1a-Lp}, we have the relations
\begin{align}
\label{Newtonian-2-Lp}
&\triangle {\mathbf N}_{\alpha ;{\Omega_\pm}}{\mathbf f}-\alpha {\mathbf N}_{\alpha ;{\Omega_\pm}}{\mathbf f}-\nabla {\mathcal Q}_{\Omega_\pm}{\mathbf f}={\mathbf f},\
{\rm{div}}\ {\mathbf N}_{\alpha ;{\Omega_\pm}}{\mathbf f}=0\ \mbox{ in }\ {\Omega}_\pm.
\end{align}
This leads us to the following assertion.
\begin{corollary}
\label{C2.16}
{Let}
$\alpha > 0$, $p\in (1,\infty)$, and $p^*=\max\{p,2\}$. Then {the Brinkman Newtonian potentials satisfy equations \eqref{Newtonian-2-Lp} and} the  following operators are continuous
\begin{align}
({\mathbf N}_{\alpha;\Omega_+}, \mathcal Q_{\Omega_+}):L_p({\Omega_+},\mathbb{R}^{n})
\to \mathfrak{H}_{p,\rm div}^{2,0}(\Omega_+,\mathcal{L}_{\alpha}),& \quad
({\mathbf N}_{\alpha;\Omega_-}, \mathcal Q_{\Omega_-}):L_p({\Omega_-},\mathbb{R}^{n})
\to \mathfrak{H}_{p,\rm div,loc}^{2,0}(\overline{\Omega}_-,\mathcal{L}_{\alpha}),\\
({\mathbf N}_{\alpha;\Omega_+}, \mathcal Q_{\Omega_+}):L_p({\Omega_+},\mathbb{R}^{n})
\to \mathfrak{B}_{p,p^*,\rm div}^{2,0}(\Omega_+,\mathcal{L}_{\alpha}),& \quad
({\mathbf N}_{\alpha;\Omega_-}, \mathcal Q_{\Omega_-}):L_p({\Omega_-},\mathbb{R}^{n})
\to \mathfrak{B}_{p,p^*,\rm div, loc}^{2,0}(\overline{\Omega}_-,\mathcal{L}_{\alpha}).
\end{align}
\end{corollary}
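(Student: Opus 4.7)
The plan is to exhibit the triples directly as elements of the prescribed spaces and to control their norms by $\|\mathbf f\|_{L_p}$, using nothing more than the mapping properties already established together with the canonical extension by zero.

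First, from \eqref{Newtonian-2-Lp} the pair $({\mathbf u},\pi):=({\mathbf N}_{\alpha;\Omega_\pm}{\mathbf f},{\mathcal Q}_{\Omega_\pm}{\mathbf f})$ satisfies the Brinkman system ${\mathcal L}_{\alpha}({\mathbf u},\pi)={\mathbf f}$ in $\Omega_\pm$ together with ${\rm div}\,{\mathbf u}=0$. Since ${\mathbf f}\in L_p(\Omega_\pm,{\mathbb R}^n)$ and $\mathring E_\pm$ extends any such ${\mathbf f}$ by zero to $L_p({\mathbb R}^n,{\mathbb R}^n)=H_p^0({\mathbb R}^n,{\mathbb R}^n)$ with support in $\overline{\Omega}_\pm$, we have $\tilde{\mathbf f}:=\mathring E_\pm{\mathbf f}\in\widetilde H_p^0(\Omega_\pm,{\mathbb R}^n)$ with $\|\tilde{\mathbf f}\|_{\widetilde H_p^0(\Omega_\pm,{\mathbb R}^n)}=\|{\mathbf f}\|_{L_p(\Omega_\pm,{\mathbb R}^n)}$, and $\tilde{\mathbf f}|_{\Omega_\pm}={\mathbf f}$, which is precisely the membership relation in Definition~\ref{2.5} for $s+\tfrac{1}{p}=2$ and $t=0$.

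Next, for the interior domain $\Omega_+$, continuity of \eqref{Newtonian-1a-Lpm} gives ${\mathbf u}\in H_p^2(\Omega_+,{\mathbb R}^n)$ with $\|{\mathbf u}\|_{H_p^2}\le C\|{\mathbf f}\|_{L_p}$, and continuity of the first operator in \eqref{Newtonian-1b-Lpm} gives $\pi\in H_p^1(\Omega_+)$ with $\|\pi\|_{H_p^1}\le C\|{\mathbf f}\|_{L_p}$. Combining these with the bound on $\tilde{\mathbf f}$ from the first step and with ${\rm div}\,{\mathbf u}=0$, one reads off that $({\mathbf u},\pi)\in\mathfrak{H}_{p,\rm div}^{2,0}(\Omega_+,{\mathcal L}_{\alpha})$, with the graphic norm controlled by $C\|{\mathbf f}\|_{L_p(\Omega_+,{\mathbb R}^n)}$. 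For the Besov statement, invoke the standard continuous embeddings $H_p^s({\mathbb R}^n)\hookrightarrow B_{p,p^*}^s({\mathbb R}^n)$ (valid for every $s\in{\mathbb R}$, $p\in(1,\infty)$, with $p^*=\max\{p,2\}$, since $H_p^s=F_{p,2}^s\hookrightarrow B_{p,p^*}^s$) localised to $\Omega_+$ for ${\mathbf u},\pi$, and applied to functions supported in $\overline{\Omega}_+$ for $\tilde{\mathbf f}$; this upgrades the previous inclusions to ${\mathbf u}\in B_{p,p^*}^2(\Omega_+,{\mathbb R}^n)$, $\pi\in B_{p,p^*}^1(\Omega_+)$, $\tilde{\mathbf f}\in\widetilde B_{p,p^*}^0(\Omega_+,{\mathbb R}^n)$, giving the desired continuity into $\mathfrak{B}_{p,p^*,\rm div}^{2,0}(\Omega_+,{\mathcal L}_{\alpha})$.

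Finally, for the exterior domain $\Omega_-$, the same argument applies verbatim with $H_p^1$, $H_p^2$, $B_{p,p^*}^1$, $B_{p,p^*}^2$ replaced by their ``loc'' counterparts: continuity of the second operator in \eqref{Newtonian-1b-Lpm} gives $\pi\in H_{p,\rm loc}^1(\Omega_-)$, while ${\mathbf u}\in H_{p,\rm loc}^2(\Omega_-,{\mathbb R}^n)$ follows from $({\mathbf N}_{\alpha;{\mathbb R}^n}\mathring E_-{\mathbf f})\in H_p^2({\mathbb R}^n,{\mathbb R}^n)$ in \eqref{volume-potential-C} and the restriction to any bounded open $\Omega_-'$ with $\partial\Omega\subset\partial\Omega_-'\subset\overline{\Omega}_-$. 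Together with the global bound on $\tilde{\mathbf f}=\mathring E_-{\mathbf f}\in\widetilde H_p^0(\Omega_-,{\mathbb R}^n)$ from step one and ${\rm div}\,{\mathbf u}=0$, this places $({\mathbf u},\pi)\in\mathfrak{H}_{p,\rm div,loc}^{2,0}(\overline{\Omega}_-,{\mathcal L}_{\alpha})$; the Besov analogue is obtained by the same embedding argument as above. The only mildly non-routine point is the correct identification of the canonical extension of ${\mathcal L}_{\alpha}({\mathbf u},\pi)$ with $\mathring E_\pm{\mathbf f}$, which follows from uniqueness of the canonical extension noted in the Remark immediately after Definition~\ref{Dce} together with the representation $\tilde{\mathcal L}_{\alpha}=\widetilde E^\pm{\mathcal L}_{\alpha}$ applicable here since $t=0\in(-1/p',1/p)$.
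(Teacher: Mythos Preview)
Your proposal is correct and follows essentially the same route as the paper. The paper does not give an explicit proof of this corollary: it is stated immediately after the sentence ``This leads us to the following assertion'' as a direct consequence of the mapping properties \eqref{Newtonian-1a-Lpm}, \eqref{Newtonian-1b-Lpm}, \eqref{Newtonian-1}, \eqref{Newtonian-1p} and the equations \eqref{Newtonian-2-Lp}, and your argument simply makes those implicit steps explicit (including the embedding $H_p^s\hookrightarrow B_{p,p^*}^s$ from \eqref{embed-3} and the identification of the canonical extension via Definition~\ref{Dce}).
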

\begin{rem}
\label{gammaN}
Let $\mathbf f_{\pm}\in L_p(\Omega_{\pm},\mathbb{R}^{n})$ for some $p\in (1,\infty)$, and $p^*=\max\{p,2\}$. Then Corollary $\ref{C2.16}$, Lemmas $\ref{trace-lemma-Besov}$, $\ref{lem 1.6}$ and Remark $\ref{CSD}$ imply that
\begin{align}
\label{Newtonian-3-Lp0}
&{\gamma}_\pm\left({\mathbf N}_{\alpha ;\Omega_\pm}{\mathbf f}_\pm\right)
\in B^s_{p,p^*;\boldsymbol\nu}(\partial {\Omega},{\mathbb R}^{n}),\quad
{\mathbf t}_\alpha^\pm\left({\mathbf N}_{\alpha ;\Omega_\pm}{\mathbf f}_\pm,{\mathcal Q}_{\Omega_\pm}{\mathbf f}_\pm\right)
\in B^{s-1}_{p,p^*}(\partial {\Omega},{\mathbb R}^{n}),\quad \forall \ s\in (0,1).
\end{align}

Moreover, due to \eqref{Newtonian-1a-Lpm}, the first equality in \eqref{Nlp-1}, Theorem $\ref{2.13}$, and  \cite[Theorem 5]{Buffa}, these inclusions can be improved to the following ones
\begin{align}
\label{Newtonian-3-Lp}
&{\gamma}_\pm\left({\mathbf N}_{\alpha ;\Omega_\pm}{\mathbf f}_\pm\right)
\in H^1_{p;\boldsymbol\nu}(\partial\Omega,{\mathbb R}^{n}),\
{\mathbf t}_\alpha^\pm\left({\mathbf N}_{\alpha ;\Omega_\pm}{\mathbf f}_\pm,
{\mathcal Q}_{\Omega_\pm}{\mathbf f}_\pm\right)=
{\mathbf t}^{c\pm}\left({\mathbf N}_{\alpha ;\Omega_\pm}{\mathbf f}_\pm,
{\mathcal Q}_{\Omega_\pm}{\mathbf f}_\pm\right)
\in L_p(\partial {\Omega},{\mathbb R}^{n}).
\end{align}
\end{rem}
In \eqref{Newtonian-3-Lp0}, \eqref{Newtonian-3-Lp} and further on, the following space notations are used for $p\in (1,\infty )$, $q\in(1,\infty]$, $s\in (0,1]$, and the outward unit normal $\boldsymbol\nu$ to the Lipschitz domain $\Omega _+\subset {\mathbb R}^n$,
\begin{align}
&L_{p;\boldsymbol \nu}(\partial {\Omega}, \mathbb{R}^{n}):=\left\{{\bf v}\in L_p(\partial {\Omega}, \mathbb{R}^{n}):
\int _{\partial {\Omega}}{\bf v}\cdot \boldsymbol \nu d\sigma =0\right\},\
H_{p;\boldsymbol \nu}^{s}(\partial {\Omega}, \mathbb{R}^{n}):=
\left\{{\bf v}\in H_p^{s}(\partial {\Omega}, \mathbb{R}^{n}):
\int _{\partial {\Omega}}{\bf v}\cdot \boldsymbol \nu d\sigma =0\right\},\nonumber\\
&B_{p,q;\boldsymbol \nu}^{s}(\partial {\Omega}, \mathbb{R}^{n}):=
\left\{{\bf v}\in B_{p,q}^{s}(\partial {\Omega}, \mathbb{R}^{n}):
\int _{\partial {\Omega}}{\bf v}\cdot \boldsymbol \nu d\sigma =0\right\}.
\label{p-nu}
\end{align}
\subsection{\bf Layer potentials for the Brinkman system}
For a given density $\textbf{g}\in L_p(\partial {\Omega}, \mathbb{R}^{n})$, the \textit{velocity single-layer potential} for the Brinkman system, ${\textbf{V}}_{\alpha }{\bf g}$, and the corresponding \textit{pressure single-layer potential}, $Q^{s}{\bf g}$, are given by
\begin{equation}
\label{single-layer}
(\textbf{V}_{\alpha }\textbf{g})({\bf x}):=\langle \mathcal{G}^{\alpha}({\bf x},\cdot), {\bf g} \rangle_{\partial {\Omega}}, \
(\mathcal Q^{s}\textbf{g})({\bf x}):= \langle {\Pi}({\bf x},\cdot), {\bf g}\rangle_{\partial {\Omega}}, \ {\bf x}\in \mathbb{R}^{n}\setminus \partial {\Omega}.
\end{equation}

Let ${\textbf h}\in H^1_p(\partial {\Omega}, \mathbb{R}^{n})$ be a given density. Then the  \textit{velocity double-layer potential}, ${\bf W}_{\alpha ;\partial {\Omega}}{\bf h}$, and the corresponding \textit{pressure double-layer potential}, $Q^{d}_{\alpha;\partial {\Omega}}\textbf{h}$, are defined by
\begin{align}
\label{dl-velocity}
&({\bf W}_{\alpha }{\textbf h})_{j}({\bf x}):= \int_{\partial {\Omega}} S^{\alpha}_{ij\ell }({\bf x},{\bf y})\nu _{\ell}({\bf y}) h_{i}({\bf y})d\sigma _{\bf y}, \ 
({\mathcal Q}^{d}_{\alpha }{\bf h})({\bf x}):= \int_{\partial {\Omega}} \Lambda^{\alpha }_{j\ell}({\bf x},{\bf y})\nu _{\ell}({\bf y})h_{j}({\bf y})d\sigma _{\bf y},\ \ \forall \ {\bf x}\in \mathbb{R}^{n}\setminus \partial {\Omega},
\end{align}
where $\nu _\ell $, $\ell =1,\ldots ,n$, are the components of the outward unit normal $\boldsymbol \nu $ to ${\Omega}_+$, which is defined a.e. (with respect to the surface measure $\sigma $) on $\partial {\Omega}$. {Note that the definition of the double layer potential in \cite[(3.9)]{Shen} {differs from} definition \eqref{dl-velocity} due to different conormal derivatives used  {in \cite[(1.14)]{Shen} and in formula \eqref{classical-stress} of our paper.}

The single- and double-layer potentials can be also defined {for any} ${\bf g}\in B^{s-1}_{p,q}(\partial {\Omega}, {\mathbb R}^{n})$ and ${\textbf h}\in B_{p,q}^{s}(\partial {\Omega},\mathbb{R}^{n})$, respectively, where $s\in (0,1)$ and $p,q\in (1,\infty )$.
For $\alpha = 0$ (i.e., for the Stokes system) we use the notations ${\textbf V}\textbf{g}, \mathcal Q^{s}\textbf{g}, {\textbf W}{\textbf h}$ and ${\mathcal Q^{d}}{\textbf h}$ for the corresponding single- and double-layer potentials.

In view of equations (\ref{2.2.1}) and (\ref{2.2.1'}), the pairs $(\textbf{V}_{\alpha }{\bf g}, Q^{s}\textbf{g})$ and $(\textbf{W}^{s}_{\alpha }\textbf{h},Q^{d}_{\alpha }\textbf{h})$ satisfy the homogeneous Brinkman system in ${\Omega}_{\pm }$,
\begin{align}
\label{sl}
&(\triangle - \alpha\mathbb{I}){\textbf V}_{\alpha }{\bf g} - \nabla \mathcal Q^{s}\textbf{g}={\bf 0},\ \ {\rm{div}}{\bf V}_{\alpha }{\bf g} = 0 \ \mbox{ in } \ \mathbb{R}^{n}\setminus \partial {\Omega},
\\
\label{dl0}
&(\triangle - \alpha\mathbb{I}){\textbf W}_{\alpha }{\textbf h} - \nabla \mathcal Q^{s}{\textbf h}={\bf 0},\ \ {\rm{div}}{\bf W}_{\alpha }\textbf{h}=0 \ \mbox{ in } \ \mathbb{R}^{n}\setminus \partial {\Omega}.
\end{align}

The direct value of the double layer potential ${\bf W}_{\alpha; \partial {\Omega}} \textbf{h}$ on the boundary is defined in terms of Cauchy principal value by
\begin{equation}
({\bf K}_{\alpha }\textbf{h})_{k}({\bf x}):={\rm{p.v.}}\int\limits_{\partial {\Omega}} S^{\alpha}_{jk\ell }({{\bf y},{\bf x}})\nu _{\ell }({\bf y})h_{j}({\bf y})d \sigma _{\bf y} \ \mbox{ a.e. } \ {\bf x}\in \partial {\Omega}.
\end{equation}

\begin{lemma}
\label{nontangential-sl}
Let $\Omega _+\!\subset \!{\mathbb R}^n$ $(n\geq 3)$ be a bounded Lipschitz domain with connected boundary $\partial \Omega $ and let $\Omega _{-}:={\mathbb R}^n\setminus \overline{\Omega }_+$.
{Let $\alpha \geq 0$ and $p\in (1,\infty )$.}
There exist some constants {$C_i>\!0$, $i=1,\ldots ,4$, depending only on $p$, $\alpha$ and the Lipschitz character of $\Omega _+$}, such that the following properties hold:
\begin{align}
\label{7ms}
&\|M\left(\nabla {\bf V}_{\alpha }{\bf g}\right)\|_{L_p(\partial \Omega )}
+\|M\left({\bf V}_{\alpha }{\bf g}\right)\|_{L_p(\partial \Omega )}
+\|M\left({\mathcal Q}^{s}{\bf g}\right)\|_{L_p(\partial \Omega )}
\le C_1\|{\bf g}\|_{L_p(\partial \Omega ,{\mathbb R}^n)},
\ \ \forall \ {\bf g}\in L_p(\partial \Omega ,{\mathbb R}^n),\\
\label{7ms-sl-0}
&{\|M\left({\bf V}_{\alpha }{\bf g}\right)\|_{L_p(\partial \Omega )}
\le C_2\|{\bf g}\|_{H_p^{-1}(\partial \Omega ,{\mathbb R}^n)},
\ \ \forall \ {\bf g}\in H_p^{-1}(\partial \Omega ,{\mathbb R}^n)},\\
\label{7ms-dl-0}
&{\|M\left({\bf W}_{\alpha }{\bf h}\right)\|_{L_p(\partial \Omega )}
\le C_3\|{\bf h}\|_{L_p(\partial \Omega ,{\mathbb R}^n)},
\ \ \forall \ {\bf h}\in L_p(\partial \Omega ,{\mathbb R}^n)},\\
\label{7ms-dl}
&{\|M\left(\nabla {\bf W}_{\alpha }{\bf h}\right)\|_{L_p(\partial \Omega )}
+\|M\left({\bf W}_{\alpha }{\bf h}\right)\|_{L_p(\partial \Omega )}
+\|M\big({\mathcal Q}_{\alpha }^{d}{\bf h}\big)\|_{L_p(\partial \Omega )}
\le C_4\|{\bf h}\|_{H_p^1(\partial \Omega ,{\mathbb R}^n)},
\ \ \forall \ {\bf h}\in H_p^1(\partial \Omega ,{\mathbb R}^n)}.
\end{align}

Moreover, the following estimates hold for the non-tangential traces that exist
at almost all points of $\partial \Omega $:
\begin{align}
\label{ntV}
&\|({\bf V}_{\alpha }{\bf g})^\pm _{\rm nt}\|_{L_p(\partial \Omega ,{\mathbb R}^n)},\,
\|(\nabla {\bf V}_{\alpha }{\bf g})^\pm _{\rm nt}\|_{L_p(\partial \Omega ,{\mathbb R}^n)},\,
\|({\mathcal Q}^{s}{\bf g})^\pm _{\rm nt}\|_{L_p(\partial \Omega ,{\mathbb R}^n)}
\le C_1\|{\bf g}\|_{L_p(\partial \Omega ,{\mathbb R}^n)},
\ \ \forall \ {\bf g}\in L_p(\partial \Omega ,{\mathbb R}^n),
\\
\comment{For any ${\bf g}\in H^{-1}_p(\partial \Omega ,{\mathbb R}^n)$, there exist the non-tangential limits of ${\bf V}_{\alpha }{\bf g}$ at almost all points of $\partial \Omega $, and
\begin{align}
\label{7ms-sl-00}
 \|({\bf V}_{\alpha }{\bf g})^\pm _{\rm nt}\|_{L_p(\partial \Omega )}\leq C_2 \|{\bf g}\|_{H^{-1}_p(\partial \Omega ,{\mathbb R}^n)}.
\end{align}
}
\label{7ms-sl-00}
&\|({\bf V}_{\alpha }{\bf g})^\pm _{\rm nt}\|_{L_p(\partial \Omega )}\leq C_2 \|{\bf g}\|_{H^{-1}_p(\partial \Omega ,{\mathbb R}^n)},
\ \ \forall \ {\bf g}\in H_p^{-1}(\partial \Omega ,{\mathbb R}^n),
\\
\label{ntW}
&\|({\bf W}_{\alpha }{\bf g})^\pm _{\rm nt}\|_{L_p(\partial \Omega ,{\mathbb R}^n)}
\le C_3\|{\bf h}\|_{L_p(\partial \Omega ,{\mathbb R}^n)},
\ \ \forall \ {\bf h}\in L_p(\partial \Omega ,{\mathbb R}^n),
\\
\label{ntW1}
&\|({\bf W}_{\alpha }{\bf h})^\pm _{\rm nt}\|_{L_p(\partial \Omega ,{\mathbb R}^n)},\,
\|(\nabla {\bf W}_{\alpha }{\bf h})^\pm _{\rm nt}\|_{L_p(\partial \Omega ,{\mathbb R}^n)},\,
\|({\mathcal Q}_{\alpha }^{d}{\bf h})^\pm _{\rm nt}\|_{L_p(\partial \Omega ,{\mathbb R}^n)}
\le C_4\|{\bf h}\|_{H_p^1(\partial \Omega ,{\mathbb R}^n)},
\ \ \forall \ {\bf h}\in H_p^1(\partial \Omega ,{\mathbb R}^n).
\end{align}
\end{lemma}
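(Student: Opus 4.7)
The plan is to reduce all the Brinkman potential estimates to the corresponding, by now classical, estimates for the Stokes potentials on bounded Lipschitz domains (as obtained by Fabes--Kenig--Verchota, Brown--Shen, Dahlberg--Kenig--Verchota, and Mitrea--Wright), plus easily-controlled remainders. The crucial observation is that the Brinkman and Stokes fundamental tensors differ by a kernel with strictly weaker singularity at the origin.

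More precisely, from \eqref{E41}--\eqref{E41-new} and the small-argument expansion of the Bessel functions $K_{\nu}$, one obtains
\begin{align*}
\mathcal{G}^{\alpha}(x)=\mathcal{G}(x)+\mathcal{R}^{\alpha}(x),\qquad \Lambda^{\alpha}(x,y)=\Lambda(x,y)+\Lambda_{\rm rem}^{\alpha}(x,y),
\end{align*}
where $\mathcal{R}^{\alpha}$ and $\Lambda_{\rm rem}^{\alpha}$ are $\alpha$-analytic and have singularity at most of order $|x|^{-(n-4)}$ (up to a logarithmic factor when $n=4$), and are in fact smooth when $n=3$; the pressure kernel $\Pi$ is independent of $\alpha$. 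Consequently the Brinkman single/double layer operators decompose as the Stokes ones plus remainder operators $\mathbf V^{\rm rem}_{\alpha}$, $\mathbf W^{\rm rem}_{\alpha}$, $\mathcal Q^{d,\rm rem}_{\alpha}$ with kernels having a weaker-than-Calder\'on--Zygmund singularity. Fractional-integration estimates then show that these remainders map $L_p(\partial\Omega,\mathbb{R}^n)$ and $H^1_p(\partial\Omega,\mathbb{R}^n)$ boundedly into $C(\overline{\Omega}_\pm,\mathbb{R}^n)\cap W^{1,p}_{\rm loc}$ (and more), so the non-tangential maximal function estimates \eqref{7ms}, \eqref{7ms-dl-0}, \eqref{7ms-dl} and the corresponding trace bounds \eqref{ntV}, \eqref{ntW}, \eqref{ntW1} for them are immediate.

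The Stokes principal parts $\mathbf V$, $\nabla \mathbf V$, $\mathcal Q^s$, $\mathbf W$, $\nabla\mathbf W$, $\mathcal Q^{d}$ are genuine Calder\'on--Zygmund operators on $\partial\Omega$ (after appropriate principal-value interpretation), for which the required estimates are already established in the literature; we cite them, e.g., from \cite{Fa-Ke-Ve} and \cite[Chapter 9]{M-W}. The existence of the non-tangential limits a.e.\ of $\mathbf V_\alpha\mathbf g$, $\nabla \mathbf V_\alpha\mathbf g$, $\mathcal Q^s\mathbf g$, $\mathbf W_\alpha\mathbf h$, $\nabla\mathbf W_\alpha\mathbf h$, $\mathcal Q_\alpha^{d}\mathbf h$ follows from the same references, the decomposition above, and Lemma~\ref{2.13D}; the $L_p(\partial\Omega)$ bounds for these non-tangential traces are then implied by the non-tangential maximal function bounds via Fatou's lemma.

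The main technical point is the pair of estimates \eqref{7ms-sl-0} and \eqref{7ms-sl-00} for data $\mathbf g\in H^{-1}_p(\partial\Omega,\mathbb{R}^n)$, since here the single-layer density is a distribution. The plan is to treat these by duality: for a smooth test vector field $\boldsymbol\varphi$ on $\partial\Omega$ and fixed $x\in \Omega_\pm$, the map $\mathbf g\mapsto (\mathbf V_\alpha\mathbf g)(x)$ is paired via the kernel $\mathcal G^\alpha(x,\cdot)\in W^1_{p'}(\partial\Omega,\mathbb R^{n\times n})$ uniformly in $x$ away from the diagonal, which gives the pointwise estimate $|(\mathbf V_\alpha\mathbf g)(x)|\le \|\mathcal G^\alpha(x,\cdot)\|_{W^1_{p'}(\partial\Omega)}\|\mathbf g\|_{H^{-1}_p(\partial\Omega)}$ there, while near the boundary one exploits the tangential-derivative identity expressing $\mathbf V_\alpha\partial_{\tau_{kj}}f$ as a Calder\'on--Zygmund-type operator applied to $f\in L_p$, reducing to \eqref{7ms} already established. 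Passing to the non-tangential limit yields \eqref{7ms-sl-00} by the same dominated convergence argument used in the proof of Theorem~\ref{2.13}. This is the step expected to require the most care, since the tangential-derivative calculus on a Lipschitz surface must be combined with the Stokes estimates and the smoother Brinkman remainder to recover a clean bound in $H^{-1}_p$.
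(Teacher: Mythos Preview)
Your strategy—decompose each Brinkman potential as its Stokes counterpart plus a remainder with strictly weaker kernel singularity, cite the Stokes $L_p$ maximal-function theory from \cite{Fa-Ke-Ve,M-W}, and bound the remainders separately—is exactly the approach the paper takes. For \eqref{7ms-sl-0} your plan to write $g=g_0+\sum\partial_{\tau_{r\ell}}g_{r\ell}$ with $g_0,g_{r\ell}\in L_p$, integrate by parts, and reduce to \eqref{7ms} also matches the paper's argument precisely; the ``duality'' layer you describe is unnecessary once this decomposition is in hand.

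One point requires correction. Your claim that the remainders map $L_p(\partial\Omega)$ into $C(\overline{\Omega}_\pm)$ is too strong for large $n$ and $p$ close to $1$: for instance $\nabla\mathcal R^\alpha\sim|x|^{3-n}$ is not in $L_{p'}(\partial\Omega)$ uniformly up to the boundary unless $p>(n-1)/2$. What you actually need, and what suffices, is the $L_p$ non-tangential maximal-function bound for surface integral operators whose kernels are $O(|x-y|^{2-n})$; the paper invokes \cite[Proposition~1]{Med-AAM} for this. The relevant kernel estimates are $|\nabla\mathcal G^\alpha-\nabla\mathcal G|\le c_\alpha|x-y|^{2-n}$ (whence $|S^\alpha-S|\le C|x-y|^{2-n}$) and $|\Lambda^\alpha_{jk}-\Lambda_{jk}|\le C|x-y|^{2-n}$, after which your argument goes through. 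For $M(\nabla\mathbf W_\alpha\mathbf h)$ the paper takes a slightly different route, deriving an explicit integration-by-parts identity that expresses $\partial_r(\mathbf W_\alpha\mathbf h)_j$ as $\partial_r(\mathbf W\mathbf h)_j$ plus gradients of $(\mathbf V_\alpha-\mathbf V)$ acting on $\partial_{\tau_{\ell r}}\mathbf h$ plus $-\alpha\mathbf V_\alpha(\nu_r\mathbf h)$, and then applies \eqref{7ms}; your alternative, via the direct bound $\nabla_x(S^\alpha-S)=O(|x|^{2-n})$, is also valid.
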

\begin{proof}
In the case $\alpha =0$, inequalities \eqref{7ms}-\eqref{7ms-dl} follow from
\cite[Propositions 4.2.3 and 4.2.8]{M-W}.

In the case $\alpha >0$, Inequality \eqref{7ms} has been obtained in \cite[Lemma 3.2]{Shen}. In addition, inequality \eqref{7ms-sl-0} follows {by} the same arguments {as in the proof of} its counterpart in the case $\alpha =0$ (cf. \cite[(4.61)]{M-W}). Indeed, if ${\bf g}\in H_p^{-1}(\partial \Omega ,{\mathbb R}^n)$, then there exist ${\bf g}_0=(g_{0;1},\ldots ,g_{0;n}),\, {\bf g}_{r\ell }=(g_{r\ell ;1},\ldots ,g_{r\ell ;n})\in L_p(\partial \Omega ,{\mathbb R}^n)$, $r,\ell =1,\ldots ,n$, such that
\begin{align}
\label{estimate-Lp}
g_k=g_{0;k}+\sum_{r,\ell =1}^n\partial _{\tau _{r\ell }}g_{r\ell ;k},\
\|g_{0;k}\|_{L_p(\partial \Omega )}+\sum _{r,\ell =1}^{n}\|g_{r\ell ;k}\|_{L_p(\partial \Omega )}\leq 2\|g_k\|_{H_p^{-1}(\partial \Omega )},\ \ k=1,\ldots ,n,
\end{align}
(cf. \cite[Corollary 2.1.2 and relation (4.65)]{M-W}),
where $\partial _{\tau _{r\ell }}=\nu _r\partial _\ell -\nu _\ell \partial _r$ are the tangential derivative operators. Hence, integrating by parts,
\begin{align}
\label{7ms-sl-001}
({\bf V}_\alpha {\bf g})_j({\bf x})=\int _{\partial \Omega }{\mathcal G}_{jk}^\alpha ({\bf x}-{\bf y})g_{0;k}({\bf y})d\sigma _{\bf y}-\sum _{k=1}^n\sum_{r,\ell =1}^n\int_{\partial \Omega }\left(\partial _{\tau _{r\ell }}\left({\mathcal G}_{jk}^\alpha ({\bf x}-{\bf y})\right)\right)g_{r\ell ;k}({\bf y})d\sigma _{{\bf y}},\, \forall \ {\bf x}\in {\mathbb R}^n\setminus \partial \Omega
\end{align}
(cf. \cite[(4.66)]{M-W} for $\alpha =0$). Inequality \eqref{7ms-sl-0} immediately follows from equality \eqref{7ms-sl-001} and the estimates in \eqref{7ms} and \eqref{estimate-Lp}.



Let us now show inequality \eqref{7ms-dl-0} for $\alpha >0$ {(note that its analogue for a differently defined double layer potential in place of $\mathbf W_\alpha$ was given in \cite[Theorem 3.5]{Shen}). First,} we note that Lemma 4.1 in \cite{Med-CVEE-16} (see also \cite[Theorem 2.5]{Shen}) {implies} that there exists a constant $c_\alpha = c_\alpha (\Omega _{+},\alpha )>0$ such that
\begin{align}
\label{7ms-c}
|\nabla {\mathcal G}^\alpha ({\bf x},{\bf y})-\nabla {\mathcal G}({\bf x},{\bf y})|\leq c_\alpha |{\bf x}-{\bf y}|^{2-n},\ \ \forall \ {\bf x},{\bf y}\in \overline{\Omega }_{+},\, {\bf x}\neq {\bf y}.
\end{align}
Then, in view of formula \eqref{stress-tensor-alpha} and equality $\Pi ^\alpha =\Pi $, there exists a constant $C_5=C_5(\Omega _{+},\alpha )>0$ such that
\begin{align}
\label{7ms-c1}
|S_{ijk}^{\alpha }({\bf y},{\bf x})-S_{ijk}({\bf y},{\bf x})|&\leq \Big|\frac{\partial \mathcal{G}^{\alpha}_{ij}({\bf y},{\bf x})}{\partial y_{k}}-\frac{\partial \mathcal{G}_{ij}({\bf y},{\bf x})}{\partial y_{k}}\Big|
+\Big|\frac{\partial \mathcal{G}^{\alpha}_{kj}({\bf y},{\bf x})}{\partial y_{i}}-\frac{\partial \mathcal{G}({\bf y},{\bf x})}{\partial y_{i}}\Big|\leq C_5|{\bf x}-{\bf y}|^{2-n},\ \ \forall \ {\bf x},{\bf y}\in \overline{\Omega }_{+},\, {\bf x}\neq {\bf y}.
\end{align}
Inequality \eqref{7ms-c1} and \cite[Proposition 1]{Med-AAM} (applied to the integral operator ${\bf W}_{\alpha }-{\bf W}$ whose kernel is $\left({\bf S}^\alpha ({\bf y},{\bf x})-{\bf S}({\bf y},{\bf x})\right)\boldsymbol \nu ({\bf y})$) show that there exists a constant $C_6=C_6(\partial \Omega ,p, \alpha )>0$ such that
\begin{align}
\label{7ms-f}
\|M\left(\left({\bf W}_{\alpha }-{\bf W}\right){\bf h}\right)\|_{L_p(\partial \Omega )}\leq
C_6\|{\bf h}\|_{L_p(\partial \Omega ,{\mathbb R}^n)},\ \forall \ {\bf h}\in L_p(\partial \Omega ,{\mathbb R}^n).
\end{align}
Moreover, by \cite[(4.56)]{M-W}, there exists a constant $C_{7}=C_{7}(\partial \Omega ,p)>0$ such that
\begin{align}
\label{7ms-g}
\|M\left({\bf W}{\bf h}\right)\|_{L_p(\partial \Omega )}\leq
C_{7}\|{\bf h}\|_{L_p(\partial \Omega ,{\mathbb R}^n)},\ \forall \ {\bf h}\in L_p(\partial \Omega ,{\mathbb R}^n),
\end{align}
and then, by \eqref{7ms-f} and \eqref{7ms-g}, we obtain
{inequality \eqref{7ms-dl-0}}.

Let us now show inequality \eqref{7ms-dl} for $\alpha >0$. According to the second formula in \eqref{dl-velocity} and formula \eqref{dl-pressure} the kernel of the Brinkman double-layer pressure potential operator ${\mathcal Q}_{\alpha }^d$ is given by
\begin{align}
\label{Lambdanu}
\Lambda^{\alpha}_{jk}({\bf x},{\bf y})\nu _{k}({\bf y})=\frac{1}{\tilde\omega _n}\left\{-\frac{2n(y_j-x_j)(y_k-x_k)\nu _{k}({\bf y})}{|{\bf y}-{\bf x}|^{n+2}}+\frac{2\nu _{j}({\bf y})}{|{\bf y}-{\bf x}|^n}-\alpha \frac{1}{(n-2)}\frac{1}{|{\bf y}-{\bf x}|^{n-2}}\nu _{j}({\bf y})\right\}.
\end{align}
For $\alpha =0$, \eqref{Lambdanu} reduces to the kernel of the Stokes double-layer pressure potential operator ${\mathcal Q}^d$. Therefore, 
\begin{align}
\label{7ms-dl-2}
|\Lambda^{\alpha}_{jk}({\bf x},{\bf y})\nu _{k}({\bf y})-\Lambda _{jk}({\bf x},{\bf y})\nu _{k}({\bf y})|\leq
\frac{\alpha }{\tilde\omega _n(n-2)}\frac{1}{|{\bf y}-{\bf x}|^{n-2}},
\ \ \forall \ {\bf x}\in \overline{\Omega }_{+},\, {\bf y}\in {\partial\Omega },\ {\bf x}\neq {\bf y}.
\end{align}
Then according to \cite[Proposition 1]{Med-AAM} applied to the operator ${\mathcal Q}_{\alpha }^d-{\mathcal Q}^d$, there exists a constant $C_8=C_{8}(\partial \Omega ,p,\alpha )$ such that
\begin{align}
\label{7ms-dl-3}
\left\|M\left(\left({\mathcal Q}_{\alpha }^d-{\mathcal Q}^d\right){\bf h}\right)\right\|_{L_p(\partial \Omega )}\leq
C_{8}\|{\bf h}\|_{L_p(\partial \Omega ,{\mathbb R}^n)},\ \forall \ {\bf h}\in H_p^1(\partial \Omega ,{\mathbb R}^n).
\end{align}

In view of \cite[Proposition 4.2.8]{M-W}, the Stokes double-layer pressure potential operator ${\mathcal Q}^d$ satisfies the inequality
\begin{align}
\label{7ms-dl-4}
\left\|M\left({\mathcal Q}^d{\bf h}\right)\right\|_{L_p(\partial \Omega )}\leq
C_{9}\|{\bf h}\|_{H_p^1(\partial \Omega ,{\mathbb R}^n)},\ \forall \ {\bf h}\in H_p^1(\partial \Omega ,{\mathbb R}^n),
\end{align}
with a constant $C_{9}\equiv C_{9}(\partial \Omega ,p)>0$. Then by \eqref{7ms-dl-3} and \eqref{7ms-dl-4} there exists a constant $C_{10}\equiv C_{10}(\partial \Omega ,p,\alpha )>0$ such that
\begin{align}
\label{7ms-dl-5}
{\left\|M\left({\mathcal Q}_{\alpha }^d{\bf h}\right)\right\|_{L_p(\partial \Omega )}\leq
C_{10}\|{\bf h}\|_{H_p^1(\partial \Omega ,{\mathbb R}^n)},\ \forall \ {\bf h}\in H_p^1(\partial \Omega ,{\mathbb R}^n).}
\end{align}

Next, we show that there exists a constant $c_3=c_3(\Omega_+,p,\alpha )>0$ such that
\begin{align}
\label{7ms-dl-7}
\|M\left(\nabla {\bf W}_{\alpha }{\bf h}\right)\|_{L_p(\partial \Omega )}\leq c_3\|{\bf h}\|_{H_p^1(\partial \Omega ,{\mathbb R}^n)},\ \forall \ {\bf h}\in H_p^1(\partial \Omega ,{\mathbb R}^n).
\end{align}
To this end, we use expressions \eqref{dl-velocity} and \eqref{stress-tensor-alpha} for the Brinkman double layer potential ${\bf W}_{\alpha }{\bf h}$ to obtain for any ${\bf h}\in H_p^1(\partial \Omega ,{\mathbb R}^n)$,
\begin{align}
\label{1a}
\partial_r\left({\bf W}_{\alpha }{\bf h}\right)_j({\bf x})&=-\int_{\partial \Omega }\left\{\nu _{\ell }({\bf y})\left(\partial _r \partial _\ell {\mathcal G}_{jk}^{\alpha }\right)({\bf y}-{\bf x})+\nu _{\ell }({\bf y})\left(\partial _r \partial _j{\mathcal G}_{\ell k}^{\alpha }\right)({\bf y}-{\bf x})-\nu _j({\bf y})\left(\partial _r {\Pi }_{k}\right)({\bf y}-{\bf x})\right\}h_k({\bf y})d\sigma _{\bf y}\nonumber\\
&=-\int_{\partial \Omega }\left\{\partial _{\tau _{\ell r}({\bf y})}\left(\partial _\ell {\mathcal G}_{jk}^{\alpha } \right)({\bf y}-{\bf x})+\partial _{\tau _{\ell r}({\bf y})}\left(\partial _j{\mathcal G}_{\ell k}^{\alpha } \right)({\bf y}-{\bf x})-\partial _{\tau _{jr}({\bf y})}\Pi _{k}({\bf y}-{\bf x})
\right\}h_k({\bf y})d\sigma _{\bf y}\nonumber\\
&\quad -\int_{\partial \Omega }\left\{\nu _r({\bf y})\triangle {\mathcal G}_{jk}^{\alpha }({\bf y}-{\bf x})+\nu _r({\bf y})\left(\partial _\ell \partial _j{\mathcal G}_{\ell k}^{\alpha }\right)({\bf y}-{\bf x})-\nu _r({\bf y})\left(\partial _j\Pi _k\right)({\bf y}-{\bf x})\right\}h_k({\bf y})d\sigma _{{\bf y}}\nonumber\\
&=\int_{\partial \Omega }\left\{\left(\partial _\ell {\mathcal G}_{jk}^{\alpha }\right)({\bf y}-{\bf x})\left(\partial _{\tau _{\ell r}}h_k\right)({\bf y})+\left(\partial _j{\mathcal G}_{\ell k}^{\alpha }\right)({\bf y}-{\bf x})\left(\partial _{\tau _{\ell r}}h_k\right)({\bf y})-\Pi _k({\bf y}-{\bf x})\left(\partial _{\tau _{jr}}h_k\right)({\bf y})\right\}d\sigma _{{\bf y}}\nonumber\\
&\quad -\alpha \int_{\partial \Omega }\nu _r({\bf y}){\mathcal G}_{jk}^{\alpha }({\bf y}-{\bf x})h_k({\bf y})d\sigma _{{\bf y}},\ \ j,r=1,\ldots ,n,
\end{align}
where
$\partial _j:=\dfrac{\partial }{\partial x_j}$.
We also employed the following integration by parts formula, which holds for any $p\in (1,\infty )$ (cf. \cite[(2.16)]{M-W}),
\begin{align}
\label{int-parts}
\int_{\partial \Omega }f\left(\partial _{\tau _{jk}}g\right)d\sigma =\int_{\partial \Omega }\left(\partial _{\tau _{kj}}f\right)g d\sigma ,\ \ \forall \ f\in H_p^1(\partial \Omega),\ \forall \ g\in H_{p'}^1(\partial \Omega),
\end{align}
where $\frac{1}{p}+\frac{1}{p'}=1$.
The last integral in \eqref{1a} follows from equations \eqref{2.2.1}, which, in particular, yield that
\begin{equation}
\label{2.2.1p}
(\triangle _{{\bf y}}-\alpha \mathbb{I})\mathcal{G}^{\alpha}({\bf y}-{\bf x}) - \nabla_{{\bf y}}{\Pi}({\bf y}-{\bf x})=0, \ \ {\rm{div}}_{{\bf y}}\mathcal{G}^{\alpha }({\bf y}-{\bf x})= 0, \ \forall  \ {\bf x}\in {\mathbb R}^n\setminus \partial \Omega, \ {\bf y}\in \partial \Omega .
\end{equation}
In the case $\alpha =0$, formula \eqref{1a} has been obtained in \cite[(4.84)]{M-W}.

Now, from formula \eqref{1a} and its counterpart corresponding to $\alpha =0$, we obtain for all $j,r=1,\ldots ,n$,
\begin{align}
\label{7ms-dl-6}
\partial_r\left({\bf W}_{\alpha }{\bf h}\right)_j=\partial_r\left({\bf W}{\bf h}\right)_j
+ 
\partial _\ell \left(\left({\bf V}_{\alpha }-{\bf V}\right)\left(\partial _{\tau _{\ell r}}{\bf h}\right)\right)_j
+ 
\partial _j\left(\left({\bf V}_{\alpha }-{\bf V}\right)\left(\partial _{\tau _{\ell r}}{\bf h}\right)\right)_\ell -\alpha \left({\bf V}_{\alpha }\left(\nu _r{\bf h}\right)\right)_j,\ \forall \ {\bf h}\in H_p^1(\partial \Omega ,{\mathbb R}^n).
\end{align}
Further, by using estimate (4.86) in \cite[Proposition 4.2.8]{M-W} for the Stokes double layer potential, ${\bf W}{\bf h}$, property \eqref{7ms} for the Brinkman and Stokes single layer potentials involved in formula \eqref{7ms-dl-6}, and continuity of the tangential derivative operators $\partial _{\tau _{jk}}:H_p^1(\partial {\Omega})\to L_p(\partial \Omega )$, we obtain inequality \eqref{7ms-dl-7}, as asserted (see also \cite[(3.35)]{K-Medk-W}).

Finally, inequalities \eqref{7ms-dl-0}, \eqref{7ms-dl-5} and \eqref{7ms-dl-7} imply inequality \eqref{7ms-dl}.

For any $n\geq 3$ and $\ell \geq 0$, there exists a constant $C=C(n,\ell ,\alpha )>0$ such that the inequality (cf. \cite[Theorem 2.4]{Shen})
\begin{align}
\label{Shen-1}
\left|\nabla _{{\bf x}}^\ell {\mathcal G}^{\alpha }({\bf x})\right|\leq \frac{C}{\left(1+\alpha |{\bf x}|^2\right)|{\bf x}|^{n-2+\ell }},
\end{align}
holds and implies that
$
\left|{\mathcal G}^{\alpha }({\bf x}-{\bf y})\right|
\leq C_0|{\bf x}-{\bf y}|^{2-n},
$
with some constant $C_0=C_0(n,\alpha )>0$.
Then in view of \cite[Proposition 1]{Med-AAM}, for any ${\bf g}\in L_p(\partial \Omega ,{\mathbb R}^n)$  there exist the non-tangential limits of the Brinkman single layer potential ${\bf V}_{\alpha }{\bf g}$ at almost all points of $\partial \Omega $.
Moreover, the existence of the non-tangential limits of $\nabla {\bf V}_{\alpha }{\bf g}$ at almost all points of $\partial \Omega $ follows immediately from \cite[Lemma 3.3]{Shen}. For ${\mathcal Q}^s{\bf g}$ such a result is valid since the Brinkman pressure single layer potential coincides with the Stokes pressure single layer potential, for which the result is well known, cf., e.g., \cite[Proposition 4.2.2]{M-W} and \cite[Lemma 3.3]{Shen}.

\comment{If ${\bf g}\in H^{-1}_p(\partial \Omega ,{\mathbb R}^n)$ then the existence of the non-tangential limits of ${\bf V}_{\alpha }{\bf g}$ a.e. on $\partial \Omega $ and inequality \eqref{7ms-sl-00} follow from formula \eqref{7ms-sl-001} and the statement from item (ii).}

If ${\bf g}\in H^{-1}_p(\partial \Omega ,{\mathbb R}^n)$ then the existence of the non-tangential limits of ${\bf V}_{\alpha }{\bf g}$ a.e. on $\partial \Omega $
follows from formula \eqref{7ms-sl-001} and the corresponding statement for the existence of non-tangential limits for a single layer potential and the gradient a single layer potential with a density in $L_p(\partial \Omega ,{\mathbb R}^n)$.

Now let ${\bf h}\in L_p(\partial \Omega ,{\mathbb R}^n)$.
Then the existence of the non-tangential limits of the Brinkman double layer potential ${\bf W}_{\alpha }{\bf h}$ at almost all points of $\partial \Omega $ follows easily from the case $\alpha =0$.
Indeed, estimate \eqref{7ms-c1}
and \cite[Proposition 1]{Med-AAM} imply that the difference
\begin{multline}
\label{Shen-3}
\left({\bf W}_{\alpha }{\bf h}\right)_j({\bf x})-\left({\bf W}{\bf h}\right)_j({\bf x})
=\int_{\partial \Omega }\left(S_{ijk}^{\alpha }({\bf y}-{\bf x})-S_{ijk}({\bf y}-{\bf x})\right)\nu _k({\bf y})h_i({\bf y})d\sigma _{{\bf y}}\\
=\int_{\partial \Omega }\left\{\left(\frac{\partial {\mathcal G}^{\alpha }_{ij}({\bf y}-{\bf x})}{\partial y_k}-\frac{\partial {\mathcal G}_{ij}({\bf y}-{\bf x})}{\partial y_k}\right)+\left(\frac{\partial {\mathcal G}^{\alpha }_{kj}({\bf y}-{\bf x})}{\partial y_i}-\frac{\partial {\mathcal G}_{kj}({\bf y}-{\bf x})}{\partial y_i}\right)\right\}\nu _k({\bf y})h_i({\bf y})d\sigma _{{\bf y}},
\ {\bf x}\in \Omega_\pm 
\end{multline}
has non-tangential limits $\left(\left({\bf W}_{\alpha }{\bf h}\right)_j-\left({\bf W}{\bf h}\right)_j\right)_{\rm{nt}}^{\pm }({\bf x}_0)$ at almost all points ${\bf x}_0\in \partial \Omega $. On the other hand, according to \cite[Proposition 4.2.2]{M-W} there exist the {non-tangential} limits of the Stokes double layer potential ${\bf W}{\bf h}$ at almost all points ${\bf x}_0$ of $\partial \Omega $.
Therefore, the {non-tangential} limits of the Brinkman double layer potential ${\bf W}_{\alpha }{\bf h}$ exist as well at almost all points ${\bf x}$ of $\partial \Omega $.

Now let ${\bf h}\in H_p^1(\partial \Omega ,{\mathbb R}^n)$.
Then the existence of the non-tangential limits of $\nabla {\bf W}_{\alpha }{\bf h}$ at almost all points of $\partial \Omega $ follows from their existence in the case $\alpha =0$ (cf. \cite[(4.91)]{M-W}), formula \eqref{7ms-dl-6}, and the statement for the existence of non-tangential limits for a single layer potential and the gradient a single layer potential with a density in $L_p(\partial \Omega ,{\mathbb R}^n)$, while the existence of non-tangential limits of ${\mathcal Q}_{\alpha }{\bf h}$ a.e. on $\partial \Omega $ is provided by the corresponding result in the case $\alpha =0$ (cf. \cite[(4.85)]{M-W}) and \cite[Proposition 1]{Med-AAM} applied to the {complementary} term $\left({\mathcal Q}_{\alpha }^d-{\mathcal Q}^d\right){\bf h}=\alpha V_{\triangle }({\bf h}\cdot \boldsymbol \nu )$, which by \eqref{Lambdanu} is the Laplace single layer potential with density $\alpha {\bf h}\cdot \boldsymbol \nu \in L_p(\partial \Omega )$.

Finally, note that inequalities \eqref{ntV}-\eqref{ntW1} follow from inequalities \eqref{7ms}-\eqref{7ms-dl} and the estimate $\|f_{\rm{nt}}^{\pm} \|_{L_p(\partial \Omega )}\leq \|M(f)\|_{L_p(\partial \Omega )}$, whenever $f$ has the property that both $f_{\rm{nt}}^{\pm}$ and $M(f)$ exist a.e on $\partial \Omega $ (see \cite[Remark 9]{Choe-Kim}).
\hfill\end{proof}

The mapping properties of layer potential operators for the {\em Stokes} system {(i.e., for $\alpha=0$)} in Bessel-potential and Besov spaces on bounded Lipschitz domains, as well as their jump relations across a Lipschitz boundary, are well known,
cf., e.g., \cite{Fa-Ke-Ve}, \cite{H-W}, \cite[Theorem 10.5.3]{M-W}, \cite[Theorem 3.1, Proposition 3.3]{M-T}.
The main properties of layer potential {operators} for the Brinkman system are collected below (some of them are also available in \cite[Proposition 3.4]{D-M}, \cite[Lemma 3.4]{K-L-M-W},
\cite[Lemma 3.1]{K-L-W}, \cite[Theorem 3.1]{M-T}, \cite[Theorems 3.4 and 3.5]{Shen}).
\begin{theorem}
\label{layer-potential-properties}
Let ${\Omega}_{+}\subset {\mathbb R}^n$ {$(n\geq 3)$} be a bounded Lipschitz domain with connected boundary $\partial {\Omega}$ and let $\Omega _{-}:={\mathbb R}^n\setminus \overline{\Omega }_+$.
Let 
$p,q\in (1,\infty )$, $\alpha >0$, and $p^*:=\max\{p,2\}$. Let $t\ge -\frac{1}{p'}$ be arbitrary, where $\frac{1}{p}+\frac{1}{p'}=1$.
\begin{itemize}
\item[$(i)$] Then the following operators are linear and continuous,
\begin{align}
\label{ss-1}
&{\bf V}_{\alpha }|_{{\Omega}_{+}}:L_p(\partial {\Omega},{\mathbb R}^n)
\to {B}^{1+\frac{1}{p}}_{p,p^*;{\rm div}}({\Omega} _{+},{\mathbb R}^n),\
\mathcal Q^s|_{{\Omega} _{+}}:L_p(\partial {\Omega},{\mathbb R}^n)
\to {B}_{p,p^*}^{\frac{1}{p}}({\Omega} _{+}),\\
\label{fr1}
&\left({\bf V}_\alpha|_{\Omega_+}, \mathcal Q^s|_{\Omega_+}\right):L_p(\partial {\Omega},{\mathbb R}^n)\to \mathfrak{B}_{p,p^*;{\rm div}}^{1+\frac{1}{p},t}(\Omega_+,\mathcal{L}_{\alpha}),\\
\label{ss-1-1}
&{\bf V}_{\alpha }|_{{\Omega}_{+}}:H^{-1}_p(\partial {\Omega},{\mathbb R}^n)
\to {B}^{\frac{1}{p}}_{p,p^*;{\rm div}}({\Omega} _{+},{\mathbb R}^n),\
\mathcal Q^s|_{{\Omega} _{+}}:H^{-1}_p(\partial {\Omega},{\mathbb R}^n)
\to {B}_{p,p^*}^{-1+\frac{1}{p}}({\Omega} _{+}),\\
\label{fr1-1}
&\left({\bf V}_\alpha|_{\Omega_+}, \mathcal Q^s|_{\Omega_+}\right):H^{-1}_p(\partial {\Omega},{\mathbb R}^n)\to \mathfrak{B}_{p,p^*;{\rm div}}^{\frac{1}{p},t}(\Omega_+,\mathcal{L}_{\alpha}),\\
\label{ds-1}
&{\bf W}_{\alpha }|_{{\Omega}_{+}}\!:\!H^1_p(\partial {\Omega},{\mathbb R}^n)
\to {B}^{1+\frac{1}{p}}_{p,p^*;{\rm div}}({\Omega} _{+},{\mathbb R}^n),\
{\mathcal Q}_{\alpha }^d\big|_{{\Omega} _{+}}\!:\!H^1_p(\partial {\Omega},{\mathbb R}^n)\to {B}_{p,p^*}^{\frac{1}{p}}({\Omega} _{+}),\\
\label{fr2}
&\left({\bf W}_\alpha|_{\Omega_+}, \mathcal Q_\alpha^d|_{\Omega_+}\right):H^1_p(\partial {\Omega},{\mathbb R}^n)
\to \mathfrak{B}_{p,p^*;{\rm div}}^{1+\frac{1}{p},t}(\Omega_+,\mathcal{L}_{\alpha}).
\\
\label{ds-1-0}
&{\bf W}_{\alpha }|_{{\Omega}_{+}}\!:\!L_p(\partial {\Omega},{\mathbb R}^n)
\to {B}^{\frac{1}{p}}_{p,p^*;{\rm div}}({\Omega} _{+},{\mathbb R}^n),\
{\mathcal Q}_{\alpha }^d\big|_{{\Omega} _{+}}\!:\!L_p(\partial {\Omega},{\mathbb R}^n)\to {B}_{p,p^*}^{\frac{1}{p}-1}({\Omega} _{+}),\\
\label{fr2-0}
&\left({\bf W}_\alpha|_{\Omega_+}, \mathcal Q_\alpha^d|_{\Omega_+}\right):L_p(\partial {\Omega},{\mathbb R}^n)
\to \mathfrak{B}_{p,p^*;{\rm div}}^{\frac{1}{p},t}(\Omega_+,\mathcal{L}_{\alpha}).
\end{align}
\item[$(ii)$]
{Moreover, the} following operators are also linear and continuous for $s\in (0,1)$,
\begin{align}
\label{exterior-weight}
&{\bf V}_{\alpha }:B_{p,q}^{s-1}(\partial {\Omega},{\mathbb R}^n)
\to B^{s+\frac{1}{p}}_{p,q;{\rm div}}({\mathbb R}^n,{\mathbb R}^n),\
\mathcal Q^s:B_{p,q}^{s-1}(\partial {\Omega},{\mathbb R}^n)
\to {B_{p,q;{\rm{loc}}}^{s+\frac{1}{p}-1}({\mathbb R}^n)},\\
\label{ss-s1}
&{\bf V}_{\alpha }|_{{\Omega}_{+}}:B_{p,q}^{s-1}(\partial {\Omega},{\mathbb R}^n)
\to B^{s+\frac{1}{p}}_{p,q;{\rm div}}({\Omega} _{+},{\mathbb R}^n),\
\left(\mathcal Q^s\right)|_{{\Omega} _{+}}\!:\!B_{p,q}^{s-1}(\partial {\Omega},{\mathbb R}^n)\to B_{p,q}^{s+\frac{1}{p}-1}({\Omega} _{+}),\\
\label{fr3}
&\left({\bf V}_\alpha|_{\Omega_+}, \mathcal Q^s|_{\Omega_+}\right):B_{p,q}^{s-1}(\partial {\Omega},{\mathbb R}^n)
\to \mathfrak{B}_{p,q,\rm div}^{s+\frac{1}{p},t}(\Omega_+,\mathcal{L}_{\alpha}),\\
\label{ds-s1}
&{\bf W}_{\alpha }|_{{\Omega}_{+}}:B_{p,q}^{s}(\partial {\Omega},{\mathbb R}^n)
\to B^{s+\frac{1}{p}}_{p,q;{\rm div}}({\Omega} _{+},{\mathbb R}^n),\
{\mathcal Q}_{\alpha }^d|_{{\Omega} _{+}}\!:\!B_{p,q}^{s}(\partial {\Omega},{\mathbb R}^n)\to B_{p,q}^{s+\frac{1}{p}-1}({\Omega} _{+}),\\
\label{fr4}
&\left({\bf W}_\alpha|_{\Omega_+},\mathcal Q^d_\alpha|_{\Omega_+}\right):B_{p,q}^{s}(\partial {\Omega},{\mathbb R}^n)
\to \mathfrak{B}_{p,q;{\rm div}}^{s+\frac{1}{p},t}(\Omega_+,\mathcal{L}_{\alpha}),\\
\label{exterior-weight-b}
&{\bf V}_{\alpha }|_{{\Omega}_{-}}:B_{p,q}^{s-1}(\partial {\Omega},{\mathbb R}^n)
\to B^{s+\frac{1}{p}}_{p,q;{\rm div}}(\Omega_-,{\mathbb R}^n),\
\mathcal Q^s|_{{\Omega} _{-}}:B_{p,q}^{s-1}(\partial {\Omega},{\mathbb R}^n)\to {B_{p,q;{\rm{loc}}}^{s+\frac{1}{p}-1}(\overline{\Omega}_-)},\\
\label{fr5}
&\left({\bf V}_\alpha|_{\Omega_-}, \mathcal Q^s|_{\Omega_-}\right):B_{p,q}^{s-1}(\partial {\Omega},{\mathbb R}^n)
\to \mathfrak{B}_{p,q;{\rm div};{\rm loc}}^{s+\frac{1}{p},t}(\overline{\Omega}_-,\mathcal{L}_{\alpha}),\\
\label{exterior-weight-s}
&{{\bf W}_{\alpha }|_{{\Omega}_{-}}}:B_{p,q}^{s}(\partial {\Omega},{\mathbb R}^n)\to
B_{p,q;{\rm div};{\rm{loc}}}^{s+\frac{1}{p}}(\overline{\Omega}_{-},{\mathbb R}^n),\
\mathcal Q^d|_{{\Omega} _{-}}:B_{p,q}^{s}(\partial {\Omega},{\mathbb R}^n)\to
{B_{p,q;{\rm{loc}}}^{s+\frac{1}{p}-1}(\overline{\Omega}_{-})},\\
\label{fr6}
&\left({\bf W}_\alpha|_{\Omega_-}, \mathcal Q^d_\alpha|_{\Omega_-}\right):B_{p,q}^{s}(\partial {\Omega},{\mathbb R}^n)
\to \mathfrak{B}_{p,q,{\rm div};{\rm loc}}^{s+\frac{1}{p},t}(\overline{\Omega}_-,\mathcal{L}_{\alpha}).
\end{align}
\item[$(iii)$]
The following relations hold a.e. on $\partial\Omega $,
\begin{align}
\label{68}
&\big({\bf V}_{\alpha }{\bf g}\big)^+_{\rm nt}
=\big({\bf V}_{\alpha }{\bf g}\big)^-_{\rm nt}
=:{\mathcal V}_{\alpha }{\bf g},
\quad\forall\ {\bf g}\in H^{-1}_p(\partial {\Omega},{\mathbb R}^n);\\
\label{68-s1}
&\frac{1}{2}{\bf h}+{({\bf W}_{\alpha }{\bf h})^+_{\rm nt}}
=-\frac{1}{2}{\bf h}+{({\bf W}_{\alpha }{\bf h})^-_{\rm nt}}
=:{\bf K}_{\alpha }{\bf h},
\quad\forall\ {\bf h}\in L_p(\partial {\Omega},{\mathbb R}^n);\\
\label{70aaa}
-&\frac{1}{2}{\bf g}+{\mathbf t_{\rm nt}^{+}}\left({\bf V}_{\alpha }{\bf g},{\mathcal Q}^{s}{\bf g}\right)
=\frac{1}{2}{\bf g}+{\mathbf t_{\rm nt}^{-}}\left({\bf V}_{\alpha }{\bf g},{\mathcal Q}^{s}{\bf g}\right)
=:{\bf K}_{\alpha }^*{\bf g},
\quad\forall\ {\bf g}\in L_p(\partial {\Omega},{\mathbb R}^n);\\
\label{70aaaa}
&{\mathbf t_{\rm nt}^{+}}\big({\bf W}_{\alpha }{\bf h},{\mathcal Q}_{\alpha }^d{\bf h}\big)
={\mathbf t_{\rm nt}^{-}}\big({\bf W}_{\alpha }{\bf h},{\mathcal Q}_{\alpha }^d{\bf h}\big)
=:{\bf D}_{\alpha }{\bf h},
\quad\forall\ {\bf h}\in H^{1}_p(\partial {\Omega},{\mathbb R}^n);
\end{align}
where ${\bf K}_{\alpha }^*$ is the transpose of ${\bf K}_{\alpha ;\partial {\Omega}}$, and the following boundary integral operators are linear and bounded,
\begin{align}
\label{ss-s2}
&{\mathcal V}_{\alpha }:L_p(\partial {\Omega},{\mathbb R}^n)\to H^1_p(\partial {\Omega},{\mathbb R}^n),\
{\bf K}_{\alpha }:H^1_p(\partial {\Omega},{\mathbb R}^n)\to H^1_p(\partial {\Omega},{\mathbb R}^n),\\
\label{ss-s2-1}
&{\mathcal V}_{\alpha }:H^{-1}_p(\partial {\Omega},{\mathbb R}^n)\to L_p(\partial {\Omega},{\mathbb R}^n),\
{\bf K}_{\alpha }:L_p(\partial {\Omega},{\mathbb R}^n)\to L_p(\partial {\Omega},{\mathbb R}^n),\\
\label{ds-s2}
&{\bf K}_{\alpha }^*:L_p(\partial {\Omega},{\mathbb R}^n)\to L_p(\partial {\Omega},{\mathbb R}^n),\ {\bf D}_{\alpha }:H^1_p(\partial {\Omega},{\mathbb R}^n)\to L_p(\partial {\Omega},{\mathbb R}^n).
\end{align}
For ${\bf h}\in B_{p,q}^{s}(\partial {\Omega},{\mathbb R}^n)$ and ${\bf g}\in B^{s-1}_{p,q}(\partial {\Omega},{\mathbb R}^n)$, {$s\in (0,1)$, the following relations hold a.e. on $\partial\Omega $,
\begin{align}
\label{68s}
&{\gamma}_{+}\big({\bf V}_{\alpha }{\bf g}\big)
={\gamma}_{-}\big({\bf V}_{\alpha }{\bf g}\big)
=:{\mathcal V}_{\alpha }{\bf g},\\
\label{68-s1s}
&\frac{1}{2}{\bf h}+{\gamma}_{+}({\bf W}_{\alpha }{\bf h})=
-\frac{1}{2}{\bf h}+{\gamma}_{-}({\bf W}_{\alpha }{\bf h})
=:{\bf K}_{\alpha }{\bf h},\\
\label{70aaas}
-&\frac{1}{2}{\bf g}+\mathbf t_\alpha^{+}\left({\bf V}_{\alpha }{\bf g},{\mathcal Q}^{s}{\bf g}\right)
=\frac{1}{2}{\bf g}+\mathbf t_\alpha^{-}\left({\bf V}_{\alpha }{\bf g},\mathcal Q_{\partial {\Omega}}^{s}{\bf g}\right)
=:{\bf K}_{\alpha }^*{\bf g},\\
\label{70aaaas}
&\mathbf t_\alpha^{+}\big({\bf W}_{\alpha }{\bf h},{\mathcal Q}_{\alpha }^d{\bf h}\big)
=\mathbf t_\alpha^{-}\big({\bf W})_{\alpha }{\bf h},{\mathcal Q}_{\alpha ;\partial {\Omega}}^d{\bf h}\big)
=:{\bf D}_{\alpha }{\bf h},
\end{align}
and the following operators are linear and continuous,}
\begin{align}
\label{ss-s2-sm}
&{\mathcal V}_{\alpha }:B_{p,q}^{s-1}(\partial {\Omega},{\mathbb R}^n)\to B_{p,q}^{s}(\partial {\Omega},{\mathbb R}^n),\
{\bf K}_{\alpha }:B_{p,q}^{s}(\partial {\Omega},{\mathbb R}^n)\to B_{p,q}^{s}(\partial {\Omega},{\mathbb R}^n),\\
\label{ds-s2-sm}
&{\bf K}_{\alpha }^*:B_{p,q}^{s-1}(\partial {\Omega},{\mathbb R}^n)\to B_{p,q}^{s-1}(\partial {\Omega},{\mathbb R}^n),\ {\bf D}_{\alpha }:B_{p,q}^{s}(\partial {\Omega},{\mathbb R}^n)\to B_{p,q}^{s-1}(\partial {\Omega},{\mathbb R}^n).
\end{align}
\end{itemize}
\end{theorem}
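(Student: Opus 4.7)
The plan is to reduce everything to the already known Stokes case ($\alpha=0$), see \cite{Fa-Ke-Ve,M-W,M-T}, plus a lower-order perturbation, and then upgrade the integer/Bessel-potential statements to the Besov scale by interpolation. First, for item (i) I would observe that the Brinkman layer potentials differ from the Stokes ones by the integral operators with kernels $\mathcal G^\alpha-\mathcal G$, $S^\alpha_{ijk}-S_{ijk}$, $\Lambda^\alpha_{jk}-\Lambda_{jk}$. By estimate \eqref{7ms-c} and the analogous pointwise bounds used in the proof of Lemma~\ref{nontangential-sl}, each of these differences decays like $|x-y|^{2-n}$ (one order better than the Stokes kernel), so the difference operators are essentially Newtonian-type volume and surface potentials; their continuity on the relevant Bessel potential scale follows from pseudodifferential calculus exactly as for the Newtonian potentials in Lemma~\ref{Newtonian-potential} and Corollary~\ref{C2.16}. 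Combined with the known Stokes mapping properties in \cite[Ch.~10]{M-W}, this yields continuity of ${\bf V}_\alpha,{\bf W}_\alpha,{\mathcal Q}^s,{\mathcal Q}_\alpha^d$ into ${B}^{s+\frac{1}{p}}_{p,p^*}$ and ${B}^{s+\frac{1}{p}-1}_{p,p^*}$ with $s=0,1$ on the domain side. The fact that the image actually lies in the solenoidal subspace follows from \eqref{sl}--\eqref{dl0}, and the $\mathfrak B$-valued continuity in \eqref{fr1}--\eqref{fr2-0} is then automatic: since ${\mathcal L}_\alpha(\mathbf V_\alpha\mathbf g,\mathcal Q^s\mathbf g)=0$ in $\Omega_+$ and analogously for the double layer, the extra component $\tilde{\mathbf f}$ in Definition~\ref{2.5} may be taken to vanish, so the graphic norm reduces to the already established norms.

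For item (ii) I would interpolate. The endpoint mappings from item (i) give continuity of $\mathbf V_\alpha|_{\Omega_+}$ from $H^{-1}_p(\partial\Omega)$ to $B^{1/p}_{p,p^*;{\rm div}}$ and from $L_p(\partial\Omega)$ to $B^{1+1/p}_{p,p^*;{\rm div}}$, and similarly for the pressure and double layer. Using the real interpolation identities $(H^{-1}_p,L_p)_{s,q}=B^{s-1}_{p,q}$, $(L_p,H^1_p)_{s,q}=B^s_{p,q}$ on the boundary (see Appendix A and, e.g., \cite{Triebel}), and the corresponding interpolation of the target Besov spaces, I obtain \eqref{ss-s1}--\eqref{fr4}. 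The exterior-domain statements \eqref{exterior-weight}, \eqref{exterior-weight-b}, \eqref{exterior-weight-s}, \eqref{fr5}, \eqref{fr6} are handled identically, but the global-in-$\mathbb R^n$ or unbounded-$\Omega_-$ pressure targets must be taken local in view of the $|x|^{2-n}$ decay of the Stokes kernel: this is precisely why ${\mathcal Q}^s$ and ${\mathcal Q}_\alpha^d$ land in ${B^{s+1/p-1}_{p,q;{\rm loc}}}$ rather than in a global space, and follows from the estimate \eqref{Shen-1} together with a cut-off argument.

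For item (iii), the jump relations \eqref{68}--\eqref{70aaaa} at almost every boundary point are inherited from their Stokes counterparts in \cite[\S4.2]{M-W}: indeed, \eqref{7ms-c} and \eqref{7ms-c1} show that $\mathbf V_\alpha-\mathbf V$, $\mathbf W_\alpha-\mathbf W$, $\mathcal Q_\alpha^d-\mathcal Q^d$ have weakly singular kernels whose non-tangential traces exist on both sides and coincide (no jump), by \cite[Proposition~1]{Med-AAM}, while the jumps of $\mathbf W$ and $\mathbf t_{\rm nt}(\mathbf V,\mathcal Q^s)$ are the classical ones. The boundedness of the boundary integral operators in \eqref{ss-s2}--\eqref{ds-s2} then follows by combining these jump formulas with the non-tangential maximal estimates \eqref{ntV}--\eqref{ntW1} of Lemma~\ref{nontangential-sl}. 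For the Besov-scale versions \eqref{68s}--\eqref{70aaaas}, the pointwise non-tangential traces are replaced by Gagliardo traces, which is legitimate by Theorem~\ref{trace-equivalence-L} when available, and by Theorem~\ref{2.13} (equivalence of canonical and non-tangential conormal derivatives) for the traction operators; once the boundary identities are established on the dense subspace corresponding to $s=0,1$, the mapping properties \eqref{ss-s2-sm}--\eqref{ds-s2-sm} follow by the same real interpolation argument used in item (ii).

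The main obstacle, in my view, is the rigorous justification of the jump relations \eqref{70aaas} and \eqref{70aaaas} in the Besov setting: the canonical conormal derivative is defined only weakly via Definition~\ref{lem 1.6D}, and to identify it with the boundary integral operators $\mathbf K_\alpha^*$ and $\mathbf D_\alpha$ one cannot simply take pointwise non-tangential limits. The way around this is to apply Theorem~\ref{2.13}(ii): for $\mathbf g\in L_p$ (respectively $\mathbf h\in H_p^1$) the non-tangential maximal function of the stress tensor and its non-tangential trace both lie in $L_p(\partial\Omega)$ by Lemma~\ref{nontangential-sl}, so $\mathbf t_\alpha^\pm=\mathbf t_{\rm nt}^\pm$ and the classical jump formulas carry over; for general densities in the Besov scale, density of the smooth data in $B^s_{p,q}(\partial\Omega)$ and continuity of both sides complete the argument.
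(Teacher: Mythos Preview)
Your overall strategy---write each Brinkman potential as the corresponding Stokes potential plus a weakly singular ``complementary'' operator with kernel $\mathcal G^\alpha-\mathcal G$, $S^\alpha-S$, or $\Lambda^\alpha-\Lambda$, and then upgrade the non-tangential jump relations to the Besov scale by density and Theorems~\ref{trace-equivalence-L}, \ref{2.13}---is exactly the paper's approach for items~(i) and~(iii), and your treatment of those parts matches the paper closely.

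The one genuine methodological difference is in item~(ii). You obtain the Besov mapping properties \eqref{exterior-weight}--\eqref{fr6} by real interpolation between the endpoint cases $s=0$ and $s=1$ established in item~(i), using $(H^{-1}_p,L_p)_{s,q}=B^{s-1}_{p,q}$ on the boundary and $(B^{1/p}_{p,p^*},B^{1+1/p}_{p,p^*})_{s,q}=B^{s+1/p}_{p,q}$ in the domain. The paper instead treats item~(ii) directly, without interpolation: for the single layer it writes ${\bf V}_\alpha={\mathbf N}_{\alpha;\mathbb R^n}\circ\gamma'$ (formula~\eqref{VNg}) and invokes the Newtonian-potential mapping \eqref{ss-2} together with the continuity of $\gamma':B^{s-1}_{p,q}(\partial\Omega)\to B^{s-2+1/p}_{p,q}(\mathbb R^n)$; for the double layer it appeals to the already-established Stokes Besov mapping in \cite[Proposition~10.5.1]{M-W} and adds the smoothing remainder ${\bf W}_{\alpha;0}$ via \eqref{dl-2}--\eqref{dl}. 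Your interpolation route is self-contained and avoids the black-box Stokes Besov result, while the paper's direct route avoids having to check that interpolation behaves well on the $\mathfrak B$-graph spaces (which is harmless here since the image is contained in the null space of $\mathcal L_\alpha$, but still a point to mention). Both arguments are valid.
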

\begin{proof}
{(i)} {First of all, we remark that all range spaces of the velocity vector-valued layer potential operators in \eqref{ss-1}-\eqref{fr6} are
divergence-free
due to the second relations in \eqref{sl}-\eqref{dl0}.}
\comment%
{Recall that for $\mathbf{f}\in \mathcal D(\mathbb R^n,\mathbb R^n)$, the velocity and pressure Newtonian potentials are defined by formula \eqref{Newtonian-Brinkman-vp}, i.e.,
\begin{align}
\label{NoT}
\big({\mathcal N}_{{\alpha ; \mathbb R^n}}\boldsymbol{\varphi}\big)({\bf x})\!:=\!
-\big\langle {\mathcal G}^{\alpha }({\bf x}, \cdot),\boldsymbol{\varphi}
\big\rangle_{_{\!{\mathbb R^n}}}\!=\!
-\int_{\mathbb R^n} {\mathcal G}^{\alpha }({\bf x},{\bf y})\boldsymbol{\varphi}({\bf y})\, d{\bf y},\,
\big(\mathcal Q_{{\mathbb R^n}}\boldsymbol{\varphi}\big)({\bf x})\!:=\!
-\big\langle{\Pi}({\bf x},\cdot),\boldsymbol{\varphi}\big\rangle _{_{\!{ \mathbb R^n}}}\!=\!
-\int_{\mathbb R^n}{\Pi}({\bf x}, {\bf y})\boldsymbol{\varphi}({\bf y})\, d{\bf y},\ {\bf x} \in {\mathbb R}^n.
\end{align}
}
Further, let us note that by \eqref{single-layer} and \eqref{Newtonian-Brinkman-vp} the single layer potential can be presented as (cf. \cite[(4.1)]{Co}),
\begin{align}\label{VNg}
{\bf V}_{\alpha }\mathbf g=\langle \gamma\mathcal{G}^{\alpha}({\bf x},\cdot), {\bf g} \rangle_{\partial {\Omega}}
=\langle \mathcal{G}^{\alpha}({\bf x},\cdot), \gamma'{\bf g} \rangle_{\mathbb R^n}
={\mathbf N}_{\alpha ;{\mathbb R}^n}\circ {\gamma}'\mathbf g
\end{align}
for any $\mathbf g\in B^{s-1}_{p,q}(\partial {\Omega},{\mathbb R}^n)$, $p,q\in (1,\infty)$ and $s\in (0,1)$.
Here the operator
{${\gamma}':B^{s-1}_{p,q}(\partial {\Omega},{\mathbb R}^n)\to {B_{p,q;{\rm{comp}}}^{s-1-\frac{1}{p'}}({\mathbb R}^n},{\mathbb R}^n)$}
is adjoint to the trace operator
{${\gamma}:{B_{p',q';{\rm{loc}}}^{1-s+\frac{1}{p'}}({\mathbb R}^n,{\mathbb R}^n)}\to B^{1-s}_{p',q'}(\partial {\Omega},{\mathbb R}^n)$}
and they both are continues due to Lemma~\ref{trace-lemma-Besov}.

Next, we show the continuity of the first operator in \eqref{ss-1} in the case $\alpha >0$ (i.e., for the {\em Brinkman} system). To this end,  we split the Brinkman single-layer potential operator into two operators, as
${\bf V}_{{\alpha }}={\bf V}+{\bf V}_{\alpha ;0},$
where ${\bf V}_{{\alpha ;0}}$ is the complementary single-layer potential operator, i.e.,
\begin{equation}
\label{sl-2}
{\bf V}_{{\alpha ;0}}:={\bf V}_{{\alpha }}-{\bf V}={\bf N}_{{\alpha ;0;\mathbb R^n}}\circ {\gamma}'\circ \iota ,
\end{equation}
where the imbedding operator {$\iota :L_p(\partial\Omega,\mathbb R^n)\hookrightarrow B^{s-1}_{p,p^*}(\partial\Omega,\mathbb R^n)$}
is continuous for any $s\in (0,1)$ and $p\in (1,\infty )$. In addition, ${\bf N}_{{\alpha ;0;\mathbb R^n}}:={\mathbf N}_{{\alpha ;\mathbb R^n}}-{\mathbf N}_{{0;\mathbb R^n}}$ is a pseudodifferential operator of order $-4$ with the kernel ${\mathcal G}^{\alpha ;0}:={\mathcal G}^{\alpha }-{\mathcal G}$ (see formula (2.27) in \cite{K-L-W}), and hence the linear operator
\begin{align}
\label{dl}
&{\bf N}_{{\alpha ;0;\mathbb R^n}}:{B^{s-1-\frac{1}{p'}}_{p,p^*;{\rm{comp}}}({\mathbb R}^n,{\mathbb R}^n)}\to {B^{s+3-\frac{1}{p'}}_{p,p^*;{\rm{loc}}}({\mathbb R}^n,{\mathbb R}^n)}
\end{align}
is continuous for any $s\in (0,1)$ and $p\in (1,\infty )$, where $\frac{1}{p'}=1-\frac{1}{p}$, and { ${B^{s-1-\frac{1}{p'}}_{p,p^*;{\rm{comp}}}({\mathbb R}^n,{\mathbb R}^n)}$ is the space of distributions in ${B^{s-1-\frac{1}{p'}}_{p,p^*}({\mathbb R}^n,{\mathbb R}^n)}$ with compact supports.}
Then formula \eqref{sl-2} and the continuity of the involved operators imply that the operators
\begin{align}
{\bf V}_{{\alpha ;0}}:L_p(\partial {\Omega},{\mathbb R}^n)\to B^{s+2+\frac{1}{p}}_{p,p^*;{\rm{loc}}}({\mathbb R}^n,{\mathbb R}^n),\
\left({\bf V}_{{\alpha ;0}}\right)|_{{\Omega}_{+}}:L_p(\partial {\Omega},{\mathbb R}^n)\to B_{p,p^*}^{s+2+\frac{1}{p}}({\Omega}_{+},{\mathbb R}^n)\nonumber
\end{align}
are continuous as well. Now, the continuity of the embedding
$B^{s+2+\frac{1}{p}}_{p,p^*}({\Omega}_{+},{\mathbb R}^n)\hookrightarrow
B^{1+\frac{1}{p}}_{p,p^*}({\Omega}_{+},{\mathbb R}^n)$ {for any $s\in (0,1)$} shows that
\begin{align}
\label{sl-compl-Lp}
{\bf V}_{{\alpha ;0}}:L_p(\partial {\Omega},{\mathbb R}^n)\to B^{1+\frac{1}{p}}_{p,p^*}({\Omega}_{+},{\mathbb R}^n)
\end{align}
is a continuous operator, even compact.

Moreover, 
the Stokes single layer potential operator ${\bf V}:L_{p}(\partial \Omega ,{\mathbb R}^n)\to L_{p}({\Omega}_{+},{\mathbb R}^{n})$ is continuous (cf., e.g., the mapping property (10.73) in \cite{M-W} and the continuity of the embeddings $L_{p}(\partial \Omega ,{\mathbb R}^n)\hookrightarrow B^{s-1}_{p,p^*}(\partial \Omega ,{\mathbb R}^{n})$ and $B^{s+\frac{1}{p}}_{p,p^*}({\Omega}_{+},{\mathbb R}^n)\hookrightarrow L_{p}({\Omega}_{+},{\mathbb R}^n)$ for any $s\in (0,1)$).

On the other hand, the kernel $\nabla {\mathcal G}$ of the integral operator $\nabla {\bf V}$ satisfies the relations
\begin{align}
\label{1ms}
\nabla {\mathcal G}\in C^{\infty }({\mathbb R}^n\setminus \{0\}),\ (\nabla {\mathcal G})(-{\bf x})
=-(\nabla {\mathcal G})({\bf x}),\ (\nabla {\mathcal G})(\lambda {\bf x})
={\lambda^{-(n-1)}(\nabla {\mathcal G})({\bf x})},\ \forall \ \lambda >0.
\end{align}
Then, in view of \cite[Proposition 2.68]{M-M1}, there exists a constant $C_0\equiv C_0(\Omega _{+},p)>0$ such that
\begin{align}
\label{7ms-1}
&{\|\nabla {\bf V}{\bf g}\|_{B_{p,p^*}^{\frac{1}{p}}(\Omega _{+},{\mathbb R}^{n\times n})}\leq C_0\|{\bf g}\|_{L_p(\partial \Omega ,{\mathbb R}^n)},\ \ \forall \ {\bf g}\in L_p(\partial \Omega ,{\mathbb R}^n).}
\end{align}
Consequently, there exists a constant $\mathfrak{C}\equiv \mathfrak{C}(\Omega _{+},p)>0$ such that
\begin{align}
\label{continuity-sl-p}
\|{\bf V}{\bf g}\|_{B^{1+\frac{1}{p}}_{p,p^*}({\Omega}_{+},{\mathbb R}^{n})}=\|{\bf V}{\bf g}\|_{L_{p}({\Omega}_{+},{\mathbb R}^{n})}+\|\nabla {\bf V}{\bf g}\|_{B^{\frac{1}{p}}_{p,p^*}({\Omega}_{+},{\mathbb R}^{n\times n})}\leq \mathfrak{C}\|{\bf g}\|_{L_{p}({\Omega}_{+},{\mathbb R}^{n})},\ \forall \ {\bf g}\in L_{p}({\Omega}_{+},{\mathbb R}^{n}),
\end{align}
which shows that the Stokes single layer potential operator
\begin{align}
\label{Stokes-sl-p}
{\bf V}:L_p(\partial \Omega ,{\mathbb R}^n)\to B_{p,p^*}^{1+\frac{1}{p}}(\Omega _+, {\mathbb R}^n)
\end{align}
is also continuous {(cf., e.g., \cite[Theorem 7.1, (3.33)]{M-T}, see also \cite{Fa-Ke-Ve} for $p=2$)}.
This mapping property and the continuity of operator \eqref{sl-compl-Lp} show that the Brinkman single layer operator ${\bf V}_{\alpha }:L_{p}(\partial \Omega ,{\mathbb R}^n)\to B^{1+\frac{1}{p}}_{p,p^*}({\Omega}_{+},{\mathbb R}^{n})$ is continuous, as well. 

{{Let us show} the continuity of the second operator in \eqref{ss-1}. To this end, we note that the Stokes single layer pressure potential ${\mathcal Q}^s{\bf f}$ with a density ${\bf f}=(f_1,\ldots ,f_n)\in L_p(\partial \Omega ,{\mathbb R}^n)$ can be written as
\begin{align}
\label{Laplace-Newtonian-potential-0s}
\left({\mathcal Q}^s{\bf f}\right)({\bf x})=\left(\mathrm{div}\, V_{\triangle }{\bf f}\right)({\bf x}),\ \ \forall \ {\bf x}\in {\mathbb R}^n\setminus \partial \Omega ,
\end{align}
where $V_{\triangle }g$ is the harmonic single layer potential with density $g\in L_p(\partial \Omega )$, given by
\begin{align}
\label{Laplace-sl-p}
(V_{\triangle }g)({\bf x}):=-\frac{1}{(n-2)\tilde\omega _n}\int _{\partial \Omega }\frac{1}{|{\bf x}-{\bf y}|^{n-2}}g({\bf y})d\sigma _{{\bf y}},\ \ {\bf x}\in {\mathbb R}^n\setminus \partial \Omega .
\end{align}
Then the continuity of the single layer pressure potential potential operator $\mathcal Q^s:L_p(\partial {\Omega},{\mathbb R}^n)\to {B}_{p,p^*}^{\frac{1}{p}}({\Omega} _{+})$ for any $p\in (1,\infty )$ is a direct consequence of Proposition 4.23 in \cite{M-M-W1}. Note that Proposition 2.68 in \cite{M-M1} applies as well, and shows the desired continuity of the single layer pressure potential operator in \eqref{ss-1} {(see also \cite[Theorem 3.1, (3.30)]{M-T})}.
Thus, we have proved the {continuity} of the operators in \eqref{ss-1}.}

Continuity of the first operator in \eqref{ss-1-1} follows from the continuity of operators involved in the right hand side of equality \eqref{7ms-sl-001}.
Continuity of the second operator in \eqref{ss-1-1} follows from equality \eqref{Laplace-Newtonian-potential-0s}, which is valid also for any ${\bf f}\in H^{-1}_p(\partial \Omega ,{\mathbb R}^n)$, and by the continuity of the harmonic single layer potential operator $V_{\triangle }$ from $H^{-1}_p(\partial \Omega )$ to ${B}_{p,p^*}^{\frac{1}{p}}({\Omega} _{+})$.
Indeed, for any $f\in H_p^{-1}(\partial \Omega )$ there exist $f_{0},f_{r\ell }\in L_p(\partial \Omega )$, $r,\ell =1,\ldots n$, such that $f=f_{0}+\sum_{r,\ell =1}^n\partial _{\tau _{r\ell }}f_{r\ell }$ ({see} \eqref{estimate-Lp}). Then by using the integration by parts formula \eqref{int-parts}, we obtain that
\begin{align}
\label{7ms-sl-001-Laplace}
(V_{\triangle }f)({\bf x})
=\int _{\partial \Omega }{\mathcal G}_{\triangle }({\bf x}-{\bf y})f_{0}({\bf y})d\sigma _{\bf y}
-\sum_{r,\ell =1}^n\int_{\partial \Omega }\left(\partial _{\tau _{rs,{\bf y}}}{\mathcal G}_{\triangle}({\bf x},{\bf y})\right)f_{r\ell }({\bf y})d\sigma _{{\bf y}},\,
\forall \ {\bf x}\in {\mathbb R}^n\setminus \partial \Omega ,
\end{align}
where ${\mathcal G}_{\triangle }({\bf x},{\bf y})$ is the fundamental solution of the Laplace equation in ${\mathbb R}^n$ ($n\geq 3$). By using again \cite[Proposition 2.68]{M-M1} (see also \eqref{7ms-1}) and the continuity of the Laplace single layer potential operator
$
V_{\triangle }:L_p(\partial \Omega )\to B_{p,p^*}^{1+\frac{1}{p}}(\Omega _+)
$
(see, e.g., \cite[Proposition 4.23]{M-M-W1} and property (3.49) in \cite[Proposition 3.3]{M-T}), there exists a constant $C_0$ such that
\begin{align}
\label{continuity-sl-Laplace}
\|V_{\triangle }f\|_{B^{1+\frac{1}{p}}_{p,p^*}({\Omega}_{+})}=\|V_{\triangle } f\|_{L_{p}({\Omega}_{+})}+\|\nabla V_{\triangle }f\|_{B^{\frac{1}{p}}_{p,p^*}({\Omega}_{+},{\mathbb R}^{n})}\leq C_0\|f\|_{L_{p}({\Omega}_{+})},\ \forall \ f\in L_{p}({\Omega}_{+}).
\end{align}

Thus, the operator 
$\nabla V_{\triangle }:L_{p}(\partial\Omega)\to B^{\frac{1}{p}}_{p,p^*}({\Omega}_{+},{\mathbb R}^{n})$ is also continuous.
Finally, by {continuity of this operator and of the operator
$V_{\triangle }:L_p(\partial \Omega )\to B_{p,p^*}^{1+\frac{1}{p}}(\Omega _+)$
and also by the second relation in \eqref{estimate-Lp}, we obtain from \eqref{7ms-sl-001-Laplace} continuity of the operator
$V_\triangle :H^{-1}_p(\partial \Omega )\to {B}_{p,p^*}^{\frac{1}{p}}({\Omega} _{+})$
and, accordingly, continuity} of the second operator in \eqref{ss-1-1}.

Let us now show the continuity of the first operator in \eqref{ds-1}. To this end, we notice that the Brinkman double-layer potential operator can be written as
${\bf W}_{{\alpha }}={\bf W}+{\bf W}_{\alpha ;0},$
where ${\bf W}_{{\alpha ;0}}$ is the complementary double layer potential operator, i.e.,
\begin{equation}
\label{dl-2}
{\bf W}_{{\alpha ;0}}:={\bf W}_{{\alpha }}-{\bf W}={\mathbb K}_{{\alpha ;0}}\circ {\gamma}'\circ \mathfrak N
\end{equation}
(see \cite[{Eq.} (3.31)]{K-L-W}), where the operator $\mathfrak N:H_p^1(\partial\Omega,\mathbb R^n)\to L_p(\partial\Omega,\mathbb R^n\otimes\mathbb R^n)\hookrightarrow B^{-s}_{p,p^*}(\partial\Omega,\mathbb R^n\otimes\mathbb R^n),\
\mathfrak N{\bf h}(x):={\boldsymbol \nu}(x)\otimes {\bf h}(x)$,
is continuous for any $s\in (0,1)$. In addition, ${\mathbb K}_{{\alpha ;0}}$ is a pseudodifferential operator of order $-3$ with the kernel ${\bf S}^{\alpha ;0}:={\bf S}^{\alpha }-{\bf S}$ (cf., e.g., \cite[(2.27)]{K-L-W}), and hence the operator
\begin{align}
\label{dl}
&{\mathbb K}_{{\alpha ;0}}:{B^{-1-s+\frac{1}{p}}_{p,p^*;{\rm{comp}}}({\mathbb R}^n,{\mathbb R}^n\otimes {\mathbb R}^n)}\to {B^{2-s+\frac{1}{p}}_{p,p^*;{\rm{loc}}}({\mathbb R}^n,{\mathbb R}^n)},\nonumber\\
&\left({\mathbb K}_{{\alpha ;0}}{\mathbb T}\right)_j({\bf x}):=\left\langle \big({S}^{\alpha }_{ji\ell }-{S}_{ji\ell }\big)(\cdot ,{\bf x}),T_{_{i\ell }}\right\rangle _{\mathbb R^n}, \ \forall\ {\mathbb T}\in B^{-1-s+\frac{1}{p}}_{p,p^*;{\rm{comp}}}({\mathbb R}^n,{\mathbb R}^n\otimes {\mathbb R}^n),
\end{align}
is also linear and continuous for any $s\in (0,1)$, where ${B^{-1-s+\frac{1}{p}}_{p,p^*;{\rm{comp}}}({\mathbb R}^n,{\mathbb R}^n\otimes {\mathbb R}^n)}$ is the space of all distributions in $B^{-1-s+\frac{1}{p}}_{p,p^*}({\mathbb R}^n,{\mathbb R}^n\otimes {\mathbb R}^n)$ having compact support in ${\mathbb R}^n$. In addition, the trace operator
${\gamma}:{B_{p',{p^*}';{\rm{loc}}}^{s+\frac{1}{p'}}({\mathbb R}^n\otimes {\mathbb R}^n)}\to
B^s_{p',{p^*}'}(\partial {\Omega},{\mathbb R}^n\otimes {\mathbb R}^n)$
(acting on matrix valued functions) and its adjoint
${\gamma}':B^{-s}_{p,p^*}(\partial {\Omega},{\mathbb R}^n\otimes {\mathbb R}^n)\to
{B_{p,p^*;{\rm{comp}}}^{-s-1+\frac{1}{p}}({\mathbb R}^n,{\mathbb R}^n\otimes {\mathbb R}^n)}$ are continuous (see the proof of \cite[Theorem 1]{Co}). Then formula \eqref{dl-2} and the continuity of the involved operators imply that the operators
\begin{align}
{\bf W}_{{\alpha ;0}}:H_p^1(\partial {\Omega},{\mathbb R}^n)\to B^{2-s+\frac{1}{p}}_{p,p^*;{\rm{loc}}}({\mathbb R}^n,{\mathbb R}^n),\
\left({\bf W}_{{\alpha ;0}}\right)|_{{\Omega}_{+}}:H_p^1(\partial {\Omega},{\mathbb R}^n)\to B_{p,p^*}^{2-s+\frac{1}{p}}({\Omega}_{+},{\mathbb R}^n)\nonumber
\end{align}
are continuous as well. Now, the continuity of the embedding
$B^{2+\frac{1}{p}-s}_{p,p^*}({\Omega}_{+},{\mathbb R}^n)\hookrightarrow
B^{1+\frac{1}{p}}_{p,p^*}({\Omega}_{+},{\mathbb R}^n)$ {for any $s\in (0,1)$} shows that
\begin{align}
\label{dl-compl-H1p}
{\bf W}_{{\alpha ;0}}:H_p^1(\partial {\Omega},{\mathbb R}^n)\to B^{1+\frac{1}{p}}_{p,p^*}({\Omega}_{+},{\mathbb R}^n)
\end{align}
is a continuous operator, even compact.
{Let us now show that the Stokes double-layer potential operator
\begin{align}
\label{dl-Stokes-H1p}
{\bf W}:H_p^1(\partial {\Omega},{\mathbb R}^n)\to B^{1+\frac{1}{p}}_{p,p^*}({\Omega}_{+},{\mathbb R}^n)
\end{align}
is continuous as well. {In the setting of Riemannian manifolds and {for double layer potentials for second order elliptic equations}, this continuity property follows from \cite[Theorem 8.5]{M-T2}, but we will provide a direct proof here in the context of Euclidean setting}. To this end, we use the following characterization of the space $H_p^1(\partial {\Omega})$
\begin{align}
\label{H1p}
h\in H_p^1(\partial {\Omega}) \Longleftrightarrow h\in L_p(\partial \Omega ),\ \partial _{\tau _{jk}}h\in L_p(\partial \Omega ),\ \ j,k=1,\ldots ,n
\end{align}
(cf., e.g., \cite[(2.11)]{M-W}), and recall that the tangential derivative operators $\partial _{\tau _{jk}}:H_p^1(\partial {\Omega})\to L_p(\partial \Omega )$ are continuous. In addition,
consider the operator $V_{jk}$ defined as
\begin{align}
\label{s-Stokes}
\left({V}_{jk}g\right)({\bf x}):=\int_{\partial \Omega }{\mathcal G}_{jk}({\bf x}-{\bf y})g({\bf y})d\sigma _{\bf y},\ \ {\bf x}\in {\mathbb R}^n\setminus {\partial \Omega }.
\end{align}
We have proved that the Stokes single layer potential operator \eqref{Stokes-sl-p}
is continuous for any $p\in (1,\infty )$ {(see also \cite[Theorem 3.1, (3.33)]{M-T})}. Consequently, the operators
\begin{align}
\label{s-Stokes-1}
V_{jk}:L_p(\partial \Omega )\to B_{p,p^*}^{1+\frac{1}{p}}(\Omega _+)
\end{align}
are continuous as well, for all $j,k=1,\ldots ,n$. Recall that the operator
$V_{\triangle }:L_p(\partial \Omega )\to B_{p,p^*}^{1+\frac{1}{p}}(\Omega _+)$
is also linear and {continuous.}
Finally, we mention the following formula (cf. \cite[(4.84)]{M-W})
\begin{align}
\label{dl-sl}
\partial _r\left({\bf W}{\bf h}\right)_j=-\partial _\ell {V}_{jk}\left(\partial _{\tau _{\ell r}}h_k\right)-\partial _j{V}_{\ell k}\left(\partial _{\tau _{\ell r}}h_k\right)-\partial _k V_{\triangle }\left(\partial _{\tau _{jr}}h_k\right)\ \mbox{ in }\ {\mathbb R}^n\setminus \partial \Omega,
\end{align}
which holds for every ${\bf h}\in H_p^1(\partial \Omega ,{\mathbb R}^n)$ and $j,r=1,\ldots ,n$, where $h_j$ is the $j$-th component of ${\bf h}$. Then by using the continuity of {operator \eqref{s-Stokes-1} and} 
properties \eqref{H1p} and \eqref{dl-sl}, we deduce that the operators
\begin{align}
\partial _r\left({\bf W}\right)_j:H_p^1(\partial \Omega ,{\mathbb R}^n)\to B_{p,p^*}^{\frac{1}{p}}(\Omega _+), \ \ r,j=1,\ldots ,n
\end{align}
are continuous. {By} \cite[Proposition 10.5.1, (10.68)]{M-W}, the operator ${\bf W}:H_p^1(\partial \Omega ,{\mathbb R}^n)\to L_p(\Omega _+,{\mathbb R}^n)$ is also continuous (as its range is a subspace of the space $H_{p}^{s+\frac{1}{p}}(\Omega _+,{\mathbb R}^n)$ for any $s\in (0,1)$, $H_p^1(\partial \Omega ,{\mathbb R}^n)\hookrightarrow B_{p,p}^{s}(\partial \Omega ,{\mathbb R}^n)$ (due to formula \eqref{real-int}), and $B_{p,p}^{s+\frac{1}{p}}(\Omega _+,{\mathbb R}^n)\hookrightarrow L_{p}(\Omega _+,{\mathbb R}^n)$).
Consequently, the Stokes double layer potential operator
${\bf W}:H_p^1(\partial {\Omega},{\mathbb R}^n)\to B^{1+\frac{1}{p}}_{p,p^*}({\Omega}_{+},{\mathbb R}^n)$ is continuous, as asserted.
This mapping property combined with the continuity of {operator \eqref{dl-compl-H1p} implies
the continuity} of the first operator in \eqref{ds-1}.

Continuity of the second operator in \eqref{ds-1} follows from similar {arguments}. {To this end, let us mention the useful formula ${\mathcal Q}^d{\bf g}={\rm{div}}(W_{\triangle }{\bf g})$,
where the harmonic double layer potential operator $W_{\triangle }:H_p^1(\partial \Omega )\to B_{p,p^*}^{1+\frac{1}{p}}(\Omega _{+})$ is continuous (cf., e.g., \cite[Proposition 4.23, (2.120), (4.96)]{M-M-W1}).
Thus, the continuity of the Stokes double layer pressure potential operator ${\mathcal Q}^d:H_p^1(\partial {\Omega},{\mathbb R}^n)\to B^{\frac{1}{p}}_{p,p^*}({\Omega}_{+})$ immediately follows.
This property and continuity of the complementary double layer potential operator ${\mathcal Q}_{\alpha ;0}^d:={\mathcal Q}_{\alpha }^d-{\mathcal Q}^d:H_p^1(\partial {\Omega},{\mathbb R}^n)\to B^{\frac{1}{p}}_{p,p^*}({\Omega}_{+})$,
where (cf. \cite[(3.10)]{Shen})
\begin{align}
\label{pressure-dl-B}
{\mathcal Q}_{\alpha;0}^d{\bf h}=\alpha V_{\triangle }({\bf h}\cdot \boldsymbol \nu ),
\end{align}
yield the continuity of the Brinkman double layer pressure potential operator ${\mathcal Q}_{\alpha }^d={\mathcal Q}^d+{\mathcal Q}_{\alpha ;0}^d:H_p^1(\partial {\Omega},{\mathbb R}^n)\to B^{\frac{1}{p}}_{p,p^*}({\Omega}_{+})$}.}

Continuity of the first operator in \eqref{ds-1-0} for the case $\alpha =0$ is an immediate consequence of \cite[Proposition 2.68]{M-M1} applied to the integral operator whose kernel is given by the fundamental stress tensor ${\bf S}^0$.
Moreover, by using again formulas \eqref{dl-2} and \eqref{dl} we can see that the operator
${\bf W}_{\alpha ;0}: L_p(\partial {\Omega},{\mathbb R}^n) \to B^{\frac{1}{p}}_{p,p^*}({\Omega}_{+},{\mathbb R}^n)$
is continuous. Therefore, for $\alpha >0$ the first operator in \eqref{ds-1-0} is continuous as well.
To prove continuity of the second operator in \eqref{ds-1-0}, we again use the representation
${\mathcal Q}^d{\bf g}={\rm{div}}(W_{\triangle }{\bf g})$,
and continuity of the harmonic double layer potential operator $W_{\triangle }:L_p(\partial \Omega )\to B_{p,p^*}^{\frac{1}{p}}(\Omega _{+})$, e.g., again  by \cite[Proposition 2.68]{M-M1},
along with continuity of the complementary double layer potential operator
$\mathcal Q_{\alpha ;0}^d:L_p(\partial {\Omega},{\mathbb R}^n)\to B^{\frac{1}{p}-1}_{p,p^*}({\Omega}_{+})$.

Mapping properties \eqref{fr1}{, \eqref{fr1-1}} and \eqref{fr2} are implied by the ones just above them and by the first relations in \eqref{sl}-\eqref{dl0}.

${(ii)}$ Now, relation \eqref{VNg},
continuity of the operator ${\gamma}':B^{s-1}_{p,q}(\partial {\Omega},{\mathbb R}^n)\to B^{s-2+\frac{1}{p}}_{p,q}({\mathbb R}^n,{\mathbb R}^n)$ (cf. Lemma~\ref{trace-lemma-Besov}),
and continuity of the Newtonian potential operator ${\mathbf N}_{\alpha ;{\mathbb R}^n}:B^{s-2+\frac{1}{p}}_{p,q}({\mathbb R}^n,{\mathbb R}^n)\to B^{s+\frac{1}{p}}_{p,q}({\mathbb R}^n,{\mathbb R}^n)$ (see \eqref{ss-2}) imply
the continuity of
the first operator in \eqref{exterior-weight} and thus of the first operators in \eqref{ss-s1} and \eqref{exterior-weight-b}. {Continuity} of the second operator in \eqref{exterior-weight} follows {by} similar arguments based on the equalities
$ 
{\mathcal Q}^s={\mathcal Q}_{{\mathbb R}^n}\circ \gamma '$, and implies also {continuity} of the second operators in \eqref{ss-s1} and \eqref{exterior-weight-b} ({cf.} \cite[Proposition 10.5.1]{M-W}).

Further, let us mention that relations \eqref{dl-2} and \eqref{dl} imply that the operator
${\bf W}_{{\alpha ;0}}:B^s_{p,q}(\partial {\Omega},{\mathbb R}^n)\to B^{s+\frac{1}{p}}_{p,q}({\Omega}_+,{\mathbb R}^n)$ is continuous for all $p\in (1,+\infty )$ and $s\in (0,1)$. This mapping property combined with the continuity of the Stokes double-layer potential operator
${\bf W}|_{{\Omega}_{+}}:B_{p,q}^{s}(\partial {\Omega}_+,{\mathbb R}^n)\to B^{s+\frac{1}{p}}_{p,q}({\Omega} _{+},{\mathbb R}^n)$ (see \cite[Proposition 10.5.1]{M-W}) implies the continuity of the first operator in \eqref{ds-s1}. The continuity of the second operator in \eqref{ds-s1} can be similarly {obtained}. Other mapping properties of layer potentials mentioned in \eqref{exterior-weight} and \eqref{exterior-weight-s}, follow with similar arguments to those for \eqref{ss-1} and \eqref{ds-1}. We omit the details for the sake of brevity (see also the proof of \cite[Lemma 3.4]{K-L-M-W}).

$(iii)$ Equality \eqref{68}  for ${\bf g}\in L_p(\partial {\Omega},{\mathbb R}^n)$ can be obtained by using inequality \eqref{Shen-1} and \cite[Proposition 1]{Med-AAM} (see also \cite[Theorems 3.4]{Shen}).
Since $\big({\bf V}_{\alpha }{\bf g}\big)^+_{\rm nt}$
and $\big({\bf V}_{\alpha }{\bf g}\big)^-_{\rm nt}$ are well defined for
${\bf g}\in H^{-1}_p(\partial {\Omega},{\mathbb R}^n)$ due to Lemma~\ref{nontangential-sl}(iii), inequality \eqref{7ms-sl-00} and the density argument then imply equality  \eqref{68} also for ${\bf g}\in H^{-1}_p(\partial {\Omega},{\mathbb R}^n)$.
Formulas \eqref{68-s1} and \eqref{70aaa} follow {by} using arguments similar to those for the trace formulas (3.11) and (3.18) in \cite{Shen}. To this end, {we first prove the formulas}
\begin{align}
\label{f-1-a}
\left(\partial _j\left(V_{ik}^{\alpha }g\right)\right)\big|_{{\rm{nt}}}^{\pm }({\bf x})=\pm \frac{1}{2}\nu _j({\bf x})\left(\delta _{ik}-\nu _i({\bf x})\nu _k({\bf x})\right)g({\bf x})+
{\rm{p.v.}}\int_{\partial \Omega }\partial _j{\mathcal G}_{ik}^{\alpha }({\bf x}-{\bf y})g({\bf y})d\sigma _{{\bf y}}\ \mbox{ a.a. }\ {\bf x}\in \partial \Omega
\end{align}
{for any $g\in L_p(\partial \Omega )$ and all $i,k=1,\ldots ,n$,}
where the function $V_{ik}^{\alpha }g$ is defined as in \eqref{s-Stokes} with ${\mathcal G}_{jk}^{\alpha }$ instead of ${\mathcal G}_{jk}$.
Indeed, formula \eqref{f-1-a} has been proved in \cite[(4.50)]{M-W} in the case $\alpha =0$.
Moreover, the estimate \cite[(2.27)]{Shen} of the kernel $\nabla _{{\bf x}}{\mathcal G}_{jk}^{\alpha }({\bf x})-\nabla _{{\bf x}}{\mathcal G}_{jk}({\bf x})$ and \cite[Proposition 1]{Med-AAM} imply that there exist the {non-tangential} limits of the {complementary} potential $\partial _jV_{ik}^{\alpha }g-\partial _jV_{ik}g$ at almost all points of $\partial \Omega $, and
\begin{align}
\label{f-1-b}
\left(\partial _j\left(V_{ik}^{\alpha }g\right)-\partial _j\left(V_{ik}g\right)\right)|_{{\rm{nt}}}^{\pm }({\bf x})={\rm{p.v.}}\int_{\partial \Omega }\left(\partial _j{\mathcal G}_{ik}^{\alpha }-\partial _j{\mathcal G}_{ik}\right)({\bf x}-{\bf y})g({\bf y})d\sigma _{{\bf y}}\ \mbox{ a.a. }\
{\bf x}\in \partial \Omega ,
\end{align}
which implies \eqref{f-1-a} also for $\alpha\not=0$.
Moreover, formula \eqref{f-1-a} yields for any ${\bf f}\in L_p(\partial \Omega ,{\mathbb R}^n)$ that
\begin{align}
\label{68-s1-proof1}
\left(\partial _j({\bf V}_{\alpha }{\bf f})\right)\big|_{{\rm{nt}}}^{\pm }({\bf x})=\pm \frac{1}{2}\nu _j({\bf x})\left\{{\bf f}({\bf x})-f_k({\bf x})\nu _k({\bf x})\boldsymbol \nu ({\bf x})\right\}+{\rm{p.v.}}\int_{\partial \Omega }\partial _j{\mathcal G}^{\alpha }({\bf x}-{\bf y}){\bf f}({\bf y})d\sigma _{{\bf y}}\ \mbox{ a.a. }\ {\bf x}\in \partial \Omega
\end{align}
(cf. \cite[(4.54)]{M-W} for $\alpha =0$ and \cite[Lemma 3.3]{Shen} for $\alpha >0$).

In addition,
\begin{align}
\label{68-s1-proof2}
\left({\mathcal Q}^s{\bf f}\right)\big|_{{\rm{nt}}}^{\pm }({\bf x})=\mp\frac{1}{2}\nu_k({\bf x})f_k({\bf x})+{\rm{p.v.}}\int_{\partial \Omega }\Pi _k({\bf x}-{\bf y})f_k({\bf y})d\sigma _{{\bf y}} \ \mbox{ a.a. }\ {\bf x}\in \partial \Omega
\end{align}
(cf. \cite[(4.42)]{M-W}, \cite[Lemma 3.3]{Shen}). {Then formulas \eqref{68-s1} and \eqref{70aaa} follow from formulas \eqref{classical-stress}, \eqref{2.37nt}, \eqref{stress-tensor-alpha}, \eqref{dl-velocity}, \eqref{68-s1-proof1} and \eqref{68-s1-proof2}}.


Formula \eqref{70aaaa} follows from formula \eqref{7ms-dl-6} and \eqref{pressure-dl-B}
together with \cite[Proposition 4.2.9]{M-W} (i.e., the counterpart of the trace formula \eqref{70aaaa} corresponding to the case $\alpha =0$).

{Continuity of operators} \eqref{fr3}, \eqref{fr4}, \eqref{fr5}, \eqref{fr6} {is} implied by the continuity of the operators just above them and by the first relations in \eqref{sl} and \eqref{dl0}.

Now, we note that formula ${\mathcal V}_{\alpha }={\mathcal V}+{\mathcal V}_{\alpha ;0}$, continuity of the Stokes single layer operator ${\mathcal V}:L_p(\partial \Omega ,{\mathbb R}^n)\to H_p^1(\partial \Omega ,{\mathbb R}^n)$ (cf. \cite[Proposition 4.2.5]{M-W}), and continuity of the complementary operator ${\mathcal V}_{\alpha ;0}:L_p(\partial \Omega ,{\mathbb R}^n)\to H_p^1(\partial \Omega ,{\mathbb R}^n)$ (cf. \cite[Theorem 3.4(b)]{K-L-W}) imply continuity of the first operator in \eqref{ss-s2}.
Continuity of the second operator in \eqref{ss-s2} and of the operators in \eqref{ds-s2} similarly follows from \cite[Propositions 4.2.7 - 4.2.10]{M-W} and \cite[Theorem 3.4(b)]{K-L-W}.
{In addition, formula \eqref{7ms-sl-001} and the first relation in \eqref{estimate-Lp} yield the following equality
\begin{align}
({\mathcal V}_\alpha {\bf g})_j({\bf x})=\int _{\partial \Omega }{\mathcal G}_{jk}^\alpha ({\bf x}-{\bf y})g_{0;k}({\bf y})d\sigma _{\bf y}-\sum _{k=1}^n\sum_{r,\ell =1}^n{\rm{p.v.}}\int_{\partial \Omega }\left(\partial _{\tau _{r\ell }}\left({\mathcal G}_{jk}^\alpha ({\bf x}-{\bf y})\right)\right)g_{r\ell ;k}({\bf y})d\sigma _{{\bf y}}\ \ \mbox{a.a}\ {\bf x}\in \partial \Omega ,
\end{align}
for any ${\bf g}\in H_p^{-1}(\partial \Omega ,{\mathbb R}^n)$ (cf., e.g., \cite[(4.69)]{M-W} for $\alpha =0$).
Then the continuity of the first operator in \eqref{ss-s2-1} immediately follows (see also \cite[Proposition 4.2.5 (iii)]{M-W} for $\alpha =0$). Continuity of the Stokes double layer operator ${\bf K}:L_p({\partial \Omega },{\mathbb R}^n)\to L_p({\partial \Omega },{\mathbb R}^n)$ (cf., e.g., \cite[Corllary 4.2.4]{M-W}) and the continuity of the reminder operator ${\bf K}_{\alpha }-{\bf K}:L_p({\partial \Omega },{\mathbb R}^n)\to L_p({\partial \Omega },{\mathbb R}^n)$ (see \cite[Theorem 3.4 (b)]{K-L-W}) show the continuity of the second operator in \eqref{ss-s2-1}.} Continuity of the traces and conormal derivatives of the layer potentials involved in \eqref{68s}-\eqref{70aaaas} and hence continuity of the boundary operators \eqref{ss-s2-sm}, \eqref{ds-s2-sm} immediately follow from the mapping properties of the layer potentials in item (ii) and Lemmas \ref{trace-lemma-Besov}, \ref{lem 1.6}.

Finally, the jump {relations given by the first equalities} in \eqref{68s}-\eqref{70aaaas} follow from formulas \eqref{68}-\eqref{70aaaa}, together with the density of the embeddings $H^1_p(\partial {\Omega},{\mathbb R}^n)\hookrightarrow B_{p,q}^{s}(\partial {\Omega},{\mathbb R}^n)$ and $L_p(\partial {\Omega},{\mathbb R}^n)\hookrightarrow B_{p,q}^{s-1}(\partial {\Omega},{\mathbb R}^n)$, {and equivalence results in Theorems \ref{trace-equivalence-L}(i) and \ref{2.13}(i) for traces and conormal derivatives}.
\hfill\end{proof}

Let us mention the following useful result.
\begin{lemma}
\label{L3.6}
Let ${\Omega}_{+}\subset {\mathbb R}^n$ {$(n\geq 3)$} be a bounded Lipschitz domain with connected boundary $\partial {\Omega}$ and let $\Omega _{-}:={\mathbb R}^n\setminus \overline{\Omega }_+$.
\begin{itemize}
\item[$(i)$] If $p\in (1,\infty )$, $\alpha \in (0,\infty )$,
${\bf g}\in L_p(\partial {\Omega},{\mathbb R}^n)$ and ${\bf h}\in H^1_p(\partial {\Omega},{\mathbb R}^n)$, then
\begin{align}
\label{68-n1}
&{\gamma}_{\pm}({\bf V}_{\alpha }{\bf g})=({\bf V}_{\alpha }{\bf g})^\pm_{\rm nt}
\in H_{p;\boldsymbol \nu}^{1}(\partial {\Omega},\mathbb{R}^{n}),
\\
\label{68-s1-n1}
&{\gamma}_{\pm}({\bf W}_{\alpha }{\bf h})=({\bf W}_{\alpha }{\bf h})^\pm_{\rm nt}
\in H_{p;\boldsymbol \nu}^{1}(\partial {\Omega},\mathbb{R}^{n}),
\\
\label{70aaa-n1}
&\mathbf t_\alpha^{\pm}\left({\bf V}_{\alpha }{\bf g},{\mathcal Q}^{s}{\bf g}\right)
=\mathbf t_{\rm nt}^{\pm}\left({\bf V}_{\alpha }{\bf g},\mathcal Q^{s}{\bf g}\right)
\in L_p(\partial {\Omega}, \mathbb{R}^{n}),
\\
\label{70aaaa-n1}
&\mathbf t_\alpha^{\pm}\big({\bf W}_{\alpha }{\bf h},{\mathcal Q}_{\alpha }^d{\bf h}\big)
=\mathbf t_{\rm nt}^{\pm}\big({\bf W}_{\alpha }{\bf h},\mathcal Q_{\alpha}^d{\bf h}\big)
\in L_p(\partial {\Omega}, \mathbb{R}^{n})
\end{align}
with the corresponding norm estimates.
\item[$(ii)$] If $p,q\in (1,\infty )$, $s\in(0,1)$, $\alpha \in (0,\infty )$,
$\mathbf g\in B_{p,q}^{s-1}(\partial {\Omega},{\mathbb R}^n)$ and
$\mathbf h\in B_{p,q}^{s}(\partial {\Omega},{\mathbb R}^n)$, then
\begin{align}
\label{68-n1B}
&{\gamma}_{\pm}({\bf V}_{\alpha }{\bf g})=({\bf V}_{\alpha }{\bf g})^\pm_{\rm nt}
\in B^{s}_{p,q;\boldsymbol \nu}(\partial {\Omega},\mathbb{R}^{n}),
\\
\label{68-s1-n1B}
&{\gamma}_{\pm}({\bf W}_{\alpha }{\bf h})=({\bf W}_{\alpha }{\bf h})^\pm_{\rm nt}
\in B^{s}_{p,q;\boldsymbol \nu}(\partial {\Omega},\mathbb{R}^{n})
\end{align}
with the corresponding norm estimates.
\end{itemize}
\end{lemma}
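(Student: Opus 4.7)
The strategy is to combine three ingredients already available: the mapping properties of the layer potentials in Theorem~\ref{layer-potential-properties}, the existence of non-tangential traces with $L_p$-bounds on the corresponding non-tangential maximal functions provided by Lemma~\ref{nontangential-sl}, and the trace/conormal equivalence results of Theorem~\ref{trace-equivalence-L}(i) and Theorem~\ref{2.13}(ii). Each identity in the statement then reduces to checking the hypotheses of one of these equivalence theorems and identifying the resulting boundary object with the corresponding boundary operator ${\mathcal V}_\alpha$, ${\bf K}_\alpha$, ${\bf K}^*_\alpha$ or ${\bf D}_\alpha$ via the jump relations \eqref{68}--\eqref{70aaaa}.

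For part (i), given $\mathbf g\in L_p(\partial\Omega,\mathbb R^n)$, Lemma~\ref{nontangential-sl} yields $M({\bf V}_\alpha {\bf g}),M(\nabla {\bf V}_\alpha {\bf g})\in L_p(\partial\Omega)$ and the pointwise existence of $({\bf V}_\alpha {\bf g})^\pm_{\rm nt}\in L_p(\partial\Omega,\mathbb R^n)$. The mapping property \eqref{ss-1} places $({\bf V}_\alpha {\bf g},\mathcal Q^s {\bf g})$ in the correct Besov scale, so Theorem~\ref{trace-equivalence-L}(i) applies and gives ${\gamma}_\pm({\bf V}_\alpha {\bf g})=({\bf V}_\alpha {\bf g})^\pm_{\rm nt}$, which by the jump formula \eqref{68} and \eqref{ss-s2} equals ${\mathcal V}_\alpha {\bf g}\in H^1_p(\partial\Omega,\mathbb R^n)$. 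The same recipe, using \eqref{7ms-dl}, \eqref{ntW1}, \eqref{68-s1}, and the continuity of ${\bf K}_\alpha:H^1_p\to H^1_p$, yields \eqref{68-s1-n1}. The two conormal equalities \eqref{70aaa-n1}, \eqref{70aaaa-n1} follow from Theorem~\ref{2.13}(ii): its hypothesis that $M(\boldsymbol\sigma({\bf u},\pi))$ and $\boldsymbol\sigma^\pm_{\rm nt}({\bf u},\pi)$ belong to $L_p(\partial\Omega,\mathbb R^{n\times n})$ is provided by \eqref{7ms}, \eqref{7ms-dl}, \eqref{ntV}, \eqref{ntW1}, and the required space membership of $({\bf u},\pi)$ is given by \eqref{fr1} and \eqref{fr2}.

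The $\boldsymbol\nu$-vanishing property in \eqref{68-n1}--\eqref{68-s1-n1} reflects the fact that the velocity layer potentials are solenoidal. In the interior, Gauss' divergence theorem applied to ${\bf V}_\alpha {\bf g}$ or ${\bf W}_\alpha {\bf h}$ (both in $H^{1+1/p}_{p}(\Omega_+,\mathbb R^n)$ and divergence-free) gives
\begin{equation*}
\int_{\partial\Omega}{\gamma}_+({\bf V}_\alpha {\bf g})\cdot\boldsymbol\nu\,d\sigma
=\int_{\Omega_+}\mathrm{div}\,{\bf V}_\alpha {\bf g}\,d{\bf x}=0,
\end{equation*}
and similarly for ${\bf W}_\alpha{\bf h}$. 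For the exterior trace, apply the divergence theorem on $\Omega_-\cap B_R$ and use the exponential decay encoded by \eqref{Shen-1} (with $\ell=0$) together with the analogous estimate on the double-layer kernel to see that the flux through $\partial B_R$ vanishes as $R\to\infty$; the boundary flux through $\partial\Omega$ must therefore vanish as well.

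For part (ii), the strategy is the same but the principal difficulty is that Lemma~\ref{nontangential-sl} only provides non-tangential existence for $L_p$ or $H^{-1}_p$ densities, while the claim is stated for genuine Besov densities. I would proceed by density: choose $\mathbf g_j\in L_p(\partial\Omega,\mathbb R^n)\hookrightarrow B^{s-1}_{p,q}(\partial\Omega,\mathbb R^n)$ with $\mathbf g_j\to\mathbf g$ in $B^{s-1}_{p,q}$ (dense for $s\in(0,1)$), and correspondingly $\mathbf h_j\in H^1_p\hookrightarrow B^s_{p,q}$ with $\mathbf h_j\to\mathbf h$. By the continuity \eqref{ss-s2-sm} of ${\mathcal V}_\alpha$ and ${\bf K}_\alpha$ on Besov scales, ${\mathcal V}_\alpha\mathbf g_j\to{\mathcal V}_\alpha\mathbf g$ in $B^s_{p,q}$ and similarly $\frac12\mathbf h_j+{\bf K}_\alpha\mathbf h_j\to\frac12\mathbf h+{\bf K}_\alpha\mathbf h$. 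Passing the identities ${\gamma}_\pm({\bf V}_\alpha \mathbf g_j)=({\bf V}_\alpha \mathbf g_j)^\pm_{\rm nt}={\mathcal V}_\alpha \mathbf g_j$ (from part~(i)) and their double-layer analogues to the limit, the Besov jump relations \eqref{68s}, \eqref{68-s1s}, combined with Theorem~\ref{trace-equivalence-L}(i), give the required Gagliardo/non-tangential identification for $\mathbf g$ and $\mathbf h$. The $\boldsymbol\nu$-orthogonality passes to the limit because the integration-against-$\boldsymbol\nu$ functional is continuous on $B^s_{p,q}(\partial\Omega,\mathbb R^n)$. The technical hurdle will be justifying that the non-tangential trace of the limiting layer potential actually exists and coincides with the limit of ${\mathcal V}_\alpha\mathbf g_j$: here one extracts a subsequence converging pointwise a.e., or alternatively interpolates the bounds \eqref{ntV}, \eqref{ntW}, \eqref{7ms-sl-00} to obtain non-tangential maximal function bounds on the Besov scale, which by Lebesgue's differentiation argument suffices.
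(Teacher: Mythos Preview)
Your treatment of part~(i) is essentially the paper's proof: combine Lemma~\ref{nontangential-sl} for the existence and $L_p$-bounds of the non-tangential traces and maximal functions, the mapping properties \eqref{ss-1}, \eqref{ds-1}, \eqref{fr1}, \eqref{fr2} from Theorem~\ref{layer-potential-properties}(i), and then invoke Theorem~\ref{trace-equivalence-L}(i) and Theorem~\ref{2.13}(ii) (together with Remark~\ref{2.13loc} for the exterior conormal derivatives) to identify the Gagliardo/canonical objects with the non-tangential ones. The $\boldsymbol\nu$-orthogonality via the divergence theorem is also how the paper proceeds.

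For part~(ii), however, you take a more laborious route than necessary, and the complication is self-inflicted. You write that ``Lemma~\ref{nontangential-sl} only provides non-tangential existence for $L_p$ or $H^{-1}_p$ densities'' and therefore resort to a density/limiting argument with an admitted ``technical hurdle''. But for $s\in(0,1)$ one has the continuous embeddings
\[
B^{s-1}_{p,q}(\partial\Omega,\mathbb R^n)\hookrightarrow H^{-1}_p(\partial\Omega,\mathbb R^n),
\qquad
B^{s}_{p,q}(\partial\Omega,\mathbb R^n)\hookrightarrow L_p(\partial\Omega,\mathbb R^n)
\]
(e.g.\ by \eqref{embed-3}--\eqref{embed-4}), so Lemma~\ref{nontangential-sl} already applies \emph{directly} to your $\mathbf g$ and $\mathbf h$: the non-tangential traces of ${\bf V}_\alpha\mathbf g$ and ${\bf W}_\alpha\mathbf h$ exist a.e.\ on $\partial\Omega$ with no approximation needed. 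Combined with the mapping properties \eqref{ss-s1}, \eqref{ds-s1} placing the layer potentials in $B^{s+1/p}_{p,q}(\Omega_\pm,\mathbb R^n)$, Theorem~\ref{trace-equivalence-L}(i) then yields \eqref{68-n1B} and \eqref{68-s1-n1B} immediately. This is exactly the paper's argument, and it bypasses entirely the delicate question of passing non-tangential limits through $B^{s-1}_{p,q}$-convergent sequences that you flag at the end.
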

\begin{proof}
Let first ${\bf g}\in L_p(\partial {\Omega},{\mathbb R}^n)$ and  ${\bf h}\in H^1_p(\partial {\Omega},{\mathbb R}^n)$, $p\in (1,\infty )$.
Then, according to Lemma \ref{nontangential-sl}(ii,v), the right hand sides of the equalities in \eqref{68-n1}-\eqref{70aaaa-n1}
exist almost everywhere on $\partial \Omega $ {in the sense of non-tangential limit},
while Theorem \ref{layer-potential-properties}(i)
yields that
$\left({\bf V}_{\alpha }{\bf g},{\mathcal Q}^{s}{\bf g}\right),
\big({\bf W}_{\alpha }{\bf h},{\mathcal Q}_{\alpha }^d{\bf h}\big)\in
\mathfrak{B}_{p,p^*;{\rm div}}^{1+\frac{1}{p},t}(\Omega_+,\mathcal{L}_{\alpha})$
and
$\left({\bf V}_{\alpha }{\bf g},{\mathcal Q}^{s}{\bf g}\right),
\big({\bf W}_{\alpha }{\bf h},{\mathcal Q}_{\alpha }^d{\bf h}\big)\in
\mathfrak{B}_{p,p^*;\rm div, loc}^{1+\frac{1}{p},t}(\Omega_-,\mathcal{L}_{\alpha})$
for any  $t\ge -\frac{1}{p'}$.
Moreover, Theorem \ref{layer-potential-properties} (iii) and the divergence theorem applied to the single layer potentials ${\bf V}_{\alpha }{\bf g}$ and ${\bf W}_{\alpha }{\bf h}$ in the domain $\Omega _+$ yield that
$({\bf V}_{\alpha }{\bf g})^\pm_{\rm nt}\in H_{p;\boldsymbol \nu}^{1}(\partial {\Omega},\mathbb{R}^{n}), {
\bf t}_{\rm{nt}}^\pm\left({\bf V}_{\alpha }{\bf g},{\mathcal Q}^{s}{\bf g}\right)
\in L_p(\partial {\Omega}, \mathbb{R}^{n})$, for any ${\bf g}\in L_p(\partial \Omega ,{\mathbb R}^n)$,
while $({\bf W}_{\alpha }{\bf h})^\pm_{\rm nt}\in H_{p;\boldsymbol \nu}^{1}(\partial {\Omega},\mathbb{R}^{n}),
{\bf t}_{\rm{nt}}^\pm\left({\bf W}_{\alpha }{\bf g},{\mathcal Q}^{d}{\bf g}\right)\in L_p(\partial {\Omega}, \mathbb{R}^{n})$, for any ${\bf h}\in H_p^1(\partial \Omega ,{\mathbb R}^n)$, with the corresponding norm estimates.
Hence Theorems \ref{trace-equivalence-L}(i) and \ref{trace-equivalence}(ii) along with Remark~\ref{2.13loc} imply relations \eqref{68-n1}-\eqref{70aaaa-n1}.

For $p,q\in (1,\infty )$ and $s\in(0,1)$, we have
$\mathbf g\in B_{p,q}^{s-1}(\partial {\Omega},{\mathbb R}^n)\subset H_{p}^{-1}(\partial {\Omega},{\mathbb R}^n)$,
$\mathbf h\in B_{p,q}^{s}(\partial {\Omega},{\mathbb R}^n)\subset L_{p}(\partial {\Omega},{\mathbb R}^n)$ and,
according to Lemma \ref{nontangential-sl}(iii,iv), the right hand sides of the equalities in  \eqref{68-n1B} and \eqref{68-s1-n1B}
exist almost everywhere on $\partial \Omega $,
while Theorem \ref{layer-potential-properties}(ii)
yields that
${\bf V}_{\alpha }{\bf g}, {\bf W}_{\alpha }{\bf h}\in {B}_{p,q;{\rm div}}^{s+\frac{1}{p}}(\Omega_+)$.
Hence Theorem \ref{trace-equivalence-L}(i) implies relations \eqref{68-n1B} and \eqref{68-s1-n1B}.
\hfill\end{proof}

We will further need the following integral representation (the third Green identity) for the homogeneous Brinkman system solution.
\begin{lemma}
\label{Green-r-f-s}
Let ${\Omega}_{+}\subset {\mathbb R}^n$ {$(n\geq 3)$} be a bounded Lipschitz domain with connected boundary $\partial {\Omega}$ and let $\Omega _{-}:={\mathbb R}^n\setminus \overline{\Omega }_+$.
Let $\alpha \in (0,\infty )$, $p,q\in (1,\infty )$ and $s\in (0,1)$. If the
the pair $({\bf u},\pi )$ satisfies the system
\begin{equation}
\label{Brinkman-homogeneous-s}
\triangle {\bf u}-\alpha {\bf u}-\nabla \pi ={\bf 0},\ \ {\rm{div}}\ {\bf u}= 0 \ \mbox{ in } \ {\Omega}_+ \end{equation}
and
$({\bf u},\pi )\in H_{p}^{s+\frac{1}{p}}(\Omega _+,{\mathbb R}^n)\times H_{p}^{s-1-\frac{1}{p}}(\Omega _+)$, or
$({\bf u},\pi )\in B_{p,q}^{s+\frac{1}{p}}(\Omega _+,{\mathbb R}^n)\times B_{p,q}^{s-1-\frac{1}{p}}(\Omega _+)$,
then
\begin{align}
\label{Green-r-f-Brinkman-s}
&{\bf u}({\bf x})={\bf V}_{\alpha }\left(\mathbf t_\alpha^+({\bf u},\pi )\right)({\bf x})
-{\bf W}_{\alpha }\left(\gamma_+{\bf u}\right)({\bf x}),\ \forall\ {\bf x}\in {\Omega}_+.
\end{align}
\end{lemma}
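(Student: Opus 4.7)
The plan is to derive the identity from a general third Green representation formula and then specialize to the homogeneous setting. I will first establish, for smooth pairs $({\bf u},\pi)\in C^\infty(\overline{\Omega}_+,{\mathbb R}^n)\times C^\infty(\overline{\Omega}_+)$ with ${\rm div}\,{\bf u}=0$ in $\Omega_+$, the identity
\begin{align*}
{\bf u}({\bf x})=-{\bf N}_{\alpha;\Omega_+}({\mathcal L}_\alpha({\bf u},\pi))({\bf x})+{\bf V}_\alpha({\bf t}^{c+}({\bf u},\pi))({\bf x})-{\bf W}_\alpha(\gamma_+{\bf u})({\bf x}),\quad {\bf x}\in\Omega_+.
\end{align*}
For fixed ${\bf x}\in\Omega_+$, I would apply the second Green identity (formula \eqref{Green-formula2}) to $({\bf u},\pi)$ in the perforated domain $\Omega_+\setminus\overline{B_\epsilon({\bf x})}$, paired column-by-column with the fundamental solution $(\mathcal G^\alpha(\cdot,{\bf x}),\Pi(\cdot,{\bf x}))$, which satisfies the Brinkman system away from ${\bf x}$ by \eqref{2.2.1}. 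The contribution over $\partial B_\epsilon({\bf x})$ concentrates to $-{\bf u}({\bf x})$ as $\epsilon\to 0$ thanks to the distributional identity \eqref{2.2.1}, the contribution over $\partial\Omega$ produces exactly the two layer potentials (with the correct signs coming from the outward normal of the perforated domain), and the volume contribution from ${\mathcal L}_\alpha({\bf u},\pi)$ becomes the Newtonian potential term.

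Second, I would extend the general identity to all $({\bf u},\pi)\in\mathfrak{H}_{p,\rm div}^{s+\frac{1}{p},-\frac{1}{p'}}(\Omega_+,{\mathcal L}_\alpha)$ or $\mathfrak{B}_{p,q,\rm div}^{s+\frac{1}{p},-\frac{1}{p'}}(\Omega_+,{\mathcal L}_\alpha)$ by density, using Theorem~\ref{Dens}: there exists a sequence of smooth divergence-free pairs $({\bf u}_k,\pi_k)\in {\mathcal D}_{\rm div}(\overline{\Omega}_+,{\mathbb R}^n)\times{\mathcal D}(\overline{\Omega}_+)$ converging to $({\bf u},\pi)$ in the corresponding graph norm. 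For each $k$, the formula just derived applies (with canonical and classical conormal derivatives coinciding by Theorem~\ref{2.13}(i)); passage to the limit is then a direct consequence of the continuity of the Newtonian potential (Corollary~\ref{C2.16}), of the single- and double-layer potentials (Theorem~\ref{layer-potential-properties}(i)--(ii)), of the trace operator (Lemma~\ref{trace-lemma-Besov}), and of the canonical conormal derivative operator (Lemma~\ref{lem 1.6}), in the relevant $H_p$-based and $B_{p,q}$-based spaces.

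Third, I would specialize to the hypothesis of the lemma: since $({\bf u},\pi)$ solves the homogeneous Brinkman system \eqref{Brinkman-homogeneous-s}, the canonical extension $\tilde{\mathcal L}_\alpha({\bf u},\pi)$ vanishes identically in the appropriate $\widetilde H^t_p$ (or $\widetilde B^t_{p,q}$) space, so that the Newtonian potential term $-{\bf N}_{\alpha;\Omega_+}(\tilde{\mathcal L}_\alpha({\bf u},\pi))$ in the general identity is zero, yielding precisely \eqref{Green-r-f-Brinkman-s}. The inclusions $({\bf u},\pi)\in H_p^{s+\frac{1}{p}}(\Omega_+,{\mathbb R}^n)\times H_p^{s-1+\frac{1}{p}}(\Omega_+)$ or the Besov analogue, together with the vanishing right-hand side, ensure the requisite membership in $\mathfrak{H}_{p,\rm div}^{s+\frac{1}{p},-\frac{1}{p'}}$ or $\mathfrak{B}_{p,q,\rm div}^{s+\frac{1}{p},-\frac{1}{p'}}$ for any admissible $t$, so that both $\gamma_+{\bf u}$ and ${\bf t}_\alpha^+({\bf u},\pi)$ are well defined as elements of the boundary spaces required by Theorem~\ref{layer-potential-properties}.

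The main obstacle will lie in the first step, namely justifying the excision argument carefully enough to identify the correct limit on $\partial B_\epsilon({\bf x})$ and to control the singular kernel of $(\mathcal G^\alpha,\Pi)$ uniformly in $\epsilon$; once that classical formula is available, Steps two and three are essentially a continuity-plus-density routine built directly on the mapping properties already compiled earlier in the paper.
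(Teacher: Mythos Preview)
Your proposal is correct and the first step is essentially the paper's entire argument; the paper's proof consists precisely of the excision manoeuvre you describe (apply the second Green identity \eqref{Green-formula2} on the perforated domain $\Omega_+\setminus \overline{B_\epsilon({\bf y})}$ to $({\bf u},\pi)$ and the fundamental solution columns $({\bf G}_k^\alpha(\cdot-{\bf y}),\Pi_k(\cdot-{\bf y}))$, then let $\epsilon\to 0$).

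The difference is that the paper applies this excision step \emph{directly} to the pair $({\bf u},\pi)$ in the given Bessel-potential or Besov spaces, without first passing through smooth approximants. This works because \eqref{Green-formula2} is already stated at that level of generality (Lemma~\ref{lem 1.6}), the perforated domain is again Lipschitz, the fundamental solution is $C^\infty$ there, and interior elliptic regularity makes $({\bf u},\pi)$ smooth near ${\bf y}$ so that the inner-sphere limit is classical. Your Steps~2 and~3 (density via Theorem~\ref{Dens}, then specializing to the homogeneous case) are therefore not needed here, though they constitute a perfectly valid and more traditional route. One small caveat on your Step~2: Theorem~\ref{Dens} requires $t>-\frac{1}{p'}$ strictly, so you should state the extended identity on $\mathfrak{H}_{p,\rm div}^{s+\frac{1}{p},t}$ for such $t$ rather than at the endpoint $t=-\frac{1}{p'}$; this causes no trouble in Step~3 since the homogeneous right-hand side lies in $\widetilde H^t_p$ for every $t$.
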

\begin{proof}
Let $B(\boldsymbol{y},\epsilon)\subset \Omega$ be a ball of a radius $\epsilon$ around a point $\boldsymbol{y}\in \Omega_+$
and let ${\bf G}_{k}^{\alpha}({\bf x})=\left({\mathcal G}_{k1}^{\alpha }({\bf x}),\ldots ,{\mathcal G}_{kn}^{\alpha }({\bf x})\right)$, $k=1,\ldots ,n$, where $(\mathcal{G}^{\alpha},\Pi )$ is the fundamental solution of the Brinkman system in ${\mathbb R}^n$ (see \eqref{E41} and \eqref{E41-new}).
Applying the second Green identity \eqref{Green-formula2} in the domain $\Omega_+\setminus B(\boldsymbol{y},\epsilon)$ to
$(\mathbf{u},\pi)$ and to the fundamental solution $({\bf G}_{k}^{\alpha}(\cdot-\boldsymbol{y}),\Pi_{k})(\cdot-\boldsymbol{y})$  and taking the limit as $\epsilon\to 0$, we obtain \eqref{Green-r-f-Brinkman-s}.
\end{proof}

Next, we show the counterpart of the integral representation formula  \eqref{Green-r-f-Brinkman-s} written in terms of the non-tangential trace and conormal derivative.
\begin{lemma}
\label{Green-r-f}
Let ${\Omega}_+\subset \mathbb{R}^{n}$ $(n \geq 3)$ be a bounded Lipschitz domain with connected boundary $\partial {\Omega}$. 
Let $\alpha >0$ and $p\in (1,\infty )$ be given constants. {Assume that ${{M}({\bf u})},{M}(\nabla {\bf u}), {M}(\pi)\in L_p(\partial {\Omega})$, there exist the non-tangential limits of ${\bf u}$, $\nabla {\bf u}$ and $\pi $ at almost all points of the boundary $\partial \Omega $, and that the pair $({\bf u},\pi )$ satisfies the homogeneous Brinkman system}
\begin{equation}
\label{Brinkman-homogeneous}
\triangle {\bf u}-\alpha {\bf u}-\nabla \pi ={\bf 0},\ \ {\rm{div}}\ {\bf u}= 0 \ \mbox{ in } \ {\Omega}_+.
\end{equation}
{Then} 
{${\bf u}$ satisfies also the following integral representation formula}
\begin{align}
\label{Green-r-f-Brinkman}
{\bf u}({\bf x})={\bf V}_{\alpha }\left(\mathbf t_{\rm{nt}}^+({\bf u},\pi )\right)({\bf x})-{\bf W}_{\alpha }\left({\bf u}_{\rm{nt}}^+\right)({\bf x}),\quad \forall\ { \bf x}\in {\Omega}_+.
\end{align}
\end{lemma}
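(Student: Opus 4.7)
My plan is to derive the representation formula \eqref{Green-r-f-Brinkman} by approximating $\Omega_+$ from inside with a sequence of $C^\infty$ subdomains $\{\Omega_j\}_{j\ge 1}$ supplied by Lemma \ref{2.13D}, applying the classical third Green identity on each $\Omega_j$ (where $({\bf u},\pi)$ is smooth by interior elliptic regularity for the Brinkman system with zero right hand side), and then passing to the limit $j\to\infty$ using the assumed control of the non-tangential maximal functions $M({\bf u})$, $M(\nabla{\bf u})$, $M(\pi)\in L_p(\partial\Omega)$. Note that, since the given ${\bf x}\in\Omega_+$ is fixed, no limiting is needed on the left-hand side: ${\bf x}\in\Omega_j$ for all $j$ sufficiently large.

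For such $j$, by interior elliptic regularity $({\bf u},\pi)\in C^\infty(\overline{\Omega_j},\mathbb R^n)\times C^\infty(\overline{\Omega_j})$. Applying the second Green identity \eqref{Green-formula2} in the punctured domain $\Omega_j\setminus B({\bf x},\varepsilon)$ with $({\bf u},\pi)$ and the fundamental pair $({\bf G}_k^\alpha(\cdot-{\bf x}),\Pi_k(\cdot-{\bf x}))$, and then letting $\varepsilon\to 0$ exactly as in the proof of Lemma \ref{Green-r-f-s}, one obtains
$$
{\bf u}({\bf x})={\bf V}_\alpha\big({\bf t}^{c+}_{\partial\Omega_j}({\bf u},\pi)\big)({\bf x})
-{\bf W}_\alpha\big(\gamma_{\partial\Omega_j}{\bf u}\big)({\bf x}),
$$
where the layer potentials are understood with integration over $\partial\Omega_j$ and, by smoothness of $\partial\Omega_j$, the classical conormal derivative equals $\boldsymbol\sigma({\bf u},\pi)\boldsymbol\nu^{(j)}$ on $\partial\Omega_j$ (Theorem \ref{2.13}(i)).

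Pulling these boundary integrals back to $\partial\Omega$ via the Lipschitz diffeomorphisms $\Phi_j:\partial\Omega\to\partial\Omega_j$ with Jacobians $\omega_j$, I would use the pointwise a.e.\ convergences
$$
({\bf u}\circ\Phi_j)({\bf y})\to{\bf u}_{\rm nt}^+({\bf y}),\
(\boldsymbol\sigma({\bf u},\pi)\circ\Phi_j)({\bf y})\to \boldsymbol\sigma_{\rm nt}^+({\bf u},\pi)({\bf y}),\
\boldsymbol\nu^{(j)}(\Phi_j({\bf y}))\to\boldsymbol\nu({\bf y}),\ \omega_j({\bf y})\to 1,
$$
which follow from Lemma \ref{2.13D} together with the inclusion $\Phi_j({\bf y})\in{\mathfrak D}_+({\bf y})$, the uniform convergence $\Phi_j({\bf y})\to{\bf y}$, and the assumed existence of the non-tangential limits of ${\bf u}$, $\nabla{\bf u}$ and $\pi$ a.e.\ on $\partial\Omega$. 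Since ${\bf x}$ is at a positive distance from every $\partial\Omega_j$ for $j$ large, the kernels $\mathcal G^\alpha({\bf x}-\cdot)$ and ${\bf S}^\alpha(\cdot,{\bf x})$ remain uniformly bounded on $\partial\Omega_j$ as $j\to\infty$, so the products of these kernels with $M({\bf u})$, $M(\nabla{\bf u})$ and $M(\pi)$ provide $L_1(\partial\Omega)$ dominants. The Lebesgue dominated convergence theorem then yields
$$
\lim_{j\to\infty}{\bf V}_\alpha\big({\bf t}^{c+}_{\partial\Omega_j}({\bf u},\pi)\big)({\bf x})
={\bf V}_\alpha\big({\bf t}_{\rm nt}^+({\bf u},\pi)\big)({\bf x}),\quad
\lim_{j\to\infty}{\bf W}_\alpha\big(\gamma_{\partial\Omega_j}{\bf u}\big)({\bf x})
={\bf W}_\alpha({\bf u}_{\rm nt}^+)({\bf x}),
$$
which, combined with the identity of the preceding paragraph, gives \eqref{Green-r-f-Brinkman}.

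The main technical difficulty is the double-layer term: its integrand couples the boundary stress $\boldsymbol\sigma({\bf u},\pi)$ with the approximating normals $\boldsymbol\nu^{(j)}$ and with a kernel depending on $\nabla\mathcal G^\alpha$ and $\Pi$, so the pointwise convergence must be organized by combining the existence of $\boldsymbol\sigma_{\rm nt}^+({\bf u},\pi)$ a.e.\ (guaranteed by $M(\nabla{\bf u}),M(\pi)\in L_p(\partial\Omega)$), the a.e.\ and $L_p$ convergence $\boldsymbol\nu^{(j)}\circ\Phi_j\to\boldsymbol\nu$ from Lemma \ref{2.13D}, and the uniform off-diagonal boundedness of ${\bf S}^\alpha(\cdot,{\bf x})$. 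Once these ingredients are packaged as above, the dominated convergence theorem closes the argument on the compact surface $\partial\Omega$.
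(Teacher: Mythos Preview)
Your approach is correct and essentially identical to the paper's proof: approximate $\Omega_+$ from inside by smooth $\Omega_j$ via Lemma \ref{2.13D}, apply the classical Green representation on each $\Omega_j$, pull back to $\partial\Omega$ via $\Phi_j$, and pass to the limit by dominated convergence using $M({\bf u})$, $M(\nabla{\bf u})$, $M(\pi)$ as dominants. One small slip in your final paragraph: it is the \emph{single-layer} term whose density ${\bf t}^{c+}_{\partial\Omega_j}({\bf u},\pi)=\boldsymbol\sigma({\bf u},\pi)\boldsymbol\nu^{(j)}$ carries the stress of $({\bf u},\pi)$ (and hence needs the convergence of $\nabla{\bf u}$, $\pi$, and $\boldsymbol\nu^{(j)}$), whereas the double-layer density is simply ${\bf u}$, with $\boldsymbol\nu^{(j)}$ entering only through the kernel ${\bf S}^\alpha$; this does not affect the argument, since you have assembled the right ingredients for both limits.
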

\begin{proof}
{We use arguments similar to the ones in \cite[Proposition 4.4.1]{M-W} for the Stokes system.
In the case of a smooth bounded domain $\Omega _0\subset {\mathbb R}^n$
{and for ${\bf u}\in C^2(\overline\Omega_+,{\mathbb R}^n)$, $\pi\in C^1(\overline\Omega_+)$},
formula \eqref{Green-r-f-Brinkman} follows easily from the integration by parts, cf. e.g. \eqref{Green-r-f-Brinkman-s}.}
\comment{{In the case of a bounded Lipschitz domain ${\Omega}_+\subset \mathbb{R}^{n}$, we first remark that ${\bf u}\in C^\infty(\Omega_+,{\mathbb R}^n)$, $\pi\in C^\infty(\Omega_+)$ {and} $\mathbf u^+_{\rm nt}\in H_{p;\boldsymbol \nu}^{1}(\partial {\Omega},\mathbb{R}^{n})$ and $\mathbf t_{\rm{nt}}^{+}({\bf u},\pi)$ by Theorem~\ref{M-H}.}}
{Now consider a sequence of sub-domains $\left\{\Omega _j \right\}_{j \geq 1}$ in $\Omega _{+}$ that contain the
point ${\bf x}\in \Omega _{+}$ and converges to $\Omega _{+}$ in the sense of {Lemma \ref{2.13D}}.
Then formula  \eqref{Green-r-f-Brinkman} holds for each of the domains $\Omega _j $ and by the Lebesgue Dominated Convergence Theorem (applied again after the change of variable as in {Lemma \ref{2.13D}} that reduces the integral over $\partial \Omega _j$ to an integral over $\partial \Omega$) letting $j \to \infty $, we obtain \eqref{Green-r-f-Brinkman}  for the Lipschitz domain $\Omega _+$ as well.}
\hfill\end{proof}

\section{Invertibility of related integral operators}

\begin{lemma}
\label{L3.1}
Let ${\Omega}_+\subset \mathbb{R}^{n}$ $(n \geq 3)$ be a bounded Lipschitz domain with connected boundary $\partial {\Omega}$. 
Let $\alpha \in (0,\infty )$ and $0\le s\le 1$.
Then the following operators are isomorphisms,
\begin{align}
\label{F-0-s}
\frac{1}{2}\mathbb{I} + {\bf K}^{*}_{\alpha }&:H^{-s}_2(\partial {\Omega},{\mathbb R}^n)\to H^{-s}_2(\partial {\Omega},{\mathbb R}^n),\\
\label{F-0-sns}
\frac{1}{2}\mathbb{I} + {\bf K}_{\alpha }&:H^{s}_2(\partial {\Omega},{\mathbb R}^n)\to H^{s}_2(\partial {\Omega},{\mathbb R}^n).
\end{align}
\end{lemma}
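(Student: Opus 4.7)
The plan is to reduce the statement to the well-posedness of the homogeneous Dirichlet and Neumann problems for the Brinkman system in $\Omega_\pm$ with $L_2$-based data, combined with interpolation and duality. Throughout, the key feature that separates the case $\alpha>0$ from the classical Stokes case $\alpha=0$ is that the exterior problem admits no nontrivial constants in the kernel (since such constants do not solve the exterior Brinkman equation), so no compatibility conditions on the data are needed.

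First, I would establish the endpoint cases $s=0$ and $s=1$. For $s=0$, the operator $\frac{1}{2}\mathbb{I}+{\bf K}^{*}_{\alpha}$ on $L_2(\partial\Omega,\mathbb R^n)$ is exactly the boundary operator obtained by taking $\mathbf t_{\rm nt}^{-}$ of the single layer, in view of the jump formula \eqref{70aaa}. Its invertibility follows by representing the unique solution to the exterior Neumann problem for the Brinkman system with data in $L_2(\partial\Omega,\mathbb R^n)$ as the single layer potential: existence comes from the Rellich-type identity that yields the estimate $\|\mathbf f\|_{L_2}\le C\|(\frac{1}{2}\mathbb I+{\bf K}^{*}_{\alpha})\mathbf f\|_{L_2}$ plus a Fredholm/continuity argument, while the kernel is trivial since a single layer with zero exterior conormal derivative must vanish in $\Omega_-$ by uniqueness of the exterior Brinkman problem and hence vanish in $\Omega_+$ by the interior Dirichlet problem (using $(\mathbf V_\alpha\mathbf f)^+_{\rm nt}=(\mathbf V_\alpha\mathbf f)^-_{\rm nt}$ from \eqref{68}). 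Symmetrically, for $s=1$ the operator $\frac{1}{2}\mathbb I+{\bf K}_\alpha$ on $H^1_2(\partial\Omega,\mathbb R^n)$ corresponds via \eqref{68-s1} to the interior Dirichlet problem solved by a double layer potential; invertibility again follows from well-posedness of the interior Dirichlet problem in the regularity class with data in $H^1_2$, together with the trivial kernel argument obtained by pairing interior and exterior problems.

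Next, I would propagate the result off the endpoints in two steps. By duality: the pairing $\langle\cdot,\cdot\rangle_{\partial\Omega}$ between $H^s_2(\partial\Omega,\mathbb R^n)$ and $H^{-s}_2(\partial\Omega,\mathbb R^n)$ identifies $\frac{1}{2}\mathbb I+{\bf K}^{*}_{\alpha}$ acting on $H^{-s}_2$ as the Banach-space adjoint of $\frac{1}{2}\mathbb I+{\bf K}_{\alpha}$ acting on $H^{s}_2$ (this adjoint identity first needs to be checked on smooth densities and then extended by density, using the mapping properties in Theorem~\ref{layer-potential-properties}). Consequently, invertibility of one operator at exponent $s$ gives invertibility of the other at exponent $-s$. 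Combining this with the two endpoint cases, one gets $\frac{1}{2}\mathbb I+{\bf K}_{\alpha}$ invertible on $H^0_2=L_2$ and $H^1_2$, and $\frac{1}{2}\mathbb I+{\bf K}^{*}_{\alpha}$ invertible on $L_2$ and $H^{-1}_2$. By complex interpolation (applied to the Bessel potential scale, using the interpolation identity referenced in the paper) the invertibility extends to all intermediate indices $s\in[0,1]$ for $\frac{1}{2}\mathbb I+{\bf K}_\alpha$ on $H^s_2$ and for $\frac{1}{2}\mathbb I+{\bf K}^{*}_\alpha$ on $H^{-s}_2$.

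The main technical obstacle is the endpoint invertibility on $L_2$, which rests on the Rellich identities for the Brinkman system on Lipschitz domains; this is where the Lipschitz geometry and the presence of the zero-order term $\alpha\mathbf u$ enter in a nontrivial way. Everything else (duality, interpolation, and the kernel analysis via interior/exterior pairing) is essentially formal once the $L_2$ theory is in hand, and in fact the $L_2$ invertibility for $\alpha>0$ has already been addressed in the works cited in the paper (notably \cite{Shen}), so in the actual write-up this step can be invoked by reference rather than reproved.
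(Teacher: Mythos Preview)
Your overall plan---endpoint invertibility plus complex interpolation plus duality---is a legitimate alternative to the paper's argument, but it differs from what the paper actually does, and your sketch has a gap at the $s=1$ endpoint. First, a sign slip: from \eqref{70aaa} and \eqref{68-s1} one has $\mathbf t^+_{\rm nt}(\mathbf V_\alpha\mathbf g,\mathcal Q^s\mathbf g)=(\tfrac{1}{2}\mathbb I+\mathbf K^*_\alpha)\mathbf g$ and $(\mathbf W_\alpha\mathbf h)^-_{\rm nt}=(\tfrac{1}{2}\mathbb I+\mathbf K_\alpha)\mathbf h$, so $\tfrac{1}{2}\mathbb I+\mathbf K^*_\alpha$ corresponds to the \emph{interior} Neumann problem and $\tfrac{1}{2}\mathbb I+\mathbf K_\alpha$ to the \emph{exterior} Dirichlet problem, not the other way round. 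More importantly, even with the correct identification, well-posedness of the exterior Dirichlet problem does not by itself give surjectivity of $\tfrac{1}{2}\mathbb I+\mathbf K_\alpha$ on $H^1_2(\partial\Omega,\mathbb R^n)$: you would still need to know that every exterior solution with $H^1_2$ boundary trace admits a double-layer representation, which is precisely where a Fredholm index-zero argument (or an $H^1$-level Rellich estimate) is needed.

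The paper sidesteps this by never treating $s=1$ as a separate endpoint. It cites the $s=0$ invertibility of $\tfrac{1}{2}\mathbb I+\mathbf K^*_\alpha$ on $L_2$ (from \cite{Med-CVEE-16}, \cite{Shen}) and hence, by duality, of $\tfrac{1}{2}\mathbb I+\mathbf K_\alpha$ on $L_2$. For every $s\in(0,1]$ it then observes that $\tfrac{1}{2}\mathbb I+\mathbf K_\alpha$ is Fredholm of index zero on $H^s_2$ because the Stokes operator $\tfrac{1}{2}\mathbb I+\mathbf K$ already is (by \cite[Proposition 10.5.3, Theorem 5.3.6]{M-W}) and $\mathbf K_\alpha-\mathbf K$ is compact on $H^s_2$ (by \cite[Theorem 3.4]{K-L-W}). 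The kernel on $H^s_2$ is then identified with the kernel on $L_2$ via Lemma~\ref{Lem2 Fredholm}, which is trivial by the $s=0$ case, giving invertibility for all $s\in[0,1]$; duality yields \eqref{F-0-s}. Your interpolation route can be made to work, but only after supplying the $s=1$ endpoint by essentially the same Fredholm-plus-compact-perturbation step; the paper's approach is therefore more direct here.
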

\begin{proof}
Isomorphism property of operator \eqref{F-0-s} for $s=0$ follows from
\cite[Proposition 7.1]{Med-CVEE-16} (see also \cite[Lemma 5.1]{Shen}). By duality this also implies the isomorphism property of operator \eqref{F-0-sns} for $s=0$.

Let us now remark that for $\alpha=0$ and $0<s\le 1$, operator \eqref{F-0-sns} is a Fredholm operator with index zero (cf., e.g., \cite[Proposition 10.5.3 and Theorem 5.3.6]{M-W}), while the operator
$
{\bf K}_{\alpha;0}:={\bf K}_{\alpha }-{\bf K}:H^s_2(\partial {\Omega},{\mathbb R}^n)\to
H^s_2(\partial {\Omega},{\mathbb R}^n)
$
is compact (cf., e.g., \cite[Theorem 3.4]{K-L-W}), implying that for $\alpha >0$ and $0<s\le 1$, \eqref{F-0-sns} is a Fredholm operator with index zero as well.
Then by Lemma \ref{Lem2 Fredholm} and the invertibility property of operator \eqref{F-0-sns} for $s=0$ we obtain the equalities
\begin{align}
\label{kernel-s1}
&{\rm{Ker}}\left\{\frac{1}{2}\mathbb{I} + {\bf K}_{\alpha }:H^s_2(\partial {\Omega},{\mathbb R}^n)\to H^s_2(\partial {\Omega},{\mathbb R}^n)\right\}
={\rm{Ker}}\left\{\frac{1}{2}\mathbb{I} + {\bf K}_{\alpha }:H^{0}_2(\partial {\Omega},{\mathbb R}^n)\to H^{0}_2(\partial {\Omega},{\mathbb R}^n)\right\}=\{{\bf 0}\},\quad 0<s\le 1,
\end{align}
which show {invertibility and hence isomorphism property} of operator \eqref{F-0-sns} for $\alpha >0$ and $0<s\le 1$ as well.
A duality argument implies that operator \eqref{F-0-s} is also an isomorphism whenever $\alpha >0$ and $0<s\le 1$.
\hfill\end{proof}

We will often need the following two intervals,
\begin{align}
\label{cases}
&\mathcal R_0(n,\varepsilon)= \left(\frac{2(n-1)}{n+1}-\varepsilon ,2+\varepsilon \right)\cap (1,+\infty ),
\quad 
\mathcal R_1(n,\varepsilon)=  \begin{cases}(2-\varepsilon,+\infty )& \mbox{ if }\ n=3,\\
\left(2-\varepsilon,\frac{2(n-1)}{n-3}+\varepsilon\right) & \mbox{ if }\ n>3
\end{cases},
\end{align}
which are particular cases of a more general interval
\begin{align}
\label{cases-s}
&{\mathcal R_\theta(n,\varepsilon)
= \begin{cases}(2-\varepsilon,+\infty )& \mbox{ if }\ n=3 \mbox{ and } \theta=1,\\
\left(\frac{2(n-1)}{n+1-2\theta}-\varepsilon,\frac{2(n-1)}{n-1-2\theta}+\varepsilon\right)\cap (1,+\infty ) & \mbox{ if }\ n>3 {\mbox{ or } 0\le\theta< 1}
\end{cases}\ .}
\end{align}
{\begin{lemma}
\label{L3.1p}
Let
${\Omega}_+\subset \mathbb{R}^{n}$ $(n \geq 3)$ be a bounded Lipschitz domain with connected boundary $\partial {\Omega}$. Let $\alpha \in (0,\infty )$.
Then there exists $\varepsilon=\varepsilon(\partial \Omega )>0$ such that for any
$p\in\mathcal R_0(n, \varepsilon)$ and ${p'}\in\mathcal R_1(n, \varepsilon)$, see \eqref{cases},
the following operators are {isomorphisms,}
\begin{align}
\label{Lp-isom*}
\frac{1}{2}{\mathbb I} + {\bf K}^{*}_{\alpha }&:L_p(\partial {\Omega},{\mathbb R}^n)\to L_p(\partial {\Omega},{\mathbb R}^n),\\
\label{F-0-snsp*}
\frac{1}{2}{\mathbb I} + {\bf K}^*_{\alpha }&:H^{-1}_{p'}(\partial {\Omega},{\mathbb R}^n)\to H^{-1}_{p'} (\partial {\Omega},{\mathbb R}^n),\\
\label{Lp-isom-}
\frac{1}{2}{\mathbb I} + {\bf K}_{\alpha }&:L_{p'}(\partial {\Omega},{\mathbb R}^n)\to L_{p'}(\partial {\Omega},{\mathbb R}^n),\\
\label{F-0-snsp}
\frac{1}{2}{\mathbb I} + {\bf K}_{\alpha }&:H^1_p(\partial {\Omega},{\mathbb R}^n)\to H^1_p(\partial {\Omega},{\mathbb R}^n).
\end{align}
If $\Omega_+$ is of class $C^1$, then the above invertibility properties hold for all $p,{p'}\in (1,\infty )$.
\end{lemma}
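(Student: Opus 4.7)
The plan is to reduce everything to the Stokes case ($\alpha=0$) plus a compact perturbation argument, then use duality to halve the number of operators we need to treat directly.

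\textbf{Step 1: Duality reduction.} Observe that operators \eqref{Lp-isom*} and \eqref{Lp-isom-} form a dual pair with respect to the $L_p$--$L_{p'}$ pairing, and similarly operators \eqref{F-0-snsp*} and \eqref{F-0-snsp} form a dual pair with respect to the $H_{p'}^{-1}$--$H_p^1$ pairing (given the standing relation $1/p+1/p'=1$). Thus it suffices to prove that \eqref{Lp-isom*} and \eqref{F-0-snsp} are isomorphisms; the other two follow automatically.

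\textbf{Step 2: Stokes baseline on the prescribed ranges.} First recall the corresponding invertibility results in the Stokes case. From the analysis of Fabes--Kenig--Verchota and its $L_p$--extension by Shen (see \cite[Theorem 3.4]{Shen}) and Mitrea--Wright \cite[Theorems 10.5.2, 10.5.3]{M-W}, there exists $\varepsilon=\varepsilon(\partial\Omega)>0$ such that, for $p\in\mathcal R_0(n,\varepsilon)$, both operators $\tfrac12\mathbb I+{\bf K}^*:L_p(\partial\Omega,{\mathbb R}^n)\to L_p(\partial\Omega,{\mathbb R}^n)$ and $\tfrac12\mathbb I+{\bf K}:H_p^1(\partial\Omega,{\mathbb R}^n)\to H_p^1(\partial\Omega,{\mathbb R}^n)$ are isomorphisms. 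For $C^1$ domains the corresponding Stokes results hold for all $p\in(1,\infty)$ by the compactness of ${\bf K}$ on these spaces.

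\textbf{Step 3: Brinkman as a compact perturbation of Stokes.} As quoted in the proof of Lemma~\ref{L3.1} (cf.\ \cite[Theorem 3.4]{K-L-W}), the differences ${\bf K}_\alpha-{\bf K}$ and ${\bf K}_\alpha^{*}-{\bf K}^{*}$ are compact operators on $L_p(\partial\Omega,{\mathbb R}^n)$ and on $H_p^1(\partial\Omega,{\mathbb R}^n)$ (respectively on their duals) for all $p\in(1,\infty)$. Consequently, for every $p$ in the ranges above, the Brinkman operators $\tfrac12\mathbb I+{\bf K}^*_\alpha$ on $L_p(\partial\Omega,{\mathbb R}^n)$ and $\tfrac12\mathbb I+{\bf K}_\alpha$ on $H_p^1(\partial\Omega,{\mathbb R}^n)$ are Fredholm with index zero (as compact perturbations of the isomorphisms from Step~2).

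\textbf{Step 4: Trivial kernels via $p$-independence.} To promote Fredholm-with-index-zero to isomorphism, it suffices to check injectivity. This is where the previously proved Lemma~\ref{L3.1} enters: for $p=2$ (with $s=0$ and $s=1$ respectively) both operators are already known to be injective. By the stability of Fredholm and invertibility properties (Lemma~\ref{Lem2 Fredholm}, used exactly as in the proof of Lemma~\ref{L3.1}), the kernel of a Fredholm operator on $L_p(\partial\Omega,{\mathbb R}^n)$ (resp.\ $H_p^1(\partial\Omega,{\mathbb R}^n)$) coincides with the kernel on $L_2(\partial\Omega,{\mathbb R}^n)$ (resp.\ $H_2^1(\partial\Omega,{\mathbb R}^n)$) whenever $p$ lies in a connected neighbourhood of $2$ throughout which the Fredholm property holds. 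Thus, shrinking $\varepsilon$ if necessary, we obtain triviality of the kernels and hence the isomorphism properties \eqref{Lp-isom*} and \eqref{F-0-snsp}.

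\textbf{Step 5: Dualisation and $C^1$ case.} Invertibility of \eqref{Lp-isom-} and \eqref{F-0-snsp*} is obtained from that of \eqref{Lp-isom*} and \eqref{F-0-snsp} by duality. For $\Omega_+$ of class $C^1$, the compactness of ${\bf K}$ on $L_p$ and $H_p^1$ for every $p\in(1,\infty)$ (Fabes--Jodeit--Riviere-type results) gives the Fredholm property on the full range; combining with Step 4 then yields the isomorphism for all $p\in(1,\infty)$.

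\textbf{Main obstacle.} The delicate point is the underlying Stokes baseline invertibility of $\tfrac12\mathbb I+{\bf K}^*$ on $L_p$ (and the analogue for $H_p^1$) on a Lipschitz domain over the prescribed interval $\mathcal R_0(n,\varepsilon)$. We invoke this as a black box from \cite{Shen, M-W}; the rest of the argument (compact perturbation plus $p$-independence of kernels) is standard and straightforward.
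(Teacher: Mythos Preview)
Your overall strategy---Stokes baseline, compact perturbation, kernel stability via the $L_2$ case from Lemma~\ref{L3.1}, then duality---is exactly the paper's approach. However, Step~2 contains a factual error: for the Stokes system ($\alpha=0$), the operators $\tfrac{1}{2}\mathbb I+{\bf K}$ on $H^1_p(\partial\Omega,\mathbb R^n)$ and $\tfrac{1}{2}\mathbb I+{\bf K}^*$ on $L_p(\partial\Omega,\mathbb R^n)$ are \emph{not} isomorphisms. Indeed, the boundary trace of any rigid motion $r(x)=a+Bx$ ($B$ skew) lies in $\mathrm{Ker}\big(\tfrac{1}{2}\mathbb I+{\bf K}\big)$: since $(r,0)$ solves the homogeneous interior Stokes Neumann problem, the Green representation gives $r=-{\bf W}(r|_{\partial\Omega})$ in $\Omega_+$, and taking the interior trace yields $(\tfrac{1}{2}\mathbb I+{\bf K})(r|_{\partial\Omega})=0$. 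The references you cite do not assert invertibility of these Stokes operators; what the paper actually invokes (\cite[Theorems 9.1.3 and 9.1.11]{M-W}) is only that they are \emph{Fredholm with index zero} on the ranges $\mathcal R_0(n,\varepsilon)$.

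Fortunately this does not break your argument: Step~3 only needs the Fredholm-with-index-zero property for the Stokes operators, so that adding the compact remainder ${\bf K}_\alpha-{\bf K}$ (resp.\ ${\bf K}^*_\alpha-{\bf K}^*$) still yields Fredholm index zero for the Brinkman operators. The decisive injectivity input in Step~4 comes, as you correctly identify, from Lemma~\ref{L3.1} for the \emph{Brinkman} operator at $p=2$, where the condition $\alpha>0$ is precisely what kills the rigid-motion null space. So replace ``isomorphisms'' by ``Fredholm operators with index zero'' in Step~2 and correct the citations accordingly; the rest of the proof then matches the paper's.
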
}
\begin{proof}
By \cite[Theorem 9.1.11]{M-W} there exists a parameter $\varepsilon>0$ such that for any $p\in\mathcal R_0(n, \varepsilon)$,
\begin{align}
\label{decomp-1}
\frac{1}{2}\mathbb{I}+{\bf K}^{*}:L_p(\partial {\Omega}, \mathbb{R}^{n})\to L_p(\partial {\Omega}, \mathbb{R}^{n})
\end{align}
is a Fredholm operator with index zero. Then compactness of the operator
${\bf K}^{*}_{\alpha ;0}:={\bf K}^{*}_{\alpha }-{\bf K}^{*}:L_p(\partial {\Omega},{\mathbb{R}^{n})}\to L_p(\partial {\Omega}, \mathbb{R}^{n})$
for any $p\in (1,\infty )$ (see \cite[Theorem 3.4(b)]{K-L-W}), imply that operator \eqref{Lp-isom*} is Fredholm with index zero as well, for any $p\in\mathcal R_0(n, \varepsilon)$.
In addition, a density argument based on {Lemma \ref{Lem2 Fredholm}} and the invertibility property of operator {\eqref{F-0-s} in the case $s=0$}, show that operator \eqref{Lp-isom*} is an isomorphism {for $p=2$ and hence} for any $p\in\mathcal R_0(n,\varepsilon)$.

Similarly, by \cite[Theorem 9.1.3]{M-W} there exists a parameter {(for the sake of brevity, we use the same notation as above)} $\varepsilon>0$ such that for any
$p\in\mathcal R_0(n, \varepsilon)$
the operator
\begin{align}
\label{decomp-1}
\frac{1}{2}\mathbb{I} + {\bf K}&:
H^1_p(\partial {\Omega},{\mathbb R}^n)\to H^1_p(\partial {\Omega},{\mathbb R}^n)
\end{align}
is Fredholm with index zero. Then compactness of the complementary operator
${\bf K}_{\alpha ;0}:={\bf K}_{\alpha }-{\bf K}:H^1_p(\partial {\Omega},{\mathbb R}^n)\to H^1_p(\partial {\Omega},{\mathbb R}^n)$
for any $p\in (1,\infty )$ (see \cite[Theorem 3.4(b)]{K-L-W}), implies that operator \eqref{F-0-snsp} is Fredholm with index zero as well, for any $p\in \mathcal R_0(n,\varepsilon)$.
In addition, a density argument based on {Lemma \ref{Lem2 Fredholm}} and the invertibility property {for operator \eqref{F-0-sns} in the case $s=1$}, show that operator \eqref{F-0-snsp} is an isomorphism {for $p=2$ and hence} for any 
$p\in\mathcal R_0(n, \varepsilon)$.

{Isomorphism} property of operators \eqref{F-0-snsp*} and \eqref{Lp-isom-} then follow by duality and isomorphism property of operators \eqref{F-0-snsp} and  \eqref{Lp-isom*}, respectively, for ${p'}= \frac{p}{p-1}$.

{If $\Omega_+$ is of class $C^1$, then operator \eqref{decomp-1} is Fredholm with index zero for any $p\in (1,\infty )$, cf., e.g., \cite[Remark 3.1]{Russo-Tartaglione-2}, and the the rest of the proof holds true for any $p, q\in (1,\infty )$.}
\hfill\end{proof}

Lemmas \ref{L3.1p}, \ref{complex-interpolation} and \ref{Interp-Isom} ({ii}) {and an interpolation argument} {(provided by the complex and real interpolation theory)} imply the following assertion.
\begin{corollary}
\label{L3.1ps}
Let
${\Omega}_+\subset \mathbb{R}^{n}$ $(n \geq 3)$ be a bounded Lipschitz domain with connected boundary $\partial {\Omega}$, and
$\alpha \in (0,\infty )$.
Then there exists $\varepsilon=\varepsilon(\partial \Omega )>0$ such that for any
$p\in \mathcal R_s(n,\varepsilon)$ and ${p'}\in \mathcal R_{1-s}(n,\varepsilon)$, cf. \eqref{cases-s},
the following operators are isomorphisms
\begin{align}
\label{F-0-snsp-s}
&{\frac{1}{2}{\mathbb I} + {\bf K}_{\alpha }:H^s_{p'}(\partial {\Omega},{\mathbb R}^n)\to
H^s_{p'}(\partial {\Omega},{\mathbb R}^n), \quad s\in[0,1]},\\
\label{F-0-sq-s}
&{\frac{1}{2}{\mathbb I} + {\bf K}^*_{\alpha }:H^{-s}_p(\partial {\Omega},{\mathbb R}^n)\to
H^{-s}_p(\partial {\Omega},{\mathbb R}^n),\quad s\in[0,1]},\\
\label{F-0-snsp-sB}
&{\frac{1}{2}{\mathbb I} + {\bf K}_{\alpha }:B^s_{p',q}(\partial {\Omega},{\mathbb R}^n)\to B^s_{p',q}(\partial {\Omega},{\mathbb R}^n), \quad s\in(0,1),\ q\in(1,\infty )},\\
\label{F-0-sq-sB}
&{\frac{1}{2}{\mathbb I} + {\bf K}^*_{\alpha }:B^{-s}_{p,q}(\partial {\Omega},{\mathbb R}^n)\to B^{-s}_{p,q}(\partial {\Omega},{\mathbb R}^n),\quad s\in(0,1),\ q\in(1,\infty )}.
\end{align}
{If $\Omega_+$ is of class $C^1$, then the properties hold for all $p,{p'}\in (1,\infty )$.}
\end{corollary}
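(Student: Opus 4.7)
The strategy is to interpolate the endpoint isomorphisms of Lemma~\ref{L3.1p} (for $s=0$ and $s=1$) using complex interpolation to obtain the Bessel potential case and real interpolation to obtain the Besov case, combined throughout with Lemma~\ref{Interp-Isom}(ii) on the propagation of invertibility under interpolation. I will focus on $\frac{1}{2}\mathbb{I}+\mathbf K_\alpha$; the statements for $\frac{1}{2}\mathbb{I}+\mathbf K_\alpha^*$ then follow by duality, using $(H^s_{p'})'=H^{-s}_{p}$ and $(B^s_{p',q})'=B^{-s}_{p,q'}$ with $\frac{1}{p}+\frac{1}{p'}=1$, $\frac{1}{q}+\frac{1}{q'}=1$.

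For \eqref{F-0-snsp-s}, fix $s\in(0,1)$ and $p'\in\mathcal R_{1-s}(n,\varepsilon)$. A direct computation with the explicit description of the intervals in \eqref{cases-s} shows that the map $(p_1',p_0')\mapsto \tfrac{1-s}{p_1'}+\tfrac{s}{p_0'}$ with $p_1'\in\mathcal R_1(n,\varepsilon)$, $p_0'\in\mathcal R_0(n,\varepsilon)$ sweeps exactly the set $\{1/p':p'\in\mathcal R_{1-s}(n,\varepsilon')\}$ for a possibly smaller $\varepsilon'>0$, so that one can choose $p_0',p_1'$ with $\frac{1}{p'}=\frac{1-s}{p_1'}+\frac{s}{p_0'}$. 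By Lemma~\ref{L3.1p}, $\frac{1}{2}\mathbb{I}+\mathbf K_\alpha$ is an isomorphism on both $L_{p_1'}(\partial\Omega,\mathbb R^n)$ and $H^1_{p_0'}(\partial\Omega,\mathbb R^n)$. Together with the identity $[L_{p_1'},H^1_{p_0'}]_s = H^s_{p'}(\partial\Omega,\mathbb R^n)$ from Lemma~\ref{complex-interpolation} and the interpolation-of-isomorphisms statement Lemma~\ref{Interp-Isom}(ii), this yields \eqref{F-0-snsp-s}; the endpoints $s=0,1$ are covered directly by Lemma~\ref{L3.1p}. In the $C^1$ case, Lemma~\ref{L3.1p} allows $p_0',p_1'\in(1,\infty)$ arbitrary, hence $p'\in(1,\infty)$ arbitrary.

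For \eqref{F-0-snsp-sB}, fix $s\in(0,1)$, $p'\in\mathcal R_{1-s}(n,\varepsilon)$ and $q\in(1,\infty)$. Since the endpoints of $\mathcal R_{1-\theta}(n,\varepsilon)$ are continuous functions of $\theta$, we can find $s_0,s_1\in[0,1]$ with $s_0<s<s_1$ and both sufficiently close to $s$ to ensure $p'\in\mathcal R_{1-s_0}(n,\varepsilon)\cap\mathcal R_{1-s_1}(n,\varepsilon)$. By Step~1 (or by Lemma~\ref{L3.1p} when an $s_j$ is at an endpoint), $\frac{1}{2}\mathbb{I}+\mathbf K_\alpha$ is an isomorphism on both $H^{s_0}_{p'}$ and $H^{s_1}_{p'}$. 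Selecting $\theta\in(0,1)$ with $s=(1-\theta)s_0+\theta s_1$ and applying the real interpolation identity $(H^{s_0}_{p'},H^{s_1}_{p'})_{\theta,q}=B^s_{p',q}(\partial\Omega,\mathbb R^n)$ together with Lemma~\ref{Interp-Isom}(ii), we obtain \eqref{F-0-snsp-sB}. The main technical point throughout is the convexity/continuity computation ensuring that the interpolation parameters can be chosen so that the intermediate $(s,p')$ sweeps precisely $\mathcal R_{1-s}(n,\varepsilon)$ (after a harmless shrinkage of $\varepsilon$); this is elementary but needs to be checked carefully because of the small perturbations at the endpoints of the intervals in \eqref{cases-s}.
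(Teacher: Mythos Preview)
Your proposal is correct and follows essentially the same approach as the paper, which simply states that the corollary follows from Lemmas~\ref{L3.1p}, \ref{complex-interpolation} and \ref{Interp-Isom}(ii) together with complex and real interpolation. Your write-up expands this one-sentence hint into the natural two-step argument (complex interpolation for the $H^s$ scale, then real interpolation for the $B^s_{p,q}$ scale), and correctly identifies the only technical point—checking that the interpolation endpoints can be chosen within $\mathcal R_0$ and $\mathcal R_1$ so as to sweep $\mathcal R_{1-s}$ and so that the inclusion hypothesis of Lemma~\ref{Interp-Isom}(ii) (e.g.\ $H^1_{p_0'}(\partial\Omega)\hookrightarrow L_{p_1'}(\partial\Omega)$ via Sobolev embedding) is met, possibly after shrinking $\varepsilon$.
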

\comment{
\begin{lemma}
\label{isom-D-N}
{Let $\alpha >0$.}
Then there exists a number $\varepsilon =\varepsilon (\partial \Omega )>0$ such that for any $p\in\mathcal R_0(n, \varepsilon)$ and $q\in\mathcal R_1(n, \varepsilon)$ $($cf. \eqref{cases}$)$,
\begin{align}
\label{F0-1}
&-\frac{1}{2}{\mathbb I}+{\bf K}_{\alpha }:L_q(\partial \Omega ,{\mathbb R}^n)\to L_q(\partial \Omega ,{\mathbb R}^n),\\
\label{F0-2}
&-\frac{1}{2}{\mathbb I}+{\bf K}_{\alpha }^*:L_p(\partial \Omega ,{\mathbb R}^n)\to L_p(\partial \Omega ,{\mathbb R}^n)
\end{align}
are Fredholm operators with index zero with one-dimensional null-spaces, the null-space of operator \eqref{F0-2} is equal to ${\mathbb R}\boldsymbol \nu$, while the {operator}
\begin{align}
\label{Lp-isom-1*}
&{-\frac{1}{2}{\mathbb I}+{\bf K}_{\alpha }:L_q(\partial \Omega ,{\mathbb R}^n)\to L_{q;\boldsymbol \nu}(\partial \Omega ,{\mathbb R}^n)}
\end{align}
{is surjective and has} a continuous right inverse.

{If $\Omega_+$ is of class $C^1$, then the properties hold for all $p,q\in (1,\infty )$.}
\end{lemma}
\begin{proof}
{By \cite[Proposition 7.2]{Med-CVEE-16}, \eqref{Lp-isom-1*} is a Fredholm operator with index zero  for $q=2$ (cf. also \cite[Lemma 5.2]{Shen} for a differently defined double layer potential operator).}
According to \cite[Corollary 9.1.12]{M-W} there exists $\varepsilon =\varepsilon (\partial \Omega )>0$ such that for any $q\in\mathcal R_1(n, \varepsilon)$, the operator
\begin{align}
\label{Lp-isom-3*}
-\frac{1}{2}\mathbb{I}+{\bf K}:L_q(\partial {\Omega},\mathbb{R}^{n})\to L_q(\partial {\Omega}, \mathbb{R}^{n})
\end{align}
is Fredholm with index zero. Then compactness of the complementary operator
${\bf K}_{\alpha ;0}:={\bf K}_{\alpha }-{\bf K}:L_q(\partial {\Omega},{\mathbb{R}^{n})}\to L_q(\partial {\Omega}, \mathbb{R}^{n})$
for any $q\in (1,\infty )$ (see \cite[Theorem 3.4(b)]{K-L-W}), and a duality argument imply that operators \eqref{F0-1} and \eqref{F0-2} are Fredholm with index zero as well for any $q\in\mathcal R_1(n, \varepsilon)$ and
$p=\frac{q}{q-1}$ (i.e, $p\in\mathcal R_0(n, \varepsilon)$).
In addition, in view of \cite[the proof of Proposition 7.2]{Med-CVEE-16} and the property that
$(-\frac{1}{2}{\mathbb I}+{\bf K}^*_{\alpha })\nu
={\bf t}_{\alpha }^{-}({\bf V}_{\alpha }\nu ,{\mathcal Q}^{s}\nu )={\bf 0}$,
{the null space of the operator} $-\frac{1}{2}{\mathbb I}+{\bf K}_{\alpha }^*:L_{2}(\partial \Omega ,{\mathbb R}^n)\to L_{2}(\partial \Omega ,{\mathbb R}^n)$ is
equal to ${\mathbb R}\boldsymbol \nu$ {(see also \cite[Lemma 5.2]{Shen})}.
Then
{Lemma \ref{Lem2 Fredholm}} implies that the null space of the operator \eqref{F0-2} is
equal to ${\mathbb R}\boldsymbol \nu$ for any $p$ as in \eqref{cases}, i.e., the dimension of the null space of operator \eqref{F0-2} is equal to one. Therefore, the range of this Fredholm operator of index zero has has codimension one in $L_p(\partial \Omega ,{\mathbb R}^n)$.
On the other hand, for any ${\bf h}\in L_q(\partial \Omega ,{\mathbb R}^n)$, the double layer potential ${\bf W}_{\alpha }{\bf h}$ satisfies the equation ${\rm{div}}\, {\bf W}_{\alpha }{\bf h}=0$ in $\Omega _{+}$, and then, by the divergence theorem and the trace formulas \eqref{68-s1}, we deduce that $\left(-\frac{1}{2}{\mathbb I}+{\bf K}_{\alpha }\right){\bf h}\subseteq L_{q;\boldsymbol\nu }(\partial \Omega ,{\mathbb R}^n)$.
Consequently, the range of operator
\eqref{F0-1} is
$L_{q;\boldsymbol\nu }(\partial \Omega ,{\mathbb R}^n)$,
and hence the operator \eqref{Lp-isom-1*} is surjective.
Since its {null space} is a {one-dimensional} closed subspace of the space $L_q(\partial \Omega ,{\mathbb R}^n)$, operator \eqref{Lp-isom-1*} has a linear continuous right inverse. 
If the domain $\Omega _{+}$ is of class $C^1$, the property follows easily from the compactness of the operator ${\bf K}_{\alpha }$ on $L_q(\partial \Omega ,{\mathbb R}^n)$, for any $q\in (1,\infty )$ (see, e.g., \cite[p. 1691]{Med-CVEE-16}).
\hfill\end{proof}

Since ${\mathbb R}\boldsymbol \nu $ is {complementary to} $L_{p;\boldsymbol \nu}(\partial \Omega ,{\mathbb R}^n)$ in $L_{p}(\partial \Omega ,{\mathbb R}^n)$, we obtain the following result (cf.
\cite[Corollary 9.1.12]{M-W} for $\alpha =0$).
} 
Next we show the following invertibility result {(see also \cite[Proposition 7.2]{Med-CVEE-16} in the case $p=2$ and $s=0$)}.
\begin{lemma}
\label{isom-p}
Let ${\Omega}_{+}\subset {\mathbb R}^n$ {$(n\geq 3)$} be a bounded Lipschitz domain with connected boundary $\partial {\Omega}$ and let $\Omega _{-}:={\mathbb R}^n\setminus \overline{\Omega }_+$.
Let $\alpha \in (0,\infty )$.
Then there exists a number $\varepsilon =\varepsilon (\partial \Omega )>0$ such that the operators
\begin{align}
\label{Lp-isom-1star}
&-\frac{1}{2}{\mathbb I}+{\bf K}_{\alpha }:L_{{p'};\boldsymbol \nu}(\partial \Omega ,{\mathbb R}^n)\to
L_{{p'};\boldsymbol \nu}(\partial \Omega ,{\mathbb R}^n),\\
\label{Lp-isom-2*}
&{-\frac{1}{2}{\mathbb I}+{\bf K}_{\alpha }^*:L_p(\partial \Omega ,{\mathbb R}^n)/{{\mathbb R}\boldsymbol \nu}\to L_p(\partial \Omega ,{\mathbb R}^n)/{{\mathbb R}\boldsymbol \nu}},\\
\label{Lp-isom-1-C1}
&-\frac{1}{2}{\mathbb I}+{\bf K}_{\alpha }:H^1_{p;\nu }(\partial \Omega ,{\mathbb R}^n)\to H^1_{p;\nu }(\partial \Omega ,{\mathbb R}^n),\\
\label{Lp-isom-1-C1*}
&-\frac{1}{2}{\mathbb I}+{\bf K}_{\alpha }^* :H^{-1}_{p'}(\partial \Omega ,{\mathbb R}^n)/{{\mathbb R}\boldsymbol \nu}\to H^{-1}_{p'}(\partial \Omega ,{\mathbb R}^n)/{{\mathbb R}\boldsymbol \nu}
\end{align}
are isomorphisms for all $p\in\mathcal R_0(n, \varepsilon)$ and ${p'}\in\mathcal R_1(n, \varepsilon)$ $($cf. \eqref{cases}$)$.

If the domain $\Omega $ is of class $C^1$, {the above properties hold for all $p,{p'}\in (1,\infty )$.}
\end{lemma}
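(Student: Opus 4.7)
The plan is to establish the four isomorphism statements by combining a Fredholm-index-zero result for the underlying full-space operator with a refined analysis of the one-dimensional null spaces. First, I would observe that, exactly as in the proof of Lemma~\ref{L3.1p}, Corollary~9.1.12 of \cite{M-W} gives that $-\frac{1}{2}{\mathbb I}+{\bf K}$ is Fredholm of index zero on $L_{p'}(\partial\Omega,\mathbb R^n)$ for $p'\in\mathcal R_1(n,\varepsilon)$, and compactness of ${\bf K}_{\alpha}-{\bf K}$ on $L_{p'}$ (from \cite[Theorem~3.4]{K-L-W}) upgrades this to Fredholm index zero for $T:=-\frac{1}{2}{\mathbb I}+{\bf K}_\alpha$. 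An analogous argument based on Theorem~9.1.3 of \cite{M-W} handles $T$ on $H^1_p$ for $p\in\mathcal R_0(n,\varepsilon)$.

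Next I would identify the kernel of the adjoint at $p=2$. A direct computation (already essentially performed in Section~3 of the paper) using ${\bf V}_\alpha\boldsymbol\nu=0$ in $\mathbb R^n\setminus\partial\Omega$ and $\mathcal Q^s\boldsymbol\nu=-1$ in $\Omega_+$, $0$ in $\Omega_-$, gives ${\bf t}_\alpha^+({\bf V}_\alpha\boldsymbol\nu,\mathcal Q^s\boldsymbol\nu)=\boldsymbol\nu$ so that $(-\tfrac12{\mathbb I}+{\bf K}_\alpha^*)\boldsymbol\nu=0$. Conversely, if $(-\tfrac12{\mathbb I}+{\bf K}_\alpha^*)g=0$, then ${\bf t}_{\rm nt}^-({\bf V}_\alpha g,\mathcal Q^s g)=0$; Green's formula \eqref{Green formula} in $\Omega_-$ (the boundary integral at infinity vanishing by the exponential decay of the Brinkman single-layer for $\alpha>0$) together with $\alpha>0$ forces ${\bf V}_\alpha g\equiv0$ in $\Omega_-$, continuity of $\mathcal V_\alpha$ and interior Dirichlet uniqueness then give ${\bf V}_\alpha g\equiv 0$ in $\Omega_+$ as well, $\mathcal Q^s g$ is constant in $\Omega_+$, and the conormal-derivative jump yields $g\in\mathbb R\boldsymbol\nu$. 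Hence $\ker(-\tfrac12{\mathbb I}+{\bf K}_\alpha^*)=\mathbb R\boldsymbol\nu$ on $L_2$, and Lemma~\ref{Lem2 Fredholm} extends this identification to all $p\in\mathcal R_0(n,\varepsilon)$. By Fredholm index zero, $\ker T$ is one-dimensional; let $\boldsymbol\phi\in L_{p'}$ be a generator.

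The decisive step is to show $\int_{\partial\Omega}\boldsymbol\phi\cdot\boldsymbol\nu\,d\sigma\neq 0$, so that $\boldsymbol\phi$ is transverse to the codimension-one subspace $L_{p';\boldsymbol\nu}$. From the divergence theorem applied to ${\bf W}_\alpha{\bf h}$ in $\Omega_+$ one first concludes $T(L_{p'})\subseteq L_{p';\boldsymbol\nu}$, and by Fredholmness with one-dimensional cokernel, $T(L_{p'})= L_{p';\boldsymbol\nu}$. For the generator $\boldsymbol\phi$, the relation $(-\tfrac12{\mathbb I}+{\bf K}_\alpha)\boldsymbol\phi=0$ means $({\bf W}_\alpha\boldsymbol\phi)^+_{\rm nt}=0$; interior Dirichlet uniqueness for the Brinkman system with $\alpha>0$ forces ${\bf W}_\alpha\boldsymbol\phi=0$ and $\mathcal Q^d_\alpha\boldsymbol\phi=c_+$ (a constant) in $\Omega_+$, so ${\bf t}_\alpha^+=-c_+\boldsymbol\nu$, and then \eqref{70aaaa} yields ${\bf t}_\alpha^-=-c_+\boldsymbol\nu$. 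Applying Green's identity \eqref{Green formula} in $\Omega_-$ to ${\bf u}={\bf W}_\alpha\boldsymbol\phi$ (the contribution at infinity decaying to zero from the $1/r^{n-1}$ behavior of ${\bf W}_\alpha\boldsymbol\phi$ and the $1/r^{n-2}$ behavior of $\mathcal Q^d_\alpha\boldsymbol\phi$ for $n\geq 3$) produces the identity
\begin{align*}
c_+\int_{\partial\Omega}\boldsymbol\phi\cdot\boldsymbol\nu\,d\sigma
=2\|\mathbb E({\bf u})\|^2_{L_2(\Omega_-)}+\alpha\|{\bf u}\|^2_{L_2(\Omega_-)}.
\end{align*}
If $\int\boldsymbol\phi\cdot\boldsymbol\nu=0$, the right-hand side would vanish, forcing ${\bf u}\equiv 0$ in $\Omega_-$ (here $\alpha>0$ is essential), and then the jump relation would give $\boldsymbol\phi=0$, contradicting nontriviality. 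So $\boldsymbol\phi\notin L_{p';\boldsymbol\nu}$, the splitting $L_{p'}=L_{p';\boldsymbol\nu}\oplus\mathbb R\boldsymbol\phi$ holds, and since $T\boldsymbol\phi=0$, we deduce $T(L_{p';\boldsymbol\nu})=T(L_{p'})=L_{p';\boldsymbol\nu}$ while $\ker T\cap L_{p';\boldsymbol\nu}=\{0\}$. This proves that \eqref{Lp-isom-1star} is an isomorphism.

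The remaining assertions now follow by duality and repetition. The induced map \eqref{Lp-isom-2*} on $L_p/\mathbb R\boldsymbol\nu$ is well-defined since $T^*\boldsymbol\nu=0$; it is injective because $\mathrm{range}(T^*)=(\ker T)^\perp=\{f:\int f\cdot\boldsymbol\phi=0\}$ does not contain $\boldsymbol\nu$ (again by $\int\boldsymbol\phi\cdot\boldsymbol\nu\neq 0$), and surjective by the same transversality. Isomorphisms \eqref{Lp-isom-1-C1} and \eqref{Lp-isom-1-C1*} are obtained by the identical scheme on $H^1_p$ and its dual $H^{-1}_{p'}$, replacing Corollary~9.1.12 by Theorem~9.1.3 of \cite{M-W} in the Fredholm step, and exploiting that ${\bf K}_\alpha-{\bf K}$ is also compact on $H^1_p$; the kernel generator coincides with $\boldsymbol\phi$ by stability of kernels, and the Green's identity argument is unchanged. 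Finally, for $\Omega_+$ of class $C^1$ the compactness of ${\bf K}$ itself on $L_p$ and $H^1_p$ for every $p\in(1,\infty)$ makes $-\frac{1}{2}{\mathbb I}+{\bf K}_\alpha$ Fredholm of index zero on the full Sobolev scale, so the preceding argument applies without the interval restrictions. The hard part is the Green's identity step guaranteeing $\int\boldsymbol\phi\cdot\boldsymbol\nu\neq 0$, as this is precisely what breaks down in the Stokes limit and requires both the positivity of $\alpha$ and careful control of boundary terms at infinity for the Brinkman layer potentials.
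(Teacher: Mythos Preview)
Your argument is correct and rests on the same two pillars as the paper's proof: Fredholm index zero from the Stokes result plus compactness of ${\bf K}_\alpha-{\bf K}$, and the exterior Green identity to control the kernel of $-\tfrac12\mathbb I+{\bf K}_\alpha$. The organization differs, however. The paper works \emph{inside} the subspace $L_{p';\boldsymbol\nu}$ from the outset: it observes that $-\tfrac12\mathbb I+{\bf K}$ is already an isomorphism on $L_{p';\boldsymbol\nu}$ (by \cite[Corollary~9.1.12]{M-W}), checks that ${\bf K}_\alpha-{\bf K}$ preserves $L_{p';\boldsymbol\nu}$, and then proves injectivity directly: if $h_0\in H^1_{2;\boldsymbol\nu}$ lies in the kernel, the interior analysis gives $\mathbf t^-_\alpha=-c_0\boldsymbol\nu$, and the pairing $\langle -c_0\boldsymbol\nu,\gamma_-{\bf W}_\alpha h_0\rangle=\langle -c_0\boldsymbol\nu,h_0\rangle$ vanishes \emph{because} $h_0\in L_{2;\boldsymbol\nu}$, so the exterior Green formula forces $h_0=0$. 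You instead work on the full space, identify $\ker T^*=\mathbb R\boldsymbol\nu$ explicitly, and prove the contrapositive (the kernel generator $\boldsymbol\phi$ is transverse to $L_{p';\boldsymbol\nu}$). This is logically equivalent but slightly longer, since the separate computation of $\ker T^*$ is not needed in the paper's route.

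One point to tidy: your Green identity in $\Omega_-$ is written with $L_2$ norms on the right, which requires $\boldsymbol\phi$ to have $L_2$-type regularity. The paper handles this by carrying out the kernel argument first in $H^1_{2;\boldsymbol\nu}$ (where Green's formula \eqref{Green formula} and the decay estimates ${\bf W}_\alpha h_0=O(|x|^{-n})$, $\mathcal Q^d_\alpha h_0=O(|x|^{1-n})$ apply cleanly) and only then invoking Lemma~\ref{Lem2 Fredholm} to pass to general $p,p'$. Your appeal to ``stability of kernels'' achieves the same thing, but it would be cleaner to state the Green step at $p=2$ from the start rather than at an unspecified $p'$.
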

\begin{proof}
In the case $\alpha =0$, operator \eqref{Lp-isom-1star} is an isomorphism  (cf. \cite[Corollary 9.1.12]{M-W}), and hence a Fredholm operator with index zero for any ${p'}\in\mathcal R_1(n, \varepsilon)$.
Moreover, the operator ${\bf K}_{\alpha }-{\bf K}$ is compact on the space $L_{p'}(\partial \Omega ,{\mathbb R}^n)$ (see \cite[Theorem 3.4(b)]{K-L-W}), and its range is a subset of $L_{{p'};\boldsymbol \nu}(\partial \Omega ,{\mathbb R}^n)$. Indeed, by using the formula
\begin{align*}
\left({\bf K}_{\alpha }-{\bf K}\right){\bf h}=\left(-\frac{1}{2}{\mathbb I}+{\bf K}_{\alpha }\right){\bf h}-\left(-\frac{1}{2}{\mathbb I}+{\bf K}\right){\bf h}=\gamma _{+}{\bf W}_{\alpha }{\bf h}-\gamma _{+}{\bf W}{\bf h},
\end{align*}
the equations ${\rm{div}}\, {\bf W}_{\alpha }{\bf h}=0$ and ${\rm{div}}\, {\bf W}{\bf h}=0$ in $\Omega _{+}$, and then, the divergence theorem and the trace formulas \eqref{68-s1}, we deduce that
$\left({\bf K}_{\alpha }-{\bf K}\right){\bf h}\in L_{{p'};\boldsymbol \nu}(\partial \Omega ,{\mathbb R}^n)$ for any ${\bf h}\in L_{{p'};\boldsymbol \nu}(\partial \Omega ,{\mathbb R}^n)$.
Therefore, the operator ${\bf K}_{\alpha }-{\bf K}:L_{{p'};\boldsymbol \nu}(\partial \Omega ,{\mathbb R}^n)\to L_{{p'};\boldsymbol \nu}(\partial \Omega ,{\mathbb R}^n)$ is compact, and then operator \eqref{Lp-isom-1star} is Fredholm with index zero for any {${p'}\in\mathcal R_1(n,\varepsilon)$.
On the other hand, by a similar reasoning (cf., e.g., \cite[Theorem 9.1.3]{M-W} and {\cite[Theorem 3.4 (b)]{K-L-W})}, operator \eqref{Lp-isom-1-C1}
is Fredholm with index zero as well, for any $p\in\mathcal R_0(n, \varepsilon)$.

We show now that operators \eqref{Lp-isom-1star} and \eqref{Lp-isom-1-C1} are also injective.
Let us start from operator \eqref{Lp-isom-1-C1} with $p=2$.}
Let ${\bf h}_0\in H^1_{2;\boldsymbol \nu}(\partial \Omega ,{\mathbb R}^n)$ be such that $\left(-\frac{1}{2}{\mathbb I}+{\bf K}_{\alpha }\right){\bf h}={\bf 0}$. Thus, $\gamma _{+}{\bf W}_{\alpha }{\bf h}_0={\bf 0}$, and by applying the Green formula \eqref{Green formula} to the double layer velocity and pressure potentials ${\bf W}_{\alpha }{\bf h}_0$ and ${\mathcal Q}_{\alpha }^d{\bf h}_0$ in $\Omega _{+}$, we deduce that ${\bf W}_{\alpha }{\bf h}_0={\bf 0}$ and ${\mathcal Q}_{\alpha }^d{\bf h}_0=c_0\in {\mathbb R}$ in $\Omega _{+}$. According to formula \eqref{70aaaa}, we obtain that
${\mathbf t_{\rm nt}^{-}}\big({\bf W}_{\alpha }{\bf h}_0,{\mathcal Q}_{\alpha }^d{\bf h}_0\big)
={\mathbf t_{\rm nt}^{+}}\big({\bf W}_{\alpha }{\bf h}_0,{\mathcal Q}_{\alpha }^d{\bf h}_0\big)
=-c_0\boldsymbol\nu $, and then the relation $\gamma _{-}{\bf W}_{\alpha }{\bf h}_0={\bf h}_0\in H^1_{2;\boldsymbol \nu}(\partial \Omega ,{\mathbb R}^n)$ shows that $\langle {\mathbf t_{\rm nt}^{-}}\big({\bf W}_{\alpha }{\bf h}_0,{\mathcal Q}_{\alpha }^d{\bf h}_0\big),\gamma _{-}{\bf W}_{\alpha }{\bf h}_0\rangle _{\partial \Omega }=0$. Finally, the relations ${\bf W}_{\alpha }{\bf h}_0({\bf x})=O(|{\bf x}|^{-n})$ and ${\mathcal Q}^d{\bf h}_0=O(|{\bf x}|^{1-n})$ as $|{\bf x}|\to \infty $ (see, e.g., \cite[Lemma 2.12, (2.76)]{24}), and the Green formula \eqref{Green formula} applied to ${\bf W}_{\alpha }{\bf h}_0$ and ${\mathcal Q}_{\alpha }^d{\bf h}_0$ in $\Omega _{-}$ imply that ${\bf W}_{\alpha }{\bf h}_0={\bf 0}$ and ${\mathcal Q}_{\alpha }^d{\bf h}_0={\bf 0}$ in $\Omega _{-}$. Then the trace formula \eqref{68-s1} yields that ${\bf h}_0={\bf 0}$.
Consequently, {operator \eqref{Lp-isom-1-C1} with $p=2$  is injective.
Then Lemma \ref{Lem2 Fredholm} implies that operator \eqref{Lp-isom-1star} with ${p'}=2$ is injective as well. Applying Lemma \ref{Lem2 Fredholm} again, we now obtain that operator \eqref{Lp-isom-1-C1} with
$p\in\mathcal R_0(n, \varepsilon)$ and operator \eqref{Lp-isom-1star} with ${p'}\in\mathcal R_1(n, \varepsilon)$ are injective, and according to their Fredholm property, these operators are also isomorphisms.
Operators \eqref{Lp-isom-2*} and \eqref{Lp-isom-1-C1*} are then isomorphisms by duality.

If $\Omega $ is of $C^1$ class, then for all $p,{p'}\in (1,\infty )$ operators \eqref{Lp-isom-1star} and  \eqref{Lp-isom-2*} are Fredholm with index zero due to compactness of the operators $\bf K$ and ${\bf K}^*$ on the corresponding spaces (cf., e.g., \cite[Eq. (3.51) in the proof of Proposition 3.5]{D-M}), and \cite[Theorem 3.4 (b)]{K-L-W}. Then the previous paragraph implies that operators \eqref{Lp-isom-1star}-\eqref{Lp-isom-1-C1*} are isomorphisms for $p,{p'}\in (1,\infty )$.}
\hfill\end{proof}

Lemmas \ref{isom-p}, \ref{complex-interpolation} and \ref{Interp-Isom}(ii) {by interpolation} imply the following result {(see also \cite[Proposition 7.2]{Med-CVEE-16} for $p=2$ and $s=0$)}.
\begin{corollary}
\label{L3.1psW}
Let ${\Omega}_{+}\subset {\mathbb R}^n$ {$(n\geq 3)$} be a bounded Lipschitz domain with connected boundary $\partial {\Omega}$ and let $\Omega _{-}:={\mathbb R}^n\setminus \overline{\Omega }_+$.
Let $\alpha \in (0,\infty )$.
Then there exists $\varepsilon=\varepsilon(\partial \Omega )>0$ such that {for any
$p\in \mathcal R_s(n,\varepsilon)$ and ${p'}\in \mathcal R_{1-s}(n,\varepsilon)$ $($cf. \eqref{cases-s}$)$,} the following operators are isomorphisms,
\begin{align}
\label{B-d-l-s}
&{-\frac{1}{2}{\mathbb I}+{\bf K}_{\alpha }:H^s_{{p'};\nu }(\partial \Omega ,{\mathbb R}^n)\to H^s_{{p'}; \nu }(\partial \Omega ,{\mathbb R}^n), \ s\in[0,1]},\\
\label{B-d-l-s*}
&{-\frac{1}{2}{\mathbb I}+{\bf K}_{\alpha }^* :H^{-s}_p(\partial \Omega ,{\mathbb R}^n)/{{\mathbb R}\boldsymbol \nu}\to H^{-s}_p(\partial \Omega ,{\mathbb R}^n)/{{\mathbb R}\boldsymbol \nu}, \ s\in[0,1]},\\
\label{B-d-l-sB}
&{-\frac{1}{2}{\mathbb I}+{\bf K}_{\alpha }:B^s_{p',q;\nu }(\partial \Omega ,{\mathbb R}^n)\to B^s_{p',q;\nu }(\partial \Omega ,{\mathbb R}^n), \quad s\in(0,1),\ q\in(1,\infty)},\\
\label{B-d-l-s*B}
&{-\frac{1}{2}{\mathbb I}+{\bf K}_{\alpha }^*:B^{-s}_{p,q}(\partial\Omega,{\mathbb R}^n)/{{\mathbb R}\boldsymbol\nu}\to B^{-s}_{p,q}(\partial\Omega,{\mathbb R}^n)/{{\mathbb R}\boldsymbol \nu}, \ s\in(0,1),\ q\in(1,\infty)}.
\end{align}
If $\Omega_+$ is of class $C^1$, then the properties hold for all $p,{p'}\in (1,\infty )$.
\end{corollary}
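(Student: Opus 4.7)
The plan is to deduce \eqref{B-d-l-s}--\eqref{B-d-l-s*B} by interpolating between the endpoint isomorphisms of Lemma~\ref{isom-p}, using the invariance of the isomorphism property under complex and real interpolation (Lemma~\ref{Interp-Isom}(ii)) and the identification of the relevant function spaces as interpolation spaces (Lemma~\ref{complex-interpolation}).

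First I would handle \eqref{B-d-l-s}. Lemma~\ref{isom-p} gives that $-\tfrac12\mathbb I+{\bf K}_\alpha$ is an isomorphism on $L_{p';\boldsymbol\nu}(\partial\Omega,\mathbb R^n)$ for $p'\in\mathcal R_1(n,\varepsilon)$, i.e., the endpoint $s=0$, and on $H^1_{p;\nu}(\partial\Omega,\mathbb R^n)$ for $p\in\mathcal R_0(n,\varepsilon)$, i.e., the endpoint $s=1$. Given $s\in(0,1)$ and $p'\in\mathcal R_s(n,\varepsilon)$, one selects endpoint exponents $p_0'\in\mathcal R_1(n,\varepsilon)$ and $p_1'\in\mathcal R_0(n,\varepsilon)$ (the latter so that the endpoint $s=1$ applies) together with $\theta\in(0,1)$ so that $s=\theta$ and $\tfrac{1}{p'}=\tfrac{1-\theta}{p_0'}+\tfrac{\theta}{p_1'}$; by the form of \eqref{cases-s} such a choice is possible. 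Then complex interpolation identifies $[L_{p_0';\boldsymbol\nu},H^1_{p_1';\nu}]_\theta=H^s_{p';\nu}(\partial\Omega,\mathbb R^n)$ (the constraint $\int_{\partial\Omega}{\bf v}\cdot\boldsymbol\nu\,d\sigma=0$ is preserved since it is a closed linear condition stable under interpolation), and Lemma~\ref{Interp-Isom}(ii) yields the isomorphism \eqref{B-d-l-s}. The Besov version \eqref{B-d-l-sB} follows identically with real interpolation in place of complex interpolation, again invoking Lemma~\ref{Interp-Isom}(ii) and the characterization of real interpolation spaces as Besov spaces.

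The dual statements \eqref{B-d-l-s*} and \eqref{B-d-l-s*B} follow by duality. Using the identity $\bigl(H^s_{p';\nu}(\partial\Omega,\mathbb R^n)\bigr)'=H^{-s}_p(\partial\Omega,\mathbb R^n)/{\mathbb R}\boldsymbol\nu$ (the annihilator of $H^s_{p';\nu}$ inside $H^{-s}_p$ being ${\mathbb R}\boldsymbol\nu$) and the analogous Besov duality, and observing that the adjoint of $-\tfrac12\mathbb I+{\bf K}_\alpha$ acting on these spaces is $-\tfrac12\mathbb I+{\bf K}_\alpha^*$, we transfer each isomorphism already obtained to its dual form. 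Equivalently, one can interpolate directly between the already-available dual endpoint isomorphisms from Lemma~\ref{isom-p}.

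The main obstacle I anticipate is the careful bookkeeping: verifying that the null-space constraint ($\int \mathbf v\cdot\boldsymbol\nu\,d\sigma=0$) and the quotient by ${\mathbb R}\boldsymbol\nu$ are compatible with both complex and real interpolation, and that the intervals $\mathcal R_s(n,\varepsilon)$ are exactly those that arise by interpolating $\mathcal R_0$ and $\mathcal R_1$ in the sense of the reciprocal exponent $1/p$ varying linearly with $\theta=s$. Once this matching is checked, the isomorphism property transfers automatically. The $C^1$ case is immediate, because Lemma~\ref{isom-p} already provides the endpoint isomorphisms without exponent restriction, and the same interpolation argument applies for all $p,p'\in(1,\infty)$.
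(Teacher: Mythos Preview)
Your approach is essentially the paper's: the corollary is stated there as an immediate consequence of Lemma~\ref{isom-p}, Lemma~\ref{complex-interpolation}, and Lemma~\ref{Interp-Isom}(ii) via complex and real interpolation, with duality (or direct interpolation of the adjoint endpoints) handling \eqref{B-d-l-s*} and \eqref{B-d-l-s*B}. One small slip: for \eqref{B-d-l-s} the exponent $p'$ lies in $\mathcal R_{1-s}(n,\varepsilon)$, not $\mathcal R_s(n,\varepsilon)$, since at $s=0$ you need $p'\in\mathcal R_1$ and at $s=1$ you need $p'\in\mathcal R_0$; your endpoint choices are correct but the label on the interpolated range should be adjusted.
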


In the case $\alpha =0$, the result, corresponding to the next one, has been obtained in
\cite[Theorem 9.1.4, Corollary 9.1.5]{M-W} (see also \cite[Theorem 6.1]{M-T}).
\begin{lemma}
\label{isom-sl}
Let ${\Omega}_{+}\subset {\mathbb R}^n$ {$(n\geq 3)$} be a bounded Lipschitz domain with connected boundary $\partial {\Omega}$ and let $\Omega _{-}:={\mathbb R}^n\setminus \overline{\Omega }_+$.
Let $\alpha \in (0,\infty )$.
Then there exists a number $\varepsilon >0$ such that for any $p\in\mathcal R_0(n, \varepsilon)$ and ${p'} \in\mathcal R_1(n, \varepsilon)$, see \eqref{cases}, the following Brinkman single layer potential operators are isomorphisms
\begin{align}
\label{s-l-r}
&{\mathcal V}_{\alpha }:L_p(\partial \Omega ,\mathbb{R}^{n})/{\mathbb R}\nu \to H_{p;\nu}^1(\partial {\Omega},\mathbb{R}^{n}),\\
\label{B-s-l-adj}
&{\mathcal V}_{\alpha }:H_{p'}^{-1}(\partial \Omega ,\mathbb{R}^{n})/{\mathbb R}\boldsymbol\nu \to L_{{ p'};\nu}(\partial {\Omega},\mathbb{R}^{n}).
\end{align}
{{If $\Omega_+$ is of class $C^{1}$,} then the above invertibility properties hold for all $p,{p'}\in (1,\infty )$.}
\end{lemma}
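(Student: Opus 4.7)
The strategy mimics the proofs of Lemma~\ref{isom-p} and Corollary~\ref{L3.1psW}: reduce the $\alpha>0$ case to the known $\alpha=0$ case by a compact perturbation argument, verify injectivity first at $p=2$, and then extend the range of $p$ using the Fredholm stability result in Lemma~\ref{Lem2 Fredholm}.

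First, I will invoke \cite[Theorem 9.1.4, Corollary 9.1.5]{M-W} (see also \cite[Theorem 6.1]{M-T}) to conclude that in the Stokes case $\alpha=0$ the operators
${\mathcal V}:L_p(\partial\Omega,\mathbb{R}^n)/{\mathbb R}\nu\to H^1_{p;\nu}(\partial\Omega,\mathbb{R}^n)$ and ${\mathcal V}:H^{-1}_{p'}(\partial\Omega,\mathbb{R}^n)/{\mathbb R}\nu\to L_{p';\nu}(\partial\Omega,\mathbb{R}^n)$ are isomorphisms for $p\in\mathcal R_0(n,\varepsilon)$, $p'\in\mathcal R_1(n,\varepsilon)$. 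Next I write ${\mathcal V}_\alpha={\mathcal V}+{\mathcal V}_{\alpha;0}$ with ${\mathcal V}_{\alpha;0}:={\mathcal V}_\alpha-{\mathcal V}$. Since ${\mathcal V}_{\alpha;0}$ has the smoother kernel ${\mathcal G}^\alpha-{\mathcal G}$ (cf.\ \eqref{sl-2}--\eqref{dl} and the pseudodifferential order $-4$ argument there, together with \cite[Theorem 3.4(b)]{K-L-W}), it is compact as an operator $L_p/{\mathbb R}\nu\to H^1_{p;\nu}$ and $H^{-1}_{p'}/{\mathbb R}\nu\to L_{p';\nu}$; thus \eqref{s-l-r} and \eqref{B-s-l-adj} are Fredholm operators of index zero for all admissible $p$, $p'$.

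It then remains to prove injectivity. I will do this first for $p=p'=2$ via a standard Green-identity argument, which is the main substantive step. Suppose ${\bf g}\in L_2(\partial\Omega,\mathbb{R}^n)$ represents a class in $L_2/{\mathbb R}\nu$ with ${\mathcal V}_\alpha{\bf g}\in{\mathbb R}\nu\cap H^1_{2;\nu}=\{{\bf 0}\}$. Setting $({\bf u},\pi):=({\bf V}_\alpha{\bf g},{\mathcal Q}^s{\bf g})$, we have $\gamma_+{\bf u}=\gamma_-{\bf u}={\mathcal V}_\alpha{\bf g}={\bf 0}$ by \eqref{68s}, and $({\bf u},\pi)$ solves the homogeneous Brinkman system in $\Omega_\pm$. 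Applying the first Green identity \eqref{Green formula} with ${\bf w}={\bf u}$ in $\Omega_+$ gives $2\|\mathbb{E}({\bf u})\|_{L_2(\Omega_+)}^2+\alpha\|{\bf u}\|_{L_2(\Omega_+)}^2=0$, hence ${\bf u}={\bf 0}$ and $\pi=c_+\in{\mathbb R}$ in $\Omega_+$. In $\Omega_-$, using the exponential decay of $({\bf V}_\alpha{\bf g},{\mathcal Q}^s{\bf g})$ at infinity (a consequence of $\alpha>0$ and estimate \eqref{Shen-1}) together with \eqref{Green formula} applied in a truncation $\Omega_-\cap B_R$, one gets ${\bf u}={\bf 0}$ and $\pi=0$ in $\Omega_-$. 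The jump relation \eqref{70aaas} then yields ${\bf g}=\mathbf t^-_\alpha({\bf u},\pi)-\mathbf t^+_\alpha({\bf u},\pi)=c_+\boldsymbol\nu$, i.e.\ ${\bf g}\in{\mathbb R}\nu$, proving injectivity of \eqref{s-l-r} at $p=2$. Injectivity of \eqref{B-s-l-adj} at $p'=2$ follows either by the analogous argument in $H^{-1}_2/{\mathbb R}\nu\to L_{2;\nu}$ or by duality.

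Once injectivity is established at $p=p'=2$, Lemma~\ref{Lem2 Fredholm} together with the Fredholm-zero-index property already obtained propagates injectivity (and hence invertibility) to all $p\in\mathcal R_0(n,\varepsilon)$ and $p'\in\mathcal R_1(n,\varepsilon)$. For $\Omega_+$ of class $C^1$, one replaces the restricted range with all $p,p'\in(1,\infty)$, exploiting compactness of ${\bf K}$ and ${\bf K}^*$ on $L_p(\partial\Omega,\mathbb{R}^n)$ for every $p\in(1,\infty)$ (cf.\ \cite[Eq.~(3.51)]{D-M}), which, together with the isomorphism of ${\mathcal V}$ for $\alpha=0$ on all these $L_p$-based spaces, carries the above reasoning through. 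The main obstacle I anticipate is identifying precisely how the smoothing of ${\mathcal G}^\alpha-{\mathcal G}$ translates into compactness between the correctly chosen quotient spaces $L_p/{\mathbb R}\nu$ and $H^1_{p;\nu}$ (and their $H^{-1}_{p'}/{\mathbb R}\nu$, $L_{p';\nu}$ counterparts); this requires checking that ${\mathcal V}_{\alpha;0}$ respects the subspace structure so that the compact perturbation argument can indeed be run on the quotient.
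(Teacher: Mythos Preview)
Your proposal is correct and follows essentially the same route as the paper: reduce to the Stokes case via the compact perturbation ${\mathcal V}_{\alpha;0}={\mathcal V}_\alpha-{\mathcal V}$, establish injectivity at $p=2$ by the Green-identity argument in $\Omega_\pm$ (using the decay of the single layer potentials at infinity), and then propagate to all admissible $p$ by Lemma~\ref{Lem2 Fredholm}. The only structural difference is that the paper runs the Fredholm-index-zero argument on the \emph{full} operator ${\mathcal V}_\alpha:L_p(\partial\Omega,\mathbb R^n)\to H^1_p(\partial\Omega,\mathbb R^n)$, computes its kernel to be exactly $\mathbb R\boldsymbol\nu$ (after verifying ${\mathcal V}_\alpha\boldsymbol\nu={\bf 0}$ by the divergence theorem) and its range to be $H^1_{p;\nu}$, and then passes to the quotient at the end; this sidesteps the obstacle you flag. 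Your variant of working directly on the quotient is also fine once you check that ${\mathcal V}_{\alpha;0}\boldsymbol\nu={\mathcal V}_\alpha\boldsymbol\nu-{\mathcal V}\boldsymbol\nu={\bf 0}$ and that both ${\mathcal V}_\alpha$ and ${\mathcal V}$ land in $H^1_{p;\nu}$, so the compact perturbation descends to the quotient.

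One small imprecision: for the $C^1$ case, compactness of ${\bf K}$ and ${\bf K}^*$ is not the relevant input here---you need the Fredholm-index-zero property of ${\mathcal V}$ itself on all $p\in(1,\infty)$, which the paper obtains from \cite[Remark~3.1]{Russo-Tartaglione-2} (see also \cite[Proposition~4.1]{H-M-T1}) for ${\mathcal V}:H^{-1}_q\to L_q$ and then by duality.
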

\begin{proof}
First, we note that for any ${\bf f}\in L_p(\partial \Omega ,\mathbb{R}^{n})$ the inclusion ${\mathcal V}_{\alpha }{\bf f}\in H_p^1(\partial {\Omega},\mathbb{R}^{n})$ follows by  Theorem~\ref{layer-potential-properties}(iii). Moreover, the inclusion ${\mathcal V}_{\alpha }{\bf f}\in H_{p;\nu}^1(\partial {\Omega},\mathbb{R}^{n})$ follows from the equation ${\rm{div}}\, {\bf V}_{\alpha }{\bf f}=0$ in $\Omega _{+}$, the divergence theorem and relation \eqref{68s}.
On the other hand, there exists a number $\varepsilon >0$ such that the Stokes single layer potential operator
$$
{\mathcal V}:L_p(\partial \Omega ,\mathbb{R}^{n})/{\mathbb R}\boldsymbol \nu \to H_{p;\nu}^1(\partial {\Omega},\mathbb{R}^{n})
$$
is an isomorphism for any $p\in\mathcal R_0(n, \varepsilon)$ (cf. \cite[Theorem 9.1.4]{M-W}), which implies that
${\mathcal V}:L_p(\partial \Omega ,\mathbb{R}^{n})\to H_p^1(\partial {\Omega},\mathbb{R}^{n})$ is a Fredholm operator with index zero for the same range of $p$. Thus, the Brinkman single layer potential operator
\begin{align}
\label{B-s-l}
{\mathcal V}_{\alpha }:L_p(\partial \Omega ,\mathbb{R}^{n})\to H_p^1(\partial {\Omega},\mathbb{R}^{n})
\end{align}
is a Fredholm operator of index zero for any $p\in\mathcal R_0(n, \varepsilon)$, as follows from the equality ${\mathcal V}_{\alpha }={\mathcal V}+{\mathcal V}_{\alpha ;0}$, where 
${\mathcal V}_{\alpha ;0}:={\mathcal V}_{\alpha }-{\mathcal V}:L_p(\partial \Omega ,\mathbb{R}^{n})\to H_{p}^1(\partial {\Omega},\mathbb{R}^{n})$ is a compact operator (cf. \cite[Lemma 3.1]{K-L-W}). Then by
Lemma \ref{Lem2 Fredholm}, we obtain the equality
\begin{align}
\label{s-l-b-1}
{\rm{Ker}}\left\{{\mathcal V}_{\alpha }:L_p(\partial \Omega ,\mathbb{R}^{n})\to H_{p}^1(\partial {\Omega},\mathbb{R}^{n})\right\}={\rm{Ker}}\left\{{\mathcal V}_{\alpha }:L_2(\partial \Omega ,\mathbb{R}^{n})\to H_{2}^1(\partial {\Omega},\mathbb{R}^{n})\right\},
\end{align}
for each $p\in\mathcal R_0(n, \varepsilon)$.

Moreover, by considering a density $\boldsymbol \varphi _0\in L_2(\partial \Omega ,\mathbb{R}^{n})$ such that ${\mathcal V}_{\alpha }\boldsymbol \varphi _0={\bf 0}$ on $\partial \Omega $, by applying the Green identity \eqref{Green formula} to the single layer velocity and pressure potentials ${\bf u}_0={\bf V}_{\alpha }\boldsymbol \varphi _0$ and $\pi _0={\mathcal Q}^s\boldsymbol \varphi _0$, and by using Theorem \ref{layer-potential-properties}, we deduce that ${\bf u}_0={\bf 0}$ and $\pi _0=c_0\in {\mathbb R}$ in $\Omega _{+}$. In addition, the behavior at infinity of the single layer potentials, ${\bf u}_0({\bf x})=O(|{\bf x}|^{-n})$, $\boldsymbol\sigma ({\bf u}_0,\pi _0)({\bf x})=O(|{\bf x}|^{1-n})$ as $|{\bf x}|\to \infty $ (see, e.g., {\cite[Section 4]{Med-CVEE-16}}), yields that the Green identity \eqref{Green formula} applies also to the fields ${\bf u}_0$ and $\pi _0$ in the exterior domain ${\Omega }_{-}$ and yields ${\bf u}_0={\bf 0}$, $\pi _0=0$ in $\Omega _{-}$.
Then by formulas \eqref{70aaa} $\boldsymbol \varphi _0=c_0\boldsymbol\nu$. On the other hand, the divergence theorem and the second equation in \eqref{2.2.1} imply that
$\left({\bf V}_{\alpha }\boldsymbol\nu \right)_j(x)
=\displaystyle\int_{\Omega_{+}}\dfrac{\partial {\mathcal G}^{\alpha }_{jk}(x-y)}{\partial y_k}dy=0,$
and accordingly that ${\mathcal V}_{\alpha }\boldsymbol \nu={\bf 0}$. Thus, we obtain the equality
\begin{align*}
{\rm{Ker}}\left\{{\mathcal V}_{\alpha }:
L_2(\partial \Omega ,\mathbb{R}^{n})\to H_{2}^1(\partial{\Omega},\mathbb{R}^{n})\right\} ={\mathbb R}\boldsymbol\nu .
\end{align*}
Therefore, by \eqref{s-l-b-1} the codimension of the range of the operator
${\mathcal V}_{\alpha }:L_p(\partial \Omega ,{\mathbb R}^n)\to H_{p}^1(\partial \Omega ,{\mathbb R}^n)$
is equal to one.
Moreover,
${\rm{Range}}\left({\mathcal V}_{\alpha ;\partial \Omega }\right)\subseteq H_{p;\boldsymbol\nu }^1(\partial \Omega ,{\mathbb R}^n)$,
as follows from the divergence theorem and the second equation in \eqref{2.2.1}.
Since $H_{p;\boldsymbol\nu }^1(\partial \Omega ,{\mathbb R}^n)$ is a subspace of codimension one in $H_{p}^1(\partial \Omega ,{\mathbb R}^n)$, we conclude that the range of the operator
${\mathcal V}_{\alpha }:L_p(\partial \Omega ,{\mathbb R}^n)\to H_{p}^1(\partial \Omega ,{\mathbb R}^n)$
is just $H_{p;\boldsymbol\nu }^1(\partial \Omega _j,{\mathbb R}^n)$.
Then the Fundamental quotient theorem for linear continuous maps implies
${\mathcal V}_{\alpha }:L_p(\partial \Omega ,\mathbb{R}^{n})/{\mathbb R}\boldsymbol\nu \to H_{p;\nu}^1(\partial {\Omega},\mathbb{R}^{n})$
is an isomorphism for any $p\in\mathcal R_0(n, \varepsilon)$, as asserted.

Since the operator
${\mathcal V}_{\alpha }$
is self-adjoint, duality shows that operator \eqref{B-s-l-adj} is also an isomorphism for any $q\in (1,\infty )$ such that $q=\frac{p}{p-1}$. Note that for the same range of $q$, the Stokes single layer potential operator
${\mathcal V}:H^{-1}_q(\partial \Omega ,\mathbb{R}^{n})/{\mathbb R}\boldsymbol \nu \to
L_{q;\nu}^1(\partial {\Omega},\mathbb{R}^{n})$
is an isomorphism as well (see \cite[Corollary 9.1.5]{M-W} for $\alpha =0$).

{{If $\Omega_+$ is of class $C^{1}$}, then the operator ${\mathcal V}:H_{q}^{-1}(\partial \Omega ,\mathbb{R}^{n})\to L_{p}(\partial {\Omega},\mathbb{R}^{n})$ is Fredholm with index zero for any $q\in (1,\infty )$ {(cf., e.g., \cite[Remark 3.1]{Russo-Tartaglione-2}; {see also \cite[Proposition 4.1]{H-M-T1}})}. By duality, we deduce that operator \eqref{B-s-l} is Fredholm with index zero as well for any $p\in (1,\infty )$ whenever $\alpha=0$. In view of \cite[Theorem 3.4]{K-L-W}, the complementary operator ${\mathcal V}_{\alpha }-{\mathcal V}:L_p(\partial \Omega ,\mathbb{R}^{n})\to H_{p}^1(\partial {\Omega},\mathbb{R}^{n})$ is compact (even in the case of a Lipschitz domain). Therefore, the operator ${\mathcal V}_{\alpha }: L_p(\partial \Omega ,\mathbb{R}^{n})\to H_{p}^1(\partial {\Omega},\mathbb{R}^{n})$ is Fredholm with index zero for any $p\in (1,\infty )$. Then the rest of the proof holds true for any $p,q\in (1,\infty )$.}
\hfill\end{proof}

Lemmas \ref{isom-sl}, \ref{complex-interpolation} and \ref{Interp-Isom}(ii) {and an interpolation argument} imply the following assertion {(see also \cite[Remark 3.1]{Russo-Tartaglione-2} in the case of a $C^1$ domain)}.
\begin{corollary}
\label{L3.1psV}
Let ${\Omega}_{+}\subset {\mathbb R}^n$ {$(n\geq 3)$} be a bounded Lipschitz domain with connected boundary $\partial {\Omega}$ and let $\Omega _{-}:={\mathbb R}^n\setminus \overline{\Omega }_+$.
Let $\alpha \in (0,\infty )$
{and} $p\in \mathcal R_s(n,\epsilon)$, see \eqref{cases-s}.
Then there exists $\varepsilon=\varepsilon(\partial \Omega )>0$ such that
the following {operators are isomorphisms,}
\begin{align}
\label{B-s-l-adj-s}
{\mathcal V}_{\alpha }&:H_p^{-s}(\partial \Omega ,\mathbb{R}^{n})/{\mathbb R}\boldsymbol\nu \to H^{1-s}_{p;\nu}(\partial {\Omega},\mathbb{R}^{n}), \ s\in[0,1],\\
\label{B-s-l-adj-B}
{\mathcal V}_{\alpha }&:B_{p,q}^{-s}(\partial \Omega ,\mathbb{R}^{n})/{\mathbb R}\boldsymbol\nu \to B^{1-s}_{p,q;\nu}(\partial {\Omega},\mathbb{R}^{n}), \ s\in(0,1),\ q\in(1,\infty ).
\end{align}
{{If $\Omega_+$ is of class $C^{1}$,} then the property holds for any $p\in (1,\infty )$.} 
\end{corollary}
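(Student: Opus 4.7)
The plan is to derive both isomorphism assertions from the two endpoint cases \eqref{s-l-r} and \eqref{B-s-l-adj} of Lemma~\ref{isom-sl} via complex (respectively real) interpolation, combined with the abstract interpolation-of-isomorphisms principle in Lemma~\ref{Interp-Isom}(ii).

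For the Bessel-potential scale \eqref{B-s-l-adj-s}, I would fix $s\in(0,1)$ and $p\in\mathcal R_s(n,\varepsilon)$, then select exponents $p_0\in\mathcal R_0(n,\varepsilon)$, $p_1\in\mathcal R_1(n,\varepsilon)$ and interpolation parameter $\theta:=s$ so that $\frac{1}{p}=\frac{1-\theta}{p_0}+\frac{\theta}{p_1}$; the availability of such a pair is exactly what the definition \eqref{cases-s} of $\mathcal R_s(n,\varepsilon)$ encodes. Lemma~\ref{isom-sl} yields that the endpoint operators
$\mathcal V_\alpha:L_{p_0}(\partial\Omega,\mathbb R^n)/\mathbb R\boldsymbol\nu\to H^1_{p_0;\boldsymbol\nu}(\partial\Omega,\mathbb R^n)$ and
$\mathcal V_\alpha:H^{-1}_{p_1}(\partial\Omega,\mathbb R^n)/\mathbb R\boldsymbol\nu\to L_{p_1;\boldsymbol\nu}(\partial\Omega,\mathbb R^n)$
are isomorphisms. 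Using Lemma~\ref{complex-interpolation} to identify the complex interpolation spaces
$[H^{-1}_{p_1}(\partial\Omega),L_{p_0}(\partial\Omega)]_\theta=H^{-s}_p(\partial\Omega)$ and
$[L_{p_1}(\partial\Omega),H^1_{p_0}(\partial\Omega)]_\theta=H^{1-s}_p(\partial\Omega)$, and then passing to the quotient by $\mathbb R\boldsymbol\nu$ on the domain side and to the closed codimension-one subspace with vanishing normal integral on the range side, Lemma~\ref{Interp-Isom}(ii) converts these two endpoint isomorphisms into the desired isomorphism \eqref{B-s-l-adj-s}.

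The Besov-scale isomorphism \eqref{B-s-l-adj-B} follows by the same strategy with real interpolation of parameter $q\in(1,\infty)$: by the real interpolation formula \eqref{real-int},
$(H^{-1}_{p_1}(\partial\Omega),L_{p_0}(\partial\Omega))_{\theta,q}=B^{-s}_{p,q}(\partial\Omega)$ and
$(L_{p_1}(\partial\Omega),H^1_{p_0}(\partial\Omega))_{\theta,q}=B^{1-s}_{p,q}(\partial\Omega)$, after which Lemma~\ref{Interp-Isom}(ii) again delivers the desired isomorphism. In the $C^1$ case, Lemma~\ref{isom-sl} already furnishes the endpoint isomorphisms for all $p_0,p_1\in(1,\infty)$, so the same interpolation argument extends the conclusion to all $p\in(1,\infty)$.

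The principal technical obstacle is passing the quotient by the one-dimensional space $\mathbb R\boldsymbol\nu$ and the closed codimension-one subspace $H^{1-s}_{p;\boldsymbol\nu}$ (respectively $B^{1-s}_{p,q;\boldsymbol\nu}$) through the interpolation functor, i.e., establishing
$[X_0/\mathbb R\boldsymbol\nu,X_1/\mathbb R\boldsymbol\nu]_\theta=[X_0,X_1]_\theta/\mathbb R\boldsymbol\nu$ and
$[Y_{0;\boldsymbol\nu},Y_{1;\boldsymbol\nu}]_\theta=[Y_0,Y_1]_{\theta;\boldsymbol\nu}$
together with their real-interpolation analogues. These identifications will be handled via the retraction/coretraction lemma by exhibiting a common bounded projection onto $\mathbb R\boldsymbol\nu$, for instance $\mathbf{v}\mapsto|\partial\Omega|^{-1}\boldsymbol\nu\int_{\partial\Omega}\mathbf{v}\cdot\boldsymbol\nu\,d\sigma$, which is simultaneously bounded on all four endpoint spaces and whose complement reduces the question to the previously known interpolation identities for the ambient Bessel-potential and Besov scales.
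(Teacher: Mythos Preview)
Your proposal is correct and follows essentially the same route as the paper: the paper's proof is just the single sentence ``Lemmas~\ref{isom-sl}, \ref{complex-interpolation} and \ref{Interp-Isom}(ii) and an interpolation argument imply the following assertion,'' and you have spelled out precisely that interpolation argument. Your discussion of the retraction/coretraction device for passing the one-dimensional quotient by $\mathbb R\boldsymbol\nu$ and the codimension-one subspace through the interpolation functor is in fact more detailed than what the paper provides, and is the natural way to justify this step.
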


\section{The Dirichlet and Neumann problems for the Brinkman system}
\subsection{\bf The Dirichlet problem for the Brinkman system}
Let us consider the Dirichlet problem for the homogeneous Brinkman system,
\begin{align}
\label{Brinkman-homogeneous1}
&\triangle {\bf u}-\alpha {\bf u}-\nabla \pi ={\bf 0},\ \ {\rm{div}}\ {\bf u}= 0 \ \mbox{ in } \ {\Omega}_+,\\
\label{DirCond}
&{\bf u}^+_{\rm nt}={\bf h}_0 \ \mbox{ on } \ {\partial\Omega},
\end{align}
and show the following assertion (cf. {\cite[Theorem 5.5]{Shen} for $p=2$ and the boundary data in the space $L_{2;\boldsymbol \nu}(\partial {\Omega}, \mathbb{R}^{n})$}; for $\alpha =0$ see also
\cite[Corollary 9.1.5, Theorems 9.1.4, 9.2.2 and 9.2.5]{M-W} and \cite[Theorem 7.1]{M-T}).
{The Dirichlet boundary condition \eqref{DirCond} is understood in the sense of non-tangential limit at almost all points of $\partial \Omega $.}
\begin{theorem}
\label{M-H-D}
Let $\Omega _{+}\subset \mathbb{R}^{n}$ $($$n\geq 3$$)$ be a bounded Lipschitz domain {with connected boundary $\partial \Omega _+$}. Let $\alpha \in (0,\infty )$, $p\in (1,\infty )$, and  $p^*:=\max \{p,2\}$.
\begin{itemize}
\item[$(i)$]
Let $\mathbf h_0\in H_{p;\boldsymbol \nu}^1(\partial {\Omega},\mathbb{R}^{n})$. Then there exists $\varepsilon=\varepsilon(\partial\Omega)>0$ such that for any $p\in\mathcal R_0(n, \varepsilon)$,
the Dirichlet problem \eqref{Brinkman-homogeneous1}-\eqref{DirCond} has a solution $({\bf u},\pi )$ such that ${M(\textbf{u})},M(\nabla\textbf{u}),M(\pi)\in L_p(\partial {\Omega})$ {{and there exist the non-tangential} limits of ${\bf u}$, $\nabla {\bf u}$ and $\pi $ at almost all points of the boundary $\partial \Omega $}.
Moreover, there exists a constant $C=C(\partial \Omega ,p,\alpha )>0$ such that
\begin{align}
\label{MpiLp}
&\|M({\bf u})\|_{L_p(\partial {\Omega})}+\|M(\nabla{\bf u})\|_{L_p(\partial {\Omega})}
+\|M(\pi)\|_{{L_p(\partial {\Omega})}}\le C\| \mathbf h_0\|_{H^1_p(\partial {\Omega},\mathbb{R}^{n})},\\
\label{MpiLptr}
&\|{\bf u}^+_{\rm nt}\|_{L_p(\partial {\Omega})}+\|\nabla{\bf u}^+_{\rm nt}\|_{L_p(\partial {\Omega})}
+\|\pi^+_{\rm nt}\|_{{L_p(\partial {\Omega})}}\le C\| \mathbf h_0\|_{H^1_p(\partial {\Omega},\mathbb{R}^{n})}.
\end{align}
In addition,
$\textbf{u}\in B_{p,p^*}^{1+\frac{1}{p}}({\Omega}_+,\mathbb{R}^{n})$, $\pi \in B_{p,p^*}^{\frac{1}{p}}({\Omega}_+)$ and
$$
\|{\bf u}\|_{B_{p,p^*}^{1+\frac{1}{p}}({\Omega}_+,\mathbb{R}^{n})}+\|\pi\|_{{B_{p,p^*}^{\frac{1}{p}}({\Omega}_+)}}
\le C\| \mathbf h_0\|_{H^1_p(\partial {\Omega},\mathbb{R}^{n})}.
$$
\item[$(ii)$]
Let $ \mathbf h_0\in L_{p;\boldsymbol \nu}(\partial {\Omega},\mathbb{R}^{n})$. Then there exists $\varepsilon=\varepsilon(\partial\Omega)>0$ such that for any $p\in\mathcal R_1(n, \varepsilon)$
the {Dirichlet} problem \eqref{Brinkman-homogeneous1}-\eqref{DirCond} has a solution $({\bf u},\pi )$ {such that ${M}(\textbf{u})\in L_p(\partial {\Omega})$}.
Moreover, there exists a constant $C>0$ such that
\begin{align}\label{MuLp}
\|M({\bf u})\|_{L_p(\partial {\Omega})}\le C\| \mathbf h_0\|_{L_p(\partial {\Omega},\mathbb{R}^{n})}.
\end{align}
In addition,
{$\textbf{u}\in B_{p,p^*}^{\frac{1}{p}}({\Omega}_+,\mathbb{R}^{n})$}
and
$${\|{\bf u}\|_{B_{p,p^*}^{\frac{1}{p}}({\Omega}_+,\mathbb{R}^{n})}
\le C\| \mathbf h_0\|_{L_p(\partial {\Omega},\mathbb{R}^{n})}}.$$
\item[$(iii)$]
{Let $0<s<1$} and $\mathbf h_0\in H_{p;\nu}^s(\partial {\Omega},\mathbb{R}^{n})$. Then there exists $\varepsilon=\varepsilon(\partial\Omega)>0$ such that for any
$p\in\mathcal R_{1-s}(n,\epsilon)$ $($cf. \eqref{cases-s}$)$,
the Dirichlet problem \eqref{Brinkman-homogeneous1}-\eqref{DirCond} {$($where the Dirichlet condition \eqref{DirCond} is considered in the Gagliardo trace sense$)$} has a solution
$\textbf{u}\in B_{p,p^*}^{s+\frac{1}{p}}({\Omega}_+,\mathbb{R}^{n})$, $\pi \in B_{p,p^*}^{s+\frac{1}{p}-1}({\Omega}_+)$, and there exists a constant $C>0$ such that
$$
\|{\bf u}\|_{B_{p,p^*}^{s+\frac{1}{p}}({\Omega}_+,\mathbb{R}^{n})}
+\|\pi\|_{{B_{p,p^*}^{s+\frac{1}{p}-1}({\Omega}_+)}}
\le C\| \mathbf h_0\|_{H^s_p(\partial {\Omega},\mathbb{R}^{n})}.
$$
\end{itemize}
In each of the cases $(i)$, $(ii)$ and $(iii)$, 
the solution 
is unique up to an arbitrary additive constant for the pressure $\pi$, {and can be expressed in terms of the following double layer velocity and pressure potentials}
\begin{align}
\label{4.3}
{{\bf u}={\bf W}_{\alpha }\left(\left(-\frac{1}{2}{\bf I}+{\bf K}_{\alpha }\right)^{-1}{\bf h}_0\right),\
\pi =\mathcal Q_\alpha^d\left(\left(-\frac{1}{2}{\bf I}+{\bf K}_{\alpha }\right)^{-1}{\bf h}_0\right)
\mbox{ in } \Omega_+ \,.}
\end{align}
\end{theorem}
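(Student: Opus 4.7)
\medskip

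\textbf{Plan of proof.} The approach is to seek the solution in the form \eqref{4.3}, i.e., as a double layer velocity-pressure potential $(\mathbf W_\alpha \mathbf g,\mathcal Q^d_\alpha \mathbf g)$, and to use the invertibility of $-\frac{1}{2}\mathbb{I}+\mathbf K_\alpha$ on the appropriate boundary space to recover a density $\mathbf g$ producing the prescribed boundary trace $\mathbf h_0$. Uniqueness will be obtained from the first Green identity or, equivalently, from the integral representation formulas of Lemmas \ref{Green-r-f-s} and \ref{Green-r-f}.

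For existence in item (i), I would invoke Lemma \ref{isom-p} which asserts that $-\frac{1}{2}\mathbb{I}+\mathbf K_\alpha:H^1_{p;\boldsymbol\nu}(\partial\Omega,\mathbb R^n)\to H^1_{p;\boldsymbol\nu}(\partial\Omega,\mathbb R^n)$ is an isomorphism for $p\in\mathcal R_0(n,\varepsilon)$. Setting $\mathbf g:=\bigl(-\frac{1}{2}\mathbb{I}+\mathbf K_\alpha\bigr)^{-1}\mathbf h_0\in H^1_{p;\boldsymbol\nu}(\partial\Omega,\mathbb R^n)$, the pair $(\mathbf W_\alpha \mathbf g,\mathcal Q^d_\alpha \mathbf g)$ satisfies the homogeneous Brinkman system in $\Omega_+$ thanks to \eqref{dl0}; the jump relation \eqref{68-s1} gives $\mathbf u^+_{\rm nt}=\bigl(-\frac{1}{2}\mathbb I+\mathbf K_\alpha\bigr)\mathbf g=\mathbf h_0$ a.e. on $\partial\Omega$; the estimate \eqref{7ms-dl} of Lemma \ref{nontangential-sl} yields \eqref{MpiLp} and, in view of $\|f_{\rm nt}^\pm\|_{L_p(\partial\Omega)}\le \|M(f)\|_{L_p(\partial\Omega)}$ (cf. \cite[Remark 9]{Choe-Kim}), also \eqref{MpiLptr}; finally, the continuity of \eqref{ds-1} delivers the announced $B_{p,p^*}^{1+1/p}\times B_{p,p^*}^{1/p}$ regularity. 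For item (ii) the argument is entirely parallel, now using the isomorphism property of $-\frac{1}{2}\mathbb I+\mathbf K_\alpha$ on $L_{p;\boldsymbol\nu}(\partial\Omega,\mathbb R^n)$ for $p\in\mathcal R_1(n,\varepsilon)$ (Lemma \ref{isom-p}), the non-tangential maximal estimate \eqref{7ms-dl-0} to obtain \eqref{MuLp}, and the mapping property \eqref{ds-1-0} for the Besov regularity of $\mathbf u$. For item (iii) I would apply Corollary \ref{L3.1psW} to obtain the isomorphism $-\frac{1}{2}\mathbb I+\mathbf K_\alpha:H^s_{p;\boldsymbol\nu}(\partial\Omega,\mathbb R^n)\to H^s_{p;\boldsymbol\nu}(\partial\Omega,\mathbb R^n)$ for $p\in\mathcal R_{1-s}(n,\varepsilon)$, and deduce the Besov estimate of $(\mathbf u,\pi)$ from the mapping properties listed in Theorem \ref{layer-potential-properties}(ii); the jump is here understood in the Gagliardo trace sense via \eqref{68-s1s}.

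For uniqueness, suppose $(\mathbf u,\pi)$ is a solution corresponding to $\mathbf h_0=\mathbf 0$. In case (iii), $(\mathbf u,\pi)\in B_{p,p^*}^{s+1/p}(\Omega_+,\mathbb R^n)\times B_{p,p^*}^{s+1/p-1}(\Omega_+)$ satisfies $\mathcal L_\alpha(\mathbf u,\pi)=\mathbf 0$, $\mathrm{div}\,\mathbf u=0$, and $\gamma_+\mathbf u=\mathbf 0$; the first Green identity \eqref{Green formula} with $\mathbf w$ a suitable extension of $\mathbf u$ (e.g. $\gamma_+^{-1}\gamma_+\mathbf u=\mathbf 0$ combined with Lemma \ref{lem 1.6}) gives
\begin{equation*}
0=2\langle \mathbb E(\mathbf u),\mathbb E(\mathbf u)\rangle_{\Omega_+}+\alpha\langle\mathbf u,\mathbf u\rangle_{\Omega_+},
\end{equation*}
whence $\mathbf u\equiv\mathbf 0$ and then $\nabla\pi=\mathbf 0$, so $\pi$ is constant. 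For items (i) and (ii), one instead relies on Lemma \ref{Green-r-f} (the integral representation in terms of non-tangential trace and conormal derivative): the Green representation expresses $\mathbf u$ as $\mathbf V_\alpha(\mathbf t^+_{\rm nt}(\mathbf u,\pi))-\mathbf W_\alpha(\mathbf u^+_{\rm nt})$, and using Lemma \ref{L3.6}(i) together with $\mathbf u^+_{\rm nt}=\mathbf 0$ one reduces the matter to showing that $\mathbf t^+_{\rm nt}(\mathbf u,\pi)=\mathbf 0$; applying \eqref{70aaa} on the exterior problem for the single layer potential $(\mathbf V_\alpha(\mathbf t^+_{\rm nt}(\mathbf u,\pi)),\mathcal Q^s(\mathbf t^+_{\rm nt}(\mathbf u,\pi)))$ (which decays at infinity) and invoking the invertibility of $\frac{1}{2}\mathbb I+\mathbf K^*_\alpha$ from Lemma \ref{L3.1p} finishes the argument.

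The main obstacle will be uniqueness in item (ii): the solution is only controlled through $M(\mathbf u)\in L_p(\partial\Omega)$, which by itself does not place $(\mathbf u,\pi)$ inside the Bessel-potential spaces where the Green identities of Lemma \ref{lem 1.6} directly apply. Bridging this gap requires combining the third Green identity (Lemma \ref{Green-r-f}), Lemmas \ref{L3.6} and \ref{nontangential-sl} to identify the $L_p$ non-tangential traces and conormal derivatives with the canonical ones (via Theorem \ref{2.13}), and then exploiting invertibility of $\frac{1}{2}\mathbb I+\mathbf K^*_\alpha$ on $L_p$ and of $\frac{1}{2}\mathbb I+\mathbf K_\alpha$ on $L_{p;\boldsymbol\nu}$ to conclude; the careful handling of exterior decay of the single and double layer potentials needed to legitimately apply the second Green identity on $\Omega_-$ is the most delicate point.
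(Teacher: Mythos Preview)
Your existence arguments are correct and essentially identical to the paper's: in each case the solution is produced as a double layer potential with density $\bigl(-\tfrac12\mathbb I+\mathbf K_\alpha\bigr)^{-1}\mathbf h_0$, using Lemma~\ref{isom-p} (items (i)--(ii)) or Corollary~\ref{L3.1psW} (item (iii)), together with the non-tangential maximal estimates of Lemma~\ref{nontangential-sl} and the mapping properties of Theorem~\ref{layer-potential-properties}.

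Your uniqueness arguments, however, have genuine gaps. For item~(iii), the first Green identity \eqref{Green formula} pairs $(\mathbf u,\pi)\in\mathfrak B^{s+1/p,-1/p'}_{p,p^*,\rm div}$ against $\mathbf w\in B^{1+1/p'-s}_{p',(p^*)'}$; taking $\mathbf w=\mathbf u$ requires $\mathbf u$ to lie in \emph{both} spaces, which fails for $p\neq 2$. The paper instead applies the representation formula of Lemma~\ref{Green-r-f-s}, takes the trace to obtain $\mathcal V_\alpha\bigl(\mathbf t^+_\alpha(\mathbf u^0,\pi^0)\bigr)=\mathbf 0$, and concludes $\mathbf t^+_\alpha(\mathbf u^0,\pi^0)\in\mathbb R\boldsymbol\nu$ from Corollary~\ref{L3.1psV}; since $\mathbf V_\alpha\boldsymbol\nu=\mathbf 0$, this gives $\mathbf u^0=\mathbf 0$. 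The same route (via Lemma~\ref{Green-r-f} and Lemma~\ref{isom-sl}) handles item~(i) more directly than your exterior-problem detour through $\tfrac12\mathbb I+\mathbf K^*_\alpha$.

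The serious obstacle is uniqueness in item~(ii), which you correctly flag but do not resolve. Under the sole hypothesis $M(\mathbf u^0)\in L_p(\partial\Omega)$, neither $\nabla\mathbf u^0$ nor $\pi^0$ has a non-tangential trace, so Lemma~\ref{Green-r-f} is inapplicable and $\mathbf t^+_{\rm nt}(\mathbf u^0,\pi^0)$ is not even defined. The paper uses a different device: fix $\mathbf x_0\in\Omega_+$, approximate $\Omega_+$ from inside by smooth domains $\Omega_j$ (Lemma~\ref{2.13D}), and on each $\Omega_j$ construct the Brinkman Green function $(\mathbf G^{\alpha;j}_k,\pi^j_k)$ via the double layer ansatz with density $\bigl(-\tfrac12\mathbb I+\mathbf K^j_\alpha\bigr)^{-1}\mathbf G^\alpha_k(\mathbf x_0,\cdot)|_{\partial\Omega_j}$. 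This yields the pointwise identity
\[
u^0_k(\mathbf x_0)=\int_{\partial\Omega_j}\mathbf t^{c+}\bigl(\mathbf G^{\alpha;j}_k(\mathbf x_0,\cdot),\pi^j_k(\mathbf x_0,\cdot)\bigr)\cdot\mathbf u^0\,d\sigma_j,
\]
whose integrand involves only $\mathbf u^0$ (not its gradient). Uniform $L_{p'}(\partial\Omega_j)$ bounds on the Green conormal derivative---obtained from the stability of the inverse $\bigl(-\tfrac12\mathbb I+\mathbf K^j_\alpha\bigr)^{-1}$ under the approximation---together with $M(\mathbf u^0)\in L_p(\partial\Omega)$ and $(\mathbf u^0)^+_{\rm nt}=\mathbf 0$ allow one to pass to the limit by dominated convergence and conclude $u^0_k(\mathbf x_0)=0$. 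This Green-function/approximation argument is the missing idea.
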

\begin{proof}
According to Lemmas \ref{nontangential-sl}, {\ref{isom-p} and Theorem \ref{layer-potential-properties}(iii),}
{the functions given by \eqref{4.3}} provide a solution of the Dirichlet problem \eqref{Brinkman-homogeneous1}-\eqref{DirCond}, {which}
satisfies the corresponding norm estimates mentioned in items $(i)-(ii)$.
%
{For} $0<s<1$ in item (iii), we have by Corollary \ref{L3.1psW} that
$\left(-\frac{1}{2}{\bf I}+{\bf K}_{\alpha }\right)^{-1}{\bf h}_0\in
H_{p}^{s}(\partial {\Omega},\mathbb{R}^{n})\hookrightarrow B_{p,p^*}^{s}(\partial {\Omega},\mathbb{R}^{n})$
with corresponding norm estimates, which by \eqref{7ms-dl-0}, \eqref{ds-s1} and \eqref{68-s1} proves the desired solution properties.

{We will now prove uniqueness of the solution of the Dirichlet problem \eqref{Brinkman-homogeneous1}-\eqref{DirCond} satisfying the conditions in item $(ii)$,}
by modifying arguments in the proofs of \cite[Theorem 5.5.4]{M-W} and \cite[Theorem 7.1]{M-T}.
%
{Let} $({\bf u}^0,\pi ^0)$ be a solution of the homogeneous version of the Dirichlet problem \eqref{Brinkman-homogeneous1}-\eqref{DirCond} {such that $M({\bf u}^0)\in{L_p(\partial {\Omega})}$ and ${\bf u}_0$ satisfies the homogeneous boundary condition in the sense of non-tangential limit at almost all points of the boundary $\partial \Omega $}.
Let ${\bf x}_0\in \Omega _{+}$ and let $\{\Omega _j\}_{j\geq 1}$ be a sequence of $C^\infty$ sub-domains in $\Omega _{+}$ that contain ${\bf x}_0$ and converge to $\Omega _{+}$ in the sense described in {Lemma \ref{2.13D}}.
Let ${\bf G}_{k}^{\alpha }({\bf x})=\left({\mathcal G}_{k1}^{\alpha }({\bf x}),\ldots ,{\mathcal G}_{kn}^{\alpha }({\bf x})\right)$, $k=1,\ldots ,n$, where $(\mathcal{G}^{\alpha},\Pi )$ is the fundamental solution of the Brinkman system in ${\mathbb R}^n$ (see \eqref{E41} and \eqref{E41-new}). 
{Then}
for each $\Omega _j$ and any $k=1,\ldots ,n$, the functions ${\bf v}^j$ and $q^j$ given by
\begin{align}
\label{Green-1-j}
{\bf v}^j_{{\bf x}_0}={\bf W}^j_{\alpha }\left({\bf h}'^{(j)}\right),\
q^j_{{\bf x}_0}={\mathcal Q}_{\alpha }^{j;d}\left({\bf h}'^{(j)}\right) \mbox{ in } {\mathbb R}^n\setminus \partial \Omega _j,\ \ 
{\bf h}'^{(j)}=\left(-\frac{1}{2}{\bf I}+{\bf K}^j_{\alpha }\right)^{-1}({\bf G}^{\alpha }_k({\bf x}_0-\cdot)|_{\partial\Omega_j}),
\end{align}
satisfy the system
\begin{align}
\label{Brinkman-homogeneous1-j}
\left\{
\begin{array}{l}
\triangle {\bf v}^j_{{\bf x}_0}-\alpha {\bf v}^j_{{\bf x}_0}-\nabla q^j_{{\bf x}_0}={\bf 0},\
{\rm{div}}\, {\bf v}^j_{{\bf x}_0}=0 \mbox{ in } {\Omega _j},\\
({\bf v}^j_{{\bf x}_0})^+_{\rm nt}={\bf G}^{\alpha }_k({\bf x}_0,\cdot )|_{\partial \Omega _j}.
\end{array}
\right.
\end{align}
{Here} {${\bf W}^j_{\alpha }:={\bf W}_{\alpha ;\partial \Omega _j}$ and ${\mathcal Q}_{\alpha }^{j;d}:={\mathcal Q}_{\alpha ;\partial \Omega _j}^{d}$ are the double layer velocity and pressure potential operators corresponding to $\partial \Omega _j$, while ${\bf K}^j_{\alpha }:H^1_{p'}(\partial \Omega _j,{\mathbb R}^n)\to H^1_{p'}(\partial \Omega _j,{\mathbb R}^n)$ is the corresponding double layer integral operator.}
Indeed, ${\bf G}^{\alpha }_k({\bf x}_0-\cdot)|_{\partial\Omega_j}\in H_{p;\boldsymbol \nu^{(j)}}^1(\partial \Omega _j,{\mathbb R}^n)$ and, in view of Lemma \ref{isom-p}, the operator
$-\frac{1}{2}{\bf I}+{\bf K}^j_{\alpha }:H^1_{p';\boldsymbol\nu^{(j)}}(\partial \Omega _j,{\mathbb R}^n)\to H^1_{p';\boldsymbol\nu^{(j)}}(\partial \Omega _j,{\mathbb R}^n)$
is an isomorphism for any $p'\in (1,\infty )$ {since $\Omega_j$ is a smooth domain}.

Note that the operator $-\frac{1}{2}{\bf I}+{\bf K}_{\alpha }:H_{p';\boldsymbol \nu}^1(\partial \Omega ,{\mathbb R}^n)\to H_{p';\boldsymbol \nu}^1(\partial \Omega ,{\mathbb R}^n)$ is an isomorphism for any $p'\in {\mathcal R}_{0}(n,\varepsilon )$ {(see Lemma \ref{isom-p}), i.e., for any $p'$ such that $\frac{1}{p'}=1-\frac{1}{p}$, where $p\in {\mathcal R}_1(n,\varepsilon )$}.
After performing a change of variable as in {Lemma \ref{2.13D}}, the operator $-\frac{1}{2}{\bf I}+{\bf K}^j_{\alpha }$ defined on $\partial \Omega _j$ can be identified with an operator ${\mathcal T}_\alpha^j$ acting on functions defined on $\partial \Omega $.
Then, employing the arguments, e.g., similar to those in the last paragraph in p.116 in \cite{M-W}, which are based on \cite[Lemmas 11.9.13 and 11.12.2]{M-W}, and taking into account \cite[Proposition 1]{Med-AAM} (see also  \cite[Theorems 3.8 (iv) and 4.15]{Fa-Ke-Ve}), one can show that the sequence of operators ${\mathcal T}_\alpha^j$ converges to the operator ${\mathcal T}_\alpha :=-\frac{1}{2}{\bf I}+{\bf K}_{\alpha }$ in the operator norm {and} the sequence of the inverses of the operators ${\mathcal T}_\alpha^j$ converges to the inverse of the operator ${\mathcal T}_\alpha$ in the operator norm.
{Hence the operator norms
$\|\left(-\frac{1}{2}{\mathbb I}+{\bf K}^j_{\alpha}\right)^{-1}\|_{H^1_{p'}(\partial \Omega _j,{\mathbb R}^n)}$
are bounded uniformly in $j$, implying that there exist some constants $C_0, C'_0$ depending only on $p$, $n$, $\alpha$} and the Lipschitz character of $\Omega _+$ (thus, $C_0$ does not depend on $j$) such that
\begin{align}
\label{j-ell-1v'}
\|{{\bf h}'^{(j)}}\|_{H^1_{p'}(\partial \Omega _j,{\mathbb R}^n)}
\leq C_0\|{\bf G}^{\alpha }_k({\bf x}_0,\cdot )\|_{H_{p'}^1(\partial \Omega_j ,{\mathbb R}^n)}
\le C'_0(\|M({\bf G}^{\alpha}_k({\bf x}_0,\cdot))\|_{L_{p'}(\partial \Omega)}
+\|M(\nabla {\bf G}^{\alpha}_k({\bf x}_0,\cdot))\|_{L_{p'}(\partial \Omega)}),
\end{align}
{where the non-tangential maximal operator $M$ is considered with respect to a regular family of cones truncated at a height smaller than the distance from ${\bf x}_0$ to $\partial \Omega $}
{(cf. \cite[Theorem 1.12]{Verchota1984}, see also Lemma \ref{2.13D})}.
Further, by considering the change of variable ${\bf y}_j:=\Phi_j({\bf y})$ as in {Lemma \ref{2.13D}}, the double-layer potential representations \eqref{Green-1-j} become
\begin{align}
\label{dl-sol-2-1}
{v}^j_{{\bf x}_0;\ell}(\mathbf x)
&=\int_{\partial {\Omega _j}}S^{\alpha}_{i\ell s}({\bf y}_j,{\bf x}) \nu _{s}
({\bf y}_j)h^{'(j)}_{i}({\bf y}_j)d\sigma _{{\bf y}_j}
=\int_{\partial {\Omega}} S^{\alpha}_{i\ell s} (\Phi_j({\bf y}),{\bf x})\nu _{s}
(\Phi_j({\bf y}))H'^{(j)}_{i}({\bf y})d\sigma _{{\bf y}},\\
\label{dl-sol-2-2}
q^j_{{\bf x}_0}(\mathbf x)&
=\int_{\partial {\Omega _j}} \Lambda^{\alpha }_{is}({\bf y}_j,{\bf x})\nu _{s}({\bf y}_j)h^{'(j)}_{i}({\bf y}_j)d\sigma _{{\bf y}_j}
=\int_{\partial {\Omega}}\Lambda^{\alpha }_{is} (\Phi_j({\bf y}),{\bf x})\nu _{s}(\Phi_j({\bf y}))H'^{(j)}_{i}({\bf y})d\sigma _{{\bf y}},
 \ \forall \ {\bf x}\in \Omega _j,
\end{align}
where
$
{\bf H}'^{(j)}({\bf y}):={{\bf h}'^{(j)}}(\Phi_j({\bf y}))\omega _j({\bf y}),
$
${\bf y}\in \partial \Omega ,
$
${\bf y}^{(j)}=(y_1^{(j)},\ldots ,y_n^{(j)})$, ${\bf h}'^{(j)}=(h^{'(j)}_{1},\ldots ,h^{'(j)}_{n})$, ${\bf H}'^{(j)}=({H'}_1^{(j)},\ldots ,{H'}_n^{(j)})$, and $\omega _j$ is the Jacobian of $\Phi_j:\partial \Omega \to \partial \Omega _j$.

In view of \eqref{j-ell-1v'} and of the uniform boundedness of $\{\omega _j\}_{j\geq 1}$, there exists {a constant $C_1>0$} (which depends only on $p$, $n$ and the Lipschitz character of $\Omega_+$) such that
\begin{align}
\label{e2-1}
\|{\bf H}'^{(j)}\|_{H^1_{p'}(\partial \Omega,{\mathbb R}^n)}
\le C_1\|{\bf h}'^{(j)}\|_{H^1_{p'}(\partial \Omega _j,{\mathbb R}^n)}
\leq C'_0C_1
(\|M({\bf G}^{\alpha}_k({\bf x}_0,\cdot))\|_{L_{p'}(\partial \Omega)}
+\|M(\nabla {\bf G}^{\alpha}_k({\bf x}_0,\cdot))\|_{L_{p'}(\partial \Omega)})
,\
\forall \ j\geq 1.
\end{align}
Hence $\{{\bf H}'^{(j)}\}_{j\geq 1}$ is a bounded sequence in $H_{p'}^1(\partial \Omega ,{\mathbb R}^n)$, and, thus, there {exists} a subsequence, {still} denoted as the sequence, and a function
${\bf H}'\in H_{p'}^1(\partial \Omega ,{\mathbb R}^n)$, such that ${\bf H}'^{(j)}\to {\bf H}'$ weakly in
$H_{p'}^1(\partial \Omega ,{\mathbb R}^n)$.
By this property and letting $j\to \infty $ in \eqref{dl-sol-2-1}-\eqref{dl-sol-2-2}, we obtain
$
{\bf v}^j_{{\bf x}_0}({\bf x})\to{\bf v}_{{\bf x}_0}({\bf x})={\bf W}_{\alpha }{\bf H}'({\bf x}),\
q^j_{{\bf x}_0}({\bf x})\to q_{{\bf x}_0}({\bf x})={\mathcal Q}_{\alpha }^{d}{\bf H}'({\bf x})
$
pointwise for any ${\bf x}\in\Omega _+$.
Moreover, {in view of Lemma \ref{nontangential-sl} {(where the constants depend only on the Lipschitz character of $\Omega _+$), applied to $\partial\Omega_j$, and \eqref{j-ell-1v'},} we obtain the inequality}
\comment{
According to Lemma \ref{nontangential-sl} (i),({iv}) there exists the non-tangential limit ${\bf u}_{{\rm nt}}^+=({\bf W}_{\alpha }{\bf H}')_{{\rm nt}}^+$ of ${\bf u}$ at almost all points of $\partial \Omega $,
and  by estimates \eqref{7ms-dl-0} and \eqref{e1}, we obtain that
\begin{align}
\label{4.15-1}
\|{\bf u}_{{\rm nt}}^+\|_{L_p(\partial \Omega ,{\mathbb R}^n)}=\|({\bf W}_{\alpha }{\bf H}')_{{\rm nt}}^+\|_{L_p(\partial \Omega ,{\mathbb R}^n)}
\leq {C_0\|{\bf H}'\|_{L_p(\partial \Omega ,{\mathbb R}^n)}}
{\leq C_0\lim \inf_{j\to \infty }\|{\bf H}'^{(j)}\|_{L_p(\partial \Omega ,{\mathbb R}^n)}}
\leq C_1\|M({\bf u})\|_{L_p(\partial \Omega )}.
\end{align}
Moreover, the divergence theorem shows that ${\mathbf u^+_{\rm nt}}=({\bf W}_{\alpha }{\bf H}')_{{\rm nt}}^+\in L_{p;\nu}(\partial {\Omega},\mathbb{R}^{n})$.
Estimate \eqref{4.9} is provided by the representations ${\bf u}={\bf W}_{\alpha }{\bf H}'$, $\pi=\mathcal Q^d_{\alpha }{\bf H}'$, and by the continuity of the operators in \eqref{fr3} and by estimates \eqref{4.15-1}.
} 
\begin{align}
\label{j-ell-1v}
\|M(\nabla {\bf v}^j_{\bf x_0})\|_{L_{p'}(\partial \Omega _{j})}
+\|M(q^{j}_{{\bf x}_0})\|_{L_{p'}(\partial \Omega _{j})}
\leq C_3\|{{\bf h}'^{(j)}}\|
\leq C'_0 C_3\left(\|M({\bf G}^{\alpha}_k({\bf x}_0,\cdot))\|_{L_{p'}(\partial \Omega)}
+\|M(\nabla {\bf G}^{\alpha}_k({\bf x}_0,\cdot))\|_{L_{p'}(\partial \Omega)}\right),
\end{align}
{with a constant $C_3$ depending only on $p$, $n$ and the Lipschitz character of $\Omega _+$.}
\comment{
\begin{align}
\label{Green-1-j}
{\bf v}^j({\bf x}_0,\cdot)={\bf W}^j_{\alpha }\left(\left(-\frac{1}{2}{\bf I}+{\bf K}^j_{\alpha }\right)^{-1}({\bf G}^{\alpha }_k({\bf x}_0-\cdot)|_{\partial\Omega_j})\right),\
q^j({\bf x}_0,\cdot)={\mathcal Q}_{\alpha }^{j;d}\left(\left(-\frac{1}{2}{\bf I}+{\bf K}^j_{\alpha }\right)^{-1}({\bf G}^{\alpha }_k({\bf x}_0-\cdot)|_{\partial\Omega_j})\right) \mbox{ in } {\mathbb R}^n\setminus \partial \Omega _j,\\
\end{align}
satisfy the system
\begin{align}
\label{Brinkman-homogeneous1-j}
\left\{
\begin{array}{l}
\triangle {\bf v}^j_{{\bf x}_0}-\alpha {\bf v}^j_{{\bf x}_0}-\nabla q^j_{{\bf x}_0}={\bf 0},\
{\rm{div}}\, {\bf v}^j_{{\bf x}_0}=0 \mbox{ in } {\Omega _j},\\
({\bf v}^j_{{\bf x}_0})^+_{\rm nt}={\bf G}^{\alpha }_k({\bf x}_0,\cdot )|_{\partial \Omega _j},
\end{array}
\right.
\end{align}
where {${\bf W}^j_{\alpha }:={\bf W}_{\alpha ;\partial \Omega _j}$ and ${\mathcal Q}_{\alpha }^{j;d}:={\mathcal Q}_{\alpha ;\partial \Omega _j}^{d}$ are the double layer velocity and pressure potential operators corresponding to $\partial \Omega _j$, while ${\bf K}^j_{\alpha }:H^1_{p'}(\partial \Omega _j,{\mathbb R}^n)\to H^1_{p'}(\partial \Omega _j,{\mathbb R}^n)$ is the corresponding double layer integral operator.}
Indeed, ${\bf G}^{\alpha }_k({\bf x}_0-\cdot)|_{\partial\Omega_j}\in H_{p;\boldsymbol \nu^{(j)}}^1(\partial \Omega _j,{\mathbb R}^n)$ and, in view of Lemma \ref{isom-p}, the operator
$-\frac{1}{2}{\bf I}+{\bf K}^j_{\alpha }:H^1_{p';\boldsymbol\nu^{(j)}}(\partial \Omega _j,{\mathbb R}^n)\to H^1_{p';\boldsymbol\nu^{(j)}}(\partial \Omega _j,{\mathbb R}^n)$
is an isomorphism for any $p'\in (1,\infty )$.

{Note that the operator $-\frac{1}{2}{\bf I}+{\bf K}_{\alpha }:H_{p';\boldsymbol \nu}^1(\partial \Omega ,{\mathbb R}^n)\to H_{p';\boldsymbol \nu}^1(\partial \Omega ,{\mathbb R}^n)$ is an isomorphism for any $p'\in {\mathcal R}_{0}(n,\varepsilon )$, where $\frac{1}{p'}=1-\frac{1}{p}$, and hence, for any $p\in {\mathcal R}_1(n,\varepsilon )$ (see Lemma \ref{isom-p}).
After performing a change of variable as in {Lemma \ref{2.13D}}, the operator $-\frac{1}{2}{\bf I}+{\bf K}^j_{\alpha }$ defined on $\partial \Omega _j$ can be identified with an operator ${\mathcal T}_\alpha^j$ acting on functions defined on $\partial \Omega $.
Then, employing the arguments, e.g., similar to those in the last paragraph in p.116 in \cite{M-W}, which are based on \cite[Lemmas 11.9.13 and 11.12.2]{M-W}, and taking into account \cite[Proposition 1]{Med-AAM} (see also  \cite[Theorems 3.8 (iv) and 4.15]{Fa-Ke-Ve}), one can show that the sequence of operators ${\mathcal T}_\alpha^j$ converges to the operator ${\mathcal T}_\alpha :=-\frac{1}{2}{\bf I}+{\bf K}_{\alpha }$ in the operator norm, the sequence of the inverses of the operators ${\mathcal T}_\alpha^j$ converges to the inverse of the operator ${\mathcal T}_\alpha$ in the operator norm,
and hence there exists a constant $C'$ depending only on $p$, $n$ and the Lipschitz character of $\Omega _+$ (thus, not depending on $j$) such that
}
\begin{align}
\label{j-ell-1v}
\|M(\nabla {\bf v}^j_{\bf x_0})\|_{L_{p'}(\partial \Omega _{j})}
+\|M(q^{j}_{{\bf x}_0})\|_{L_{p'}(\partial \Omega _{j})}
\leq C'\|{\bf G}^{\alpha }_k({\bf x}_0,\cdot )\|_{H_{p'}^1(\partial \Omega ,{\mathbb R}^n)}.
\end{align}
} 

In addition, the pair $\left({\bf G}^{\alpha ;j}_k({\bf x}_0,\cdot),\pi ^{j}_{k}({\bf x}_0,\cdot)\right)$ given by
\begin{align}
\label{Green-j}
{\bf G}^{\alpha ;j}_k({\bf x}_0,\cdot):={\bf G}^\alpha _k({\bf x}_0-\cdot)-{\bf v}^j_{{\bf x}_0},\ \
\pi ^{j}_{k}({\bf x}_0,\cdot):=\Pi_k({\bf x}_0-\cdot)-q^{j}_{{\bf x}_0}
\end{align}
defines the Green function of the Brinkman system in $\Omega _j$ and its corresponding pressure vector, i.e., it satisfies for each ${\bf x}_0\in \Omega _j$ the following relations
\begin{align}
\label{Brinkman-Green-j}
\left\{
\begin{array}{l}
-\nabla \pi_k^j({\bf x_0},{\bf y})+\triangle {\bf G}^{\alpha ;j}_k({\bf x}_0,{\bf y})-\alpha {\bf G}^{\alpha ;j}_k({\bf x}_0,{\bf y})=-\delta_{{\bf y}}({\bf x}_0){\bf I},\\
{\rm{div}}_{\bf y}{\bf G}^{\alpha ;j}_k({\bf x}_0,{\bf y})=0\ \mbox{ in }\ {\Omega _j},\\
{\bf G}^{\alpha ;j}_k({\bf x}_0,{\bf y})={\bf 0},\ {\bf y}\in \partial \Omega _j.
\end{array}
\right.
\end{align}
\comment{
Therefore, $\left({\bf G}^{\alpha ;j}_k\right)_{k=1,\ldots ,n}$ is the Green function of the Brinkman system in $\Omega _j$. Moreover, ${\bf G}^{\alpha ;\Omega _+}_k$ and $\pi _{k}$ given by
\begin{align}
\label{Brinkman-Green-Omega}
&{\bf G}^{\alpha ;\Omega _+}_k({\bf x}_0,\cdot):={\bf G}^\alpha _k({\bf x}_0-\cdot)-{\bf W}_{\alpha }\left(\left(-\frac{1}{2}{\bf I}+{\bf K}_{\alpha }\right)^{-1}({\bf G}^{\alpha }_k({\bf x}_0-\cdot)|_{\partial\Omega })\right),\\
&{\pi _{k}({\bf x}_0,\cdot):=\Pi_k({\bf x}_0-\cdot)-{\mathcal Q}_{\alpha }^{d}\left(\left(-\frac{1}{2}{\bf I}+{\bf K}_{\alpha }\right)^{-1}({\bf G}^{\alpha }_k({\bf x}_0-\cdot)|_{\partial\Omega })\right)}
\end{align}
are the Green function of the Brinkman system in the Lipschitz domain $\Omega _+$ and the corresponding pressure term.
Note that
\begin{align}
\label{Brinkman-Green-Omega-j}
{\bf G}^{\alpha ;j}_k({\bf x}_0,\cdot)={\bf T}_{\alpha ;j}\left({\bf G}^{\alpha }_k({\bf x}_0-\cdot)\right),\
{\bf G}^{\alpha }_k({\bf x}_0,\cdot)={\bf T}_{\alpha }\left({\bf G}^{\alpha }_k({\bf x}_0-\cdot)\right),
\end{align}
where
\begin{align}
{\bf T}_{\alpha ;j}:={\mathbb I}-{\bf W}^j_{\alpha }\circ \left(-\frac{1}{2}{\bf I}+{\bf K}^j_{\alpha }\right)^{-1}\circ \gamma _{+},\ {\bf T}_{\alpha }:={\mathbb I}-{\bf W}_{\alpha }\circ \left(-\frac{1}{2}{\bf I}+{\bf K}_{\alpha }\right)^{-1}\circ \gamma _{+}.
\end{align}
} 
{Hence,} for each $\Omega_j$ and any $k=1,\ldots ,n$, we obtain the relations
\begin{align}
\label{j-ell}
&\left\langle \triangle {\bf G}^{\alpha ;j}_k({\bf x}_0,\cdot)-\alpha {\bf G}^{\alpha ;j}_k({\bf x}_0,\cdot)-\nabla \pi _k^j({\bf x}_0,\cdot),{\bf u}^0\right\rangle _{\Omega _j}=u_k^0({\bf x}_0).
\end{align}
Then by \eqref{Brinkman-Green-j} and \eqref{j-ell} we obtain that
\begin{align}
\label{j-ell-2}
u_k^0({\bf x}_0)
=\int_{\partial \Omega _j}{\bf t}^{\mathrm c+}({\bf G}^{\alpha ;j}_k({\bf x}_0,\cdot),\pi _k^j({\bf x}_0,\cdot))\cdot {\bf u}^0d\sigma _j.
\end{align}
\comment{
Note that the operator $-\frac{1}{2}{\bf I}+{\bf K}_{\alpha }:H_{p';\boldsymbol \nu}^1(\partial \Omega ,{\mathbb R}^n)\to H_{p';\boldsymbol \nu}^1(\partial \Omega ,{\mathbb R}^n)$ is an isomorphism for any $p'\in {\mathcal R}_{0}(n,\varepsilon )$, where $\frac{1}{p'}=1-\frac{1}{p}$, and hence, for any $p\in {\mathcal R}_1(n,\varepsilon )$ (see Lemma \ref{isom-p}).
After performing a change of variable as in {Lemma \ref{2.13D}}, the operator $-\frac{1}{2}{\bf I}+{\bf K}^j_{\alpha }$ defined on $\partial \Omega _j$ can be identified with an operator ${\mathcal T}_\alpha^j$ acting on functions defined on $\partial \Omega $.
Then, employing the arguments, e.g., similar to those in the last paragraph in p.116 in \cite{M-W}, which are based on \cite[Lemmas 11.9.13 and 11.12.2]{M-W}, {and taking into account \cite[Proposition 1]{Med-AAM} (see also  \cite[Theorems 3.8 (iv) and 4.15]{Fa-Ke-Ve})}, one can show that the sequence of operators ${\mathcal T}_\alpha^j$ converges to the operator ${\mathcal T}_\alpha :=-\frac{1}{2}{\bf I}+{\bf K}_{\alpha }$ in the operator norm, the sequence of the inverses of the operators ${\mathcal T}_\alpha^j$ converges to the inverse of the operator ${\mathcal T}_\alpha$ in the operator norm,
and hence
} 
By \eqref{Green-j} and \eqref{j-ell-1v},
there exists a constant $C$ depending only on $\alpha$, $p$, $n$ and the Lipschitz character of $\Omega _+$
such that
\begin{align*}
\|M(\nabla {\bf G}^{\alpha ;j}_k({\bf x}_0,\cdot))\|_{L_{p'}(\partial \Omega _{j})}
+\|M(\pi ^{j}_{k}({\bf x}_0,\cdot))\|_{L_{p'}(\partial \Omega _{j})}
\leq C
(\|M({\bf G}^{\alpha}_k({\bf x}_0,\cdot))\|_{L_{p'}(\partial \Omega)}
+\|M(\nabla {\bf G}^{\alpha}_k({\bf x}_0,\cdot))\|_{L_{p'}(\partial \Omega)}),
\end{align*}
{Since} also $M({\bf u}^0)\in L_p(\partial \Omega )$ and $({\bf u}^0)^+_{\rm nt}={\bf 0}$ on $\partial \Omega ${, then} the Lebesgue Dominated Convergence Theorem (applied again after the change of variable as in {Lemma \ref{2.13D}} that reduces the integral over $\partial \Omega _j$ to an integral over $\partial \Omega$)
implies that the right hand side in \eqref{j-ell-2} tends to zero as $\partial\Omega_j$ tends to $\partial\Omega$ and hence  $u_k^0({\bf x}_0)=0$. Because ${\bf x}_0$ is an arbitrary point in $\Omega _{+}$, we conclude that ${\bf u}^0={\bf 0}$ in $\Omega _{+}$, and by the first equation in \eqref{Brinkman-homogeneous1},
{$\pi^0$ is a constant pressure,}
as asserted.
{This completes the proof of the uniqueness in item (ii).
}

{
Let us show also the uniqueness result {for} item $(i)$. To do so, assume that $({\bf u}_0,\pi _0)$ is a solution of the homogeneous version of the Dirichlet problem \eqref{Brinkman-homogeneous1} {such that $M({\bf u}_0),{M}(\nabla\textbf{u}_0),{M}(\pi _0)\in L_p(\partial {\Omega})$, there exist the non-tangential limits of ${\bf u}_0$, $\nabla {\bf u}_0$ and $\pi _0$ at almost all points of the boundary $\partial \Omega $, and ${\bf u}_0$ satisfies the homogeneous Dirichlet boundary condition in the sense of non-tangential limit at almost all points of $\partial \Omega $}. Then the Green representation formula
${\bf u}_0={\bf V}_{\alpha }\left(\mathbf t_{\rm{nt}}^+({\bf u}_0,\pi _0)\right)-{\bf W}_{\alpha }\left({\bf u}^{0+}_{\rm nt}\right) \mbox{ in } \Omega _{+}$
(cf. Lemma \ref{Green-r-f})
reduces to ${\bf u}_0={\bf V}_{\alpha }\left(\mathbf t_{\rm{nt}}^+({\bf u}_0,\pi _0)\right)$ in $\Omega _{+}$, and, by considering the non-tangential trace, we obtain that
${\mathcal V}_{\alpha }\left(\mathbf t_{\rm{nt}}^+({\bf u}_0,\pi _0)\right)={\bf 0}$ on $\partial \Omega $. Thus, $\mathbf t_{\rm{nt}}^+({\bf u}_0,\pi _0)\in {\mathbb R}\boldsymbol \nu $ (see Lemma \ref{isom-sl}), and hence ${\bf u}_0={\bf 0}$ in $\Omega _+$, while the Brinkman equation \eqref{Brinkman-homogeneous1} shows that $\pi^0=0$ in $\Omega _+$ (up to an additive constant pressure). This completes the proof of the statement in item $(i)$.
} 

Next we show for $s\in (0,1)$ the uniqueness of a solution to the Dirichlet problem \eqref{Brinkman-homogeneous1}-\eqref{DirCond}, in the hypothesis of item $(iii)$. 
To this end, let $({\bf u}^0,\pi ^0)\in B_{p,p^*}^{s+\frac{1}{p}}(\Omega _+,{\mathbb R}^n)\times B_{p,p^*}^{s+\frac{1}{p}}(\Omega _+)$ denote a solution of the homogeneous version of the Dirichlet problem \eqref{Brinkman-homogeneous1}-\eqref{DirCond}.
By Lemmas \ref{trace-lemma-Besov}, \ref{lem 1.6} and Theorem~\ref{trace-equivalence-L} we obtain that
$\gamma _{+}{\bf u}^0={\bf u}^{0+}_{\rm nt}=0$ 
and ${\bf t}_{\alpha }^+({\bf u}^0,\pi ^0)\in B_{p,p^*}^{s-1}(\partial\Omega,{\mathbb R}^n)$.
Then for $s\in (0,1)$, the Green representation formula \eqref{Green-r-f-Brinkman-s} applied to the pair $({\bf u}^0,\pi ^0)$ implies that ${\gamma_+}{\bf V}_{\alpha }\left({\mathbf t}_\alpha^+({\bf u},\pi )\right)={\bf 0}$ on $\partial \Omega $.
Hence by {\eqref{68s} and \eqref{B-s-l-adj-B}} we obtain that ${\bf t}_\alpha^+({\bf u},\pi )\in {\mathbb R}\boldsymbol \nu $.
Since ${\bf V}_{\alpha }\boldsymbol \nu ={\bf 0}$ in $\Omega _{+}$, we deduce that ${\bf u}_0={\bf 0}$ in $\Omega _{+}$, and by the Brinkman equation \eqref{Brinkman-homogeneous1}
$\pi^0=0$ (up to an additive constant).
\hfill\end{proof}

Note that for $p=2$, Theorem \ref{M-H-D} (ii) has been obtained by Z. Shen in \cite[Theorem 5.5]{Shen} by using another double layer potential approach.

The following regularity result has been obtained in \cite[Theorem 4.3.1]{M-W} and \cite[Theorem 7.1]{M-T} in the case of the Stokes system (i.e., for $\alpha =0$). We prove a similar result in the case of the Brinkman system (i.e., for $\alpha > 0$) by using the main ideas of the proof of \cite[Theorem 7.1]{M-T} (see also \cite[(2.95), Remark V p. 37]{M-M-T}, \cite[Theorem 2]{Choe-Kim}, \cite[Lemma 3.3]{K-L-W2}, \cite{Med-CVEE}).
\begin{theorem}
\label{M-H}
Let ${\Omega}_+\subset \mathbb{R}^{n}$ be a bounded Lipschitz domain {with connected boundary $\partial \Omega $}.
Let $\alpha \geq 0$, $p\in (1,\infty )$ and $p^*:=\max \{p,2\}$. Assume that a pair $({\bf u},\pi)$ satisfies the homogeneous Brinkman system \eqref{Brinkman-homogeneous1}. {Then the following properties hold.}
\begin{itemize}
\item[$(i)$]
{There exists $\varepsilon=\varepsilon(\partial \Omega )>0$ such that for any $p\in (2-\varepsilon, \infty)$, the condition ${M}(\textbf{u})\in L_p(\partial {\Omega})$
implies that} there exists the non-tangential limit of ${\bf u}$ almost everywhere on $\partial \Omega $ and $\mathbf u^+_{\rm nt}\in L_{p;\nu}(\partial {\Omega},\mathbb{R}^{n})$. Moreover,
\begin{align}
\label{4.9}
\|{\mathbf u^+_{\rm nt}}\|_{L_p(\partial \Omega ,{\mathbb R}^n)}\leq C_1\|M({\bf u})\|_{L_p(\partial \Omega )},\ \ \|{\bf u}\|_{B_{p,p^*}^{\frac{1}{p}}({\Omega}_+,\mathbb{R}^{n})}
\le C'_1\|M({\bf u})\|_{L_p(\partial \Omega )},
\end{align}
with some constants $C_1\equiv C_1(\partial\Omega,p,\alpha)>0$, $C'_1\equiv C'_1(\partial\Omega,p,\alpha)>0$.
\item[$(ii)$]
{There exists $\varepsilon=\varepsilon(\partial \Omega )>0$ such that for any $p\in {\mathcal R}_{0}(n,\varepsilon )\cup(2,\infty)$, the assumption ${{M}({\bf u})},{M}(\nabla {\bf u}),{M}(\pi)\in L_p(\partial {\Omega})$} implies that there exist the non-tangential limits of ${\bf u},\nabla {\bf u},\pi $ almost everywhere on $\partial \Omega $, and that
${\mathbf u^+_{\rm nt}}\in {H_{p;\boldsymbol \nu}^{1}(\partial {\Omega},\mathbb{R}^{n})}$ and
$\mathbf t_{\rm{nt}}^{+}({\bf u},\pi)
\in L_p(\partial {\Omega}, \mathbb{R}^{n})$. In addition, there exist some constants $C_2\equiv C_2(\partial\Omega,p,\alpha)>0$, $C'_2\equiv C'_2(\partial\Omega,p,\alpha)>0$ such that
\begin{align}
\label{n-p}
\|{\mathbf u^+_{\rm nt}}\|_{H_p^1(\partial \Omega ,{\mathbb R}^n)}
+\|\mathbf t_{\rm nt}^{+}({\bf u},\pi)\|_{L_p(\partial \Omega ,{\mathbb R}^n)}
&\leq C_2\left(\|M({\bf u})\|_{L_p(\partial \Omega )}+\|M(\nabla {\bf u})\|_{L_p(\partial \Omega )}+
\|M(\pi )\|_{L_p(\partial \Omega )}\right),\\
\|{\bf u}\|_{B_{p,p^*}^{1+\frac{1}{p}}({\Omega}_+,\mathbb{R}^{n})}+\|\pi\|_{B_{p,p^*}^{\frac{1}{p}}({\Omega}_+)}
&\le C'_2\left(\|M({\bf u})\|_{L_p(\partial \Omega )}+\|M(\nabla {\bf u})\|_{L_p(\partial \Omega )}+\|M(\pi )\|_{L_p(\partial \Omega )}\right).
\end{align}
\end{itemize}
\end{theorem}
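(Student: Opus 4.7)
The plan is to follow the approach developed for the Stokes system in \cite[Theorem 4.3.1]{M-W} and \cite[Theorem 7.1]{M-T}, adapting it to the Brinkman case via the approximation of $\Omega_+$ by the smooth sub-domains $\Omega_j$ from Lemma~\ref{2.13D}. The essential point is that, since $\Omega_j$ is smooth and $(\mathbf u,\pi)\in C^\infty(\Omega_+,\mathbb R^n)\times C^\infty(\Omega_+)$ by interior regularity for the Brinkman system, the trace of $\mathbf u$ on $\partial \Omega_j$ exists classically. Transferring this trace back to $\partial\Omega$ via the diffeomorphism $\Phi_j$, set $\mathbf h^{(j)}:=\mathbf u\circ \Phi_j$. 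Since $\Phi_j(x)\in \mathfrak D_+(x)$ by Lemma~\ref{2.13D}(iii), we have $|\mathbf h^{(j)}(x)|\le M(\mathbf u)(x)$ for a.e. $x\in\partial\Omega$, hence $\|\mathbf h^{(j)}\|_{L_p(\partial\Omega,\mathbb R^n)}\le \|M(\mathbf u)\|_{L_p(\partial\Omega)}$ uniformly in $j$.

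For item (i), on each smooth $\Omega_j$ the operator $-\tfrac12\mathbb I+\mathbf K_\alpha^{j}$ is invertible on $L_p(\partial\Omega_j,\mathbb R^n)$ for all $p\in(1,\infty)$ (Lemma~\ref{L3.1p}), and, transferred to $\partial\Omega$ by $\Phi_j$ and multiplication by the Jacobian $\omega_j$, these inverses converge in operator norm to $(-\tfrac12\mathbb I+\mathbf K_\alpha)^{-1}$ on $L_p(\partial\Omega,\mathbb R^n)$ for $p\in\mathcal R_1(n,\varepsilon)$, by the argument already used in the uniqueness proof of Theorem~\ref{M-H-D}(ii) (based on \cite[Lemmas 11.9.13, 11.12.2]{M-W} and \cite[Proposition 1]{Med-AAM}). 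Representing $\mathbf u|_{\Omega_j}$ on each $\Omega_j$ as a double layer potential with density $\mathbf g^{(j)}:=(-\tfrac12\mathbb I+\mathbf K_\alpha^{j})^{-1}\mathbf h^{(j)}$ then yields a sequence $\mathbf g^{(j)}$ uniformly bounded in $L_p(\partial\Omega,\mathbb R^n)$ after the pull-back by $\Phi_j$. Extracting a weakly convergent subsequence $\mathbf g^{(j)}\rightharpoonup \mathbf g$ in $L_p(\partial\Omega,\mathbb R^n)$ and passing to the limit with the aid of the continuity of $\mathbf W_\alpha$ (Theorem~\ref{layer-potential-properties}(i), \eqref{ds-1-0}) one obtains the representation $\mathbf u=\mathbf W_\alpha \mathbf g$ in $\Omega_+$. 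Lemma~\ref{nontangential-sl} then guarantees the existence of the non-tangential trace, while the jump relation \eqref{68-s1} gives $\mathbf u^+_{\rm nt}=(\tfrac12\mathbb I+\mathbf K_\alpha)\mathbf g$, which belongs to $L_{p;\boldsymbol\nu}(\partial\Omega,\mathbb R^n)$ because $\operatorname{div}\mathbf u=0$ (using the divergence theorem applied in each $\Omega_j$ and passing to the limit). The two estimates in \eqref{4.9} then follow from $\|\mathbf g\|_{L_p}\le C\|M(\mathbf u)\|_{L_p}$ together with the continuity bounds of $\tfrac12\mathbb I+\mathbf K_\alpha$ and of $\mathbf W_\alpha: L_p(\partial\Omega,\mathbb R^n)\to B^{1/p}_{p,p^*}(\Omega_+,\mathbb R^n)$. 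For the remaining range $p>2$ we embed into $L_{2}(\partial\Omega,\mathbb R^n)$ to reduce to the previously established case.

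For item (ii), the additional hypothesis $M(\nabla\mathbf u)\in L_p$ gives a uniform bound on the tangential derivatives of $\mathbf h^{(j)}$, hence $\|\mathbf h^{(j)}\|_{H^1_p(\partial\Omega,\mathbb R^n)}$ is uniformly bounded; passing to a weak limit in $H^1_p$ and using the isomorphism property of $-\tfrac12\mathbb I+\mathbf K_\alpha$ on $H^1_{p;\boldsymbol\nu}(\partial\Omega,\mathbb R^n)$ (Lemma~\ref{isom-p}) together with \eqref{ds-1}, one obtains $\mathbf u=\mathbf W_\alpha\mathbf g$, $\pi=\mathcal Q^d_\alpha \mathbf g$ in $\Omega_+$ with $\mathbf g\in H^1_{p;\boldsymbol\nu}(\partial\Omega,\mathbb R^n)$ (up to the additive constant for $\pi$), leading to $\mathbf u^+_{\rm nt}\in H^1_{p;\boldsymbol\nu}(\partial\Omega,\mathbb R^n)$ and the Besov estimate for $(\mathbf u,\pi)$. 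The conormal derivative statement follows directly from definition \eqref{2.37nt}, together with the assumptions $M(\nabla\mathbf u),M(\pi)\in L_p$ and $\boldsymbol\nu\in L_\infty(\partial\Omega,\mathbb R^n)$, yielding $\mathbf t^+_{\rm nt}(\mathbf u,\pi)\in L_p(\partial\Omega,\mathbb R^n)$ with the desired estimate. The hardest step, and the main technical obstacle, is the uniform-in-$j$ control and operator-norm convergence of $(-\tfrac12\mathbb I+\mathbf K_\alpha^{j})^{-1}$ pulled back to $\partial\Omega$; however, this can be handled by the same device used in the proof of Theorem~\ref{M-H-D}, combined with the Lebesgue dominated convergence theorem to identify weak limits of traces with non-tangential traces on $\partial\Omega$.
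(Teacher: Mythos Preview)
The proposal is correct and follows essentially the same route as the paper: approximate $\Omega_+$ by smooth $\Omega_j$, represent $\mathbf u|_{\Omega_j}$ as a Brinkman double-layer potential via the invertibility of $-\tfrac12\mathbb I+\mathbf K_\alpha^j$ (this is Lemma~\ref{isom-p}, not Lemma~\ref{L3.1p}, and the isomorphism is on the $\boldsymbol\nu$-orthogonal subspace $L_{p;\boldsymbol\nu}$, which suffices since $\operatorname{div}\mathbf u=0$), pull back the densities to $\partial\Omega$, extract a weak limit to obtain $\mathbf u=\mathbf W_\alpha\mathbf g$ on $\Omega_+$, and then read off the non-tangential and Besov properties from Lemma~\ref{nontangential-sl} and Theorem~\ref{layer-potential-properties}, with the extension to large $p$ via the embedding $L_p\subset L_2$. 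Note two minor slips: from \eqref{68-s1} one has $\mathbf u^+_{\rm nt}=(-\tfrac12\mathbb I+\mathbf K_\alpha)\mathbf g$ rather than $(\tfrac12\mathbb I+\mathbf K_\alpha)\mathbf g$, and in item~(ii) the existence of $(\nabla\mathbf u)^+_{\rm nt}$ and $\pi^+_{\rm nt}$ (needed before invoking \eqref{2.37nt}) is not ``direct'' but comes from the layer-potential representation just established.
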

\begin{proof}
{(i) We will use arguments similar to the ones in the proof of \cite[Lemma 8]{Choe-Kim}. First,
let $\{\Omega _j\}_{j\geq 1}$ be a sequence of sub-domains in $\Omega _{+}$ that converge to $\Omega _{+}$ in the sense described in {Lemma \ref{2.13D}}, with the corresponding notations $\Phi_j$, $\boldsymbol \nu^{(j)}$ and  $\omega_j$ also introduced there.
{Due to ellipticity of the homogeneous Brinkman system in $\Omega_+$, we have
$({\bf u},\pi)\in C^{\infty }(\Omega_+,{\mathbb R}^n)\times C^{\infty }(\Omega_+)$.
Now, let ${\bf h}^{(j)}:={\bf u}|_{\partial \Omega _j}$.
Then $({\bf u}_j,\pi _j):=({\bf u}|_{\overline \Omega _j},\pi |_{\overline \Omega _j})$} satisfies the homogeneous Brinkman system in $\Omega _j$ and the Dirichlet boundary condition ${\bf u}_j|_{\partial \Omega _j}={\bf h}^{(j)}$ on $\partial \Omega _j$, where ${\bf h}^{(j)}\in L_{p;\boldsymbol \nu^{(j)}}(\partial \Omega _j,{\mathbb R}^n)$.
The solution of such a problem is unique, up to an additive constant for the pressure (see, e.g., Theorem \ref{M-H-D}).
\comment{
as in \cite[page 80]{Necas2012}, \cite[Theorerm 1.12]{Verchota1984} (see also the proof of Theorem~\ref{2.13}(ii) in this paper), consider a sequence of $C^{\infty }$-domains $\left\{\Omega _j\right\}\subset \Omega _{+}$ approximating $\Omega _{+}$, such that their Lipschitz characters are uniformly controlled by the Lipschitz character of $\Omega _+$. Moreover, there exist some diffeomorphisms $\Phi_j:\partial \Omega \to \partial \Omega _j$ such that
\begin{align}
&\Phi_j({\bf x})\in {\mathfrak D}_{+}({\bf x}),\ \forall \ j\geq 1,\ \forall \ {\bf x}\in \partial \Omega ,\\
&\lim_{j\to \infty }|\Phi_j({\bf x})-{\bf x}|=0 \mbox{ uniformly in } {\bf x}\in \partial \Omega ,\\
&\lim_{j\to \infty }\boldsymbol \nu^{(j)}(\Phi_j({\bf x}))=\boldsymbol \nu ({\bf x})\ \mbox{ for a.e. }\ {\bf x}\in \partial \Omega ,
\end{align}
where ${\mathfrak D}_{+}({\bf x})$ is the non-tangential approach cone with vertex at ${\bf x}$, and $\boldsymbol \nu^{(j)}$ is the outward unit normal to $\partial \Omega _j$. We can assume that the Lipschitz constants of $\Phi_j$ and $\Phi_j^{-1}$ are uniformly bounded in $j$.

In addition, there exist some positive functions $\omega_j:\partial \Omega \to {\mathbb R}$ (the Jacobian related to $\Phi_j$, $j\in {\mathbb N}$) bounded away from zero and infinity uniformly in $j$, such that, for any measurable set $A\subset \partial \Omega $, $\int _A\omega _jd\sigma =\int _{\Phi_j(A)}d\sigma _j$. Moreover, $\lim _{j\to \infty }\omega _j=1$ a.e. on $\partial \Omega $ and in every space $L_p(\partial \Omega )$, $p\in (1,\infty )$ (cf. \cite[Theorem 1.12(iv)]{Verchota1984}).
} 

According to {Lemma \ref{isom-p}} applied to the smooth domain $\Omega _j$, such a solution can be expressed in terms of the double layer potential
${\bf u}_j={\bf W}_{\alpha ;\partial \Omega _j}{{\bf h}'^{(j)}}$, $\pi _j
={\mathcal Q}_{\alpha ;\partial \Omega _j}^d{{\bf h}'^{(j)}}$,
with a density ${{\bf h}'^{(j)}}\in L_{p;\boldsymbol \nu^{(j)}}(\partial \Omega _j,{\mathbb R}^n)$ satisfying the equation
{$\left(-\frac{1}{2}{\mathbb I}+{\bf K}^j_{\alpha}\right){{\bf h}'^{(j)}}={\bf h}^{(j)}$,
where ${\bf K}^j_{\alpha}:={\bf K}_{\alpha ;\partial \Omega _j}$ is associated (as in \eqref{68-s1s}) with the double layer potential $W_{\alpha ;\partial \Omega _j}$ defined on
$L_{p;\boldsymbol \nu^{(j)}}(\partial \Omega _j,{\mathbb R}^n)$,}
and, in view of Lemma \ref{isom-p}, the operator
$-\frac{1}{2}{\bf I}+{\bf K}^j_{\alpha}:L_{p;\boldsymbol\nu^{(j)}}(\partial \Omega _j,{\mathbb R}^n)\to L_{p;\boldsymbol\nu^{(j)}}(\partial \Omega _j,{\mathbb R}^n)$
is an isomorphism for any $p\in (1,\infty )$.

Note that the operator
$-\frac{1}{2}{\bf I}+{\bf K}_{\alpha }:L_{p;\boldsymbol \nu}(\partial \Omega ,{\mathbb R}^n)
\to L_{p;\boldsymbol \nu}(\partial \Omega ,{\mathbb R}^n)$
is an isomorphism for any
$p\in {\mathcal R}_{1}(n,\varepsilon)$ (see Lemma \ref{isom-p}).
After performing a change of variable as in {Lemma \ref{2.13D}}, the operator $-\frac{1}{2}{\bf I}+{\bf K}^j_{\alpha }$ defined on $\partial \Omega _j$ can be identified with an operator ${\mathcal T}_\alpha^j$ acting on functions defined on $\partial \Omega $.
Then, employing the arguments, e.g., similar to those in the last paragraph in p.116 in \cite{M-W}, which are based on \cite[Lemmas 11.9.13 and 11.12.2]{M-W}, and taking into account \cite[Proposition 1]{Med-AAM} (see also  \cite[Theorems 3.8 (iv) and 4.15]{Fa-Ke-Ve}), one can show that the sequence of operators ${\mathcal T}_\alpha^j$ converges to the operator ${\mathcal T}_\alpha :=-\frac{1}{2}{\bf I}+{\bf K}_{\alpha }$ in the operator norm {and} the sequence of the inverses of the operators ${\mathcal T}_\alpha^j$ converges to the inverse of the operator ${\mathcal T}_\alpha$ in the operator norm for $p\in {\mathcal R}_{1}(n,\varepsilon)$.
Hence, if $p\in {\mathcal R}_{1}(n,\varepsilon)$, {the operator norms $\|\left(-\frac{1}{2}{\mathbb I}+{\bf K}^j_{\alpha}\right)^{-1}\|_{L_p(\partial \Omega _j,{\mathbb R}^n)}$ are bounded uniformly in $j$, implying that} there exists a constant $c_0$ depending only on $p$, $n$, $\alpha$, and the Lipschitz character of $\Omega _+$ (thus, not depending on $j$) such that
\begin{align}
\label{e1}
\|{{\bf h}'^{(j)}}\|^p_{L_p(\partial \Omega _j,{\mathbb R}^n)}
&\leq c_0\|{\bf h}^{(j)}\|^p_{L_p(\partial \Omega _j,{\mathbb R}^n)}
=c_0\|{\bf u}\|^p_{L_p(\partial \Omega _j,{\mathbb R}^n)}\nonumber\\
&= c_0\int_{\partial \Omega _j}|{\bf u}({\bf y}_j)|^pd\sigma _{{\bf y}_j}
=c_0\int_{\partial \Omega }|{\bf u}(\Phi_j({\bf y}))|^p\omega _j({\bf y})d\sigma _{{\bf y}}
\leq c_1\int_{\partial \Omega }|M({\bf u}({\bf y}))|^pd\sigma _{{\bf y}}
= c_1\|M({\bf u})\|^p_{L_p(\partial \Omega,{\mathbb R}^n)},
\end{align}
Recall that {we have approximated the domain $\Omega _+$ with a sequence of smooth domains $\Omega _j$ with uniform Lipschitz characters from inside, }
{and we have employed here the change of variable ${\bf y}_j:=\Phi_j({\bf y})$,
${\bf y}\in \partial \Omega,$
${\bf y}_j\in \partial \Omega_j,$
and $\omega _j$ is the Jacobian of $\Phi_j:\partial \Omega \to \partial \Omega _j$  (cf. Lemma \ref{2.13D}).
Hence the constants $c_0$ and $c_1$ depend only on $p$, $n$, $\alpha$, and the Lipschitz character of $\Omega _+$.}
\comment{By employing again arguments similar to those in the last paragraph in p.116 in \cite{M-W} (which are essentially based on \cite[Lemma 11.9.3]{M-W}, inequality (11.196) and on the singular behaviour of the kernel of the Brinkman double layer potential), we deduce that {the operator norms of $-\frac{1}{2}{\mathbb I}+{\bf K}_{\alpha ;\partial \Omega _j}$ and its inverse are uniformly bounded in $j$ (see also \cite[p. 785]{Fa-Ke-Ve})}.
Then} 


\comment{\bl Note that the solutions $({\bf u}_j,\pi _j)$ of the sequence of domains $\Omega _j$ approximating $\Omega _+$ satisfy the inequality
\begin{align}
\label{e1-1}
\|({\bf u}_j)_{{\rm nt}}^+\|_{L_p(\partial \Omega _j,{\mathbb R}^n)}\leq \|M({\bf u}_j)\|_{L_p(\partial \Omega _j)}\leq c_0^*\|{\bf h}^{(j)}\|_{L_p(\partial \Omega _j,{\mathbb R}^n)}
\end{align}
with a uniform constant $c_0^*$ (see, e.g., \cite[Theorem 3.9 (iii)]{Fa-Ke-Ve} in the case $\alpha =0$ and $p=2$). In addition, the following estimate holds
\begin{align}
\label{e1-2}
\|{\bf h}'_j\|_{L_p(\partial \Omega _j,{\mathbb R}^n)}\leq c_1^*\left\|\left(-\frac{1}{2}{\mathbb I}+{\bf K}_{\alpha ;\partial \Omega _j}\right){\bf h}'_j\right\|_{L_p(\partial \Omega _j,{\mathbb R}^n)}
\end{align}
with a uniform constant $c_1^*$ (cf., e.g., \cite[(5.17)]{Shen} for $p=2$). Then inequalities \eqref{e1-1} and \eqref{e1-2} lead to inequality \eqref{e1} (see also the proof of inequality $(1)$ in \cite{Choe-Kim}).}

{Further,} the double-layer potential ${\bf W}_{\alpha ;\partial \Omega _j}{{\bf h}'^{(j)}}$ becomes
\begin{align}
\label{dl-sol-2}
u_\ell ({\bf x})&
=\int_{\partial {\Omega _j}} S^{\alpha}_{i\ell s}({\bf y}_j,{\bf x})\nu _{s}({\bf y}_j)h'^{(j)}_{i}({\bf y}_j)d\sigma _{{\bf y}_j}
=\int_{\partial {\Omega}} S^{\alpha}_{i\ell s} (\Phi_j({\bf y}),{\bf x})\nu _{s}(\Phi_j({\bf y}))H'^{(j)}_{i}({\bf y})d\sigma _{{\bf y}},\ \ \forall \ {\bf x}\in \Omega _j,
\end{align}
where
$
{{\bf H}'}^{(j)}({\bf y}):={{\bf h}'^{(j)}}(\Phi_j({\bf y}))\omega _j({\bf y}),
$
${\bf h}'^{(j)}=(h^{'(j)}_{1},\ldots ,h^{'(j)}_{n})$, ${\bf H}'^{(j)}=({H'}_1^{(j)},\ldots ,{H'}_n^{(j)})$.

In view of \eqref{e1} and of the {uniform} boundedness of $\{\omega _j\}_{j\geq 1}$, there exist some constants $c_1,c_2>0$ (which depend only on $\Omega _+$ and $p$) such that
\begin{align}
\label{e2}
\int _{\partial \Omega }|{\bf H}'^{(j)}({\bf y})|^pd\sigma _{{\bf y}}
\leq c_2\int_{\partial \Omega _j}|{\bf u}({\bf y}_j)|^pd\sigma _{{\bf y}_j}
\leq c'_2\int_{\partial \Omega }|M({\bf u}({\bf y}))|^pd\sigma _{{\bf y}},\ \forall \ j\geq 1.
\end{align}
Hence $\{{\bf H}'^{(j)}\}_{j\geq 1}$ is a bounded sequence in $L_p(\partial \Omega ,{\mathbb R}^n)$, and, thus, there {exists a subsequence, still} denoted as the sequence, and a function ${\bf H}'\in L_p(\partial \Omega ,{\mathbb R}^n)$, such that ${\bf H}'^{(j)}\to {\bf H}'$ weakly in $L_p(\partial \Omega ,{\mathbb R}^n)$.
By this property and letting $j\to \infty $ in \eqref{dl-sol-2}, we obtain
$
{\bf u}={\bf W}_{\alpha }{\bf H}' \mbox{ in } \Omega _+.
$
According to Lemma \ref{nontangential-sl}{(i,iv),} there exists the non-tangential limit ${\bf u}_{{\rm nt}}^+=({\bf W}_{\alpha }{\bf H}')_{{\rm nt}}^+$ of ${\bf u}$ at almost all points of $\partial \Omega $,
and by estimates \eqref{7ms-dl-0} and \eqref{e2}, we obtain that
\begin{align}
\label{4.15}
\|{\bf u}_{{\rm nt}}^+\|_{L_p(\partial \Omega ,{\mathbb R}^n)}=\|({\bf W}_{\alpha }{\bf H}')_{{\rm nt}}^+\|_{L_p(\partial \Omega ,{\mathbb R}^n)}
\leq {c_3\|{\bf H}'\|_{L_p(\partial \Omega ,{\mathbb R}^n)}}
{\leq c_3\lim \inf_{j\to \infty }\|{\bf H}'^{(j)}\|_{L_p(\partial \Omega ,{\mathbb R}^n)}}
\leq c_4\|M({\bf u})\|_{L_p(\partial \Omega )},
\end{align}
{where the constants $c_3,c_4>0$ do not depend on $j$.}
Moreover, the divergence theorem shows that ${\mathbf u^+_{\rm nt}}=({\bf W}_{\alpha }{\bf H}')_{{\rm nt}}^+\in L_{p;\nu}(\partial {\Omega},\mathbb{R}^{n})$.}
Estimate \eqref{4.9} is provided by the representation
${\bf u}={\bf W}_{\alpha }{\bf H}'$,
by continuity of {operator \eqref{ds-1-0}}, and by estimates \eqref{4.15}.
This completes the proof of item (i) for {any} $p\in {\mathcal R}_{1}(n,\varepsilon )$.

{Let us now consider item (i) for any $p>2$ (not covered yet when $n>3$). Note that inclusions
$2\in {\mathcal R}_{1}(n,\varepsilon )$ and $L_p(\partial {\Omega})\subset L_2(\partial {\Omega})$ particularly imply that for such $p$ there exist  non-tangential limits of ${\bf u}$ almost everywhere on $\partial \Omega $. Implementing now, e.g., \cite[Proposition 3.29]{M-M1} completes the proof for any $p>2$.}


(ii) Now assume that ${\bf u}$ and $\pi $ satisfy the Brinkman system and that {${{M}({\bf u})},{M}(\nabla {\bf u}), {M}(\pi)\in L_p(\partial {\Omega})$}. As in the proof of {item} (i), we consider again a sequence of smooth domains $\left\{\Omega _j\right\}_{j\in {\mathbb N}}$, such that $\overline\Omega _j \subseteq \Omega _+$ and $\Omega _j\to \Omega _+$ as $j\to \infty $.

{As we already mentioned, $({\bf u}_j,\pi _j):=({\bf u}|_{\overline \Omega _j},\pi |_{\overline \Omega _j})\in C^{\infty }(\overline\Omega _j,{\mathbb R}^n)\times C^{\infty }(\overline\Omega _j)$.
Thus, ${\bf h}^{(j)}:={\bf u}|_{\partial \Omega _j}\in C^{\infty }(\partial\Omega _j,\mathbb R^n)
\subset H_p^1(\partial \Omega _j,\mathbb R^n)$ and ${\bf h}^{(j)}\in L_{p;\boldsymbol \nu^{(j)}}(\partial\Omega _j,{\mathbb R}^n)$, for any $j\in {\mathbb N}$.
Then the pair $({\bf u}_j,\pi _j)\in C^{\infty }({\Omega }_j,{\mathbb R}^n)\times C^{\infty }({\Omega }_j)$ satisfies the Brinkman system in $\Omega _j$ with the Dirichlet boundary condition ${\bf u}_j|_{\partial \Omega _j}={\bf h}^{(j)}\in H^1_{p;\boldsymbol \nu^{(j)} }(\partial \Omega _j,{\mathbb R}^n)$.}
The solution of such a problem is unique up to an additive constant pressure (see Theorem \ref{M-H-D}(i)) and can be expressed in terms of a double layer potential as in item (i), but now with a density in
$H_{p;\boldsymbol \nu^{(j)}}^{1}(\partial \Omega _j,{\mathbb R}^n)$.
\comment{
\begin{align}
\label{operator-j}
-\frac{1}{2}{\mathbb I}+{\bf K}_{\alpha ;\partial \Omega _j}:H^1_{p;\boldsymbol\nu^{(j)}}(\partial \Omega _j,{\mathbb R}^n)\to H^1_{p;\boldsymbol\nu^{(j)}}(\partial \Omega _j,{\mathbb R}^n)
\end{align}
is an isomorphism for any $p\in (1,\infty )$.
Thus, for ${\bf h}^{(j)}\in H_{p;\boldsymbol\nu^{(j)}}^1(\partial \Omega _j,{\mathbb R}^n)$ given, there exists a unique ${\bf g}_j\in H^1_{p;\boldsymbol\nu^{(j)}}(\partial \Omega _j,{\mathbb R}^n)$ such that
$\left(-\frac{1}{2}{\mathbb I}+{\bf K}_{\alpha ;\partial \Omega _j}\right){{\bf g}_j}={\bf h}^{(j)}$, and there exists a constant $c_0>0$ such that
\begin{align}
\label{e1-sl}
\|{\bf g}_j\|_{H^1_p(\partial \Omega _j,{\mathbb R}^n)}\leq \left\|\left(-\frac{1}{2}{\mathbb I}+{\bf K}_{\alpha ;\partial \Omega _j}\right)^{-1}\right\|_{{\mathcal L}(H^1_{p;\boldsymbol\nu^{(j)}}(\partial \Omega _j,{\mathbb R}^n),H^1_{p;\boldsymbol\nu^{(j)}}(\partial \Omega _j,{\mathbb R}^n))}\|{\bf h}^{(j)}\|_{H^1_p(\partial \Omega _j,{\mathbb R}^n)}
\leq c_0\|{\bf h}^{(j)}\|_{H^1_p(\partial \Omega _j,{\mathbb R}^n)}
=c_0\|{\bf u}\|_{H^1_p(\partial \Omega _j,{\mathbb R}^n)}.
\end{align}
In view of \cite[Lemma 11.12.2]{M-W}, the operator $-\frac{1}{2}{\mathbb I}+{\bf K}_{\alpha ;\partial \Omega _j}$ (after performing a change of variable that identifies this operator with one defined on $H^1_{p;\boldsymbol\nu }(\partial \Omega ,{\mathbb R}^n)$) converges to $-\frac{1}{2}{\mathbb I}+{\bf K}_{\alpha ;\partial \Omega }$ in the operator norm of the space ${\mathcal L}(H^1_{p;\boldsymbol\nu }(\partial \Omega ,{\mathbb R}^n),H^1_{p;\boldsymbol\nu }(\partial \Omega ,{\mathbb R}^n))$. Thus, the operator norms of $-\frac{1}{2}{\mathbb I}+{\bf K}_{\alpha ;\partial \Omega }$ and of its inverse are uniformly bounded in $j$ (see also \cite[p. 785]{Fa-Ke-Ve}). Hence the constant $c_0$ in \eqref{e1-sl} depends only on the Lipschitz character of $\Omega _+$ and $p$.
} 
Proceeding  similar to the proof of item (i), we prove item (ii).
\comment{Let us now prove item (ii) for $p>2$. Then, the results of item (ii) will hold particularly for
$p'=2\in {\mathcal R}_{0}(n,\varepsilon )$ by inclusion $L_p(\partial {\Omega})\subset L_2(\partial {\Omega})$, and especially, there exist the non-tangential limits of ${\bf u},\nabla {\bf u},\pi $ almost everywhere on $\partial \Omega $. Implementing now, e.g., \cite[Proposition 3.29]{M-M1} completes the proof for any $p>2$.
} 
\comment %
{By considering the change of variable ${\bf y}_j:=\Phi_j({\bf y})$, ${\bf y}_j=(y_1^{(j)},\ldots ,y_n^{(j)})$, the single-layer potential ${\bf V}_{\alpha ;\partial \Omega _j}{{\bf g}_j}$ becomes
\begin{align}
\label{sl-sol-2}
u_\ell ({\bf x})&=\int_{\partial {\Omega _j}}{\mathcal G}^{\alpha}_{i\ell} ({\bf y}_j,{\bf x})h^{(j)}_{i}({\bf y}_j)d\sigma _{{\bf y}_j}=\int_{\partial {\Omega}}{\mathcal G}^{\alpha}_{i\ell } (\Phi_j({\bf y}),{\bf x})H^{(j)}_{i}({\bf y})d\sigma _{{\bf y}},\ \ \forall \ {\bf x}\in \Omega _j.
\end{align}
where
${\bf H}'^{(j)}({\bf y}):={{\bf g}_j}(\Phi_j({\bf y}))\omega _j({\bf y}),\ {\bf y}\in \partial \Omega $, and ${\bf y}_j=(y_1^{(j)},\ldots ,y_n^{(j)})$, ${\bf H}'^{(j)}=({H'}_1^{(j)},\ldots ,{H'}_n^{(j)})$.

According to \eqref{e1-sl} and the uniformly boundedness of $\{\omega _j\}_{j\geq 1}$, there exist some constants $c_3,c_4>0$ (which depend only on $\Omega _+$ and $p$) such that
\begin{align}
\label{e2-sl}
\int _{\partial \Omega }|{\bf H}'^{(j)}({\bf y})|^pd\sigma _{{\bf y}}
\leq c_3\int_{\partial \Omega _j}|{\bf u}({\bf y}_j)|^pd\sigma _{{\bf y}_j}
\leq c_4\int_{\partial \Omega }|M({\bf u}({\bf y}))|^pd\sigma _{{\bf y}},\ \forall \ j\geq 1.
\end{align}
Therefore, $\{{\bf H}'^{(j)}\}_{j\geq 1}$ is a bounded sequence in $L_p(\partial \Omega ,{\mathbb R}^n)$, and hence there exist a subsequence, denoted as the sequence, and a function ${\bf H}'\in L_p(\partial \Omega _+,{\mathbb R}^n)$, such that ${\bf H}'^{(j)}\to {\bf H}'$ weakly in $L_p(\partial \Omega ,{\mathbb R}^n)$. By this property and letting $j\to \infty $ in \eqref{sl-sol-2}, we obtain
\begin{align}
\label{u-sl}
{\bf u}={\bf V}_{\alpha }{\bf H}'\ \mbox{ in }\ \Omega _{+}.
\end{align}
Moreover, letting $j\to \infty $ in the representation formula for $\pi _j$, we obtain that $\pi ={\mathcal Q}_{\alpha ;\partial \Omega }^s{\bf H}'$ in $\Omega _+$. Consequently,
\begin{align}
\label{u-sl-pi}
{\bf u}={\bf V}_{\alpha }{\bf H}',\ \ \pi ={\mathcal Q}^s{\bf H}' \ \mbox{ in }\ \Omega _+.
\end{align}
{In view of Lemma \ref{nontangential-sl}(i),(ii) there exist the non-tangential limits ${\bf u}_{{\rm nt}}^+=({\bf V}_{\alpha }{\bf H}')_{{\rm nt}}^+$, $(\nabla {\bf u})_{{\rm nt}}^+=\left(\nabla {\bf V}_{\alpha }{\bf H}'\right)_{{\rm nt}}^+$ and $\pi _{{\rm nt}}^+=({\mathcal Q}_{\alpha }^s{\bf H}')_{{\rm nt}}^+$ of ${\bf u}$, $\nabla {\bf u}$ and $\pi $, respectively, at almost all points of $\partial \Omega $, and ${\bf u}_{{\rm nt}}^+\in H_p^1(\partial \Omega ,{\mathbb R}^n)$, $\pi _{{\rm nt}}^+\in L_p(\partial \Omega )$. In addition, the divergence theorem implies that ${\mathbf u^+_{\rm nt}}=({\bf V}_{\alpha }{\bf H}')_{{\rm nt}}^+\in H_{p;\boldsymbol \nu}^{1}(\partial {\Omega},\mathbb{R}^{n})$.}
Formula \eqref{u-sl-pi} and Lemma \ref{nontangential-sl}(i),(ii) show also that 
there exists the non-tangential limit of the stress tensor $\boldsymbol{\sigma }({\bf u},\pi )=-\pi {\mathbb I}+\nabla {\bf u}+(\nabla {\bf u})^\top$ at almost all points of $\partial \Omega $, and $\left(\boldsymbol \sigma ({\bf u},\pi )\right)_{\rm{nt}}^+\in L_p(\partial \Omega ,{\mathbb R}^{n\times n})$. In addition, we have that ${\mathbf t}_{\rm{nt}}^{+}({\bf u},\pi)={\boldsymbol{\sigma}^+_{\rm nt}({\bf u},\pi)}\,\boldsymbol \nu \in L_p(\partial {\Omega}, \mathbb{R}^{n})$.
Finally, formulas \eqref{u-sl-pi} and properties \eqref{ss-1} yield that ${\bf u}\in {B}_{p,p^*}^{1+\frac{1}{p}}({\Omega} _{+},{\mathbb R}^n)$ and $\pi \in {B}_{p,p^*}^{\frac{1}{p}}({\Omega}_{+})$, as asserted, while estimate \eqref{n-p} is immediate.
} 
\hfill\end{proof}


\begin{remark}
\label{non-tangential-p}
{The {condition requiring the existence of the non-tangential limits of ${\bf u}$, $\nabla {\bf u}$ and $\pi $ at almost all points of the boundary $\partial \Omega $} in Lemma \ref{Green-r-f} {is} particularly satisfied if $p\in {\mathcal R}_{0}(n,\varepsilon )\cup(2,\infty)$ with $\varepsilon >0$ as in Theorem \ref{M-H}(ii).}
{Indeed, for such $p$, the condition is implied by the inclusions ${{M}({\bf u})},{M}(\nabla {\bf u}),{M}(\pi)\in L_p(\partial {\Omega})$ and by the Brinkman system \eqref{Brinkman-homogeneous}}.
\end{remark}

Having in view Theorem \ref{M-H-D}(iii), we are now able to consider the Poisson-Dirichlet problem for the Brinkman system,
\begin{equation}
\label{Poisson-Dirichlet-Brinkman-Lp}
\left\{
\begin{array}{l}
\triangle {\bf u} - \alpha {\bf u} - \nabla \pi =\mathbf f,\
{\rm{div}}\ {\bf u} = 0\ \mbox{ in }\  {\Omega}_+\\
\gamma_+{\bf u}={\mathbf h_{0}} \mbox{ on } \partial \Omega
\end{array}
\right.
\end{equation}
with the Dirichlet datum for the Gagliardo trace $\gamma_+{\bf u}$ {(see also \cite[Theorem 10.6.2]{M-W} for $\alpha =0$)}.
\begin{theorem}
\label{M-H-DG}
Let ${\Omega}_+\subset \mathbb{R}^{n}$ {$(n\geq 3)$} be a bounded Lipschitz domain with connected boundary $\partial {\Omega}$. {Let $\alpha \in (0,\infty )$ and $0<s\le 1$}.
Then there exists $\varepsilon=\varepsilon(\partial\Omega)>0$ such that for any $p\in\mathcal R_{1-s}(n,\epsilon)$ $($cf. \eqref{cases-s}$)$, the Dirichlet problem \eqref{Poisson-Dirichlet-Brinkman-Lp} with $\mathbf f\in L_p({\Omega}_+,{\mathbb R}^3)$ and $\mathbf h_0\in H_{p;\nu}^s(\partial {\Omega},\mathbb{R}^{n})$ has a solution
$(\textbf{u},\pi) \in  B_{p,p^*}^{s+\frac{1}{p}}({\Omega}_+,\mathbb{R}^{n})\times B_{p,p^*}^{s+\frac{1}{p}-1}({\Omega}_+)$,
which is unique up to an arbitrary additive constant for the pressure $\pi$, where $p^{*} = \max\{2, p\}$. In addition, there exists {a constant $C=C(s,p,\Omega _+)>0$} such that
$$
\|{\bf u}\|_{B_{p,p^*}^{s+\frac{1}{p}}({\Omega}_+,\mathbb{R}^{n})}
+\|\pi\|_{B_{p,p^*}^{s+\frac{1}{p}-1}({\Omega}_+)/\mathbb R}
\le C(\| \mathbf f\|_{L_p(\Omega_+,\mathbb{R}^n)}+\| \mathbf h_0\|_{H^s_p(\partial {\Omega},\mathbb{R}^{n})}).
$$

\end{theorem}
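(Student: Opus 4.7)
The plan is to reduce the Poisson-Dirichlet problem \eqref{Poisson-Dirichlet-Brinkman-Lp} to a homogeneous Dirichlet problem of the type already solved in Theorem~\ref{M-H-D}, by subtracting off the Newtonian volume contribution. Set
\begin{align*}
\mathbf{u}_N:={\bf N}_{\alpha;\Omega_+}\mathbf{f},\qquad \pi_N:=\mathcal Q_{\Omega_+}\mathbf{f}.
\end{align*}
By \eqref{Newtonian-1a-Lpm}-\eqref{Newtonian-1b-Lpm} one has $(\mathbf u_N,\pi_N)\in H^2_p(\Omega_+,\mathbb R^n)\times H^1_p(\Omega_+)$, and the equations \eqref{Newtonian-2-Lp} show that $(\mathbf u_N,\pi_N)$ satisfies the Brinkman system with right-hand side $\mathbf{f}$ and $\mathrm{div}\,\mathbf u_N=0$ in $\Omega_+$. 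The embedding chain \eqref{Nlp-1} (with its obvious counterpart at the pressure level) gives $\mathbf u_N\in B^{1+\frac{1}{p}}_{p,p^*}(\Omega_+,\mathbb R^n)\hookrightarrow B^{s+\frac{1}{p}}_{p,p^*}(\Omega_+,\mathbb R^n)$ and $\pi_N\in B^{\frac{1}{p}}_{p,p^*}(\Omega_+)\hookrightarrow B^{s+\frac{1}{p}-1}_{p,p^*}(\Omega_+)$ for any $s\in(0,1]$, with norms controlled by $\|\mathbf{f}\|_{L_p(\Omega_+,\mathbb R^n)}$.

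Next I would examine the Gagliardo trace of the Newtonian piece. By Remark~\ref{gammaN} applied to $\Omega_+$,
\begin{align*}
\gamma_+\mathbf u_N\in H^1_{p;\boldsymbol\nu}(\partial\Omega,\mathbb R^n)\hookrightarrow H^s_{p;\boldsymbol\nu}(\partial\Omega,\mathbb R^n),
\end{align*}
so the modified boundary datum $\mathbf h^*:=\mathbf h_0-\gamma_+\mathbf u_N$ belongs to $H^s_{p;\boldsymbol\nu}(\partial\Omega,\mathbb R^n)$ and satisfies $\|\mathbf h^*\|_{H^s_p}\le \|\mathbf h_0\|_{H^s_p}+C\|\mathbf f\|_{L_p}$. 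Now I would apply the well-posedness of the homogeneous Brinkman-Dirichlet problem on the Lipschitz domain $\Omega_+$: namely, Theorem~\ref{M-H-D}(iii) when $0<s<1$ and Theorem~\ref{M-H-D}(i) when $s=1$, both of which are available precisely on the $p$-range $\mathcal R_{1-s}(n,\varepsilon)$ (with $\mathcal R_0(n,\varepsilon)$ in the case $s=1$). This produces a pair $(\mathbf v,q)\in B^{s+\frac{1}{p}}_{p,p^*}(\Omega_+,\mathbb R^n)\times B^{s+\frac{1}{p}-1}_{p,p^*}(\Omega_+)$ solving the homogeneous Brinkman system with $\gamma_+\mathbf v=\mathbf h^*$, unique modulo an additive constant for $q$, and satisfying
\begin{align*}
\|\mathbf v\|_{B^{s+\frac{1}{p}}_{p,p^*}(\Omega_+,\mathbb R^n)}+\|q\|_{B^{s+\frac{1}{p}-1}_{p,p^*}(\Omega_+)/\mathbb R}\le C\|\mathbf h^*\|_{H^s_p(\partial\Omega,\mathbb R^n)}.
\end{align*}
Setting $(\mathbf u,\pi):=(\mathbf v+\mathbf u_N,q+\pi_N)$ furnishes the desired solution of \eqref{Poisson-Dirichlet-Brinkman-Lp}, with the claimed norm estimate following by the triangle inequality from the two bounds above.

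For uniqueness, let $(\mathbf u,\pi)$ lie in the stated Besov product space and satisfy the homogeneous problem ($\mathbf f=0$, $\mathbf h_0=\mathbf 0$). When $0<s<1$ this is exactly the homogeneous Brinkman-Dirichlet configuration whose uniqueness was established in the last paragraph of the proof of Theorem~\ref{M-H-D}, using the Green representation \eqref{Green-r-f-Brinkman-s} together with the isomorphism of the single-layer operator in \eqref{B-s-l-adj-B}; hence $\mathbf u=\mathbf 0$ and $\pi$ is constant. When $s=1$, the inclusion $B^{1+\frac{1}{p}}_{p,p^*}(\Omega_+,\mathbb R^n)\hookrightarrow B^{s'+\frac{1}{p}}_{p,p^*}(\Omega_+,\mathbb R^n)$ and $B^{\frac{1}{p}}_{p,p^*}(\Omega_+)\hookrightarrow B^{s'+\frac{1}{p}-1}_{p,p^*}(\Omega_+)$ for any $s'\in(0,1)$, combined with the monotonicity $\mathcal R_0(n,\varepsilon)\subset \mathcal R_{1-s'}(n,\varepsilon)$ of the intervals in \eqref{cases-s}, reduces the situation to the already-treated case $0<s'<1$. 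The only point that requires some care is ensuring that the extended distribution produced by $\widetilde E_+\mathbf f$ lies in $\widetilde H^0_p(\Omega_+,\mathbb R^n)$ so that $\tilde{\mathcal L}_\alpha(\mathbf u_N,\pi_N)=\widetilde E_+\mathbf f$, which lets us invoke Corollary~\ref{C2.16}; once this identification is made, all remaining verifications are direct applications of the cited results.
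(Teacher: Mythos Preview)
Your proof is correct and follows essentially the same strategy as the paper: subtract the Newtonian pair $(\mathbf N_{\alpha;\Omega_+}\mathbf f,\mathcal Q_{\Omega_+}\mathbf f)$ to reduce to the homogeneous Dirichlet problem of Theorem~\ref{M-H-D}, and for uniqueness use the Green representation together with the single-layer isomorphism (the paper does this directly via Lemma~\ref{Green-r-f-s} and Corollary~\ref{L3.1psV} rather than pointing back into the proof of Theorem~\ref{M-H-D}, but the content is the same).

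Two small corrections. First, the set inclusion $\mathcal R_0(n,\varepsilon)\subset\mathcal R_{1-s'}(n,\varepsilon)$ is false as stated: as $\theta=1-s'$ increases from $0$, the interval in \eqref{cases-s} shifts to the right rather than enlarging, so its left endpoint moves past points near the left edge of $\mathcal R_0$. What you actually need (and what suffices) is that for each fixed $p\in\mathcal R_0(n,\varepsilon)$ there exists some $s'\in(0,1)$ close to $1$ with $p\in\mathcal R_{1-s'}(n,\varepsilon)$, which does hold by continuity of the endpoints in $\theta$. Second, for $s=1$ you invoke Theorem~\ref{M-H-D}(i), which delivers the Dirichlet condition in the non-tangential sense, whereas \eqref{Poisson-Dirichlet-Brinkman-Lp} is stated with the Gagliardo trace; the paper closes this gap by appealing to Theorem~\ref{trace-equivalence-L}(iii), and you should do likewise.
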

\begin{proof}
If $\mathbf f=\mathbf 0$, the existence of a solution of the problem \eqref{Poisson-Dirichlet-Brinkman-Lp} for $0<s<1$
is implied by Theorem \ref{M-H-D}(iii) together with the asserted estimate, while for $s=1$ it follows from Theorems \ref{M-H-D} (i) and \ref{trace-equivalence-L} (iii).

If $\mathbf f\not=\mathbf 0$, we will look for a solution of problem \eqref{Poisson-Dirichlet-Brinkman-Lp} in the form
\begin{align}
\label{Poisson-mixed-2-LpD}
{\bf u}={\mathbf N}_{\alpha ;{\Omega}_+}{\mathbf f}+{\bf v},\ \pi ={\mathcal Q}_{{\Omega}_+}{\mathbf f}+q,
\end{align}
where the Newtonian velocity and pressure potentials
${\mathbf N}_{\alpha ;{\Omega}_+}{\mathbf f}$ and ${\mathcal Q}_{\Omega_+}{\mathbf f}$ are defined by \eqref{Newtonian-1a-Lp}.
By  Remark~\ref{gammaN},
\begin{align*}
&\triangle {\mathbf N}_{\alpha ;{\Omega_+}}{\mathbf f}-\alpha {\mathbf N}_{\alpha ;{\Omega_+}}{\mathbf f}-\nabla {\mathcal Q}_{\Omega+}{\mathbf f}={\mathbf f},\
{\rm{div}}\ {\mathbf N}_{\alpha ;{\Omega_\pm}}{\mathbf f}=0\ \mbox{ in }\ {\Omega}_+,\\
&(\mathbf N_{\alpha;\Omega_+}{\mathbf f},{\mathcal Q}_{\Omega_+}{\mathbf f})\in {B_{p,p^*}^{2}({\Omega}_+,\mathbb{R}^{n})}\times { B_{p,p^*}^{1}({\Omega}_+)},
\ {\gamma}_{+}(\mathbf N_{\alpha;{\Omega}_+}{\mathbf f})\in H^1_{p;\boldsymbol\nu}(\partial {\Omega},{\mathbb R}^n),\ {\mathbf t}_\alpha ^+\left({\mathbf N}_{\alpha ;\Omega_+}{\mathbf f},{\mathcal Q}_{\Omega_\pm}{\mathbf f}\right)
\in L_p(\partial {\Omega},{\mathbb R}^{n}).
\end{align*}
Then problem \eqref{Poisson-Dirichlet-Brinkman-Lp} reduces to the one for the corresponding homogeneous Brinkman system,
\begin{equation}
\label{Newtonian-4-LpD}
\left\{
\begin{array}{l}
\triangle {\bf v} - \alpha {\bf v} - \nabla q= {\bf 0},\
{\rm{div}}\ {\bf v}= 0\ \mbox{ in }\  {\Omega}_+,\\
\gamma_+{\bf v}={\mathbf h_{00}},
\end{array}
\right.
\end{equation}
where
$
\mathbf h_{00}:=\mathbf h_0-\gamma_+\left({\mathbf N}_{\alpha ;{\Omega}_+}{\mathbf f}\right)\in H^s_{p;\boldsymbol\nu}(\partial\Omega,{\mathbb R}^n),
$
already discussed in the first paragraph of the proof.
Therefore, there exists a
solution
$({\bf u},\pi)\in {B_{p,p^*}^{s+\frac{1}{p}}({\Omega}_+,\mathbb{R}^{n})}\times B_{p,p^*}^{s+\frac{1}{p}-1}({\Omega}_+)$
of the Poisson problem \eqref{Poisson-Dirichlet-Brinkman-Lp}, which satisfies the asserted estimate.

Let us prove the uniqueness of the solution to the Poisson problem  \eqref{Poisson-Dirichlet-Brinkman-Lp} for $0<s<1$. 
To do so, we consider a solution
$({\bf u}^0,\pi ^0)\in B_{p,p^*}^{s+\frac{1}{p}}({\Omega},\mathbb{R}^{3})\times B_{p,p^*}^{s+\frac{1}{p}-1}({\Omega})$
of the homogeneous version of the problem \eqref{Poisson-Dirichlet-Brinkman-Lp}.
Let us take the trace of
the Green representation formula \eqref{Green-r-f-Brinkman-s} for $({\bf u}^0,\pi ^0)$. Since
$\gamma_+{\bf u}^0=\mathbf 0$, we obtain the equation
$$
{\mathcal V}_{\alpha}\left(\mathbf t_\alpha^+({\bf u^0},\pi^0 )\right)=\mathbf 0\ \mbox{ on } \partial\Omega,
$$
for $\mathbf t_\alpha^+({\bf u^0},\pi^0 )\in B_{p,p^*}^{s-1}(\partial\Omega)$, which by Corollary \ref{L3.1psV} has  a one-dimensional set of solutions, $\mathbf t_\alpha^+({\bf u^0},\pi^0 )=c\boldsymbol\nu $, where $c\in {\mathbb R}$.
Substituting this back into the Green representation formula \eqref{Green-r-f-Brinkman-s} we obtain
${\bf u^0}=c{\mathbf V}_{\alpha}\boldsymbol\nu=\bf 0$ in $\Omega _+$
(cf. the arguments in the proof of Lemma \ref{isom-sl}), and by the homogeneous Brinkman equation, $\pi^0$ is an arbitrary constant.
Finally, uniqueness for $0<s<1$ implies also uniqueness for $s=1$.
\hfill\end{proof}


\subsection{\bf The Neumann problem for the Brinkman system}
\label{section2}
Using an argument similar to the one for the Robin boundary value problem for the Brinkman system in \cite{K-L-W2}, we obtain in this section the well-posedness of the Neumann problem for the linear Brinkman system,
\begin{equation}
\label{the homogeneous Neumann Brinkman system}
\left\{
\begin{array}{l}
\triangle {\textbf u} - \alpha {\textbf u} - \nabla \pi = 0,\
\mbox{ in } \ {\Omega}_+, \\
{\rm{div}}\ {\textbf u} = 0 \ \mbox{ in }\ {\Omega}_+, \\
\mathbf t^+_{\rm nt}(\textbf u,\pi)=\mathbf g_{0}\ \mbox{ on }\ \partial \Omega
\end{array}
\right.
\end{equation}
in $L_p-$based Bessel potential and Besov spaces
for some $\varepsilon > 0$, and extend the results obtained in the case $p=2$ and for a conormal derivative given by
$\displaystyle\frac{\partial {\bf u}}{\partial n}:=
-\pi {\boldsymbol \nu }+ \displaystyle\frac{\partial {\bf u}}{\partial \boldsymbol \nu}$,
in \cite[Theorem 5.3]{Shen} (see also \cite[Theorem 5.5.2]{M-W} in the case $\alpha =0$). Note that the Neumann boundary condition in \eqref{the homogeneous Neumann Brinkman system} is understood in the sense of non-tangential limit almost everywhere on $\partial \Omega $.


\comment%
{
Consider the following boundary value problem of Neumann type for the Brinkman system
\begin{equation}
\label{the homogeneous Neumann Brinkman system}
\left\{
\begin{array}{l}
\triangle {\textbf u} - \alpha {\textbf u} - \nabla \pi = 0,\
{\rm{div}}\ {\textbf u} = 0 \ \mbox{ in }\ {\Omega}_+, \\
{\mathbf t^+_{\rm nt}}({\textbf u},\pi )={\mathbf g_{0}}\ \mbox{ on }\ \partial \Omega .
\end{array}
\right.
\end{equation}
We will show that for ${\mathbf g_{0}}\in L_p(\partial \Omega ,\mathbb{R}^{n})$ given
{for some range of $p$ there exists a unique solution of problem \eqref{the homogeneous Neumann Brinkman system} such that ${M}({\textbf u}),\, {M}(\nabla {\textbf u}),\, {M}(\pi)\in L_p(\partial {\Omega})$. Moreover, then $({\bf u},\pi )\in B_{p,p^{*}}^{1+\frac{1}{p}}({\Omega}_+,\mathbb{R}^{n})\times  B_{p,p^{*}}^{\frac{1}{p}}({\Omega}_+)$.}
} 

\begin{theorem}
\label{Th2.1'}
Let ${\Omega}_+\subset \mathbb{R}^{n}$ {$(n\geq 3)$} be a bounded Lipschitz domain with connected boundary $\partial {\Omega}$. Let $\alpha \in (0,\infty )$. Then there {exists} $\epsilon>0$, such that for any $p\in\mathcal R_0(n,\epsilon)$ $($see \eqref{cases}$)$,
and for any given datum ${\mathbf g_{0}}\in L_p(\partial {\Omega},\mathbb{R}^{n})$, the Neumann problem
\eqref{the homogeneous Neumann Brinkman system} has a unique solution
$({\textbf u},\pi)$ {such that ${M}({\textbf u}),{M}(\nabla {\textbf u}),{M}(\pi)\in L_p(\partial {\Omega})$.}
The solution can be represented by the single layer velocity and pressure potentials
\begin{align}
\label{SoNB4}
{{\bf u}={\bf V}_{\alpha }\left(\left(\frac{1}{2}\mathbb{I}+{\bf K}^{*}_{\alpha }\right)^{-1}{\mathbf g_{0}}\right), \ \pi=Q^{s}\left(\left(\frac{1}{2}\mathbb{I}+{\bf K}^{*}_{\alpha }\right)^{-1}{\mathbf g_{0}}\right).}
\end{align}
Moreover, {$(\textbf{u},\pi)\in  B_{p,p^*}^{1+\frac{1}{p}}({\Omega}_+,\mathbb{R}^{n})\times B_{p,p^*}^{\frac{1}{p}}({\Omega}_+)$,} and there exist some constants $C_M$, $C$ and $C'$ depending only on ${\Omega}_+$, $\alpha$, and $p$ such that
\begin{align}
\label{ineq-N-p}
&\|M(\nabla {\bf u})\|_{L_p(\partial \Omega )}+\|M({\bf u})\|_{L_p(\partial \Omega )}+\|M(\pi )\|_{L_p(\partial \Omega )}\leq C_M\|{\mathbf g_{0}}\|_{L_p(\partial {\Omega}, \mathbb{R}^{n})},\\
\label{continuity}
&\|{\bf u}\|_{B_{p,p^{*}}^{1+\frac{1}{p}}({\Omega}_+,\mathbb{R}^{n})} + \|\pi\|_{B_{p,p^{*}}^{\frac{1}{p}}({\Omega}_+)}\leq C\|{\mathbf g_{0}}\|_{L_p(\partial {\Omega}, \mathbb{R}^{n})},\\
&
\label{tr-continuityN}
\|\gamma_+{\bf u}\|_{H_{p}^1(\partial {\Omega},\mathbb{R}^{n})}
+\|\mathbf t^+_\alpha(\textbf u,\pi)\|_{L_{p}(\partial{\Omega},\mathbb{R}^n)}\le C'\|\mathbf g_0\|_{L_p(\partial{\Omega},\mathbb{R}^{n})}.
\end{align}
\end{theorem}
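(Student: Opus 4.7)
The plan is to seek the solution in the form of single-layer velocity and pressure potentials, as written in \eqref{SoNB4}. First I would take the ansatz $\mathbf u=\mathbf V_\alpha\boldsymbol\phi$, $\pi=\mathcal Q^s\boldsymbol\phi$ for an unknown density $\boldsymbol\phi\in L_p(\partial\Omega,\mathbb R^n)$, so that the homogeneous Brinkman system is automatically satisfied by \eqref{sl}. The jump formula \eqref{70aaa} in Theorem~\ref{layer-potential-properties}(iii) would then reduce the Neumann boundary condition to the boundary integral equation $\bigl(\tfrac12\mathbb I+\mathbf K^*_\alpha\bigr)\boldsymbol\phi=\mathbf g_0$. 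By Lemma~\ref{L3.1p}, operator \eqref{Lp-isom*}, for every $p\in\mathcal R_0(n,\epsilon)$ the operator $\tfrac12\mathbb I+\mathbf K^*_\alpha$ is an isomorphism on $L_p(\partial\Omega,\mathbb R^n)$, so the density $\boldsymbol\phi=\bigl(\tfrac12\mathbb I+\mathbf K^*_\alpha\bigr)^{-1}\mathbf g_0$ is uniquely determined and satisfies $\|\boldsymbol\phi\|_{L_p(\partial\Omega,\mathbb R^n)}\le C\|\mathbf g_0\|_{L_p(\partial\Omega,\mathbb R^n)}$, yielding existence of the solution in the form \eqref{SoNB4}.

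Once $\boldsymbol\phi$ is in hand, the three required norm estimates would follow from the established layer-potential properties applied to $\boldsymbol\phi$. The non-tangential bound \eqref{ineq-N-p} is a direct consequence of inequality \eqref{7ms} in Lemma~\ref{nontangential-sl}; the Besov regularity of $(\mathbf u,\pi)$ and estimate \eqref{continuity} are provided by the mapping properties \eqref{ss-1} in Theorem~\ref{layer-potential-properties}(i); and the trace and conormal derivative bound \eqref{tr-continuityN} follows from Lemma~\ref{L3.6}(i) (relations \eqref{68-n1} and \eqref{70aaa-n1}) composed with the $L_p$-bound on $\boldsymbol\phi$.

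For uniqueness, I would start from a solution $(\mathbf u^0,\pi^0)$ of the homogeneous problem (i.e.\ $\mathbf g_0=\mathbf 0$) with $M(\mathbf u^0),M(\nabla\mathbf u^0),M(\pi^0)\in L_p(\partial\Omega)$. By Theorem~\ref{M-H}(ii) and Remark~\ref{non-tangential-p}, the non-tangential limits exist a.e.\ on $\partial\Omega$ with $\mathbf u^{0,+}_{\rm nt}\in H^1_{p;\boldsymbol\nu}(\partial\Omega,\mathbb R^n)$ and $\mathbf t^+_{\rm nt}(\mathbf u^0,\pi^0)\in L_p(\partial\Omega,\mathbb R^n)$, so Lemma~\ref{Green-r-f} applies. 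Using the vanishing conormal trace, the Green representation formula \eqref{Green-r-f-Brinkman} reduces to $\mathbf u^0=-\mathbf W_\alpha(\mathbf u^{0,+}_{\rm nt})$ in $\Omega_+$. Taking the non-tangential interior trace and inserting the jump relation \eqref{68-s1} then yields $\bigl(\tfrac12\mathbb I+\mathbf K_\alpha\bigr)\mathbf u^{0,+}_{\rm nt}=\mathbf 0$ in $H^1_p(\partial\Omega,\mathbb R^n)$. The second isomorphism \eqref{F-0-snsp} of Lemma~\ref{L3.1p}, valid in the same range $p\in\mathcal R_0(n,\epsilon)$, forces $\mathbf u^{0,+}_{\rm nt}=\mathbf 0$, whence $\mathbf u^0=\mathbf 0$ in $\Omega_+$. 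The Brinkman equation then gives $\nabla\pi^0=\mathbf 0$, so $\pi^0$ is a constant, and the homogeneous Neumann condition $-\pi^0\boldsymbol\nu=\mathbf 0$ enforces $\pi^0=0$.

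The principal subtlety will be cleanly coordinating two function-space frameworks that arise in the same argument: existence requires invertibility of $\tfrac12\mathbb I+\mathbf K^*_\alpha$ on $L_p(\partial\Omega,\mathbb R^n)$, whereas uniqueness hinges on injectivity of $\tfrac12\mathbb I+\mathbf K_\alpha$ on $H^1_p(\partial\Omega,\mathbb R^n)$, and both must be exploited on the same range of $p$. The fact that Lemma~\ref{L3.1p} furnishes both isomorphisms precisely on $\mathcal R_0(n,\epsilon)$ is what makes the approach go through. A related technical point is the justification of the Green formula used in the uniqueness step; this rests on Remark~\ref{non-tangential-p}, which guarantees a.e.\ existence of the non-tangential limits of $\mathbf u^0,\nabla\mathbf u^0,\pi^0$ on $\partial\Omega$ from the hypothesis $M(\mathbf u^0),M(\nabla\mathbf u^0),M(\pi^0)\in L_p(\partial\Omega)$ in our range of $p$.
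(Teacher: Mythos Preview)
Your proposal is correct and follows essentially the same approach as the paper: existence via the single-layer ansatz and the isomorphism \eqref{Lp-isom*} of Lemma~\ref{L3.1p}, estimates via Lemma~\ref{nontangential-sl}, Theorem~\ref{layer-potential-properties}(i) and Lemma~\ref{L3.6}, and uniqueness via the Green representation \eqref{Green-r-f-Brinkman} reducing to $\bigl(\tfrac12\mathbb I+\mathbf K_\alpha\bigr)\mathbf u^{0,+}_{\rm nt}=\mathbf 0$ and the isomorphism \eqref{F-0-snsp}. Your explicit invocation of Theorem~\ref{M-H}(ii) and Remark~\ref{non-tangential-p} to justify the hypotheses of Lemma~\ref{Green-r-f} is in fact a bit more careful than the paper's terse citation at that point.
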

\begin{proof}
{We use an} argument similar to that for \cite[Theorem 4.15]{Fa-Ke-Ve} $($see also \cite[Theorem 3.1, Proposition 3.3]{M-T}$)$.
By Lemma~\ref{L3.1p} there exists $\epsilon>0$ such that operator
$\frac{1}{2}{\mathbb I} + {\bf K}^{*}_{\alpha }:
L_p(\partial {\Omega},{\mathbb R}^n)\to L_p(\partial {\Omega},{\mathbb R}^n)$
is an isomorphism for $p\in\mathcal R_0(n, \epsilon)$.
Along with Lemma~\ref{nontangential-sl}, Theorem~\ref{layer-potential-properties} and Lemma~\ref{L3.6} this implies that representation
\eqref{SoNB4} gives a solution of problem \eqref{the homogeneous Neumann Brinkman system} {that belongs to the space $B_{p,p^{*}}^{1+\frac{1}{p}}({\Omega}_+,\mathbb{R}^{n})\times B_{p,p^{*}}^{\frac{1}{p}}({\Omega}_+)$} and satisfies estimates \eqref{ineq-N-p}-\eqref{tr-continuityN}.

In order to show the uniqueness assertion, we assume that $({\bf u}^0, \pi ^0)$ is a solution of the homogeneous version of \eqref{the homogeneous Neumann Brinkman system} such that ${M}({\textbf u}^0),{M}(\nabla {\textbf u}^0),{M}(\pi)^0\in L_p(\partial {\Omega})$ {and satisfies the Neumann condition almost everywhere on $\partial \Omega $ in the sense of non-tangential limit}.
Then the Green representation formula \eqref{Green-r-f-Brinkman}
gives,
\begin{align}
\label{dbrf}
{\bf u}^0={\bf V}_{\alpha }\left(\mathbf t^+_{\rm nt}({\bf u}^0,\pi ^0)\right)
-{\bf W}_{\alpha }\left({\bf u}^{0+}_{\rm nt}\right)=-{\bf W}_{\alpha }\left({\bf u}^{0+}_{\rm nt}\right)\
\mbox{ in }\ {\Omega}_+,
\end{align}
which, combined with formulas \eqref{68-s1}, leads to the boundary integral equation
\begin{align}
\label{equation-uniq}
\left(\frac{1}{2}\mathbb{I}+{\bf K}_{\alpha }\right){\bf u}^{0+}_{\rm nt}={\bf 0}\ \mbox{ on }\ \partial {\Omega}.
\end{align}
Here ${\bf u}^{0+}_{\rm nt}\in H_p^{1}(\partial{\Omega}, \mathbb{R}^{n})$ due to Lemma \ref{nontangential-sl}(i). 
Then invertibility of operator \eqref{F-0-snsp} in Lemma~\ref{L3.1p} implies that ${\bf u}^{0+}_{\rm nt}={\bf 0}$ on $\partial\Omega$ and thus, by \eqref{dbrf}, ${\bf u}^0={\bf 0}$ in $\Omega_+$.
{Moreover, by the homogeneous Neumann condition satisfied by $({\bf u}^0,\pi ^0)$, we obtain that $\pi^0=0$ in ${\Omega}_+$. This concludes the proof of uniqueness of the solution of the Neumann problem \eqref{the homogeneous Neumann Brinkman system}, and hence the proof of the theorem.}
\hfill\end{proof}

\comment{Taking into account that $B_{2,2}^{\frac{3}{2}}({\Omega}_+,\mathbb{R}^{n})=H_{2}^{\frac{3}{2}}({\Omega}_+,\mathbb{R}^{n})$,
we obtain the following {important} particular case of Theorem~\ref{Th2.1'}.
\begin{theorem}
\label{Th2.1}
Let $\alpha >0$.
Then for any boundary datum ${\mathbf g_{0}}\in L_2(\partial {\Omega},\mathbb{R}^{n})$ the {Neumann} problem \eqref{the homogeneous Neumann Brinkman system} has
a unique solution $({\textbf u},\pi)$
such that ${M}({\textbf u}),{M}(\nabla {\textbf u}),{M}(\pi)\in L_2(\partial {\Omega})$.
Moreover, the solution can be represented by the single layer velocity and pressure potentials \eqref{SoNB4}, and there exist constants $C'$ and $C$ depending only on ${\Omega}_+$ and $\alpha$ such that
\begin{align}
\label{ineq-N-2}
&\|M(\nabla {\bf u})\|_{L_2(\partial \Omega )}+\|M({\bf u})\|_{L_2(\partial \Omega )}
+\|M(\pi )\|_{L_2(\partial \Omega )}\leq C'\|{\mathbf g_{0}}\|_{L_2(\partial {\Omega}, \mathbb{R}^{n})},\\
\label{estimate}
&\|{\bf u}\|_{H_{2}^{\frac{3}{2}}({\Omega}_+, \mathbb{R}^{n})}+ \|\pi\|_{H_2^{\frac{1}{2}}({\Omega}_+,\mathbb{R}^{n})}\leq C\|{\mathbf g_{0}}\|_{L_2(\partial {\Omega}, \mathbb{R}^{n})},
\end{align}
\end{theorem}
\begin{proof}
First, we note that the uniqueness follows immediately from the Green identity \eqref{Green formula}. \comment{yields that
\begin{equation}
\label{oNB1}
2\left\langle \mathbb{E}({\bf u}^{(0)}),\mathbb{E}({\bf u}^{(0)})\right\rangle _{{\Omega}_+}+\alpha \left\langle {\bf u}^{(0)},{\bf u}^{(0)}\right\rangle _{{\Omega}_+}=\left\langle{\bf t}_{\alpha }^{+}({\bf u}^{(0)},\pi^{(0)}),{\gamma}\ {\bf u}^{(0)}\right\rangle _{\partial {\Omega}}=0,
\end{equation}
and hence that
${\bf u}^{(0)}={\bf 0}$ in ${\Omega}_+.$
Moreover, the homogeneous Neumann condition implies that $\pi^{(0)}=0$ in ${\Omega}_+$.}

On the other hand, the isomorphism property of {operator \eqref{F-0-s}} in Lemma~\ref{L3.1} and Lemma \ref{nontangential-sl} imply that the single layer velocity and pressure potentials ${\bf u}$ and $\pi $ given by \eqref{SoNB4} determine the unique solution of the Neumann problem \eqref{the homogeneous Neumann Brinkman system} in the space $H_{2}^{\frac{3}{2}}({\Omega}_+, \mathbb{R}^{n})\times H_2^{\frac{1}{2}}({\Omega}_+)$, which satisfies the conditions {${M}(\nabla \textbf{u}),{M}(\textbf{u}),{M}(\pi)\in L_2(\partial {\Omega})$} and inequality \eqref{ineq-N-2}. In addition, {continuity} of the operators in \eqref{SoNB4} leads to inequality \eqref{estimate}.
\hfill\end{proof}
} 
{Having in view Theorem \ref{Th2.1'}, we are now able to consider the Poisson-Neumann problem for the Brinkman system,
\begin{equation}
\label{Poisson-Neumann-Brinkman-Lp}
\left\{
\begin{array}{l}
\triangle {\bf u} - \alpha {\bf u} - \nabla \pi =\mathbf f,\
{\rm{div}}\ {\bf u} = 0\ \mbox{ in }\  {\Omega}_+\\
\mathbf t_\alpha^+({\bf u},\pi)={\mathbf g_{0}}\ \mbox{ on }\ \partial \Omega
\end{array}
\right.
\end{equation}
with the Neumann datum for the canonical conormal derivative $\mathbf t_\alpha^+({\bf u},\pi)$ {(see also \cite[Theorem 10.6.4]{M-T} for $\alpha =0$)}.
\begin{theorem}
\label{M-H-NG}
Let ${\Omega}_+\subset \mathbb{R}^{n}$ {$(n\geq 3)$} be a bounded Lipschitz domain with connected boundary $\partial {\Omega}$. {Let $\alpha \in (0,\infty )$}. Then there exists $\varepsilon=\varepsilon(\partial\Omega)>0$ such that for any
$p\in\mathcal R_0(n,\epsilon)$ $($cf. \eqref{cases}$)$, the Neumann problem \eqref{Poisson-Neumann-Brinkman-Lp} with $\mathbf f\in L_p({\Omega}_+,{\mathbb R}^3)$ and $\mathbf g_0\in L_{p}(\partial {\Omega},\mathbb{R}^{n})$ has a unique solution
$(\textbf{u},\pi)\in B_{p,p^*}^{1+\frac{1}{p}}({\Omega}_+,\mathbb{R}^{n})\times B_{p,p^*}^{\frac{1}{p}}({\Omega}_+)$, where $p^{*} = \max\{2, p\}$. In addition, there exists a constant $C=C(p,\Omega _+)>0$ such that
\begin{align*}
&\|{\bf u}\|_{B_{p,p^*}^{1+\frac{1}{p}}({\Omega}_+,\mathbb{R}^{n})}
+\|\pi\|_{B_{p,p^*}^{\frac{1}{p}}({\Omega}_+)}\le C(\|\mathbf f\|_{L_p(\Omega_+,\mathbb{R}^{n})}+\| \mathbf g_0\|_{L_p(\partial{\Omega},\mathbb{R}^{n})}),\\
&\|\gamma_+{\bf u}\|_{H_{p}^1(\partial {\Omega},\mathbb{R}^{n})}\le C(\|\mathbf f\|_{L_p(\Omega_+,\mathbb{R}^{n})}+ \mathbf g_0\|_{L_p(\partial{\Omega},\mathbb{R}^{n})}).
\end{align*}
Moreover, if $\mathbf f=\mathbf 0$, then {${M}({\textbf u}), {M}(\nabla {\textbf u}),{M}(\pi)\in L_p(\partial {\Omega})$} and there exists a constant $C_M>0$ such that
$$
\|M({\bf u})\|_{L_p(\partial {\Omega})}+\|M(\nabla{\bf u})\|_{L_p(\partial {\Omega})}
+\|M(\pi)\|_{L_p(\partial {\Omega})}\le C_M\| \mathbf g_0\|_{L_p(\partial {\Omega},\mathbb{R}^{n})}.
$$
\end{theorem}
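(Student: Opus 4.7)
The plan is to mimic the reduction used in the proof of Theorem~\ref{M-H-DG} for the Poisson--Dirichlet problem and then appeal to Theorem~\ref{Th2.1'} for the homogeneous Neumann case. Specifically, I would seek a solution of \eqref{Poisson-Neumann-Brinkman-Lp} in the form
\begin{align*}
{\bf u}={\mathbf N}_{\alpha ;{\Omega}_+}{\mathbf f}+{\bf v},\quad \pi ={\mathcal Q}_{{\Omega}_+}{\mathbf f}+q.
\end{align*}
By Corollary~\ref{C2.16}, Remark~\ref{gammaN}, and equations \eqref{Newtonian-2-Lp}, the pair $(\mathbf N_{\alpha;\Omega_+}\mathbf f,\mathcal Q_{\Omega_+}\mathbf f)$ lies in $B_{p,p^*}^{1+1/p}(\Omega_+,\mathbb R^n)\times B_{p,p^*}^{1/p}(\Omega_+)$, solves the nonhomogeneous Brinkman system with right-hand side $\mathbf f$ and a divergence-free velocity, and has a canonical conormal derivative $\mathbf t^+_\alpha(\mathbf N_{\alpha;\Omega_+}\mathbf f,\mathcal Q_{\Omega_+}\mathbf f)\in L_p(\partial\Omega,\mathbb R^n)$, all with the expected bounds in terms of $\|\mathbf f\|_{L_p(\Omega_+,\mathbb R^n)}$. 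Hence $(\mathbf v,q)$ must solve the homogeneous Brinkman system with modified Neumann datum $\mathbf g_{00}:=\mathbf g_0-\mathbf t^+_\alpha(\mathbf N_{\alpha;\Omega_+}\mathbf f,\mathcal Q_{\Omega_+}\mathbf f)\in L_p(\partial\Omega,\mathbb R^n)$; Theorem~\ref{Th2.1'} then produces a $(\mathbf v,q)\in B_{p,p^*}^{1+1/p}(\Omega_+,\mathbb R^n)\times B_{p,p^*}^{1/p}(\Omega_+)$ together with the estimates \eqref{continuity} and \eqref{tr-continuityN}, and summing yields the existence and the norm and trace estimates claimed for $(\mathbf u,\pi)$.

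For uniqueness, suppose $(\mathbf u^0,\pi^0)\in B_{p,p^*}^{1+1/p}(\Omega_+,\mathbb R^n)\times B_{p,p^*}^{1/p}(\Omega_+)$ satisfies the homogeneous version of \eqref{Poisson-Neumann-Brinkman-Lp}. For any $s\in(0,1)$, suitable Besov embeddings put this pair into the hypotheses of Lemma~\ref{Green-r-f-s}, which together with the homogeneous Neumann condition $\mathbf t_\alpha^+(\mathbf u^0,\pi^0)=\mathbf 0$ gives
\begin{align*}
\mathbf u^0=\mathbf V_\alpha\bigl(\mathbf t_\alpha^+(\mathbf u^0,\pi^0)\bigr)-\mathbf W_\alpha(\gamma_+\mathbf u^0)=-\mathbf W_\alpha(\gamma_+\mathbf u^0)\quad\text{in }\Omega_+.
\end{align*}
Taking the Gagliardo trace and using the jump formula \eqref{68-s1s} reduces this to the boundary equation $(\tfrac12\mathbb I+\mathbf K_\alpha)\gamma_+\mathbf u^0=\mathbf 0$ on $\partial\Omega$. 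Choosing $s$ close enough to $1$ so that $p\in\mathcal R_{1-s}(n,\varepsilon)$, which is possible after a slight shrinking of $\varepsilon$ since $\mathcal R_{1-s}(n,\varepsilon)$ tends to $\mathcal R_0(n,\varepsilon)$ as $s\uparrow 1$, Corollary~\ref{L3.1ps} yields isomorphy of $\tfrac12\mathbb I+\mathbf K_\alpha$ on $B_{p,p^*}^s(\partial\Omega,\mathbb R^n)$. Hence $\gamma_+\mathbf u^0=\mathbf 0$ and $\mathbf u^0=-\mathbf W_\alpha(\mathbf 0)=\mathbf 0$ in $\Omega_+$. The homogeneous Brinkman equation then forces $\pi^0$ to be a constant, and the condition $\mathbf t_\alpha^+(\mathbf 0,\pi^0)=-\pi^0\boldsymbol\nu=\mathbf 0$ fixes $\pi^0=0$; note that, unlike in the Dirichlet case, the canonical conormal derivative itself eliminates the additive pressure constant.

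When $\mathbf f=\mathbf 0$ the Newtonian-potential part of the decomposition vanishes identically and the non-tangential maximal-function bound on $\mathbf u$, $\nabla\mathbf u$, $\pi$ is inherited directly from \eqref{ineq-N-p} in Theorem~\ref{Th2.1'}. The main technical obstacle is the parameter matching between the range $p\in\mathcal R_0(n,\varepsilon)$ required for existence (invertibility of $\tfrac12\mathbb I+\mathbf K^*_\alpha$ on $L_p$) and the range $p\in\mathcal R_{1-s}(n,\varepsilon)$ required for the uniqueness step in Corollary~\ref{L3.1ps}, but since these intervals coincide in the limit $s\uparrow 1$, a preliminary reduction of $\varepsilon$ secures a usable $s\in(0,1)$; everything else is a routine application of the previously established mapping, jump and invertibility results of Sections~2--3.
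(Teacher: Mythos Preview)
Your proposal is correct and follows essentially the same route as the paper: reduction to the homogeneous problem via the Newtonian potentials $(\mathbf N_{\alpha;\Omega_+}\mathbf f,\mathcal Q_{\Omega_+}\mathbf f)$, existence via Theorem~\ref{Th2.1'}, and uniqueness via the Green representation formula (Lemma~\ref{Green-r-f-s}) combined with the invertibility of $\tfrac12\mathbb I+\mathbf K_\alpha$ on $B^s_{p,p^*}(\partial\Omega,\mathbb R^n)$ from Corollary~\ref{L3.1ps}. The only point you glide over is that Theorem~\ref{Th2.1'} is stated for the \emph{non-tangential} conormal derivative, so when applying it to the reduced problem with the canonical conormal datum $\mathbf g_{00}$ you should invoke Theorem~\ref{2.13}(ii) for the equivalence $\mathbf t_\alpha^+(\mathbf v,q)=\mathbf t^+_{\rm nt}(\mathbf v,q)$, as the paper does explicitly.
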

\begin{proof}
If $\mathbf f=\mathbf 0$, there exists a solution of problem \eqref{Poisson-Neumann-Brinkman-Lp} given by the solution of the corresponding problem \eqref{the homogeneous Neumann Brinkman system} with the non-tangential conormal derivative in the Neumann condition, whose existence is provided by Theorem \ref{Th2.1'} together with the asserted estimate. Here we rely also on the equivalence of the conormal derivatives, ${\bf t}_\alpha^+({\bf u},\pi)\!=\!{\bf t}^+_{\rm nt}({\textbf u},\pi )$, due to Theorem \ref{2.13}.

If $\mathbf f\not=\mathbf 0$, we will look for a solution of problem  \eqref{Poisson-Neumann-Brinkman-Lp} in the form
\begin{align}
\label{Poisson-mixed-2-LpD}
{\bf u}={\mathbf N}_{\alpha ;{\Omega}_+}{\mathbf f}+{\bf v},\ \pi ={\mathcal Q}_{{\Omega}_+}{\mathbf f}+q,
\end{align}
where the Newtonian velocity and pressure potentials
${\mathbf N}_{\alpha ;{\Omega}_+}{\mathbf f}$ and ${\mathcal Q}_{\Omega_+}{\mathbf f}$ are defined by \eqref{Newtonian-1a-Lp}. According to Remark~\ref{gammaN}, we obtain the relations
\begin{align*}
&\triangle {\mathbf N}_{\alpha ;{\Omega_+}}{\mathbf f}-\alpha {\mathbf N}_{\alpha ;{\Omega_+}}{\mathbf f}-\nabla {\mathcal Q}_{\Omega+}{\mathbf f}={\mathbf f},\
{\rm{div}}\ {\mathbf N}_{\alpha ;{\Omega_\pm}}{\mathbf f}=0\ \mbox{ in }\ {\Omega}_+,\\
&(\mathbf N_{\alpha;\Omega_+}{\mathbf f},{\mathcal Q}_{\Omega_+}{\mathbf f})\in {B_{p,p^*}^{2}({\Omega}_+,\mathbb{R}^{n})}\times B_{p,p^*}^{1}({\Omega}_+),
\ {\gamma}_{+}(\mathbf N_{\alpha;{\Omega}_+}{\mathbf f})\in H^1_{p;\boldsymbol\nu}(\partial {\Omega},{\mathbb R}^n),
\ {\mathbf t}^+\left({\mathbf N}_{\alpha ;\Omega_+}{\mathbf f},
{\mathcal Q}_{\Omega_\pm}{\mathbf f}\right)
\in L_p(\partial {\Omega},{\mathbb R}^{n}).
\end{align*}
Then problem
\eqref{Poisson-Neumann-Brinkman-Lp} reduces to the
problem for the corresponding homogeneous Brinkman system,
\begin{equation}
\label{Newtonian-4-LpD}
\left\{
\begin{array}{l}
\triangle {\bf v} - \alpha {\bf v} - \nabla q= {\bf 0},\
{\rm{div}}\ {\bf v}= 0\ \mbox{ in }\  {\Omega}_+,\\
\mathbf t_\alpha^+({\bf u},\pi)={\mathbf g_{00}}\ \mbox{ on }\ \partial \Omega ,
\end{array}
\right.
\end{equation}
where
$
\mathbf g_{00}:=\mathbf g_0-{\mathbf t}_\alpha^+\left(\mathbf N_{\alpha ;\Omega_\pm}{\mathbf f}_\pm,
{\mathcal Q}_{\Omega_\pm}{\mathbf f}_\pm\right)\in L_{p}(\partial\Omega,{\mathbb R}^n),
$
already discussed in the first paragraph of the proof.
Therefore, there exists a
solution
$({\bf u},\pi)\in {B_{p,p^*}^{1+\frac{1}{p}}({\Omega}_+,\mathbb{R}^{n})}\times B_{p,p^*}^{\frac{1}{p}}({\Omega}_+)$
of the Poisson problem \eqref{Poisson-Neumann-Brinkman-Lp}, which satisfies all the asserted estimates.

Let us prove uniqueness of the solution to the Poisson problem  \eqref{Poisson-Neumann-Brinkman-Lp}.
Indeed, let us consider a solution
$({\bf u}^0,\pi^0)\in B_{p,p^*}^{1+\frac{1}{p}}({\Omega},\mathbb{R}^{3})\times B_{p,p^*}^{\frac{1}{p}}({\Omega})$
of the homogeneous version of problem \eqref{Poisson-Neumann-Brinkman-Lp}.
Let us take the trace of the Green representation formula \eqref{Green-r-f-Brinkman-s} for $({\bf u}^0,\pi^0)$, considered for any $s\in (0,1)$.
Since $\mathbf t_\alpha^+({\bf u},\pi)=\mathbf 0$, we obtain the equation
$$
\gamma_+{\bf u}^0=\frac{1}{2}\gamma_+{\bf u}^0-{\bf K}_{\alpha }\gamma_+{\bf u}^0\ \mbox{ on } \partial\Omega,
$$
with the unknown $\gamma_+{\bf u}^0\in B_{p,p^*}^{s}(\partial\Omega ,{\mathbb R}^n)$, which, by Corollary \ref{L3.1ps}, has only the trivial solution.
Substituting this back to the Green representation formula \eqref{Green-r-f-Brinkman-s} we obtain
${\bf u^0}={\bf 0}$  in $\Omega _+$.
Then the Brinkman system implies $\pi^0=c\in {\mathbb R}$, and taking again into account that $\mathbf t_\alpha^+({\bf u},\pi)=\mathbf 0$, we obtain $\pi^0=0$ in $\Omega _+$, as asserted.
\hfill\end{proof}


\section{The mixed Dirichlet-Neumann problem for the Brinkman system}
\label{MP}
In this section we show the well-posedness of the mixed {{\em Dirichlet-Neumann} boundary value problem} for the Brinkman system
\begin{align}
\label{mixed homogeneous Brinkman system}
\left\{
\begin{array}{lll}
\triangle {\bf u} - \alpha {\bf u} - \nabla \pi = {\bf 0},\
{\rm{div}}\ {\bf u} = 0\ \mbox{ in }\  {\Omega}_+,\\
\mathbf u^+_{\rm nt}|_{S_D}={\mathbf h_{0}},
\\
\mathbf t^+_{\rm nt}({\bf u},\pi )|_{S_N}=\mathbf g_{0},
\end{array}\right.
\end{align}
{on a bounded, {\it creased} Lipschitz domain ${\Omega}_+\subset \mathbb{R}^{n}$ $(n\geq 3)$ with connected boundary $\partial \Omega $,} which is decomposed into two disjoint admissible patches $S_{D}$ and $S_{N}$ (see Definition \ref{creased domain}), $\cdot |_{S_D}$ is the operator of restriction from $H_p^s(\partial {\Omega},\mathbb{R}^{n})$ to $H_p^s(S_{D},\mathbb{R}^{n})$, and $\cdot |_{S_N}$ is defined similarly.
We {show} that for ${\mathbf h_{0}}\in H_p^1(S_{D},\mathbb{R}^{n})$ and ${\mathbf g_{0}}\in L_p(S_{N},\mathbb{R}^{n})$ given and for some range of $p$, there exists a unique
{solution  $({\bf u},\pi)$
of the mixed problem \eqref{mixed homogeneous Brinkman system},
such that ${M}({\textbf u}),{M}(\nabla {\textbf u}),{M}(\pi)\in L_p(\partial {\Omega})$},
and the Dirichlet and Neumann boundary conditions in \eqref{mixed homogeneous Brinkman system} are satisfied in the sense of non-tangential {limits} at almost all points of $S_D$ and $S_N$, respectively}.} Moreover, we will show that $({\bf u},\pi )\in B_{p,p^{*}}^{1+\frac{1}{p}}({\Omega}_+,\mathbb{R}^{n})\times  B_{p,p^{*}}^{\frac{1}{p}}({\Omega}_+)$.

{We consider also a counterpart mixed problem
\begin{align}
\label{mixed homogeneous Brinkman systemG}
\left\{
\begin{array}{lll}
\triangle {\bf u} - \alpha {\bf u} - \nabla \pi = {\bf 0},\
{\rm{div}}\ {\bf u} = 0\ \mbox{ in }\  {\Omega}_+\\
\gamma_+\mathbf u|_{S_D}={\mathbf h_{0}},
\\
\mathbf t^+_{\alpha}({\bf u},\pi )|_{S_N}=\mathbf g_{0},
\end{array}\right.
\end{align}
where, unlike the mixed problem setting \eqref{mixed homogeneous Brinkman system}, the trace is considered in the Gagliardo sense and the conormal derivative in the canonical sense.
We will show that for ${\mathbf h_{0}}\in H_p^1(S_{D},\mathbb{R}^{n})$ and ${\mathbf g_{0}}\in L_p(S_{N},\mathbb{R}^{n})$ given and for some range of $p$, there exists a unique solution $({\bf u},\pi )\in B_{p,p^{*}}^{1+\frac{1}{p}}({\Omega}_+,\mathbb{R}^{n})\times  B_{p,p^{*}}^{\frac{1}{p}}({\Omega}_+)$ of problem \eqref{mixed homogeneous Brinkman systemG}.
Moreover, we will obtain that ${M}({\textbf u}),\, {M}(\nabla {\textbf u}),\, {M}(\pi)\in L_p(\partial {\Omega})$.

The corresponding mixed problems for the Poisson-Brinkman system, i.e., with non-zero right hand side of the Brinkman system, will be also considered.}

\subsection{\bf Creased Lipschitz domains}
\label{Creased}
Next, we recall the definition of admissible patch (cf., e.g., \cite[Definition 2.1]{M-M}, \cite{B-M-M-W}).
\begin{defi}
\label{Lipschity-domain-1}
Let ${\Omega} \subset \mathbb{R}^{n}$ $(n\geq 3)$ be a Lipschitz domain. Let $S$ be an open set of $\partial {\Omega}$, such that for any $x_{0}\in \partial S$ there exists a new orthogonal system obtained from the original one by a rigid motion with $x_{0}$ as the origin and with the property that one can find a cube $Q = Q_{1}\times Q_{2}\times \cdots \times Q_{n}\subset \mathbb{R}^{n}$ centered at $0$ and two Lipschitz functions
\begin{align}
\left\{\begin{array}{ll}
\Phi :Q':= Q_{1}\times \ldots \times Q_{n-1}\to Q_{n}\,, & \Phi(0)=0,\\
\Psi :Q'':= Q_{2}\times \ldots \times Q_{n-1}\to Q_{1}\,, & \Psi(0)=0,
\end{array}\right.
\end{align}
such that
\begin{align}\label{e3}
&S\cap Q=\left\{(x', \Phi(x')) : \ x' \in Q', \ \Psi(x'') \le x_{1}\right\},\nonumber\\
&\left(\partial {\Omega\setminus} \overline{S}\right)\cap Q=\left\{(x',\Phi(x')):\ x' \in Q', \ \Psi(x'')\geq x_{1}\right\},\\
&\partial S \cap Q=\left\{\left(\Psi(x''),x'',\Phi(\Psi(x''), x'')\right): x''\in Q''\right\}.\nonumber
\end{align}
Such a set $S$ is called an \textit{admissible patch} of $\partial {\Omega}$.
\end{defi}
Definition \ref{Lipschity-domain-1} shows that if $S\subset \partial {\Omega}$ is an admissible patch then $\partial {\Omega}\setminus \overline{S}$ is also an admissible patch (cf., e.g., \cite{M-M}).
Next, we recall the definition of a creased Lipschitz graph domain (cf. \cite[Definition 2.2]{M-M}).
\begin{defi}
\label{special creased domain}
Let ${\Omega}\subset \mathbb{R}^{n}$ $(n\geq 3)$ be an open, connected set. Suppose that $S_{D},S_{N}\subset \partial {\Omega}$ are two non-empty, disjoint admissible patches such that $\overline{S_{D}}\cap \overline{S_{N}}=\partial {S_{D}}=\partial {S_{N}}$ and $\overline{S_{D}}\cup \overline{S_{N}}=\partial {\Omega}$.
The set ${\Omega} $ is a {\it creased Lipschitz graph domain} if the following conditions are satisfied:
\begin{enumerate}
\item[$(a)$]
There exists a Lipschitz function $\phi :\mathbb{R}^{n-1} \rightarrow \mathbb{R}$ such that $${\Omega}=\left\{(x',x_{n}) \in \mathbb{R}^{n}:x_{n}>\phi(x')\right\}.$$
\item[$(b)$]
There exists a Lipschitz function $\Psi :\mathbb{R}^{n-2} \rightarrow \mathbb{R}$ such that
\begin{align}
&S_{N}= \{(x_{1},x",x_{n}) \in \mathbb{R}^{n} : x_{1}> \Psi(x") \} \cap \partial {\Omega},\\
&S_{D}= \{(x_{1},x",x_{n}) \in \mathbb{R}^{n} : x_{1}< \Psi(x") \} \cap \partial {\Omega}.
\end{align}
\item[$(c)$]
There exist some constants $\delta_{D}, \delta_{N} \ge 0$, $\delta_{D} + \delta_{N}>0$ with the property that
\begin{equation}
\frac{\partial \phi}{\partial x_{1} } \ge \delta_{N}\ \mbox{ a.e. on }\ S_{N}, \
\frac{\partial \phi}{\partial x_{1} } \le -\delta_{D}\ \mbox{ a.e. on }\ S_{D}.
\end{equation}
\end{enumerate}
\end{defi}

Let us now refer to a creased bounded Lipschitz domain (cf. \cite[Definition 2.3]{M-M}).
\begin{defi}
\label{creased domain}
Assume that ${\Omega}\subset {\mathbb R}^n$ is a bounded Lipschitz domain with connected boundary $\partial \Omega$, and that $S_{D},S_{N}\subset \partial {\Omega}$ are two non-empty, disjoint admissible patches such that $\overline{S_{D}}\cap \overline{S_{N}}=\partial {S_{D}}=\partial {S_{N}}$ and $\overline{S_{D}}\cup \overline{S_{N}}=\partial {\Omega}$. Then ${\Omega}$ is {\it creased} if
\begin{itemize}
\item[$(a)$]
There exist $m\in {\mathbb N}$, $a>0$ and $z_i\in \partial {\Omega}$, $i=1,\ldots,m$, such that $\partial {\Omega\subset}\cup_{i=1}^mB_a(z_i)$, where $B_a(z_i)$ is the ball of radius $a$ and center at $z_i$.
\item[$(b)$]
For any point $z_i$, $i=1,\ldots ,m$, there exist a coordinate system $\{x_1,\ldots ,x_n\}$ with origin at $z_i$ and a Lipschitz function $\phi _i$ from ${\mathbb R}^{n-1}$ to ${\mathbb R}$ such that the set ${\Omega}_i:=\{(x',x_n)\in {\mathbb R}^n:x_n >\phi _i(x')\}$, whose boundary $\partial {\Omega}_i$ admits the decomposition $\partial {\Omega}_i=\overline{S_{D_i}}\cup \overline{S_{N_i}}$, is a creased Lipschitz graph domain in the sense of Definition $\ref{special creased domain}$, and
\begin{equation}
\label{creased-Lip}
{\Omega}\cap B_{2a}(z_i)={\Omega}_i\cap B_{2a}(z_i),\ S_D\cap B_{2a}(z_i)=S_{D_i}\cap B_{2a}(z_i),\ S_{N}\cap B_{2a}(z_i)=S_{N_i}\cap B_{2a}(z_i).
\end{equation}
\end{itemize}
\end{defi}
The geometric meaning of {Definitions \ref{special creased domain} and \ref{creased domain}} is that $S_D$ and $S_N$ are separated by a Lipschitz interface ($\overline{S_D}\cap \overline{S_N}$ is a {\it creased collision} manifold for $\mathfrak D$) and that $S_D$ and $S_N$ meet at an angle which is strictly less than $\pi $ (cf., e.g., \cite{Br,M-M}). A main property of a (bounded or {graph}) creased Lipschitz domain is the existence of a function $\boldsymbol{\varphi}\in C^{\infty }({\overline{\Omega}},{\mathbb R}^n)$ and of a constant $\delta >0$ such that
\begin{align}
\label{creased-sign}
\boldsymbol{\varphi}\cdot \boldsymbol\nu >\delta \mbox{ a.e. on } S_N,\ \ \boldsymbol{\varphi}\cdot \boldsymbol\nu <-\delta \mbox{ a.e. on } S_D,
\end{align}
i.e., the scalar product $\boldsymbol{\varphi}\cdot \boldsymbol\nu $, between $\boldsymbol{\varphi}$ and the unit normal $\boldsymbol\nu $, changes the sign when moving from $S_D$ to $S_N$ (cf., e.g., \cite[(1.122)]{B-M}, \cite[(2.2)]{B-M-M-W}). For such a domain, Brown \cite{Br} showed that the mixed problem for the Laplace equation has a unique solution whose gradient belongs to $L_2(\partial \mathfrak D)$ when the Dirichlet datum belongs to $H^1_2(S_D)$ and the Neumann datum to $L_2(S_N)$. For the same class of domains, well-posedness of the mixed problem for the Laplace equation in a range of $L_p-$based spaces has been obtained in \cite{M-M}.

\subsection{\bf Mixed Dirichlet-Neumann problem for the Brinkman system with {boundary} data in $L_2$-{based} spaces}
{Mitrea and Mitrea in \cite{M-M} have proved sharp well-posedness results for the Poisson problem for the Laplace operator with mixed boundary conditions of Dirichlet and Neumann type on bounded creased Lipschitz {domains in ${\mathbb R}^n$ ($n\geq 3$),} whose boundaries satisfy a geometric condition, and with data in Sobolev and Besov spaces.
Brown et al. in \cite[Theorem 1.1]{B-M-M-W} have obtained the well-posedness result for the mixed Dirichlet-Neumann problem for the Stokes system with boundary data in $L_2$-based spaces on creased Lipschitz domains in ${\mathbb R}^n$ $(n\geq 3)$, by reducing such a boundary value problem to the analysis of a boundary integral equation (see also the references therein).}
Well-posedness of the mixed {\em Dirichlet-Robin} problem for the Brinkman system in a creased Lipschitz domain with boundary data in $L_2$-{based} spaces has been recently proved in \cite[Theorem 6.1]{K-L-W2}.
Using the main ideas of that proof,  we show in this section well-posedness of the mixed {{\em Dirichlet-Neumann} boundary value problem} for the Brinkman system in $L_2$-based {Bessel potential} spaces defined on a bounded, creased Lipschitz domain ${\Omega}_+$.
\comment{
Therefore, we assume that $S_{D}$ and $S_{N}$ are two disjoint admissible patches of $\partial {\Omega}$ (see Definition \ref{creased domain}), and consider the following mixed boundary value problem
\begin{equation}
\label{mixed homogeneous Brinkman system}
\left\{
\begin{array}{l}
\triangle {\bf u} - \alpha {\bf u} - \nabla \pi = {\bf 0},\
{\rm{div}}\ {\bf u} = 0\ \mbox{ in }\  {\Omega}_+\\
\left({\gamma}_{+}{\bf u}\right)|_{S_D}={\mathbf h_{0}}\in H_2^1(S_{D},\mathbb{R}^{n}) \\
\left(\mathbf t_{\alpha}^{+}({\bf u},\pi )\right)|_{S_N}=\mathbf g_{0}\in L_2(S_{N},\mathbb{R}^{n}), \\
{M}(\nabla {\bf u}),\ {M}({\bf u}),\ {M}(\pi)\in L_2(\partial {\Omega}),
\end{array}
\right.
\end{equation}
where $\cdot |_{S_D}$ is the operator of restriction from $H_2^1(\partial {\Omega},\mathbb{R}^{n})$ to $H_2^1(S_{D},\mathbb{R}^{n})$, and $\cdot |_{S_N}$ is the operator of restriction from $L_2(\partial {\Omega},\mathbb{R}^{n})$ to $L_2(S_{N},\mathbb{R}^{n})$. In addition, the trace and conormal derivative operators in \eqref{mixed homogeneous Brinkman system} are considered in the sense of non-tangential limits.
} 
\begin{theorem}
\label{Th2.6}
Assume that ${\Omega}_+\subset \mathbb{R}^{n}$ $(n\geq 3)$
is a bounded, creased Lipschitz domain with connected boundary $\partial {\Omega}$,
which is {decomposed} into two disjoint admissible patches $S_{D}$ and $S_{N}$. Then the mixed
problem \eqref{mixed homogeneous Brinkman system} with given data
$({\mathbf h_{0}},\mathbf g_{0})\in H_2^1(S_{D},\mathbb{R}^{n})\times L_2(S_{N},\mathbb{R}^{n})$ has a unique {solution $({\bf u},\pi)$ such that ${M}({\textbf u}),{M}(\nabla {\textbf u}),{M}(\pi)\in L_2(\partial {\Omega})$}.
Moreover,
$
({\bf u},\pi)\in H_{2}^{\frac{3}{2}}({\Omega}_+,\mathbb{R}^{n})\times H_2^{\frac{1}{2}}({\Omega}_+),
$
and there exist some constants $C_M$ and $C$ depending only on $S_{D}$, $S_{N}$ and $\alpha$ such that
\begin{align}
\label{ineq-M-2}
&\|M(\nabla {\bf u})\|_{L_2(\partial \Omega )}+\|M({\bf u})\|_{L_2(\partial \Omega )}
+\|M(\pi )\|_{L_2(\partial \Omega )}\leq C_M\left(\|{\mathbf h_{0}}\|_{H_2^1(S_{D},\mathbb{R}^{n})}+ \|\mathbf g_{0}\|_{L_2(S_{N}, \mathbb{R}^{n})}\right),\\
\label{estimate-mixed}
&\|{\bf u}\|_{H_{2}^{\frac{3}{2}}({\Omega}_+, \mathbb{R}^{n})} + \|\pi\|_{H_2^{\frac{1}{2}}({\Omega}_+)}\leq C\left(\|{\mathbf h_{0}}\|_{H_2^1(S_{D},\mathbb{R}^{n})}+ \|\mathbf g_{0}\|_{L_2(S_{N}, \mathbb{R}^{n})}\right).
\end{align}
\end{theorem}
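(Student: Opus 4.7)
\bigskip
\noindent\textbf{Proof Proposal.}
My plan is to seek the solution as a single layer potential with an unknown density on the full boundary $\partial\Omega$, thereby reducing the mixed problem to a boundary integral equation, and then to show the associated boundary operator is an isomorphism by combining a compact perturbation argument (to obtain Fredholmness of index zero) with a direct uniqueness proof via the first Green identity. Concretely, I would look for
\begin{align*}
{\bf u}={\bf V}_{\alpha}{\bf f}, \qquad \pi={\mathcal Q}^{s}{\bf f}, \qquad {\bf f}\in L_2(\partial\Omega,{\mathbb R}^n).
\end{align*}
By Theorem~\ref{layer-potential-properties}(i) and Lemma~\ref{nontangential-sl} this ansatz automatically lives in $H_2^{3/2}(\Omega_+,{\mathbb R}^n)\times H_2^{1/2}(\Omega_+)$ with $M({\bf u}),M(\nabla{\bf u}),M(\pi)\in L_2(\partial\Omega)$ and obeys the estimates \eqref{ineq-M-2}--\eqref{estimate-mixed} in terms of $\|{\bf f}\|_{L_2(\partial\Omega)}$. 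The non-tangential trace formulas \eqref{68} and \eqref{70aaa} (together with Lemma~\ref{L3.6}) then recast \eqref{mixed homogeneous Brinkman system} as the operator equation
\begin{align*}
T_{\alpha}{\bf f}:=\Bigl({\mathcal V}_{\alpha}{\bf f}\big|_{S_D},\ \bigl(\tfrac12{\mathbb I}+{\bf K}^{*}_{\alpha}\bigr){\bf f}\big|_{S_N}\Bigr)=({\bf h}_0,{\bf g}_0),
\end{align*}
where $T_{\alpha}:L_2(\partial\Omega,{\mathbb R}^n)\to H_2^{1}(S_D,{\mathbb R}^n)\times L_2(S_N,{\mathbb R}^n)$ is linear and continuous.

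For the Fredholm property, I would decompose $T_{\alpha}=T_0+T_{\alpha;0}$, where $T_0$ is the corresponding Stokes operator (case $\alpha=0$) and $T_{\alpha;0}$ has components ${\mathcal V}_{\alpha;0}\big|_{S_D}$ and ${\bf K}^{*}_{\alpha;0}\big|_{S_N}$. The operators ${\mathcal V}_{\alpha;0}={\mathcal V}_{\alpha}-{\mathcal V}$ and ${\bf K}^{*}_{\alpha;0}={\bf K}^{*}_{\alpha}-{\bf K}^{*}$ are compact on the relevant $L_2$-spaces thanks to the improved smoothness of the kernel differences established in \cite[Theorem~3.4(b)]{K-L-W}, so $T_{\alpha}-T_0$ is compact. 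The Stokes operator $T_0$ is an isomorphism by the well-posedness of the mixed Dirichlet--Neumann problem for the Stokes system on creased Lipschitz domains due to Brown--Mitrea--Mitrea--Wright \cite{B-M-M-W}, whose proof exploits the Rellich-type identities based on the sign-changing vector field $\boldsymbol{\varphi}$ satisfying \eqref{creased-sign}. Hence $T_{\alpha}$ is Fredholm of index zero.

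Injectivity of $T_{\alpha}$ would be obtained from uniqueness for the homogeneous mixed problem together with the jump relations. Given a solution $({\bf u},\pi)$ of the homogeneous version of \eqref{mixed homogeneous Brinkman system} with $M({\bf u}),M(\nabla{\bf u}),M(\pi)\in L_2(\partial\Omega)$, I would apply the first Green identity of Lemma~\ref{lem 1.6} (using the approximating smooth subdomains of Lemma~\ref{2.13D} to justify passing to the boundary in the non-tangential $L_2$ setting, together with Theorem~\ref{trace-equivalence} to identify the canonical and non-tangential conormal derivatives) to obtain
\begin{align*}
2\|{\mathbb E}({\bf u})\|_{L_2(\Omega_+)}^{2}+\alpha\|{\bf u}\|_{L_2(\Omega_+)}^{2}=\langle{\bf t}_{\rm nt}^{+}({\bf u},\pi),{\bf u}_{\rm nt}^{+}\rangle_{S_D}+\langle{\bf t}_{\rm nt}^{+}({\bf u},\pi),{\bf u}_{\rm nt}^{+}\rangle_{S_N}=0,
\end{align*}
which, since $\alpha>0$, forces ${\bf u}\equiv{\bf 0}$ in $\Omega_+$; the Brinkman equation and the vanishing of ${\bf t}_{\rm nt}^{+}({\bf u},\pi)$ on $S_N$ then yield $\pi\equiv 0$. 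If moreover $({\bf u},\pi)=({\bf V}_{\alpha}{\bf f},{\mathcal Q}^{s}{\bf f})$, then $({\bf V}_{\alpha}{\bf f})_{\rm nt}^{+}=0$ and ${\bf t}_{\rm nt}^{+}({\bf V}_{\alpha}{\bf f},{\mathcal Q}^{s}{\bf f})=0$; the continuity of the single-layer trace \eqref{68} combined with the decay of $({\bf V}_{\alpha}{\bf f},{\mathcal Q}^{s}{\bf f})$ at infinity and uniqueness for the exterior Dirichlet problem (Theorem~\ref{M-H-D} adapted to $\Omega_-$) give ${\bf V}_{\alpha}{\bf f}\equiv{\bf 0}$ in $\Omega_-$ as well, so the jump relation \eqref{70aaa} implies ${\bf f}=({\bf t}_{\rm nt}^{+}-{\bf t}_{\rm nt}^{-})({\bf V}_{\alpha}{\bf f},{\mathcal Q}^{s}{\bf f})=0$. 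Combined with the Fredholm property, this establishes invertibility of $T_{\alpha}$, which in turn provides existence, uniqueness, and continuous dependence.

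The main obstacle is the appeal to the Stokes base case, which itself requires the delicate Rellich identities adapted to the creased geometry of $\partial\Omega$ and thus rests on the prior results of \cite{B-M,B-M-M-W}; once that input is accepted, the extension to $\alpha>0$ is a compact perturbation combined with the Green-identity uniqueness proof, and the rest is bookkeeping of mapping properties already collected in Section~3.
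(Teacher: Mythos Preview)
Your proposal is correct and follows essentially the same route as the paper: single-layer ansatz, the operator $T_\alpha=\mathcal S_\alpha$, Fredholmness via the Stokes base case from \cite{B-M-M-W} plus compactness of the complementary operators, and uniqueness for the homogeneous mixed problem via the first Green identity. The only divergence is in how you finish the injectivity of $T_\alpha$ once $({\bf V}_\alpha{\bf f},{\mathcal Q}^s{\bf f})$ is known to vanish in $\Omega_+$: you pass to the exterior via trace continuity and invoke uniqueness for an exterior Dirichlet problem (which the paper does not actually prove for $\Omega_-$), whereas the paper simply observes that ${\bf t}_{\rm nt}^{+}({\bf V}_\alpha{\bf f},{\mathcal Q}^s{\bf f})={\bf 0}$ on all of $\partial\Omega$ gives $(\tfrac12\mathbb I+{\bf K}^*_\alpha){\bf f}={\bf 0}$ and then uses the already established invertibility of $\tfrac12\mathbb I+{\bf K}^*_\alpha$ on $L_2(\partial\Omega,\mathbb R^n)$ (Lemma~\ref{L3.1p}) to conclude ${\bf f}={\bf 0}$ --- a cleaner and fully self-contained step.
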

\begin{proof}
First, we note that if a couple $({\bf u},\pi)$ satisfies the Brinkman system \eqref{mixed homogeneous Brinkman system} and the conditions ${M}({\textbf u}),{M}(\nabla {\textbf u}),{M}(\pi)\in L_2(\partial {\Omega})$, then, taking into account that $B_{2,2}^{\frac{3}{2}}({\Omega}_+,\mathbb{R}^{n})=H_{2}^{\frac{3}{2}}({\Omega}_+,\mathbb{R}^{n})$, $B_{2,2}^{\frac{1}{2}}({\Omega}_+)=H_{2}^{\frac{1}{2}}({\Omega}_+)$,
Theorem~\ref{M-H}(ii) implies
that $({\bf u},\pi)\in \mathfrak{H}_{2,\rm div}^{\frac{3}{2},t}({\Omega},\mathcal{L}_{\alpha})$ for any
$t\ge -\frac{1}{2}$, while $\gamma_+{\bf u}={\bf u}^+_{\rm nt}$ and $\mathbf t_{\alpha}^{+}({\bf u},\pi)=\mathbf t^+_{\rm nt}({\bf u},\pi)$ by Theorems \ref{trace-equivalence-L} and \ref{2.13}.

Let us show that the mixed boundary value problem \eqref{mixed homogeneous Brinkman system} has at most one $L_2$-solution.
Indeed, if a couple $({\bf u}^{(0)},\pi^{(0)})$ satisfies the homogeneous problem associated to \eqref{mixed homogeneous Brinkman system}, and moreover $({\bf u}^{(0)},\pi^{(0)})\in \mathfrak{H}_{2,\rm div}^{\frac{3}{2},0}({\Omega},\mathcal{L}_{\alpha})$, then by the first Green identity \eqref{Green formula}, we obtain the equality
\begin{equation}
\label{DN-unique}
\left\langle \mathbf t_{\alpha}^{+}({\bf u}^{(0)},\pi^{(0)}),{\gamma}_{+}{\bf u}^{(0)}\right \rangle_{\partial {\Omega}}=2\left\langle \mathbb{E}({\bf u}^{(0)}),\mathbb{E}({\bf u}^{(0)}) \right\rangle_{{\Omega}_+} + \alpha \left\langle {\bf u}^{(0)},{\bf u}^{(0)}\right\rangle_{{\Omega}_+},
\end{equation}
where the left-hand side vanishes, due to
the homogeneous boundary conditions satisfied by $\gamma_+{\bf u}^{(0)}={\bf u}^{(0)+}_{\rm nt}$ and
$\mathbf t_{\alpha}^{+}({\bf u}^{(0)},\pi^{(0)})=\mathbf t^+_{\rm nt}({\bf u}^{(0)},\pi^{(0)})$ on $S_{D}$ and $S_{N}$, respectively. Then by \eqref{DN-unique} we immediately obtain that ${\bf u}^{(0)}={\bf 0}$ and $\pi^{(0)} = 0$ in ${\Omega}_+$.

Next, we consider the operator
\begin{align}
\label{isomorphism-D-N}
{\mathcal S}_{\alpha }:L_2(\partial {\Omega},{\mathbb R}^n)\to H^1_2(S_D,{\mathbb R}^n)\times
L_2(S_N,{\mathbb R}^n),\ {\mathcal S}_{\alpha }\Psi :=\left(\left({\mathcal V}_{\alpha }\boldsymbol \Psi \right)\big|_{{S_D}},\left(\left(\frac{1}{2}{\mathbb I}+{{\bf K}}_{\alpha }^*\right)\boldsymbol \Psi \right)\Big|_{{S_N}}\right)
\end{align}
(cf. \cite[(6.6)-(6.8)]{K-L-W2}), and show that this is an isomorphism, {which will yield} the
well-posedness of the mixed problem \eqref{mixed homogeneous Brinkman system}. To this {end}, we note that
${\mathcal S}_{\alpha }$ can be written {as}
$
{\mathcal S}_{\alpha }={\mathcal S}_{0}+{\mathcal S}_{\alpha ;0},
$
where
\begin{align}
\label{isomorphism-D-N-2}
&{\mathcal S}_{0}:L_2(\partial {\Omega},{\mathbb R}^n)\!\to \!H^1_2(S_D,{\mathbb R}^n)\!\times \!L_2(S_N,{\mathbb R}^n),\,
{\mathcal S}_{0}\Psi :=\left(\!\left({\mathcal V}_{0}\boldsymbol \Psi
\right)\big|_{{S_D}},\left(\left(\frac{1}{2}{\mathbb I}\!+\!{{\bf K}}_{0}^*\right)\boldsymbol \Psi \right)\Big|_{{S_N}}\right),\\
\label{isomorphism-D-N-2c}
&{\mathcal S}_{\alpha ;0}:L_2(\partial {\Omega},{\mathbb R}^n)\!\to \!H^1_2(S_D,{\mathbb R}^n)\!\times\!L_2(S_N,{\mathbb R}^n),\, {\mathcal S}_{\alpha ;0}\boldsymbol \Psi \!:=\!\left(\!\left({\mathcal V}_{\alpha ;0}\Psi \!\right)\big|_{{S_D}},\left({{\bf K}}_{\alpha ;0}^*\boldsymbol \Psi \right)\big|_{{S_N}}\right).
\end{align}
{Here} $\mathcal{V}_{\alpha; 0}:L_2(\partial {\Omega},{\mathbb R}^n)\to H^1_2(\partial {\Omega},{\mathbb R}^n)$ and ${\bf K}^{*}_{\alpha;0}:L_2(\partial {\Omega},{\mathbb R}^n)\to L_2(\partial {\Omega},{\mathbb R}^n)$ are the complementary layer potential operators defined {as}
\begin{equation}
\label{compl-adj}
\mathcal{V}_{\alpha; 0}\boldsymbol \Psi := \mathcal{V}_{\alpha }\boldsymbol \Psi - \mathcal{V}_{0}\boldsymbol \Psi \ \mbox{ and }\ {\bf K}^{*}_{\alpha ;0}\boldsymbol \Psi :={\bf K}^{*}_{\alpha }\Psi -{\bf K}_{0}^{*}\boldsymbol \Psi .
\end{equation}
{The} operator ${\mathcal S}_{0}$ defined in \eqref{isomorphism-D-N-2} is an isomorphism and this property is equivalent with the well-posedness result of the mixed Dirichlet-Neumann problem for the Stokes system on a creased Lipschitz domain with Dirichlet and Neumann boundary data in $L_2$-{based} spaces (cf. the proof of \cite[Theorem 6.3]{B-M-M-W}), when the BVP solution is looked for in the form of {the Stokes} single layer potential. In addition, the continuity of the restriction operators from $H^1_2(\partial {\Omega},{\mathbb R}^n)$ to $H^1_2(S_D,{\mathbb R}^n)$ and from $L_2(\partial {\Omega},{\mathbb R}^n)$ to $L_2(S_N,{\mathbb R}^n)$, respectively, as well as the compactness of the complementary operators in \eqref{compl-adj} (cf. \cite[Theorem 3.4]{K-L-W}) imply that the operator ${\mathcal S}_{\alpha ;0}$ in \eqref{isomorphism-D-N-2c} is compact as well. Therefore, the operator ${\mathcal S}_{\alpha }$ in \eqref{isomorphism-D-N} is Fredholm with index zero. 
This operator is also injective. Indeed, if $\boldsymbol \Psi^{(0)}\in L_2(\partial \Omega ,{\mathbb R}^n)$ satisfies the equation ${\mathcal S}_{\alpha }\boldsymbol \Psi ^{(0)}=0$ then the single layer velocity and pressure potentials ${\bf u}^{(0)}:={\bf V}_{\alpha }\boldsymbol \Psi ^{(0)}$ and $\pi ^{(0)}:={\mathcal Q}^s\boldsymbol \Psi ^{(0)}$ will determine a solution of the homogeneous mixed problem associated to \eqref{mixed homogeneous Brinkman system}, such that $({\bf u}^{(0)},\pi ^{(0)})\in H_{2}^{\frac{3}{2}}({\Omega}_+,\mathbb{R}^{n})\times  H_{2}^{\frac{1}{2}}({\Omega}_+)$ and ${M}({\textbf u}^{(0)}),{M}(\nabla {\textbf u}^{(0)}),{M}(\pi ^{(0)})\in L_2(\partial {\Omega})$.
Then
${\bf u}^{(0)}={\bf 0}$ and $\pi ^{(0)}=0$ in $\Omega _{+}${, as shown above}.
{Consequently,} ${\bf t}_{\rm{nt}}^+({\bf u}^{(0)},\pi ^{(0)})
={\bf 0}$ a.e. on $\partial \Omega $, which, in view of \eqref{70aaa}, can be written as
\begin{align*}
\left(\frac{1}{2}{\mathbb I}+{\bf K}_{\alpha }^*\right)\boldsymbol \Psi ^{(0)}={\bf 0}.
\end{align*}
Moreover, the invertibility of the operator $\frac{1}{2}\mathbb{I}+{\bf K}^{*}_{\alpha}:L_2(\partial {\Omega},{\mathbb R}^n)\to L_2(\partial {\Omega},{\mathbb R}^n)$ (see {Lemma \ref{L3.1p}}) shows that $\boldsymbol \Psi ^{(0)}={\bf 0}$. Consequently, operator \eqref{isomorphism-D-N} is an isomorphism, as asserted.
Then the fields
\begin{equation}
\label{solution-D-N}
{\bf u}={\bf V}_{\alpha
}\left({\mathcal S}_{\alpha }^{-1}({\mathbf h_{0}},\mathbf g_{0})\right),\ \
\pi =\mathcal Q^s\left({\mathcal S}_{\alpha }^{-1}({\mathbf h_{0}},\mathbf g_{ 0})\right)
\end{equation}
determine the unique solution of the mixed Dirichlet-Neumann problem \eqref{mixed homogeneous Brinkman system}. According to Lemma \ref{nontangential-sl}, {Theorem} \ref{layer-potential-properties} and \eqref{solution-D-N}, the solution belongs to the space $H_{2}^{\frac{3}{2}}({\Omega}_+,\mathbb{R}^{n})\times H_2^{\frac{1}{2}}({\Omega}_+)$ and satisfies the {estimate \eqref{ineq-M-2} with some constant $C_M>0$ depending on $S_{D}$, $S_{N}$ and $\alpha$}, as well as
estimate \eqref{estimate-mixed} with the constant
$C=\left(\|{\bf V}_{\alpha }\|+\|{Q}^s\|\right)\|\mathcal{S}^{-1}_{\alpha}\|$. 
\hfill\end{proof}

\subsection{\bf Mixed Dirichlet-Neumann problem for the Brinkman system with data in $L_p$-spaces}
Next, we extend the {results established in Theorem \ref{Th2.6}, to $L_p$-based
spaces with $p$ in some neighborhood of $2$, for the mixed Dirichlet-Neumann problem for the Brinkman system \eqref{mixed homogeneous Brinkman system}, with the boundary data $(\textbf{h}_0,\textbf{g}_0)\in{H_p^{1}(S_{D},\mathbb{R}^{n})}\times L_p(S_{N},\mathbb{R}^{n})$.}
{We will obtain the well-posedness result in the space $B_{p,p^{*}}^{1+\frac{1}{p}}({\Omega}_+,\mathbb{R}^{n})\times B_{p,p^{*}}^{\frac{1}{p}}({\Omega}_+)$,
where $p^{*}=\max\{2, p\}$.}

We further need the space
\begin{equation}
\label{space-L0}
\widetilde H_p^0(S_{0},\mathbb{R}^{n}):=\left\{{\bf \Phi }\in L_p(\partial {\Omega},\mathbb{R}^{n}) : {\rm{supp}}\ {\bf \Phi }\subseteq \overline{S_{0}}\right\},\quad S_0\subset \partial {\Omega}.
\end{equation}

\subsection*{$\bullet$ {\bf The Neumann-to-Dirichlet {operator} for the Brinkman system}}
As in the work \cite{M-M}, devoted to the mixed Dirichlet-Neumann problem for the Laplace equation in a creased Lipschitz domain, we consider the Neumann-to-Dirichlet operator $\Upsilon_{{\rm nt};\alpha}$,
which associates to ${\mathbf g}\in L_p(\partial {\Omega}, \mathbb{R}^{n})$, the restriction {of the non-tangential trace ${\bf u}^+_{\rm nt}$ to the patch $S_{D}$}, where $({\bf u},\pi )$ is the unique { $L_p$-solution} of the Neumann problem {\eqref{the homogeneous Neumann Brinkman system} for the Brinkman system with the non-tangential conormal derivative ${\mathbf g}$}. {Thus, $({\bf u},\pi )$ satisfies the Neumann condition almost everywhere on $\partial \Omega $ in the sense of non-tangential limit, as well as the conditions ${M}({\textbf u}),{M}(\nabla {\textbf u}),{M}(\pi)\in L_p(\partial {\Omega})$}, and
\begin{equation}
\label{Neumann-to-Dirichlet operator}
\Upsilon_{{\rm nt};\alpha}{\mathbf {g}}={\bf u}^+_{\rm nt}|_{S_{D}}.
\end{equation}
{Similarly, we consider the Neumann-to-Dirichlet operator $\Upsilon_{\alpha}$,
which associates to ${\mathbf g}\in L_p(\partial {\Omega}, \mathbb{R}^{n})$, the restriction of the trace $\gamma_+{\bf u}$ to the patch $S_{D}$, where $({\bf u},\pi )$ is the unique solution of the Neumann problem  \eqref{Poisson-Neumann-Brinkman-Lp} for the Brinkman system with $\mathbf f=\mathbf 0$ and the canonical conormal derivative ${\mathbf g}$, i.e.,}
\begin{equation}
\label{Neumann-to-Dirichlet operatorG}
{\Upsilon}_{\alpha}{\mathbf {g}}=\gamma_+{\bf u}|_{S_{D}}.
\end{equation}
A way to extend the well-posedness result in Theorem \ref{Th2.6} to $L_p$-based {spaces} is to show the invertibility of the Neumann-to-Dirichlet operator $\Upsilon_{{\rm nt};\alpha}$
on such spaces. An intermediary step to obtain this property is given by the following result.
\comment{
\begin{theorem}[Old version]
\label{Th2.5}
Assume that {${\Omega}_+$}
is a bounded, creased Lipschitz domain with connected boundary $\partial {\Omega}$, and that $\partial {\Omega}$ is decomposed into two disjoint admissible patches $S_{D}$ and $S_{N}$. Then there exists a number $\varepsilon>0$ such that for any
$p\in\mathcal R_0(n, \varepsilon)$, see \eqref{cases},
the mixed Dirichlet-Neumann boundary value problem for the Brinkman system \eqref{mixed homogeneous Brinkman system} is well-posed in the space $B_{p,p^{*}}^{1+\frac{1}{p}}({\Omega}_+,\mathbb{R}^{n})\times  B_{p,p^{*}}^{\frac{1}{p}}({\Omega}_+)$, where $p^{*}=\max\{2, p\}$, if and only if the operator
\begin{equation}
\label{Robin-to-Dirichlet operator1}
{\Upsilon}_{\alpha}: {\widetilde H}_p^0(S_{D},\mathbb{R}^{n})\to H_p^{1}(S_{D},\mathbb{R}^{n})
\end{equation}
is an isomorphism.
\end{theorem}
\begin{proof}
{By Theorem \ref{Th2.1'}, there exists $\varepsilon>0$ such that for
$p\in \left(\frac{2(n-1)}{n+1}-\varepsilon_{0},2+\varepsilon_{0} \right)\cap (1,+\infty )$
the Neumann problem for the Brinkman system \eqref{the homogeneous Neumann Brinkman system} has a unique solution given by \eqref{SoNB4}. Then} we deduce that the operator {${\Upsilon}_{\alpha}: {\widetilde H}_p^0(S_{D},\mathbb{R}^{n})\to H_p^{1}(S_{D},\mathbb{R}^{n})$} given by \eqref{Neumann-to-Dirichlet operator} has the expression
\begin{equation}
\label{construction Y}
{\Upsilon}_{\alpha} ={\left({\mathcal V}_{\alpha } \circ \left(\frac{1}{2}\mathbb{I} + {\bf K}^{*}_{\alpha }\right)^{-1}\right)\Bigg|_{S_{D}}},
\end{equation}
and is continuous, due to the continuity of both operators in the right-hand side of \eqref{construction Y}.

\begin{itemize}
\item[(i)]
First, we assume that problem \eqref{mixed homogeneous Brinkman system} is well-posed and show the invertibility of the operator ${\Upsilon}_{\alpha}$.
\end{itemize}
In order to prove the injectivity property of this operator, we consider a density ${\mathbf g^0}\in {\widetilde H}_p^0(S_D,\mathbb{R}^{n})$,
such that ${\Upsilon}_{\alpha}{\mathbf g^0}={\bf 0}$.
In addition, in view of \eqref{Neumann-to-Dirichlet operator}, we also have that ${\Upsilon}_{\alpha}{\mathbf g^0}=({\gamma}_{+}{\bf u}^{0})|_{S_{D}}$, where $({\bf u}^{0}, \pi^{0})$ is the unique {$L_p$-solution} of the Neumann problem \eqref{the homogeneous Neumann Brinkman system} for the homogeneous Brinkman system with boundary datum {${\mathbf g^0}\in {\widetilde H}_p^0(S_D,\mathbb{R}^{n})$} on $\partial {\Omega}$.
Therefore,
\begin{equation}
\label{0}
{\Upsilon}_{\alpha}{\mathbf g^0}=({\gamma}_{+}{\bf u}^{0})|_{S_{D}}={\bf 0},
\end{equation}
and
\begin{equation}
\label{the homogeneous Neumann Brinkman system2}
\left\{
\begin{array}{l}
\triangle {\bf u}^{0} - \alpha {\bf u}^{0}- \nabla \pi ^{0}={\bf 0},\
{\rm{div}}\ {\bf u}^{0}= 0\ \mbox{ in }\ {\Omega}_+, \\
\mathbf t_{\alpha}^{+}({\bf u}^{0},\pi ^{0})= {\mathbf g^0}\ \mbox{ on }\ \partial {\Omega}.
\end{array}
\right.
\end{equation}
{Due to the relations
\begin{equation}
\label{N-D1}
({\gamma}_{+}{\bf u}^{0})|_{S_{D}}={\bf 0}\ \mbox{ on }\ S_{D}, \ \ \left(\mathbf t_{\alpha}^{+}({\bf u}^{0},\pi^{0})\right)|_{S_N}={\bf 0}\ \mbox{ on }\ S_{N},
\end{equation}
where the second of them follows from the assumption that ${\mathbf g^0}\in {\widetilde H}_p^0(S_D,\mathbb{R}^{n})$, and hence that ${\mathbf g^0}={\bf 0}$ on $S_N$, and by using the assumed well-posedness of the mixed Dirichlet-Neumann problem \eqref{mixed homogeneous Brinkman system} (with zero boundary {data} \eqref{N-D1}), we deduce that ${\bf u}^{0}={\bf 0}$ and $\pi^{0} = 0$ in ${\Omega}_+$.} Thus, $\mathbf t_{\alpha}^{+}({\bf u}^{0},\pi^{0})={\bf 0}$ on $\partial {\Omega}$, and hence ${\mathbf g^0}={\bf 0}$. {This} implies that the operator ${\Upsilon}_{\alpha}$ is injective.

We show that the operator ${\Upsilon}_{\alpha}$ is also surjective. Let ${\mathbf h_0}\in H_p^{1}(S_{D},\mathbb{R}^{n})$ be given.
Then, due to the assumed well posedness of the mixed Dirichlet-Neumann problem \eqref{mixed homogeneous Brinkman system}, there  exists a unique solution, $({\bf u}_{0},\pi _{0})$, of this problem with the Dirichlet datum ${\mathbf h_0}$ on $S_D$ and with the particular Neumann datum $\mathbf g_{0}\equiv {\bf 0}$ on $S_N$.
In particular, we deduce that the field ${\mathbf g_{0}}:=\mathbf t_{\alpha}^{+}({\bf u}_{0},\pi _{ 0})\in L_p(\partial {\Omega},\mathbb{R}^{n})$
belongs to ${\widetilde H}_p^0(S_{D}, \mathbb{R}^{n})$, due to definition \eqref{space-L0}.
In addition, the uniqueness result in Theorem \ref{Th2.1'} shows that $({\bf u}_{0},\pi _{0})$ is the unique solution of the Neumann problem for the Brinkman system in ${\Omega}_+$ with the Neumann datum
${\mathbf g_{0}}\in{\widetilde H}_p^0(S_{D},\mathbb{R}^{n})\subset L_p(\partial {\Omega}, \mathbb{R}^{n})$.
Then by definition (\ref{Neumann-to-Dirichlet operator}) of the operator ${\Upsilon}_{\alpha}$, we obtain that ${\Upsilon}_{\alpha}{\mathbf g_{0}}=({\gamma}_{+}{\bf u}_{0})|_{S_{D}}$.
However, $({\gamma}_{+}{\bf u}_{0})|_{S_{D}}={\mathbf h_0}$, due to the choice of $({\bf u}_{0},\pi _{ 0})$ as the solution of the mixed Dirichlet-Neumann problem \eqref{mixed homogeneous Brinkman system} with boundary data $({\mathbf h_0},{\bf 0})$.
{Consequently}, for a given ${\mathbf h_0}\in H_p^{1}(S_{D},\mathbb{R}^{n})$ there {exists}
${\mathbf g_{0}}\in {\widetilde H}_p^0(S_{D},\mathbb{R}^{n})$ such that ${\Upsilon}_{\alpha}{\mathbf g_{0}}={\mathbf h_0}$. This shows that the operator ${\Upsilon}_{\alpha}$ is surjective, and thus, it is an isomorphism, as asserted.
\begin{itemize}
\item[(ii)]
Next, we show the converse result, i.e., that the invertibility of the operator ${\Upsilon}_{\alpha}$ implies the well-posedness of the mixed Dirichlet-Neumann problem \eqref{mixed homogeneous Brinkman system}.
\end{itemize}
Let us first show the uniqueness of the solution to problem \eqref{mixed homogeneous Brinkman system}.
To this {end}, we assume that $({\bf u}^{(0)},\pi^{(0)})$ is an $L_p$-solution of the homogeneous version of \eqref{mixed homogeneous Brinkman system}.
Hence, ${\mathbf g}^{(0)}:=\mathbf t_\alpha^{+}({\bf u}^{(0)},\pi^{(0)})\in {\widetilde H}_p^0(S_{D}, \mathbb{R}^{n})$ since $\left(\mathbf t_{\alpha}^+({\bf u}^{(0)},\pi^{(0)})\right)|_{S_{N}}={\bf 0}$,
implying
that $({\bf u}^{(0)}, \pi^{(0)})$ is (by Theorem \ref{Th2.1'}) the unique $L_p$-solution of the Neumann problem for the Brinkman system with Neumann datum ${\mathbf g}^{(0)}$ on $\partial {\Omega}$.
Then by \eqref{Neumann-to-Dirichlet operator}, ${\Upsilon}_{\alpha}{\mathbf g}^{(0)}=({\gamma}_{+}{\bf u}^{(0)})|_{S_{D}}={\bf 0}$, where the last equality follows from the assumption that $({\bf u}^{(0)}, \pi^{(0)})$ satisfies the homogeneous version of \eqref{mixed homogeneous Brinkman system}.
The injectivity of ${\Upsilon}_{\alpha}$ implies that ${\mathbf g}^{(0)}={\bf 0}$, and hence we obtain that $\mathbf t_{\alpha}^{+}({\bf u}^{(0)},\pi^{(0)} )={\bf 0}$ on $\partial {\Omega}$. In addition, Theorem \ref{Th2.1'} implies that ${\bf u}^{0}= {\bf 0}$, $\pi ^{0} = 0$ in ${\Omega}_+$. This concludes the proof of uniqueness of the {$L_p$-solution} to the mixed problem \eqref{mixed homogeneous Brinkman system}.

To show the existence of an {$L_p$-solution} to the mixed problem \eqref{mixed homogeneous Brinkman system}, let us consider such a problem with arbitrary boundary data $({\mathbf h_{0}},\mathbf g_{0})\in H_p^{1}(S_{D}, \mathbb{R}^{n})\times L_p(S_{N}, \mathbb{R}^{n})$. Also let ${\bf G}\in L_p(\partial {\Omega}_+, \mathbb{R}^{n})$ be such that
\begin{align}
\label{G}
{\bf G}|_{S_{N}}=\mathbf g_{0}.
\end{align}
Then by Theorem \ref{Th2.1'} there exists a unique {$L_p$-solution} $({\bf v},q)$ of {the Neumann problem \eqref{the homogeneous Neumann Brinkman system}} with the Neumann datum ${\bf G}$. Note that ${\bf v}$ is expressed in terms of a single-layer potential with a density in the space $L_p(\partial \Omega ,{\mathbb R}^n)$, and hence $\gamma _{+}{\bf v}\in H_p^1(\partial \Omega ,{\mathbb R}^n)$ (see Lemma \ref{M-H}(ii)). On the other hand, the invertibility of the operator ${\Upsilon}_{\alpha}: {\widetilde H}_p^0(S_{D},\mathbb{R}^{n})\to H_p^{1}(S_{D}, \mathbb{R}^{n})$ assures that the equation
\begin{align}
{\Upsilon}_{\alpha}{\bf F}=\left({\gamma}_{+}{\bf v}|_{S_{D}}-{\mathbf h_{0}}\right)\in H_p^{1}(S_{D}, \mathbb{R}^{n})
\end{align}
has a unique solution ${\bf F}\in {\widetilde H}_p^0(S_{D},\mathbb{R}^{n})\subset L_p(\partial {\Omega},\mathbb{R}^{n})$. Next, let $({\bf w}, \tilde{q})$ be the unique {$L_p$-solution} of the Neumann problem \eqref{the homogeneous Neumann Brinkman system} with the Neumann datum ${\bf F}$. Also let
\begin{align}
\label{constr}
({\bf u},\pi):=({\bf v}-{\bf w},q-\tilde{q}).
\end{align}
Then we obtain the relations
\begin{align}
({\gamma}_{+}{\bf u})|_{S_{D}}&=({\gamma}_{+}{\bf v})|_{S_{D}}-({\gamma}_{+}{\bf w})|_{S_{D}}
=\left({\Upsilon}_{\alpha}{\bf F}+{\mathbf h_{0}}\right)-{\Upsilon}_{\alpha}{\bf F}
={\mathbf h_{0}}
\end{align}
and
\begin{align}
\left(\mathbf t_{\alpha}^{+}({\bf u},\pi)\right)|_{S_N}&
=\left(\mathbf t_{\alpha}^{+}({\bf v},q)\right)|_{S_N}-\left(\mathbf t_{\alpha}^{+}({\bf w},\tilde{q})\right)|_{S_N}
={\bf G}|_{S_N}-{\bf F}|_{S_N}
=\mathbf g_{0},
\end{align}
where the last equality follows from \eqref{G} and the inclusion {${\bf F}\in {\widetilde H}_p^0(S_{D},\mathbb{R}^{n})$.}
Moreover,
$({\bf u},\pi)\in B_{p,p^{*}}^{1+\frac{1}{p}}({\Omega}_+,\mathbb{R}^{n})\times  B_{p,p^{*}}^{\frac{1}{p}}({\Omega}_+)$
and ${M}({\textbf u}),{M}(\nabla {\textbf u}),{M}(\pi)\in L_p(\partial {\Omega})$, due to \eqref{constr} and the mapping properties of the pairs $({\bf v},q)$ and $({\bf w},\tilde{q})$
{given  by Theorem \ref{Th2.1'}.}
Consequently, the mixed Dirichlet-Neumann problem \eqref{mixed homogeneous Brinkman system} is well-posed.
\hfill\end{proof}
} 
\begin{lemma} 
\label{Th2.5}
Let ${\Omega}_+\subset {\mathbb R}^n$ $($$n\geq 3$$)$ be a bounded, creased Lipschitz domain with connected boundary $\partial {\Omega}$ which is
decomposed into two disjoint admissible patches $S_{D}$ and $S_{N}$. Let $\alpha \in (0,\infty )$. {Then there exists $\varepsilon=\varepsilon(\partial\Omega)>0$ such that for any $p\in\mathcal R_0(n, \varepsilon)$ the following properties hold.}
\begin{itemize}
\item[$(i)$]
{The operators $\Upsilon_{{\rm nt};\alpha}$ and $\Upsilon_{\alpha}$ coincide and are given by}
\begin{equation}
\label{construction Y}
{\Upsilon_{{\rm nt};\alpha}=}\Upsilon_{\alpha} ={\left({\mathcal V}_{\alpha } \circ \left(\frac{1}{2}\mathbb{I} + {\bf K}^{*}_{\alpha }\right)^{-1}\right)\Bigg|_{S_{D}}}.
\end{equation}
\item[$(ii)$]
{The mixed Dirichlet-Neumann problem \eqref{mixed homogeneous Brinkman system} with given data
$({\mathbf h_{0}},\mathbf g_{0})\in H_p^1(S_{D},\mathbb{R}^{n})\times L_p(S_{N},\mathbb{R}^{n})$ has a unique {solution $({\bf u},\pi)$, such that ${M}({\textbf u}),{M}(\nabla {\textbf u}),{M}(\pi)\in L_p(\partial {\Omega})$,} 
if and only if the operator}
\begin{equation}
\label{Robin-to-Dirichlet operator1}
{\Upsilon}_{{\rm nt};\alpha}: {\widetilde H}_p^0(S_{D},\mathbb{R}^{n})\to H_p^{1}(S_{D},\mathbb{R}^{n})
\end{equation}
is an isomorphism.
\item[$(iii)$]
{The mixed Dirichlet-Neumann problem \eqref{mixed homogeneous Brinkman systemG} with given data
$({\mathbf h_{0}},\mathbf g_{0})\in H_p^1(S_{D},\mathbb{R}^{n})\times L_p(S_{N},\mathbb{R}^{n})$ has a unique solution {$(\textbf{u},\pi)\in B_{p,p^*}^{1+\frac{1}{p}}({\Omega}_+,\mathbb{R}^{n})\times B_{p,p^*}^{\frac{1}{p}}({\Omega}_+)$} if and only if the operator}
\begin{equation}
\label{Robin-to-Dirichlet operator1G}
{{\Upsilon}_{\alpha}: {\widetilde H}_p^0(S_{D},\mathbb{R}^{n})\to H_p^{1}(S_{D},\mathbb{R}^{n})}
\end{equation}
{is an isomorphism.}
\item[]
{Moreover, when the solution $({\bf u},\pi)$ in item $(ii)$ or $(iii)$ exists, then}
{{it belongs to the space $B_{p,p^{*}}^{1+\frac{1}{p}}({\Omega}_+,\mathbb{R}^{n})\times B_{p,p^{*}}^{\frac{1}{p}}({\Omega}_+)$ and} there exist some constants $C_M\equiv C_M(\alpha, p,S_{D},S_{N})$, $C\equiv C(\alpha, p,S_{D},S_{N})$ and $C'\equiv C'(\alpha, p,S_{D},S_{N})$ such that}
\begin{align}
\label{ineq-MY}
&\|M(\nabla {\bf u})\|_{L_p(\partial \Omega )}+\|M({\bf u})\|_{L_p(\partial \Omega )}
+\|M(\pi )\|_{L_p(\partial \Omega )}
\leq C_M\left(\|{\mathbf h_{0}}\|_{H_p^{1}(S_{D}, \mathbb{R}^{n})}
+\|\mathbf g_{0}\|_{L_p(S_{N},\mathbb{R}^{n})}\right),\\
\label{continuity-dataY}
&\|{\bf u}\|_{B_{p,p^{*}}^{1+\frac{1}{p}}({\Omega}_+,\mathbb{R}^{n})}+\|\pi\|_{ B_{p,p^{*}}^{\frac{1}{p}}({\Omega}_+)}
\leq C\left(\|{\mathbf h_{0}}\|_{H_p^{1}(S_{D}, \mathbb{R}^{n})}+\|\mathbf g_{0}\|_{L_p(S_{N},\mathbb{R}^{n})}\right),
\quad p^{*} = \max\{2, p\},\\
\label{tr-continuity}
&\|\gamma_+{\bf u}\|_{H_{p}^1(\partial\Omega,\mathbb{R}^n)}
+\|\mathbf t^+_{\alpha}({\bf u},\pi)\|_{L_p(\partial\Omega,\mathbb{R}^{n}))}
\le C'\left(\|{\mathbf h_{0}}\|_{H_p^{1}(S_{D},\mathbb{R}^{n})}+\|\mathbf g_{0}\|_{L_p(S_{N},\mathbb{R}^{n})}\right).
\end{align}
\end{itemize}
\end{lemma}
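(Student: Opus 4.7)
For item $(i)$, my plan is to apply Theorem~\ref{Th2.1'}, which shows that for $p\in\mathcal R_0(n,\varepsilon)$ the unique $L_p$-solution $({\bf u},\pi)$ of the Neumann problem with datum $\mathbf g\in L_p(\partial\Omega,\mathbb R^n)$ is given by the single-layer representation \eqref{SoNB4}. Taking the trace and restricting to $S_D$ yields the closed-form expression \eqref{construction Y} for $\Upsilon_{{\rm nt};\alpha}$. Since Theorem~\ref{Th2.1'} guarantees that for this class of solutions ${M}({\bf u}),{M}(\nabla{\bf u}),{M}(\pi)\in L_p(\partial\Omega)$ and the non-tangential limits exist a.e., Theorems~\ref{trace-equivalence-L}(iii) and~\ref{2.13}(ii) give $\gamma_+{\bf u}={\bf u}^+_{\rm nt}$ and ${\mathbf t}_\alpha^+({\bf u},\pi)={\mathbf t}^+_{\rm nt}({\bf u},\pi)$. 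Consequently $\Upsilon_{{\rm nt};\alpha}$ and $\Upsilon_\alpha$ coincide and both equal the right-hand side of \eqref{construction Y}, and continuity of this composite operator follows from the mapping properties in Lemma~\ref{L3.1p} and Theorem~\ref{layer-potential-properties}.

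For the forward direction of items $(ii)$ and $(iii)$, I will assume well-posedness of the mixed problem and prove that $\Upsilon_{{\rm nt};\alpha}=\Upsilon_\alpha$ is an isomorphism of $\widetilde H_p^0(S_D,\mathbb R^n)$ onto $H_p^1(S_D,\mathbb R^n)$. \emph{Injectivity:} if $\mathbf g^0\in\widetilde H_p^0(S_D,\mathbb R^n)$ satisfies $\Upsilon_\alpha\mathbf g^0=\mathbf 0$, then the corresponding Neumann solution $({\bf u}^0,\pi^0)$ solves the homogeneous mixed problem (note that $\mathbf g^0|_{S_N}=\mathbf 0$ since $\mathrm{supp}\,\mathbf g^0\subseteq\overline{S_D}$), so by assumed uniqueness $({\bf u}^0,\pi^0)=(\mathbf 0,0)$, forcing $\mathbf g^0=\mathbf 0$. \emph{Surjectivity:} for any $\mathbf h_0\in H_p^1(S_D,\mathbb R^n)$, the mixed problem with data $(\mathbf h_0,\mathbf 0)$ admits a solution $({\bf u}_0,\pi_0)$; then $\mathbf g_0:={\mathbf t}_\alpha^+({\bf u}_0,\pi_0)\in L_p(\partial\Omega,\mathbb R^n)$ is supported in $\overline{S_D}$, so belongs to $\widetilde H_p^0(S_D,\mathbb R^n)$, and by uniqueness of the Neumann problem one has $\Upsilon_\alpha\mathbf g_0=\mathbf h_0$.

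For the reverse direction, assume $\Upsilon_\alpha$ is an isomorphism. Uniqueness of the mixed problem follows by the same reasoning: a homogeneous solution $({\bf u}^{(0)},\pi^{(0)})$ has conormal derivative $\mathbf g^{(0)}\in\widetilde H_p^0(S_D,\mathbb R^n)$ (vanishing on $S_N$), so $\Upsilon_\alpha \mathbf g^{(0)}=\gamma_+{\bf u}^{(0)}|_{S_D}=\mathbf 0$ implies $\mathbf g^{(0)}=\mathbf 0$, and Theorem~\ref{Th2.1'} then gives ${\bf u}^{(0)}=\mathbf 0$, $\pi^{(0)}=0$. For existence, given arbitrary data $(\mathbf h_0,\mathbf g_0)\in H_p^1(S_D,\mathbb R^n)\times L_p(S_N,\mathbb R^n)$, I will extend $\mathbf g_0$ to $\mathbf G\in L_p(\partial\Omega,\mathbb R^n)$ with $\mathbf G|_{S_N}=\mathbf g_0$ (e.g., by zero extension) and let $({\bf v},q)$ be the Neumann solution with datum $\mathbf G$ given by Theorem~\ref{Th2.1'}. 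By the assumed isomorphism property, the equation $\Upsilon_\alpha\mathbf F=\gamma_+{\bf v}|_{S_D}-\mathbf h_0$ has a unique solution $\mathbf F\in\widetilde H_p^0(S_D,\mathbb R^n)$; let $({\bf w},\tilde q)$ be the Neumann solution with datum $\mathbf F$. Then $({\bf u},\pi):=({\bf v}-{\bf w},q-\tilde q)$ satisfies the desired boundary conditions by direct verification, since $\mathbf F|_{S_N}=\mathbf 0$ and $\Upsilon_\alpha\mathbf F+\mathbf h_0=\gamma_+{\bf v}|_{S_D}$.

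The norm estimates \eqref{ineq-MY}--\eqref{tr-continuity} will follow by tracking the construction: the extension $\mathbf G$ satisfies $\|\mathbf G\|_{L_p(\partial\Omega)}\lesssim\|\mathbf g_0\|_{L_p(S_N)}$, the mapping estimates for $({\bf v},q)$ and $({\bf w},\tilde q)$ are furnished by Theorem~\ref{Th2.1'} (including control of $M({\bf v}),M(\nabla{\bf v}),M(q)$ and analogous quantities for ${\bf w}$), and continuity of $\Upsilon_\alpha^{-1}:H_p^1(S_D,\mathbb R^n)\to\widetilde H_p^0(S_D,\mathbb R^n)$ bounds $\|\mathbf F\|_{L_p(\partial\Omega)}$ in terms of $\|\mathbf h_0\|_{H_p^1(S_D)}+\|\mathbf g_0\|_{L_p(S_N)}$. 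The main technical obstacle is keeping the trace/conormal conventions aligned so that items $(ii)$ and $(iii)$ can be handled simultaneously; this is precisely why item $(i)$ is proved first, since the identification $\Upsilon_{{\rm nt};\alpha}=\Upsilon_\alpha$ via Theorems~\ref{trace-equivalence-L} and~\ref{2.13} reduces both formulations to a single isomorphism criterion, and the $B_{p,p^*}$-regularity of the constructed $({\bf u},\pi)$ follows from the mapping properties in Theorem~\ref{layer-potential-properties} applied to the single-layer representations of ${\bf v}$ and ${\bf w}$.
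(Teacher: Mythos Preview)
Your proposal is correct and follows essentially the same approach as the paper. The paper's proof of item~(i) likewise invokes Theorem~\ref{Th2.1'} for the single-layer representation and then Theorem~\ref{layer-potential-properties} together with Lemma~\ref{L3.6} (which internally uses the trace and conormal equivalence results you cite) to obtain \eqref{construction Y}; for item~(ii) it runs precisely your injectivity/surjectivity argument in both directions, with the only cosmetic difference that the paper writes the constructed solution as $(\mathbf v+\mathbf u^0,q+\pi^0)$ after solving $\Upsilon_{{\rm nt};\alpha}\mathbf g^0=\mathbf h_0-\mathbf v^+_{\rm nt}|_{S_D}$, whereas you take the equivalent convention $(\mathbf v-\mathbf w,q-\tilde q)$ with $\Upsilon_\alpha\mathbf F=\gamma_+\mathbf v|_{S_D}-\mathbf h_0$. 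For item~(iii) the paper does not attempt to merge it with~(ii) via the identification in~(i) as you suggest, but instead says the argument is repeated verbatim with Theorems~\ref{M-H-DG} and~\ref{M-H-NG} replacing Theorems~\ref{M-H-D} and~\ref{Th2.1'}; this is slightly more careful, since the uniqueness classes in~(ii) and~(iii) are formally different (maximal-function conditions versus Besov-space membership), so the uniqueness half of the reverse direction in~(iii) genuinely requires the Besov-space uniqueness of the Neumann problem from Theorem~\ref{M-H-NG} rather than Theorem~\ref{Th2.1'}.
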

\begin{proof}
{(i) By Theorem \ref{Th2.1'}, there exists $\varepsilon=\varepsilon(\partial\Omega)>0$ such that for any $p\in\mathcal R_0(n, \varepsilon)$ the Neumann problem \eqref{the homogeneous Neumann Brinkman system} 
has a unique solution, and it can be expressed in form} \eqref{SoNB4}.
Then {due to Theorem~\ref{layer-potential-properties} and Lemma~\ref{L3.6}} we deduce that the operator
\eqref{Neumann-to-Dirichlet operator} 
has the expression \eqref{construction Y}
and {is} continuous, due to the continuity of both operators in the right-hand side of \eqref{construction Y}.

{(ii) First}, we assume that problem \eqref{mixed homogeneous Brinkman system} is well-posed and show the invertibility of {operator \eqref{Robin-to-Dirichlet operator1}}.

In order to prove the injectivity property of this operator, we consider a function ${\mathbf g^0}\in {\widetilde H}_p^0(S_D,\mathbb{R}^{n})$, such that $\Upsilon_{{\rm nt};\alpha}{\mathbf g^0}={\bf 0}$.
{Denoting by $({\bf u}^{0}, \pi^{0})$} the unique {$L_p$-solution} of the Neumann problem \eqref{the homogeneous Neumann Brinkman system} for the homogeneous Brinkman system with boundary datum ${\mathbf g^0}\in {\widetilde H}_p^0(S_D,\mathbb{R}^{n})$ on $\partial {\Omega}${, in view of \eqref{Neumann-to-Dirichlet operator}, we have}
\begin{equation}
\label{0}
{\bf u}^+_{\rm nt}|_{S_{D}}=\Upsilon_{{\rm nt};\alpha}{\mathbf g^0}={\bf 0},
\end{equation}
and
\begin{equation}
\label{the homogeneous Neumann Brinkman system2}
\left\{
\begin{array}{l}
\triangle {\bf u}^{0} - \alpha {\bf u}^{0}- \nabla \pi ^{0}={\bf 0},\
{\rm{div}}\ {\bf u}^{0}= 0\ \mbox{ in }\ {\Omega}_+, \\
{\mathbf t^+_{\rm nt}}({\bf u}^{0},\pi ^{0})= {\mathbf g^0}\ \mbox{ on }\ \partial {\Omega}.
\end{array}
\right.
\end{equation}
{In addition, $({\bf u}^0,\pi ^0)$ satisfies the conditions ${M}({\textbf u}^0),{M}(\nabla {\textbf u}^0),{M}(\pi ^0)\in L_p(\partial {\Omega})$, and the Neumann condition holds almost everywhere on $\partial \Omega $ in the sense of non-tangential limit.}

According to {relation \eqref{0} and the inclusion ${\mathbf g^0}\in {\widetilde H}_p^0(S_D,\mathbb{R}^{n})$, we have}
\begin{equation}
\label{N-D1}
{{\bf u}^{0+}_{\rm nt}}|_{S_{D}}={\bf 0}\ \mbox{ on }\ S_{D}, \ \
{\mathbf t^+_{\rm nt}({\bf u}^{0},\pi ^{0})}|_{S_N}={\bf 0}\ \mbox{ on }\ S_{N},
\end{equation}
and hence
by the assumed well-posedness of the mixed Dirichlet-Neumann problem
\eqref{mixed homogeneous Brinkman system},
we deduce that
${\bf u}^{0}={\bf 0}$ and $\pi^{0} = 0$ in ${\Omega}_+$.
Thus, ${\mathbf g^0}=\mathbf t^+_{\rm nt}({\bf u}^{0},\pi^{0})={\bf 0}$ on $\partial {\Omega}$, which
implies that the operator ${\Upsilon}_{\alpha}$ is injective.

We show that the operator $\Upsilon_{{\rm nt};\alpha}$ is also surjective.
Due to the assumed well posedness of the mixed Dirichlet-Neumann problem \eqref{mixed homogeneous Brinkman system}, for any Dirichlet datum ${\mathbf h_0}\in H_p^{1}(S_{D},\mathbb{R}^{n})$ on $S_D$ and  the Neumann datum $\mathbf g_{0}\equiv {\bf 0}$ on $S_N$, there exists a unique {$L_p$-solution}, $({\bf u}_{0},\pi _{0})$, of this problem.
In particular, we deduce that the {vector field} ${\mathbf g^{0}}:={\mathbf t^+_{\rm nt}}({\bf u}_{0},\pi _{ 0})\in L_p(\partial {\Omega},\mathbb{R}^{n})$
belongs to ${\widetilde H}_p^0(S_{D}, \mathbb{R}^{n})$, due to definition \eqref{space-L0}.
In addition, the uniqueness result in Theorem \ref{Th2.1'} shows that $({\bf u}_{0},\pi _{0})$ is the unique solution of the Neumann problem for the Brinkman system in ${\Omega}_+$ with the Neumann datum
${\mathbf g^{0}}\in{\widetilde H}_p^0(S_{D},\mathbb{R}^{n})\subset L_p(\partial {\Omega}, \mathbb{R}^{n})$.
Then by definition (\ref{Neumann-to-Dirichlet operator}) of the operator $\Upsilon_{{\rm nt};\alpha}$, we obtain that $\Upsilon_{{\rm nt};\alpha}{\mathbf g_{0}}={{\bf u}^{+}_{0,\rm nt}}|_{S_{D}}
={\mathbf h_0}.$
Consequently, for a given ${\mathbf h_0}\in H_p^{1}(S_{D},\mathbb{R}^{n})$ there exists
${\mathbf g_{0}}\in {\widetilde H}_p^0(S_{D},\mathbb{R}^{n})$ such that $\Upsilon_{{\rm nt};\alpha}{\mathbf g_{0}}={\mathbf h_0}$. This shows that the operator $\Upsilon_{{\rm nt};\alpha}$ is surjective, and thus, it is an isomorphism, as asserted.

Next, we show the converse result, i.e., that the invertibility of the operator $\Upsilon_{{\rm nt};\alpha}$ implies the well-posedness of the mixed Dirichlet-Neumann problem \eqref{mixed homogeneous Brinkman system}.
Let us first show {uniqueness} of the solution to problem \eqref{mixed homogeneous Brinkman system}.
To this {end}, we assume that $({\bf u}^{(0)},\pi^{(0)})$ is an $L_p$-solution of the homogeneous version of \eqref{mixed homogeneous Brinkman system}.
Hence, ${\mathbf g}^{(0)}:={\mathbf t^+_{\rm nt}}({\bf u}^{(0)},\pi^{(0)})\in {\widetilde H}_p^0(S_{D}, \mathbb{R}^{n})$ since ${\mathbf t^+_{\rm nt}}({\bf u}^{(0)},\pi^{(0)})|_{S_{N}}={\bf 0}$,
implying
that $({\bf u}^{(0)}, \pi^{(0)})$ is (by Theorem \ref{Th2.1'}) the unique {solution} of the Neumann problem for the Brinkman system with Neumann datum ${\mathbf g}^{(0)}$ on $\partial {\Omega}$.
Then by \eqref{Neumann-to-Dirichlet operator},
$\Upsilon_{{\rm nt};\alpha}{\mathbf g}^{(0)}={{\bf u}^{(0)+}_{\rm nt}}|_{S_{D}}={\bf 0}$,
and injectivity of $\Upsilon_{{\rm nt};\alpha}$ implies that ${\mathbf g}^{(0)}={\bf 0}$.
Hence ${\mathbf t^+_{\rm nt}}({\bf u}^{(0)},\pi^{(0)} )={\bf 0}$ on $\partial {\Omega}$ and Theorem \ref{Th2.1'} implies that ${\bf u}^{0}= {\bf 0}$, $\pi ^{0} = 0$ in ${\Omega}_+$. This concludes the proof of uniqueness of the solution to the mixed problem \eqref{mixed homogeneous Brinkman system}.

To show existence of an {$L_p$-solution} to the mixed problem \eqref{mixed homogeneous Brinkman system}, let us consider such a problem with arbitrary boundary data $({\mathbf h_{0}},\mathbf g_{0})\in H_p^{1}(S_{D}, \mathbb{R}^{n})\times L_p(S_{N}, \mathbb{R}^{n})$. Also let ${\bf G}\in \widetilde H^0_p(S_N, \mathbb{R}^{n})$ be such that
\begin{align}
\label{G}
{\bf G}|_{S_{N}}=\mathbf g_{0}.
\end{align}
Then by Theorem \ref{Th2.1'} there exists a unique {$L_p$-solution} $({\bf v},q)$ of {the Neumann problem \eqref{the homogeneous Neumann Brinkman system}} with the Neumann datum ${\bf G}$, such that {there exist the non-tangential limits of ${\bf u}$, $\nabla {\bf u}$, $\pi $ at almost all points of $\partial \Omega $, ${M}({\textbf v}),{M}(\nabla {\textbf v}),{M}(q)\in L_2(\partial {\Omega})$, and satisfies the Neumann boundary condition in the sense of non-tangential limit at almost all points of $\partial \Omega $.} Note that ${\bf v}$ can be expressed in terms of a single-layer potential with a density in the space $L_p(\partial \Omega ,{\mathbb R}^n)$, and hence ${\mathbf v^+_{\rm nt}}\in H_p^1(\partial \Omega ,{\mathbb R}^n)$ (see {Lemma \ref{L3.6}}).

On the other hand, the invertibility of the operator $\Upsilon_{{\rm nt};\alpha}: {\widetilde H}_p^0(S_{D},\mathbb{R}^{n})\to H_p^{1}(S_{D}, \mathbb{R}^{n})$ assures that the equation
\begin{align}
\Upsilon_{{\rm nt};\alpha}{\bf g^0}=\left({{\mathbf h_{0}}-{{\bf v}^{+}_{\rm nt}}|_{S_{D}}}\right)\in H_p^{1}(S_{D}, \mathbb{R}^{n})
\end{align}
has a unique solution ${\bf g^0}\in {\widetilde H}_p^0(S_{D},\mathbb{R}^{n})\subset L_p(\partial {\Omega},\mathbb{R}^{n})$. Next, let $({\bf u^0}, \pi^0)$ be the unique {$L_p$-solution} of the Neumann problem \eqref{the homogeneous Neumann Brinkman system} with the Neumann datum ${\bf g^0}$. Also let
\begin{align}
\label{constr}
({\bf u},\pi):=({\bf v}+{\bf u^0},q+\pi^0).
\end{align}
Then we obtain the relations
\begin{align}
{\bf u}_{\rm nt}^+|_{S_{D}}&={\bf v}_{\rm nt}^+|_{S_{D}}+{\bf u}_{\rm nt}^{0+}|_{S_{D}}
=\left({\mathbf h_{0}}-\Upsilon_{{\rm nt};\alpha}{\bf g^0}\right)+\Upsilon_{{\rm nt};\alpha}{\bf g^0}
={\mathbf h_{0}},\\
{\mathbf t^+_{\rm nt}}({\bf u},\pi)|_{S_N}&
={\mathbf t^+_{\rm nt}}({\bf v},q)|_{S_N}+{\mathbf t^+_{\rm nt}}({\bf u^0},\pi^0)|_{S_N}
={\bf G}|_{S_N}+{\bf g^0}|_{S_N}
=\mathbf g_{0},
\end{align}
where the last equality follows from \eqref{G} and the inclusion {${\bf g^0}\in {\widetilde H}_p^0(S_{D},\mathbb{R}^{n})$.}
Moreover,
{the estimates \eqref{ineq-MY} and \eqref{continuity-dataY} corresponding to item (ii) are}
due to \eqref{constr} and the mapping properties of the pairs $({\bf v},q)$ and $({\bf u^0},{\pi^0})$
given  by Theorem \ref{Th2.1'}.
Consequently, the mixed Dirichlet-Neumann problem \eqref{mixed homogeneous Brinkman system} is well-posed and estimates \eqref{ineq-MY}-\eqref{tr-continuity} hold true.

{The proof for item (iii) of the lemma and estimates \eqref{ineq-MY}-\eqref{tr-continuity} follow from similar arguments as those for item (ii), by refering to Theorems \ref{M-H-DG} and \ref{M-H-NG} instead of Theorems \ref{M-H-D} and \ref{Th2.1'}.}
\hfill\end{proof}

By combining Theorem \ref{Th2.6} and {Lemma} \ref{Th2.5}, we are now able to obtain the well-posedness results for the mixed Dirichlet-Neumann problem \eqref{mixed homogeneous Brinkman system} 
with boundary data in $L_p$-based {Bessel potential} spaces and with $p$ in a neighborhood of $2$, which is the main result of this section. {Recall that $p^{*}=\max\{2,p\}$.}
\begin{theorem}
\label{Th2.7}
Assume that ${\Omega}_+ \subset \mathbb{R}^{n}$ $(n\geq 3)$ is a bounded, creased Lipschitz domain with connected boundary $\partial {\Omega}$ which is decomposed into two disjoint admissible patches $S_{D}$ and $S_{N}$. {Then there exists a number {$\varepsilon >0$ such that for any $p\in (2-\varepsilon , 2+\varepsilon )$} and for all given data $({\mathbf h_{0}},\mathbf g_{0})\in H_p^{1}(S_{D},\mathbb{R}^{n})\times L_p(S_{N},\mathbb{R}^{n})$ the following properties hold.}
\begin{itemize}
\item[$(i)$]
{The mixed} Dirichlet-Neumann problem for the Brinkman system \eqref{mixed homogeneous Brinkman system} has a unique {solution $({\bf u},\pi)$, such that ${M}({\textbf u}),{M}(\nabla {\textbf u}),{M}(\pi)\in L_p(\partial {\Omega})$}.
Moreover, {$({\bf u},\pi)\in B_{p,p^{*}}^{1+\frac{1}{p}}({\Omega}_+,\mathbb{R}^{n})\times B_{p,p^{*}}^{\frac{1}{p}}({\Omega}_+)$, and there exist some constants $C_M\equiv C_M(\alpha, p,S_{D},S_{N})>0$, $C\equiv C(\alpha, p,S_{D},S_{N})>0$ and $C'\equiv C'(\alpha, p,S_{D},S_{N})>0$ such that}
\begin{align}
\label{ineq-M}
&\|M(\nabla {\bf u})\|_{L_p(\partial \Omega )}+\|M({\bf u})\|_{L_p(\partial \Omega )}
+\|M(\pi )\|_{L_p(\partial \Omega )}
\leq C_M\left(\|{\mathbf h_{0}}\|_{H_p^{1}(S_{D}, \mathbb{R}^{n})}
+\|\mathbf g_{0}\|_{L_p(S_{N},\mathbb{R}^{n})}\right),\\
\label{continuity-data}
&\|{\bf u}\|_{B_{p,p^{*}}^{1+\frac{1}{p}}({\Omega}_+,\mathbb{R}^{n})}+\|\pi\|_{ B_{p,p^{*}}^{\frac{1}{p}}({\Omega}_+)}
\leq C\left(\|{\mathbf h_{0}}\|_{H_p^{1}(S_{D}, \mathbb{R}^{n})}+\|\mathbf g_{0}\|_{L_p(S_{N},\mathbb{R}^{n})}\right),\\
\label{tr-continuity3}
&\|\gamma_+{\bf u}\|_{H_{p}^1(\partial\Omega,\mathbb{R}^n)}+\|\mathbf t_\alpha ^+({\bf u},\pi)\|_{L_p(\partial\Omega,\mathbb{R}^{n}))}
\le C'\left(\|{\mathbf h_{0}}\|_{H_p^{1}(S_{D},\mathbb{R}^{n})}+\|\mathbf g_{0}\|_{L_p(S_{N},\mathbb{R}^{n})}\right).
\end{align}
\item[$(ii)$]
The mixed Dirichlet-Neumann problem for the Brinkman system
\eqref{mixed homogeneous Brinkman systemG} has a unique solution
$({\bf u},\pi)\in B_{p,p^{*}}^{1+\frac{1}{p}}({\Omega}_+,\mathbb{R}^{n})\times B_{p,p^{*}}^{\frac{1}{p}}({\Omega}_+).$
Moreover, the solution satisfies estimates \eqref{ineq-M}-\eqref{tr-continuity3}.
\end{itemize}
\end{theorem}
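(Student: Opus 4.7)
The plan is to reduce the theorem to the invertibility of the Neumann-to-Dirichlet operator $\Upsilon_\alpha$ via Lemma~\ref{Th2.5}, and then to propagate that invertibility from $p=2$, where it follows from Theorem~\ref{Th2.6}, to an open neighborhood of $p=2$ via a Fredholm plus kernel-stability argument in the spirit of the proofs of Lemmas~\ref{L3.1p}, \ref{isom-p} and \ref{isom-sl}.

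First, by Lemma~\ref{Th2.5}(i), for $p\in\mathcal{R}_0(n,\varepsilon_0)$ the Neumann-to-Dirichlet operator admits the representation
\begin{equation*}
\Upsilon_\alpha = \mathcal{V}_\alpha \circ \left(\tfrac{1}{2}\mathbb{I} + \mathbf{K}_\alpha^*\right)^{-1}\Big|_{S_D}: \widetilde{H}_p^0(S_D,\mathbb{R}^n) \to H_p^1(S_D,\mathbb{R}^n),
\end{equation*}
whose boundedness follows from the zero-extension imbedding $\widetilde{H}_p^0(S_D,\mathbb{R}^n) \hookrightarrow L_p(\partial\Omega,\mathbb{R}^n)$, the isomorphism property of $\tfrac{1}{2}\mathbb{I}+\mathbf{K}_\alpha^*$ on $L_p(\partial\Omega,\mathbb{R}^n)$ from Lemma~\ref{L3.1p}, the continuity of $\mathcal{V}_\alpha:L_p(\partial\Omega,\mathbb{R}^n) \to H_p^1(\partial\Omega,\mathbb{R}^n)$ from Theorem~\ref{layer-potential-properties}, and continuity of the surface restriction. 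I would next establish that $\Upsilon_\alpha$ is Fredholm of index zero on this range. For the Stokes case $\alpha=0$, the result of \cite{B-M-M-W} for the $L_2$-mixed Stokes problem on creased Lipschitz domains, combined with a standard interpolation/perturbation argument in $p$, implies that $\Upsilon_0$ is an isomorphism, and therefore Fredholm with index zero, on $\widetilde{H}_p^0(S_D,\mathbb{R}^n) \to H_p^1(S_D,\mathbb{R}^n)$ for $p$ in a sufficiently small neighborhood of $2$. The remainder $\Upsilon_\alpha - \Upsilon_0$ is compact between the same spaces, since both $\mathbf{K}_\alpha-\mathbf{K}$ and $\mathcal{V}_\alpha-\mathcal{V}$ are compact by Theorem~3.4(b) of \cite{K-L-W}; hence $\Upsilon_\alpha$ is a compact perturbation of an isomorphism, and therefore Fredholm of index zero on the same neighborhood of~$2$.

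The heart of the argument, and the main obstacle, is to conclude that the kernel of this Fredholm family is trivial for all $p$ in an open neighborhood of $2$. At $p=2$, triviality follows from Theorem~\ref{Th2.6} together with Lemma~\ref{Th2.5}(ii). To propagate it to nearby $p$, I would invoke Lemma~\ref{Lem2 Fredholm} (stability of Fredholm kernels across interpolation scales), applied to an interpolation couple containing $p=2$, so that the null space of $\Upsilon_\alpha$ remains trivial throughout the admissible range. Once $\Upsilon_\alpha$ is thereby seen to be an isomorphism on $(2-\varepsilon,2+\varepsilon)$ for some sufficiently small $\varepsilon>0$, items (i) and (ii) of the theorem follow directly from Lemma~\ref{Th2.5}(ii) and~(iii), with the quantitative estimates \eqref{ineq-M}, \eqref{continuity-data} and \eqref{tr-continuity3} supplied by that same lemma.
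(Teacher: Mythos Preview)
Your overall plan—reducing the theorem via Lemma~\ref{Th2.5} to invertibility of the Neumann-to-Dirichlet operator $\Upsilon_\alpha$, establishing that invertibility at $p=2$ from Theorem~\ref{Th2.6}, and then extending to a neighborhood of $p=2$—is exactly the paper's strategy. The divergence is in how the extension in $p$ is carried out, and your proposed mechanism has a genuine gap.

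Your Fredholm route rests on knowing that the Stokes operator $\Upsilon_0$ is an isomorphism for $p$ near $2$. But the formula $\Upsilon_0=\mathcal{V}\circ(\tfrac{1}{2}\mathbb{I}+\mathbf{K}^*)^{-1}|_{S_D}$ presupposes that $\tfrac{1}{2}\mathbb{I}+\mathbf{K}^*$ is invertible on $L_p(\partial\Omega,\mathbb{R}^n)$; the paper (and the sources it cites) establishes this only for $\alpha>0$ (Lemma~\ref{L3.1p}), while for $\alpha=0$ one has merely Fredholmness of index zero. Indeed, the interior Stokes Neumann problem has a nontrivial kernel consisting of rigid motions, so $(\tfrac{1}{2}\mathbb{I}+\mathbf{K}^*)^{-1}$ should not be expected to exist on all of $L_p$. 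Moreover, Lemma~\ref{Th2.5}, which you would need in order to translate the mixed Stokes result of \cite{B-M-M-W} into invertibility of a Neumann-to-Dirichlet operator, is stated only for $\alpha>0$. Hence the Fredholm property of $\Upsilon_\alpha$ on the full $p$-range is not established by your argument, and Lemma~\ref{Lem2 Fredholm} cannot yet be applied.

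The paper avoids this entirely: since $\{\widetilde{H}_p^0(S_D,\mathbb{R}^n)\}_p$ and $\{H_p^1(S_D,\mathbb{R}^n)\}_p$ are complex interpolation scales (Lemma~\ref{complex-interpolation}) and $\Upsilon_{{\rm nt};\alpha}=\Upsilon_\alpha$ is an isomorphism at $p=2$, Lemma~\ref{Prop1.4} (stability of invertibility on interpolation scales) directly yields an $\varepsilon_1>0$ with $\Upsilon_\alpha$ invertible for all $p\in(2-\varepsilon_1,2+\varepsilon_1)$. Taking $\varepsilon=\min\{\varepsilon_1,\epsilon\}$, with $\epsilon$ from Theorem~\ref{Th2.1'}, and invoking Lemma~\ref{Th2.5}(ii),(iii) finishes both items. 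Note that the ``standard interpolation/perturbation argument'' you allude to for $\Upsilon_0$ is precisely Lemma~\ref{Prop1.4}; applying it to $\Upsilon_\alpha$ itself removes the need for the $\alpha=0$ detour, the compactness claim, and Lemma~\ref{Lem2 Fredholm} altogether.
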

\begin{proof}
(i) By Theorem \ref{Th2.6} the mixed Dirichlet-Neumann problem \eqref{mixed homogeneous Brinkman system} is well-posed for $p=2$.
Then by Lemma \ref{Th2.5} (ii) and Theorem~\ref{Th2.1'} for $p=2$, the operator $\Upsilon_{{\rm nt};\alpha}:{\widetilde H}_2^0(S_{D},\mathbb{R}^{n})\to H_2^{1}(S_{D},\mathbb{R}^{n})$ is an isomorphism.
Moreover, by Lemma \ref{complex-interpolation}, the sets $\{{\widetilde H}_p^0(S_{D},\mathbb{R}^{n})\}_{p\geq 1}$ and $\{H_p^{1}(S_{D},\mathbb{R}^{n})\}_{p\geq 1}$ are complex interpolation scales.
Then by the stability of the invertibility property given in Lemma \ref{Prop1.4}, there exists a {number $\varepsilon _1>0$}, such that the operator $\Upsilon_{{\rm nt};\alpha}:{\widetilde H}_p^0(S_{D},\mathbb{R}^{n})\to H_p^{1}(S_D, \mathbb{R}^{n})$ is an isomorphism as well, for any {$p\in (2-\varepsilon _1, 2+\varepsilon _1)$}.
Finally, by choosing the parameter ${\varepsilon }:=\min \{{\epsilon },\varepsilon _1\}>0$, where $\epsilon$ is the parameter in Theorem \ref{Th2.1'}, and by using again Lemma~\ref{Th2.5} (ii), we deduce the well-posedness result of the mixed Dirichlet-Neumann problem \eqref{mixed homogeneous Brinkman system}
and estimates \eqref{ineq-M}-\eqref{tr-continuity3}, whenever $p\in (2-\varepsilon ,2+\varepsilon )$.

(ii) Let $\varepsilon$ be as in the proof of item (i). Let $p\in (2-\varepsilon ,2+\varepsilon )$. Then Lemma~\ref{Th2.5} (i) implies that $\Upsilon_\alpha=\Upsilon_{{\rm nt};\alpha}$, and hence
$\Upsilon_\alpha:{\widetilde H}_p^0(S_{D},\mathbb{R}^{n})\to H_p^{1}(S_D, \mathbb{R}^{n})$
is an isomorphism, and by Lemma~\ref{Th2.5} (ii) the mixed Dirichlet-Neumann problem \eqref{mixed homogeneous Brinkman systemG} is well posed
and estimates \eqref{ineq-M}-\eqref{tr-continuity3} hold.
\hfill\end{proof}

\begin{rem}
\label{solution-Lp}
Under the conditions of Theorem $\ref{Th2.7}$, the {solution $({\bf u},\pi)$ of the mixed Dirichlet-Neumann  problem \eqref{mixed homogeneous Brinkman system}} 
can be expressed by the single layer velocity and pressure potentials
\begin{equation}
\label{solution-D-N-Lp}
{\bf u}={\bf V}_{\alpha
}\left({\mathcal S}_{\alpha }^{-1}({\mathbf h_{0}},\mathbf g_{0})\right),\ \
\pi =\mathcal Q^s_{\partial\Omega}\left({\mathcal S}_{\alpha }^{-1}({\mathbf h_{0}},\mathbf g_{ 0})\right),
\end{equation}
where the operator
\begin{align}
\label{isomorphism-D-N-Lp}
{\mathcal S}_{\alpha }:L_p(\partial {\Omega},{\mathbb R}^n)\to H^1_p(S_D,{\mathbb R}^n)\times L_p(S_N,{\mathbb R}^n),\ {\mathcal S}_{\alpha }\boldsymbol\Psi :=\left(\left({\mathcal V}_{\alpha }\boldsymbol\Psi
\right)\big|_{{S_D}},\left(\left(\frac{1}{2}{\mathbb I}+{{\bf K}}_{\alpha }^*\right)\boldsymbol\Psi \right)\Big|_{{S_N}}\right)
\end{align}
is an isomorphism.
Indeed, as shown in the proof of Theorem~$\ref{Th2.6}$, the operator ${\mathcal S}_{\alpha }:L_2(\partial {\Omega},{\mathbb R}^n)\to H^1_2(S_D,{\mathbb R}^n)\times L_2(S_N,{\mathbb R}^n)$ is an {isomorphism}, and then, {by using again Lemma \ref{complex-interpolation} and Lemma $\ref{Prop1.4}$}, we can extend the isomorphism property of the operator \eqref{isomorphism-D-N-Lp} to $L_p$-spaces, with $p$ in a neighborhood of $2$, which can be chosen to coincide with that in Theorem $\ref{Th2.7}$.
\end{rem}

\subsection{\bf Poisson problem of mixed Dirichlet-Neumann type for the Brinkman system with data in $L_p$-based spaces}
Having in view Theorem \ref{Th2.7}, we are now able {to consider} the well-posedness of the following Poisson problem of mixed Dirichlet-Neumann type for the Brinkman system in a creased Lipschitz domain ${\Omega}_+$, with data in some $L_p$-based spaces,
\begin{equation}
\label{Poisson-mixed-Brinkman-Lp}
\left\{
\begin{array}{l}
{\triangle {\bf u} - \alpha {\bf u} - \nabla \pi ={\mathbf f}\in L_p({\Omega}_+,{\mathbb R}^3)},\
{\rm{div}}\ {\bf u} = 0\ \mbox{ in }\  {\Omega}_+\\
{{\gamma}_{+}{\bf u}}|_{S_D}={\mathbf h_{0}}\in H_p^{1}(S_{D},\mathbb{R}^{n}) \\
{\mathbf t_{\alpha}^{+}({\bf u},\pi )}|_{S_N}=\mathbf g_{0}\in L_p(S_{N},\mathbb{R}^{n}).
\end{array}
\right.
\end{equation}
\begin{remark}
\label{def-Poisson-mixed-DN}
{(i) {By a \it solution} of the Poisson problem of mixed Dirichlet-Neumann type \eqref{Poisson-mixed-Brinkman-Lp} we mean a pair $({\bf u},\pi)\in {B_{p,p^*}^{1+\frac{1}{p}}({\Omega}_+,\mathbb{R}^{n})}\times {B_{p,p^*}^{\frac{1}{p}}({\Omega}_+)}$, where $p^{*}=\max\{2, p\}$, which satisfies the non-homogeneous Brinkman system in $\Omega _+$, the Dirichlet boundary condition on $S_D$ in the Gagliardo trace sense, and the Neumann boundary condition on $S_N$ in the canonical sense described in Definition $\ref{lem 1.6D}$}.

{(ii) If a pair $({\bf u},\pi)\in {B_{p,p^*}^{1+\frac{1}{p}}({\Omega}_+,\mathbb{R}^{n})}\times {B_{p,p^*}^{\frac{1}{p}}({\Omega}_+)}$, $p\in (1,\infty)$, solves the non-homogeneous Brinkman system in the first line of \eqref{Poisson-mixed-Brinkman-Lp} with  ${\bf f}\in L_p(\partial \Omega ,{\mathbb R}^n)$, then
$({\bf u},\pi)\in \mathfrak{B}_{p,p^*,\rm div}^{1+\frac{1}{p},0}(\Omega_+;{\mathcal L}_{\alpha})$
by Definition~\ref{2.5}.
Hence, by Lemma~\ref{trace-lemma-Besov}, Definition~\ref{lem 1.6D}, Lemma~\ref{lem 1.6} and the embeddings $B_{p,p^*}^{1+\frac{1}{p}}({\Omega}_+,\mathbb{R}^{n})\hookrightarrow B_{p,p^*}^{s+\frac{1}{p}}({\Omega}_+,\mathbb{R}^{n})$,
$\mathfrak{B}_{p,p^*,\rm div}^{1+\frac{1}{p},0}(\Omega_+;{\mathcal L}_{\alpha})\hookrightarrow
\mathfrak{B}_{p,p^*,\rm div}^{s+\frac{1}{p},-\frac{1}{p'}}(\Omega_+;{\mathcal L}_{\alpha})$, for any $0<s<1$,
the trace $\gamma_{+}{\bf u}$ and canonical conormal derivative $\mathbf t_{\alpha}^{+}({\bf u},\pi)$ are well defined and belong to $B_{p,p^*}^{s}(\partial \Omega ,\mathbb{R}^{n})$ and  $B_{p,p^*}^{s-1}(\partial \Omega ,\mathbb{R}^{n})$, respectively.
Thus, the boundary conditions in \eqref{Poisson-mixed-Brinkman-Lp} are well defined as well.
In what follows, we show that the sharper inclusions, $\gamma_{+}{\bf u}\in H_{p}^{1}(\partial \Omega ,\mathbb{R}^{n})$ and $\mathbf t_{\alpha}^{+}({\bf u},\pi)\in L_p(\partial \Omega ,\mathbb{R}^{n})$, hold if the spaces of the given boundary data in the boundary conditions are those mentioned in \eqref{Poisson-mixed-Brinkman-Lp}.}
\comment{
Then the embeddings ${B_{p,p^*}^{1+\frac{1}{p}}({\Omega}_+,\mathbb{R}^{n})}\times {B_{p,p^*}^{\frac{1}{p}}({\Omega}_+)}\hookrightarrow {B_{p,p^*}^{s+\frac{1}{p}}({\Omega}_+,\mathbb{R}^{n})}\times {B_{p,p^*}^{\frac{1}{p}-s}({\Omega}_+)}$ and $L_p(\partial \Omega ,{\mathbb R}^n)\hookrightarrow B_{p,p^*}^{s-1}({\Omega}_+,\mathbb{R}^{n})$
show that for any $({\bf u},\pi)\in {B_{p,p^*}^{1+\frac{1}{p}}({\Omega}_+,\mathbb{R}^{n})}\times {B_{p,p^*}^{\frac{1}{p}}({\Omega}_+)}$, the Gagliardo trace ${\gamma}_{+}{\bf u}$ belongs to the space $B_{p,p^*}^{s}(\partial \Omega ,\mathbb{R}^{n})$, while the corresponding canonical conormal derivative $\mathbf t_{\alpha}^{+}({\bf u},\pi )$ belongs to the Besov space $B_{p,p^*}^{s-1}(\partial \Omega ,\mathbb{R}^{n})$.
Thus, the boundary conditions in the mixed problem \eqref{Poisson-mixed-Brinkman-Lp} are considered in the narrow spaces $H_p^{1}(S_{D},\mathbb{R}^{n})\hookrightarrow B_{p,p^*}^{s}(S_D, \mathbb{R}^{n})$ and $L_p(S_{N},\mathbb{R}^{n})\hookrightarrow B_{p,p^*}^{s-1}(S_N,\mathbb{R}^{n})$, respectively.}
\end{remark}
\begin{theorem}
\label{Poisson-mixed-Lp}
Assume that ${\Omega}_+\subset \mathbb{R}^{n}$ $(n\geq 3)$
is a bounded, creased Lipschitz domain with connected boundary $\partial {\Omega}$, and that $\partial {\Omega}$ is decomposed into two disjoint admissible patches $S_{D}$ and $S_{N}$. Then there exists a number {$\varepsilon >0$ such that for any $p\in (2-\varepsilon ,2+\varepsilon )$} and for all given data
$({{\mathbf f}},{\mathbf h_{0}},\mathbf g_{0})\in
L_p({\Omega}_+,{\mathbb R}^n)\times H_p^{1}(S_{D},\mathbb{R}^{n})\times L_p(S_{N},\mathbb{R}^{n})$
the Poisson problem of mixed Dirichlet-Neumann type \eqref{Poisson-mixed-Brinkman-Lp} has a solution
$({\bf u},\pi)\in B_{p,p^*}^{1+\frac{1}{p}}({\Omega}_+,\mathbb{R}^{n})\times B_{p,p^*}^{\frac{1}{p}}({\Omega}_+)$
that can be represented in the form
\begin{align}
\label{Poisson-mixed-DN}
{{\bf u}={\mathbf N}_{\alpha ;{\Omega}_+}{\mathbf f}+{\bf V}_{\alpha
}\left({\mathcal S}_{\alpha }^{-1}({\mathbf h_{00}},\mathbf g_{00})\right),\ \pi ={\mathcal Q}_{{\Omega}_+}{\mathbf f}+\mathcal Q^s_{\partial\Omega}\left({\mathcal S}_{\alpha }^{-1}({\mathbf h_{00}},\mathbf g_{00})\right),}
\end{align}
{where ${\mathcal S}_{\alpha }:L_p(\partial {\Omega},{\mathbb R}^n)\to H^1_p(S_D,{\mathbb R}^n)\times L_p(S_N,{\mathbb R}^n)$ is the isomorphism defined in \eqref{isomorphism-D-N-Lp}, and}
\begin{align}
\label{modif-Lp}
{{\mathbf h_{00}}:={\mathbf h_{0}}-{\gamma_+}\left({\mathbf N}_{\alpha ;{\Omega}_+}{{\mathbf f}}\right)|_{S_D}\in H^1_p(S_D,{\mathbb R}^n),\ \
{\mathbf g_{00}}:=\mathbf g_{0}-{\mathbf t_{\alpha}^{+}}\left({\mathbf N}_{\alpha ;{\Omega}_+}{{\mathbf f}},{\mathcal Q}_{\alpha ;{\Omega}_+}{{\mathbf f}}\right)|_{S_N}\in L_p(S_N,{\mathbb R}^n).}
\end{align}
Moreover,
{the solution $({\bf u},\pi)$
is unique in the space} ${B_{p,p^*}^{1+\frac{1}{p}}({\Omega}_+,\mathbb{R}^{n})}\times {B_{p,p^*}^{\frac{1}{p}}({\Omega}_+)},$ and there exist some constants\\ $C\equiv C(\alpha, p,S_{D},S_{N})>0$ and $C'\equiv C'(\alpha, p,S_{D},S_{N})>0$ such that the following inequalities hold
\begin{align}
\label{continuity-data-p}
\|{\bf u}\|_{B_{p,p^{*}}^{1+\frac{1}{p}}({\Omega}_+,\mathbb{R}^{n})}+\|\pi\|_{ B_{p,p^{*}}^{\frac{1}{p}}({\Omega}_+)}
&\leq C\left(\mathbf f\|_{L_p(\Omega_+,\mathbb{R}^{n})}+ \|{\mathbf h_{0}}\|_{H_p^{1}(S_{D}, \mathbb{R}^{n})} +\|\mathbf g_{0}\|_{L_p(S_{N},\mathbb{R}^{n})}\right),
\\
\label{cond-p}
\|\gamma_+{\bf u}\|_{H_{p}^1(\partial\Omega,\mathbb{R}^n)}+\|\mathbf t_\alpha ^+({\bf u},\pi)\|_{L_p(\partial\Omega,\mathbb{R}^{n})}
&\le C'\left(\mathbf f\|_{L_p(\Omega_+,\mathbb{R}^{n})}+\|{\mathbf h_{0}}\|_{H_p^{1}(S_{D},\mathbb{R}^{n})}+\|\mathbf g_{0}\|_{L_p(S_{N},\mathbb{R}^{n})}\right).
\end{align}
In addition, there exists a linear continuous operator
\begin{align}
{\mathcal A}_p:{L_p({\Omega}_+,\mathbb{R}^{n})}\times H_p^{1}(S_{D},\mathbb{R}^{n})\times L_p(S_{N},\mathbb{R}^{n})\to {B_{p,p^*}^{1+\frac{1}{p}}({\Omega}_+,\mathbb{R}^{n})}\times { B_{p,p^*}^{\frac{1}{p}}({\Omega}_+)}\nonumber
\end{align}
delivering this solution{, i.e., $\mathcal A_p({{\mathbf f}},{\mathbf h_{0}},\mathbf g_{0})=({\bf u},\pi)$}.
\end{theorem}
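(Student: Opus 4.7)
The plan is to reduce the Poisson problem \eqref{Poisson-mixed-Brinkman-Lp} to the mixed boundary value problem for the homogeneous Brinkman system from Theorem~\ref{Th2.7}(ii) by subtracting off the Newtonian potentials associated with the right-hand side ${\bf f}$. Specifically, I would set $({\bf u},\pi)=({\bf N}_{\alpha;\Omega_+}{\bf f},{\mathcal Q}_{\Omega_+}{\bf f})+({\bf v},q)$ and show that, for $p$ in a suitable neighborhood of $2$, the pair $({\bf v},q)$ must solve a problem of type \eqref{mixed homogeneous Brinkman systemG} with the modified boundary data ${\bf h}_{00}$ and ${\bf g}_{00}$ defined in \eqref{modif-Lp}. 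The first step is to invoke Corollary~\ref{C2.16} to conclude that $(\mathbf N_{\alpha;\Omega_+}{\bf f},{\mathcal Q}_{\Omega_+}{\bf f})\in \mathfrak B_{p,p^*,\rm div}^{2,0}(\Omega_+,\mathcal L_\alpha)\hookrightarrow \mathfrak B_{p,p^*,\rm div}^{1+\frac1p,0}(\Omega_+,\mathcal L_\alpha)$ and that the Brinkman equation $\mathcal L_\alpha(\mathbf N_{\alpha;\Omega_+}{\bf f},{\mathcal Q}_{\Omega_+}{\bf f})={\bf f}$ holds in $\Omega_+$ together with $\mathrm{div}\,\mathbf N_{\alpha;\Omega_+}{\bf f}=0$. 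Next, Remark~\ref{gammaN} gives $\gamma_+\mathbf N_{\alpha;\Omega_+}{\bf f}\in H^1_{p;\boldsymbol\nu}(\partial\Omega,\mathbb R^n)$ and $\mathbf t_\alpha^+(\mathbf N_{\alpha;\Omega_+}{\bf f},{\mathcal Q}_{\Omega_+}{\bf f})\in L_p(\partial\Omega,\mathbb R^n)$ together with the corresponding norm bounds, so ${\bf h}_{00}={\bf h}_0-(\gamma_+\mathbf N_{\alpha;\Omega_+}{\bf f})|_{S_D}\in H^1_p(S_D,\mathbb R^n)$ and ${\bf g}_{00}={\bf g}_0-\mathbf t_\alpha^+(\mathbf N_{\alpha;\Omega_+}{\bf f},{\mathcal Q}_{\Omega_+}{\bf f})|_{S_N}\in L_p(S_N,\mathbb R^n)$.

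Having reduced to the homogeneous mixed problem
\begin{equation*}
\triangle{\bf v}-\alpha{\bf v}-\nabla q={\bf 0},\ \mathrm{div}\,{\bf v}=0\ \text{in}\ \Omega_+,\quad (\gamma_+{\bf v})|_{S_D}={\bf h}_{00},\quad \mathbf t_\alpha^+({\bf v},q)|_{S_N}={\bf g}_{00},
\end{equation*}
I would apply Theorem~\ref{Th2.7}(ii) with the same $\varepsilon>0$ as there to obtain a unique solution $({\bf v},q)\in B^{1+\frac1p}_{p,p^*}(\Omega_+,\mathbb R^n)\times B^{\frac1p}_{p,p^*}(\Omega_+)$ satisfying estimates \eqref{ineq-M}-\eqref{tr-continuity3} with data $({\bf h}_{00},{\bf g}_{00})$. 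By Remark~\ref{solution-Lp} (extended to the Gagliardo/canonical setting exactly as in the proof of Lemma~\ref{Th2.5}(iii)), this solution admits the single-layer representation ${\bf v}={\bf V}_\alpha(\mathcal S_\alpha^{-1}({\bf h}_{00},{\bf g}_{00}))$, $q=\mathcal Q^s_{\partial\Omega}(\mathcal S_\alpha^{-1}({\bf h}_{00},{\bf g}_{00}))$, which yields formula \eqref{Poisson-mixed-DN}.

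The estimates \eqref{continuity-data-p}-\eqref{cond-p} now follow by combining the bounds for $({\bf v},q)$ with the continuity of the Newtonian velocity and pressure potentials (Corollary~\ref{C2.16} and Remark~\ref{gammaN}) and the continuity of the restriction operators $H^1_p(\partial\Omega,\mathbb R^n)\to H^1_p(S_D,\mathbb R^n)$ and $L_p(\partial\Omega,\mathbb R^n)\to L_p(S_N,\mathbb R^n)$; the existence of the continuous linear operator $\mathcal A_p$ is then immediate from linearity and continuity of the map
$$
({\bf f},{\bf h}_0,{\bf g}_0)\mapsto ({\bf h}_{00},{\bf g}_{00})\mapsto \mathcal S_\alpha^{-1}({\bf h}_{00},{\bf g}_{00})\mapsto ({\bf u},\pi).
$$

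For uniqueness, suppose $({\bf u}^0,\pi^0)\in B^{1+\frac1p}_{p,p^*}(\Omega_+,\mathbb R^n)\times B^{\frac1p}_{p,p^*}(\Omega_+)$ solves the homogeneous version of \eqref{Poisson-mixed-Brinkman-Lp} (with ${\bf f}={\bf 0}$, ${\bf h}_0={\bf 0}$, ${\bf g}_0={\bf 0}$). Then $({\bf u}^0,\pi^0)$ satisfies precisely the hypothesis of Theorem~\ref{Th2.7}(ii) with zero data, so ${\bf u}^0={\bf 0}$ and $\pi^0=0$ in $\Omega_+$. The main subtlety I expect to confront is the bookkeeping required to verify that the modified data ${\bf h}_{00}$ and ${\bf g}_{00}$ indeed live in the spaces $H^1_p(S_D,\mathbb R^n)$ and $L_p(S_N,\mathbb R^n)$ required by Theorem~\ref{Th2.7}(ii); this is where the sharper inclusions from Remark~\ref{gammaN} (rather than the generic $B^{s-1}_{p,p^*}$ conormal traces from Definition~\ref{lem 1.6D}) are essential, because only those sharper estimates are strong enough to close the argument. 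Once that integrability is secured, the rest reduces to assembling the continuous operator chains already established.
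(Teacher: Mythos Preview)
Your proposal is correct and follows essentially the same approach as the paper: subtract the Newtonian potentials $({\mathbf N}_{\alpha;\Omega_+}{\mathbf f},{\mathcal Q}_{\Omega_+}{\mathbf f})$ using Corollary~\ref{C2.16} and Remark~\ref{gammaN}, reduce to the homogeneous mixed problem \eqref{mixed homogeneous Brinkman systemG} with data $({\mathbf h}_{00},{\mathbf g}_{00})$, apply Theorem~\ref{Th2.7}(ii) for existence, uniqueness and the estimates, and invoke the single-layer representation via $\mathcal S_\alpha^{-1}$ (the paper cites Lemma~\ref{L3.6} here rather than Remark~\ref{solution-Lp}, but the content is the same). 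Your identification of the key subtlety---that the sharper $H^1_p$ and $L_p$ trace inclusions from Remark~\ref{gammaN}, rather than the generic Besov-space conormal derivatives, are what place ${\mathbf h}_{00},{\mathbf g}_{00}$ in the spaces required by Theorem~\ref{Th2.7}(ii)---matches exactly what the paper relies on.
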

\begin{proof}
Let $\varepsilon >0$ as in Theorem \ref{Th2.7}, and let $p\in (2-\varepsilon ,2+\varepsilon )$.
We will look for a solution of problem \eqref{Poisson-mixed-Brinkman-Lp} in the form
\begin{align}
\label{Poisson-mixed-2-Lp}
{\bf u}={\mathbf N}_{\alpha ;{\Omega}_+}{{\mathbf f}}+{\bf v},\ \pi ={\mathcal Q}_{{\Omega}_+}{{\mathbf f}}+q,
\end{align}
where the Newtonian velocity and pressure potentials
${\mathbf N}_{\alpha ;{\Omega}_+}{{\mathbf f}}$ and ${\mathcal Q}_{\Omega_+}{{\mathbf f}}$ are defined by \eqref{Newtonian-1a-Lp}.
{By properties \eqref{Newtonian-1b-Lpm}-\eqref{Newtonian-1p},
Corollary~\ref{C2.16} and Remark~\ref{gammaN}}, we obtain that
\begin{align}
\label{Newtonian-2-Lp}
&\triangle {\mathbf N}_{\alpha ;{\Omega_+}}{{\mathbf f}}-\alpha {\mathbf N}_{\alpha ;{\Omega_+}}{{\mathbf f}}-\nabla {\mathcal Q}_{\Omega_+}{{\mathbf f}}={{\mathbf f}},\
{\rm{div}}\ {\mathbf N}_{\alpha ;{\Omega_\pm}}{{\mathbf f}}=0\ \mbox{ in }\ {\Omega}_+,\\
\label{Poisson-mixed-1-LpN}
&{(\mathbf N_{\alpha;\Omega_+}{{\mathbf f}},{\mathcal Q}_{\Omega_+}{{\mathbf f}})\in {H_p^2(\Omega _+,{\mathbb R}^n)\times H_p^1(\Omega _+)}\hookrightarrow {B_{p,p^*}^{1+\frac{1}{p}}({\Omega}_+,\mathbb{R}^{n})}\times B_{p,p^*}^{\frac{1}{p}}({\Omega}_+),}\\
\ \
&{{\gamma}_{+}\mathbf N_{\alpha;{\Omega}_+}{{\mathbf f}}\in H^1_p(\partial {\Omega},{\mathbb R}^n),\ \
{\mathbf t}_\alpha^{+}(\mathbf N_{\alpha;{\Omega}_+}{{\mathbf f}},{\mathcal Q}_{\Omega_+}{{\mathbf f}})
\in L_p(\partial {\Omega},{\mathbb R}^n),}
\end{align}
{where $\gamma _+$ is the Gagliardo trace operator from $H_p^2(\Omega _+,{\mathbb R}^n)$ to $H^1_p(\partial {\Omega},{\mathbb R}^n)$.} 
Then
the mixed Poisson problem
\eqref{Poisson-mixed-Brinkman-Lp} reduces to the mixed
problem for the corresponding homogeneous system,
\begin{equation}
\label{Newtonian-4-Lp}
\left\{
\begin{array}{l}
\triangle {\bf v} - \alpha {\bf v} - \nabla q= {\bf 0},\
{\rm{div}}\ {\bf v}= 0\ \mbox{ in }\  {\Omega}_+,\\
{\gamma_+{\bf v}}|_{S_D}={\mathbf h_{00}}\in H_p^{1}(S_{D},\mathbb{R}^{n}), \\
{\mathbf t_{\alpha}^+({\bf v},q)}|_{S_N}={\mathbf g_{00}}\in L_p(S_{N},\mathbb{R}^{n}),
\end{array}
\right.
\end{equation}
where ${\mathbf h_{00}}\in H^1_p(S_D,{\mathbb R}^n)$ and ${\mathbf g_{00}}\in L_p(S_N,{\mathbb R}^n)$ are given by \eqref{modif-Lp}, {and these inclusions follow from \eqref{Poisson-mixed-1-LpN}.}
\comment{
\begin{align}
\label{modif-Lp}
{\mathbf h_{00}}:={\mathbf h_{0}}-{\gamma_+}\left({\mathbf N}_{\alpha ;{\Omega}_+}{{\mathbf f}}\right)|_{S_D}\in H^1_p(S_D,{\mathbb R}^n),\ \
{\mathbf g_{00}}:=\mathbf g_{0}-{\mathbf t_{\alpha}^{+}}\left({\mathbf N}_{\alpha ;{\Omega}_+}{{\mathbf f}},{\mathcal Q}_{\alpha ;{\Omega}_+}{{\mathbf f}}\right)|_{S_N}\in L_p(S_N,{\mathbb R}^n),
\end{align}}

By Theorem \ref{Th2.7}{(ii)},
\comment{According to Lemma \ref{L3.6} the single layer velocity and pressure potentials
\begin{equation}
\label{solution-D-N-Lp0}
{{\bf v}={\bf V}_{\alpha }\left({\mathcal S}_{\alpha }^{-1}({\mathbf h_{00}},\mathbf g_{00})\right),\ \
q=\mathcal Q^s_{\partial\Omega}\left({\mathcal S}_{\alpha }^{-1}({\mathbf h_{00}},\mathbf g_{00})\right),}
\end{equation}
where ${\mathcal S}_{\alpha }$ from $L_p(\partial {\Omega},{\mathbb R}^n)$ to $H^1_p(S_D,{\mathbb R}^n)\times L_p(S_N,{\mathbb R}^n)$ is the isomorphism defined by \eqref{isomorphism-D-N-Lp}, determine a solution of
problem \eqref{Newtonian-4-Lp} in the sense described in Remark \ref{def-Poisson-mixed-DN}.
Moreover, in view of Theorem \ref{layer-potential-properties} (i) and Lemma \ref{L3.6}, the pair
$({\bf v},q)$ belongs to the space ${B_{p,p^*}^{1+\frac{1}{p}}({\Omega}_+,\mathbb{R}^{n})}\times {B_{p,p^*}^{\frac{1}{p}}({\Omega}_+)}$, is the unique solution of the problem \eqref{Newtonian-4-Lp} in this space, and satisfies the following
estimates
} 
there exists a unique solution $({\bf v},q)\in{B_{p,p^*}^{1+\frac{1}{p}}({\Omega}_+,\mathbb{R}^{n})}\times {B_{p,p^*}^{\frac{1}{p}}({\Omega}_+)}$ of problem \eqref{Newtonian-4-Lp}, and it satisfies the following estimates
\begin{align}
\label{estimate-mixed-Poisson-1-Lp}
&\|{\bf v}\|_{{B_{p,p^*}^{1+\frac{1}{p}}({\Omega}_+,\mathbb{R}^{n})}}+ \|q\|_{{ B_{p,p^*}^{\frac{1}{p}}({\Omega}_+)}}\leq c\left(\|{\mathbf h_{00}}\|_{H_p^{1}(S_{D},\mathbb{R}^{ n})}+\|\mathbf g_{00}\|_{L_p(S_{N}, \mathbb{R}^{n})}\right),\\
\label{tr-continuity4}
&\|\gamma_+{\bf v}\|_{H_{p}^1(\partial\Omega,\mathbb{R}^n)}+\|\mathbf t_\alpha ^+({\bf v},q)\|_{L_p(\partial\Omega,\mathbb{R}^{n}))}
\le c'\left(\|{\mathbf h_{00}}\|_{H_p^{1}(S_{D},\mathbb{R}^{n})}+\|\mathbf g_{00}\|_{L_p(S_{N},\mathbb{R}^{n})}\right),
\end{align}
with some constants $c\equiv c(\alpha, p,S_{D},S_{N})>0$ and $c'\equiv c'(\alpha, p,S_{D},S_{N})>0$.

According to Lemma \ref{L3.6} the single layer velocity and pressure potentials
\begin{equation}
\label{solution-D-N-Lp0}
{{\bf v}={\bf V}_{\alpha }\left({\mathcal S}_{\alpha }^{-1}({\mathbf h_{00}},\mathbf g_{00})\right),\ \
q=\mathcal Q^s_{\partial\Omega}\left({\mathcal S}_{\alpha }^{-1}({\mathbf h_{00}},\mathbf g_{00})\right),}
\end{equation}
where ${\mathcal S}_{\alpha }:L_p(\partial {\Omega},{\mathbb R}^n)\to H^1_p(S_D,{\mathbb R}^n)\times L_p(S_N,{\mathbb R}^n)$ is the isomorphism defined by \eqref{isomorphism-D-N-Lp}, determine the unique solution of problem \eqref{Newtonian-4-Lp}. Moreover, in view of Theorem \ref{layer-potential-properties} (i) and Lemma \ref{L3.6}, the pair
$({\bf v},q)$ given by \eqref{solution-D-N-Lp0} belongs indeed to the space ${B_{p,p^*}^{1+\frac{1}{p}}({\Omega}_+,\mathbb{R}^{n})}\times {B_{p,p^*}^{\frac{1}{p}}({\Omega}_+)}$,

Therefore, there exists a solution $({\bf u},\pi)\in {B_{p,p^*}^{1+\frac{1}{p}}({\Omega}_+,\mathbb{R}^{n})}\times  B_{p,p^*}^{\frac{1}{p}}({\Omega}_+)$
of the {mixed Poisson problem \eqref{Poisson-mixed-Brinkman-Lp}, which is given by representation \eqref{Poisson-mixed-DN} and satisfies estimates \eqref{continuity-data-p} and \eqref{cond-p}. The uniquness result of such a solution follows from Theorem \ref{Th2.7} (ii).
Moreover, linearity and continuity of the Newtonian potential operators  \eqref{Newtonian-1}, \eqref{Newtonian-1p} and estimate \eqref{estimate-mixed-Poisson-1-Lp}
imply the continuity of the operator ${\mathcal{A}_p}$ delivering such a solution.}
\hfill\end{proof}

\section{Mixed Dirichlet-Neumann problem for the semilinear Darcy-Forchheimer-Brinkman system in Besov spaces}
\label{section3}
Next we consider the mixed Dirichlet-Neumann problem for the semilinear Darcy-Forchheimer-Brinkman system
\begin{equation}
\label{D-F-B system}
\triangle {\bf u} - \alpha {\bf u} - {\beta |{\bf u}|{\bf u}}-\nabla \pi ={\bf f},\ \
{\rm{div}}\ {\bf u} = 0\ \mbox{ in }\  {\Omega}_+.
\end{equation}
Such a nonlinear system describes flows in porous media saturated with viscous incompressible fluids (see, e.g., \cite[p.17]{Ni-Be}), and the constants $\alpha ,\beta >0$ are related by the physical properties of such a porous medium, as they describe the viscosity and the convection of the fluid flow.

Due to some embedding results that play a main role in our {arguments, we will restrict our analysis in this section to the cases $n=3$}.

A numerical study of a mixed Dirichlet-Neumann problem for system \eqref{D-F-B system} in the particular case of a two-dimensional square cavity driven by a moving wall has been obtained in \cite{Robert}. Well-posedness and numerical results for an extended nonlinear system, called the Darcy-Forchheimer-Brinkman system, where both semilinear and nonlinear terms $|{\bf u}|{\bf u}$ and $({\bf u} \cdot \nabla ){\bf u}$ are involved, have been obtained in \cite{G-K-W}, and boundary value problems of Robin type for the Darcy-Forchheimer-Brinkman system with data in $L_2$-based {Bessel potential (Sobolev)} spaces have been studied in \cite{K-L-W3, K-L-W2}.

In what follows, we extend an existence and uniqueness result obtained in \cite[Theorem 7.1]{K-L-W2} for the mixed problem \eqref{Darcy-Brinkamn problem} with the given data in $L_2$-based Sobolev spaces, to the case of $L_p$-based {Bessel potential} spaces, i.e., when the given boundary data $({\mathbf h_{ 0}},\mathbf g_{0})$ belong to the space $H_p^{1}(S_{D},\mathbb{R}^{n}) \times L_p(S_{N},\mathbb{R}^{n})$, with {$p\in \left(2-\varepsilon ,2+\varepsilon \right)$}, and the parameter $\varepsilon >0$ as in Theorem \ref{Poisson-mixed-Lp}. In addition, the given data should be sufficiently small in a sense that will be specified below.
\begin{theorem}
\label{Theorem 3.2}
Assume that ${\Omega}_+\subset \mathbb{R}^{3}$ is a bounded creased Lipschitz domain with connected boundary $\partial {\Omega}$, and that $\partial {\Omega}$ is decomposed into two disjoint admissible patches $S_{D}$ and $S_{N}$. Let $\alpha ,\beta >0$ be given constants.
Then there exists a number $\varepsilon >0$ such that for any $p\in \left(2-\varepsilon ,2+\varepsilon \right)$ {and $p^{*} = \max\{2, p\}$,} there exist two constants {$\zeta _p\equiv \zeta _p({\Omega}_+,\alpha,\beta, p)>0$ and $\eta _p\equiv \eta _p({\Omega}_+,\alpha,\beta, p)>0$} with the property that for all given data
$({\mathbf f,}{\mathbf h_{0}},\mathbf g_{0})\in
{L_p(\Omega_+,\mathbb{R}^3)\times}H_p^{1}(S_{D},\mathbb{R}^{3})\times L_p(S_{N},\mathbb{R}^{3})$
satisfying the condition
\begin{equation}
\label{inequality p2 1}
\|{\mathbf h_{0}}\|_{H_p^{1}(S_{D},\mathbb{R}^{3})}+\|\mathbf g_{0}\|_{L_p(S_{N},\mathbb{R}^{3})}
{+\|\mathbf f\|_{L_p(\Omega_+,\mathbb{R}^3)}}
\leq {\zeta _p},
\end{equation}
the mixed Dirichlet-Neumann problem for the semilinear Darcy-Forchheimer-Brinkman system
\begin{equation}
\label{Darcy-Brinkamn problem}
\left\{
\begin{array}{l}
\triangle {\bf u} - \alpha {\bf u} - {\beta |{\bf u}|{\bf u}} - \nabla \pi ={\bf f},\ {\rm{div}}\ {\bf u}=0\ \mbox{ in }\ {\Omega}_+,\\
{\gamma}_{+}{\bf u}|_{S_D}={\mathbf h_{0}}\ \mbox{ on }\ S_D\\
\mathbf t_{\alpha}^{+}({\bf u},\pi )|_{S_N}=\mathbf g_{0}\ \mbox{ on }\ S_N
\end{array}
\right.
\end{equation}
has a unique solution $({\bf u},\pi)\in {B_{p,p^*}^{1+\frac{1}{p}}({\Omega}_+,\mathbb{R}^{n})\times B_{p,p^*}^{\frac{1}{p}}({\Omega}_+)}$, which satisfies the inequality
\begin{equation}
\label{inequality p2 2-Lp}
\|{\bf u}\|_{{B_{p,p^*}^{1+\frac{1}{p}}({\Omega}_+,\mathbb{R}^{n})}}\leq {\eta _p}.
\end{equation}
{Moreover, ${{\gamma}_{+}{\bf u}\in H^1_p(\partial {\Omega},{\mathbb R}^{n})},\ {{\mathbf t}_\alpha^{+}({\bf u},\pi )\in L_p(\partial {\Omega},{\mathbb R}^{n})}$
and} the solution depends continuously on the given data, which means that there exists {some constants $C_*\equiv C_*(\Omega_+,\alpha,\beta,p)>0$ and $C_*'\equiv C_*(\Omega_+,\alpha,\beta,p)>0$} such that
\begin{align}
\label{estimate-D-B-F-new1-new1-D-Rn2-Lp}
\|{\bf u}\|_{{B_{p,p^*}^{1+\frac{1}{p}}({\Omega}_+,\mathbb{R}^{n})}}+\|\pi \|_{{ B_{p,p^*}^{\frac{1}{p}}({\Omega}_+)}}
&\leq C_*\left({\|\mathbf f\|_{L_p(\Omega_+,\mathbb{R}^{n})}+}
\|{\mathbf h_{ 0}}\|_{H_p^{1}(S_D,\mathbb{R}^{n})}
+\|\mathbf g_{0}\|_{L_p(S_N,\mathbb{R}^{n})}\right),\\
\label{cond-pDF}
\|\gamma_+{\bf u}\|_{H_{p}^1(\partial\Omega,\mathbb{R}^n)}
+\|\mathbf t^+_\alpha({\bf u},\pi)\|_{L_p(\partial\Omega,\mathbb{R}^{n}))}
&\le C_*'\left(\|\mathbf f\|_{L_p(\Omega_+,\mathbb{R}^{n})}
+\|{\mathbf h_{0}}\|_{H_p^{1}(S_{D},\mathbb{R}^{n})}+\|\mathbf g_{0}\|_{L_p(S_{N},\mathbb{R}^{n})}\right).
\end{align}
\end{theorem}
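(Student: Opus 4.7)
The plan is to reformulate problem \eqref{Darcy-Brinkamn problem} as a fixed point equation and to apply Banach's contraction principle in a small closed ball of the Banach space $\mathfrak{X}_p:=B_{p,p^*}^{1+\frac{1}{p}}({\Omega}_+,\mathbb{R}^{3})\times B_{p,p^*}^{\frac{1}{p}}({\Omega}_+)$, relying on the well-posedness result for the linear Poisson-type mixed problem proved in Theorem~\ref{Poisson-mixed-Lp}. For any $({\bf w},\varpi)\in\mathfrak{X}_p$, let $\mathcal{T}({\bf w},\varpi):=({\bf u},\pi)=\mathcal{A}_p\bigl({\bf f}+\beta|{\bf w}|{\bf w},\,{\mathbf h_0},\,\mathbf g_0\bigr)\in\mathfrak{X}_p$ denote the unique solution of the linear mixed Poisson problem \eqref{Poisson-mixed-Brinkman-Lp} with modified right-hand side ${\bf f}+\beta|{\bf w}|{\bf w}$. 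A fixed point of $\mathcal{T}$ in $\mathfrak{X}_p$ is exactly a solution of \eqref{Darcy-Brinkamn problem}, so it suffices to show that, for suitably small $\zeta_p$ and $\eta_p$, $\mathcal T$ is a contraction of the closed ball $\overline B_{\eta_p}\subset\mathfrak X_p$ into itself.

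The main analytic ingredient will be a continuous Sobolev-type embedding allowing the nonlinear term to be estimated in $L_p(\Omega_+,\mathbb{R}^3)$. Since $n=3$ and $p$ lies in a neighbourhood of $2$, we have $B_{p,p^*}^{1+\frac{1}{p}}(\Omega_+,\mathbb{R}^3)\hookrightarrow L_{2p}(\Omega_+,\mathbb{R}^3)$ (for $p$ close enough to $2$; at $p=2$ this reduces to $H_2^{3/2}(\Omega_+)\hookrightarrow L_q(\Omega_+)$ for any $q<\infty$), and the H\"older inequality yields
\begin{equation*}
\bigl\||{\bf w}_1|{\bf w}_1-|{\bf w}_2|{\bf w}_2\bigr\|_{L_p(\Omega_+,\mathbb{R}^3)}\le c_0\bigl(\|{\bf w}_1\|_{L_{2p}}+\|{\bf w}_2\|_{L_{2p}}\bigr)\|{\bf w}_1-{\bf w}_2\|_{L_{2p}}\le c_1\bigl(\|{\bf w}_1\|_{\mathfrak X_p}+\|{\bf w}_2\|_{\mathfrak X_p}\bigr)\|{\bf w}_1-{\bf w}_2\|_{\mathfrak X_p},
\end{equation*}
with constants $c_0,c_1$ depending only on $\Omega_+$ and $p$. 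In particular, $\||{\bf w}|{\bf w}\|_{L_p(\Omega_+,\mathbb{R}^3)}\le c_1\|{\bf w}\|_{\mathfrak X_p}^2$. Establishing this embedding and the Lipschitz-type inequality above is the principal technical step.

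Combining the last bound with the continuity estimate \eqref{continuity-data-p} of the solution operator $\mathcal{A}_p$, we obtain
\begin{equation*}
\|\mathcal T({\bf w},\varpi)\|_{\mathfrak X_p}\le C\bigl(\|{\bf f}\|_{L_p(\Omega_+,\mathbb R^3)}+\beta c_1\|{\bf w}\|_{\mathfrak X_p}^2+\|{\mathbf h_0}\|_{H_p^1(S_D,\mathbb R^3)}+\|\mathbf g_0\|_{L_p(S_N,\mathbb R^3)}\bigr),
\end{equation*}
\begin{equation*}
\|\mathcal T({\bf w}_1,\varpi_1)-\mathcal T({\bf w}_2,\varpi_2)\|_{\mathfrak X_p}\le C\beta c_1\bigl(\|{\bf w}_1\|_{\mathfrak X_p}+\|{\bf w}_2\|_{\mathfrak X_p}\bigr)\|{\bf w}_1-{\bf w}_2\|_{\mathfrak X_p}.
\end{equation*}
Choosing $\eta_p$ sufficiently small so that $2C\beta c_1\eta_p\le 1/2$, and then $\zeta_p$ small enough so that $C\zeta_p+C\beta c_1\eta_p^2\le \eta_p$, the map $\mathcal T$ sends $\overline B_{\eta_p}\subset\mathfrak X_p$ into itself and contracts with Lipschitz constant $1/2$. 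Banach's fixed point theorem then produces a unique fixed point $({\bf u},\pi)\in\overline B_{\eta_p}$, which is the desired solution satisfying \eqref{inequality p2 2-Lp}. Estimate \eqref{estimate-D-B-F-new1-new1-D-Rn2-Lp} follows by applying \eqref{continuity-data-p} once more to the equation $({\bf u},\pi)=\mathcal A_p({\bf f}+\beta|{\bf u}|{\bf u},{\mathbf h_0},\mathbf g_0)$ and absorbing the quadratic term using the smallness \eqref{inequality p2 2-Lp}; the boundary regularity $\gamma_+{\bf u}\in H_p^1(\partial\Omega,\mathbb R^3)$, ${\bf t}_\alpha^+({\bf u},\pi)\in L_p(\partial\Omega,\mathbb R^3)$ and estimate \eqref{cond-pDF} follow analogously from \eqref{cond-p}. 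Finally, uniqueness in the ball $\overline B_{\eta_p}$ is built into the fixed point argument; for two solutions both satisfying \eqref{inequality p2 2-Lp} the difference estimate gives uniqueness outright provided $\eta_p$ is chosen small enough. The hard part will be rigorously justifying the embedding $\mathfrak X_p\hookrightarrow L_{2p}$ and the associated nonlinear Lipschitz estimate uniformly for $p$ in a neighbourhood of $2$; the rest is a routine application of the Banach contraction principle together with Theorem~\ref{Poisson-mixed-Lp}.
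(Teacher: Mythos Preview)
Your proposal is correct and follows essentially the same route as the paper: reduce the semilinear problem to a fixed point equation via the linear solution operator $\mathcal{A}_p$ of Theorem~\ref{Poisson-mixed-Lp}, use the embedding $B_{p,p^*}^{1+\frac{1}{p}}(\Omega_+,\mathbb{R}^3)\hookrightarrow L_{2p}(\Omega_+,\mathbb{R}^3)$ together with H\"older to control the nonlinearity, and apply the Banach contraction principle on a small ball. The only cosmetic difference is that the paper sets up the fixed point map on the velocity component alone (writing $(\mathcal{U}(\mathbf{v}),\mathcal{P}(\mathbf{v}))$ and seeking a fixed point of $\mathcal{U}$), whereas you work on the full pair space $\mathfrak{X}_p$; since your map $\mathcal{T}$ does not depend on $\varpi$, this amounts to the same thing.
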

\begin{proof}
We use {the arguments similar to those}
in the proof of \cite[Theorem 5.2]{K-L-M-W} devoted to transmission problems with Lipschitz interface in ${\mathbb R}^{n}$ for the Stokes and Darcy-Forchheimer-Brinkman systems {in $L_2-$based Sobolev spaces}.

\comment{
Let $\varepsilon >0$ be as in Theorem \ref{Poisson-mixed-Lp} and such that $2-\varepsilon \geq \frac{3}{2}$.

(i) If {$p\in [2,2+\varepsilon )$} then $p^*=\max\{p,2\}=p$, and by \eqref{Z2} and the Sobolev embedding theorem, we obtain the inclusions
\begin{align*}
{B_{p,p^*}^{1+\frac{1}{p}}({\Omega}_+,\mathbb{R}^{n})}=B_{p,p}^{1+\frac{1}{p}}({\Omega}_+,\mathbb{R}^{n}) =W_{p}^{1+\frac{1}{p}}({\Omega}_+,\mathbb{R}^{n})\hookrightarrow W^1_p({\Omega}_+,\mathbb{R}^{n}) \hookrightarrow L_{\frac{3p}{3-p}}({\Omega}_+,\mathbb{R}^{n}).
\end{align*}
Note that the embedding $B_{p,p}^{1+\frac{1}{p}}({\Omega}_+,\mathbb{R}^{n})\hookrightarrow L_{\frac{3p}{3-p}}({\Omega}_+,\mathbb{R}^{n})$ follows also directly from \eqref{embed-2}.

(ii) If {$p\in \left(2-\varepsilon ,2\right)$} then $p^*=\max\{p,2\}=2$ and by \eqref{Z1}, the first embedding in \eqref{embed-3}, the second embedding in \eqref{embed-4}, and by \eqref{embed-5}, we obtain the following inclusions
\begin{align*}
{B_{p,p^*}^{1+\frac{1}{p}}({\Omega}_+,\mathbb{R}^{n})}=
B_{p,2}^{1+\frac{1}{p}}({\Omega}_+,\mathbb{R}^{n})\hookrightarrow
B_{p,\infty }^{1+\frac{1}{p}}({\Omega}_+,\mathbb{R}^{n})\hookrightarrow H^1_p({\Omega}_+,\mathbb{R}^{n}) =W^1_p({\Omega}_+,\mathbb{R}^{n}){\hookrightarrow
L_{\frac{3p}{3-p}}({\Omega}_+,\mathbb{R}^{n})}.
\end{align*}
In addition, $L_{\frac{3p}{3-p}}({\Omega}_+,\mathbb{R}^{n})\hookrightarrow L_{2p}({\Omega}_+,\mathbb{R}^{n})$, as follows from the assumption $p>2-\varepsilon $ and the choice of $\varepsilon $ such that $2-\varepsilon \geq \frac{3}{2}$.

Hence, for any {$p\in \left(2-\varepsilon ,2+\varepsilon \right)$} {(and $\varepsilon $ such that $2-\varepsilon \geq \frac{3}{2}$)}, we obtain the inclusions
\begin{align}
\label{sm-1}
{B_{p,p^*}^{1+\frac{1}{p}}({\Omega}_+,\mathbb{R}^{n})}\hookrightarrow L_{\frac{3p}{3-p}}({\Omega}_+,\mathbb{R}^{n})\hookrightarrow L_{2p}({\Omega}_+,\mathbb{R}^{n}).
\end{align}
} 

{According  to \eqref{embed-2} and the second formula in \eqref{embed-3}, for $n\le 5$ and $p>3/2$, we obtain the following continuous embeddings,}
\begin{align}
\label{sm-1}
B_{p,p^*}^{1+\frac{1}{p}}({\Omega}_+,\mathbb{R}^{n})
\hookrightarrow B^0_{2p,\min\{2p,(2p)'\}}({\Omega}_+,\mathbb{R}^{n})
\hookrightarrow H^0_{2p}({\Omega}_+,\mathbb{R}^{n})
= L_{2p}({\Omega}_+,\mathbb{R}^{n}).
\end{align}

Now, by \eqref{sm-1} and the H\"{o}lder inequality we obtain the estimates
\begin{align}
\label{3.0.3-Lp}
\|\ |{\bf v}|{\bf w}\ \|_{L_p({\Omega}_+,\mathbb{R}^{n})}
&\leq \|{\bf v}\|_{L_{2p}({\Omega}_+,\mathbb{R}^{n})}\|{\bf w}\|_{L_{2p}({\Omega}_+,\mathbb{R}^{n})}
{\leq} c_1'\|{\bf v}\|_{B_{p,p^*}^{1+\frac{1}{p}}({\Omega}_+,\mathbb{R}^{n})}\|{\bf w}\| _{ B_{p,p^*}^{1+\frac{1}{p}}({\Omega}_+,\mathbb{R}^{n})},\ \forall \ {\bf v},{\bf w}\in { B_{p,p^*}^{1+\frac{1}{p}}({\Omega}_+,\mathbb{R}^{n}),}
\end{align}
with some constants $c_{k}'\equiv c_{k}'({\Omega}_+,p)>0$, $k=0,1$, implying that
$
|{\bf v}|{\bf w}\in  L_p({\Omega}_+,\mathbb{R}^{n}), \ \forall \ {\bf v},\, {\bf w}\in B_{p,p^*}^{1+\frac{1}{p}}({\Omega}_+,\mathbb{R}^{n}).
$

Next, for a given fixed ${\mathbf v}\in B_{p,p}^{1+\frac{1}{p}}({\Omega}_+,\mathbb{R}^{n})$, we consider the linear Poisson problem of mixed type for the Brinkman system
\begin{equation}
\label{Newtonian-D-B-F-new2-D-Rn-Lp}
\left\{\begin{array}{lll}
\triangle {{\bf v}^0}-\alpha {{\bf v}^0}-\nabla {\pi^0}=
{\mathbf f+}\beta |{\bf v}|{\bf v}\ \mbox{ in }\ {\Omega}_+,\\
{\gamma}_{+}{{\bf v}^0}|_{S_D}
={\mathbf h_{0}}\in H^1_p(S_D,{\mathbb R}^{n}),\\
\mathbf t_{\alpha}^{+}\left({{\bf v}^0},{\pi^0}\right)|_{S_N}=\mathbf g_{0}\in L_p(S_N,{\mathbb R}^{n}),
\end{array}\right.
\end{equation}
with the unknown fields $({\bf v}^0,\pi^0)\in { B_{p,p^*}^{1+\frac{1}{p}}({\Omega}_+,\mathbb{R}^{n})\times B_{p,p^*}^{\frac{1}{p}}({\Omega}_+)}$.

{Let $2-\varepsilon<p<2+\varepsilon$, where $\varepsilon >0$ is as in Theorem \ref{Poisson-mixed-Lp} and such that $2-\varepsilon > \frac{3}{2}$.
Then by} Theorem \ref{Poisson-mixed-Lp}, problem \eqref{Newtonian-D-B-F-new2-D-Rn-Lp} with given data
$\left({\mathbf f+}\beta|{\bf v}|{\bf v},{\mathbf h_{0}},\mathbf g_{0}\right)
\in L_p({\Omega}_+,\mathbb{R}^{n})\times H^1_p(S_D,{\mathbb R}^{n})\times L_p(S_N,{\mathbb R}^{n})$
has a unique solution
\begin{align}
\label{solution-v0-Lp}
\left({\bf v}^0,\pi^0\right):=\left({\mathcal U}({\bf v}),{\mathcal P}({\bf v})\right)
={\mathcal A}_p\left({\mathbf f+}\beta |{\bf v}|{\bf v},\ {\mathbf h_{0}},\ \mathbf g_{0}\right)\in {\mathcal X}_p,
\end{align}
{where} the linear and continuous operator ${\mathcal A}_p:\mathcal Y_p\to\mathcal X_p$ has been defined in Theorem \ref{Poisson-mixed-Lp}, and
\begin{equation}
\label{SaNot-Lp}
\mathcal{X}_p:={{B_{p,p^*}^{1+\frac{1}{p}}({\Omega}_+,\mathbb{R}^{n})}\times B_{p,p^*}^{\frac{1}{p}}({\Omega}_+)},\ \
\mathcal{Y}_p:={L_p({\Omega}_+,\mathbb{R}^{n})}\times H_p^{1}(S_{D},\mathbb{R}^{n})\times L_p(S_{N}, \mathbb{R}^{n}).
\end{equation}
Hence, for fixed data
$\left(\mathbf f,{\mathbf h_{0}},\mathbf g_{0}\right)
\in L_p({\Omega}_+,\mathbb{R}^{n})\times H^1_p(S_D,{\mathbb R}^{n})\times L_p(S_N,{\mathbb R}^{n})$,
the nonlinear operators
\begin{align}
\label{Newtonian-D-B-F-new3n}
&({\mathcal U},{\mathcal P}):{{B_{p,p^*}^{1+\frac{1}{p}}({\Omega}_+,\mathbb{R}^{n})}}\to {\mathcal X}_p
\end{align}
defined in \eqref{solution-v0-Lp}, are continuous and bounded,
we obtain,
\begin{align}
\label{estimate-D-B-F-new1-new1-D-Rn-Lp}
\big\|\big({\mathcal U}({\bf w}),{\mathcal P}({\bf w})\big)\big\|_{{\mathcal X}_p}
&\leq C\|\left({\mathbf f+}\beta |{\bf w}|{\bf w},{\mathbf h_{0}},\mathbf g_{0}\right)\|_{\mathcal Y_p} \nonumber\\
&\leq C\left({\|\left(\mathbf f,{\mathbf h_{0}},\mathbf g_{0}\right)\|_{L_p({\Omega}_+,\mathbb{R}^{n}) \times H^1_p(S_D,{\mathbb R}^{n})\times L_p(S_N,{\mathbb R}^{n})}}
+\beta \|\ |{\bf w}|{\bf w}\ \|_{L_p({\Omega}_+,{\mathbb R}^{n})}\right)\nonumber\\
&\leq C{\|\left(\mathbf f,{\mathbf h_{0}},\mathbf g_{0}\right)\|_{\mathcal Y_p}}
+CC_2\|{\bf w}\|^2_{B_{p,p^*}^{1+\frac{1}{p}}({\Omega}_+,\mathbb{R}^{n})},
\ \ \forall\ {\bf w}\in {B_{p,p^*}^{1+\frac{1}{p}}({\Omega}_+,\mathbb{R}^{n})},
\end{align}
\begin{align}
\label{cond-pDF2}
\|\gamma_+{\mathcal U}({\bf w})\|_{H_{p}^1(\partial\Omega,\mathbb{R}^n)}
+\|\mathbf t^+_\alpha\big({\mathcal U}({\bf w}),{\mathcal P}({\bf w})\big)\|_{L_p(\partial\Omega,\mathbb{R}^{n}))}
&\le C'\|\left(\mathbf f,{\mathbf h_{0}},\mathbf g_{0}\right)\|_{\mathcal Y_p}
+C'C_2\|{\bf w}\|^2_{B_{p,p^*}^{1+\frac{1}{p}}({\Omega}_+,\mathbb{R}^{n})}.
\end{align}
where $C_2:=c_1'\beta >0$, and $c_1'\equiv c_1'({\Omega}_+,p)>0$ is the constant that appears in inequality \eqref{3.0.3-Lp}, and $C$ can be taken as $C=\|{\mathcal A}_p\|_{{\mathcal L}(\mathcal Y_p,\mathcal X_p)}$.
In addition, in view of \eqref{solution-v0-Lp} and due to the definition of ${\mathcal A}_p$, we obtain that
$\left({\bf v}^0,\pi^0\right)=({\mathcal U}({\bf v}),{\mathcal P}({\bf v}))$
{and satisfy \eqref{Newtonian-D-B-F-new2-D-Rn-Lp}}.
Therefore, if we show that the nonlinear operator ${\mathcal U}$ has a fixed point ${\bf u}\in { B_{p,p^*}^{1+\frac{1}{p}}({\Omega}_+,\mathbb{R}^{n})}$, i.e., such that ${\mathcal U}({\bf u})={\bf u}$, then ${\bf u}$ together with the pressure function $\pi ={\mathcal P}({\bf u})$ determine a solution of the nonlinear mixed problem \eqref{Darcy-Brinkamn problem} in the space ${\mathcal X}_p$.
In order to show the existence of such a fixed point, we introduce the constants
\begin{align}
\label{Newtonian-D-B-F-new9n-Lp}
&\zeta _p:=\frac{3}{16C_2C^2}>0,\ \ \eta _p:=\frac{1}{4C_2C}>0
\end{align}
(cf. \cite[Theorem 5.2]{K-L-M-W}) and the closed ball
\begin{align}
\label{gamma-0n-Lp}
\mathbf{B}_{\eta _p}:=\left\{{\bf w}\in {B_{p,p^*}^{1+\frac{1}{p}}({\Omega}_+,\mathbb{R}^{n})}:\|{\bf w}\|_{B_{p,p^*}^{1+\frac{1}{p}}({\Omega}_+,\mathbb{R}^{n})}\leq \eta _p\right\},
\end{align}
and assume that the given data satisfy the inequality
\begin{align}
\label{cond-small-sm-msn-Lp}
{\|\left(\mathbf f,{\mathbf h_{0}},\mathbf g_{0}\right)\|_{\mathcal Y_p}}\leq \zeta _p.
\end{align}
Then by (\ref{estimate-D-B-F-new1-new1-D-Rn-Lp}), \eqref{Newtonian-D-B-F-new9n-Lp}-(\ref{cond-small-sm-msn-Lp}) we deduce that
\begin{align}
\label{Newtonian-D-B-F-new9-new-crackn-Lp}
\|\left({{\mathcal U}}({\bf w}),{\mathcal P}({\bf v})\right)\|_{{\mathcal X}_p}\leq {\frac{1}{4C_2C}}= \eta _p,\ \forall \ {\bf w}\in \mathbf{B}_{\eta _p}.
\end{align}
Consequently, ${\mathcal U}$ maps $\mathbf{B}_{\eta _p}$ into $\mathbf{B}_{\eta _p}$.

Moreover, we now prove that ${\mathcal U}$ is a contraction on $\mathbf{B}_{\eta _p}$. Indeed, by using the expression of ${\mathcal U}$ given in (\ref{solution-v0-Lp}), the linearity and continuity of the operator ${\mathcal A}_p$, and inequality \eqref{3.0.3-Lp}, we obtain that
\begin{align}
\label{4.30n-Lp}
\|{\mathcal U}({\bf v})-{\mathcal U}({\bf w})\|_{B_{p,{ p^*}}^{1+\frac{1}{p}}({\Omega}_+,\mathbb{R}^{n})}
&\leq \|{\mathcal A}_p\left(\beta |{\bf v}|{\bf v}-\beta |{\bf w}|{\bf w},\ {\bf 0},\ {\bf 0}\right)\|_{B_{p,{p^*}}^{1+\frac{1}{p}}({\Omega}_+,\mathbb{R}^{n})}\nonumber\\
&\leq C\beta \|\ |{\bf v}|{\bf v}-|{\bf w}|{\bf w}\ \|_{L_p({\Omega}_+,{\mathbb R}^{n})}
=C\beta \|\ (|{\bf v}|-|{\bf w}|){\bf v}+|{\bf w}|({\bf v}-{\bf w})\ \|_{L_p({\Omega}_+,{\mathbb R}^{n})} \nonumber
\\
&\leq Cc_1'\beta \Big(\|{\bf v}\|_{B_{p,{p^*}}^{1+\frac{1}{p}}({\Omega}_+,\mathbb{R}^{n})}+\|{\bf w}\|_{B_{p,{p^*}}^{1+\frac{1}{p}}({\Omega}_+,\mathbb{R}^{n})}\Big)
\|{\bf v}-{\bf w}\|_{B_{p,{p^*}}^{1+\frac{1}{p}}({\Omega}_+,\mathbb{R}^{n})}\nonumber
\\
& \leq 2\eta _pCC_2\|{\bf v}-{\bf w}\|_{B_{p,{ p^*}}^{1+\frac{1}{p}}({\Omega}_+,\mathbb{R}^{n})} =\frac{1}{2}\|{\bf v}-{\bf w}\|_{B_{p,{p^*}}^{1+\frac{1}{p}}({\Omega}_+,\mathbb{R}^{n})},
\ \forall\ {\bf v},{\bf w}\in \mathbf{B}_{\eta _p},
\end{align}
{see also \eqref{estimate-D-B-F-new1-new1-D-Rn-Lp}}.
Then the Banach-Caccioppoli fixed point theorem implies that there exists a unique fixed point ${\bf u}\in \mathbf{B}_{\eta _p}$ of ${\mathcal U}$, i.e., ${\mathcal U}({\bf u})={\bf u}$.
Moreover, ${\bf u}$ and the pressure function $\pi ={\mathcal P}({\bf u})$, given by \eqref{solution-v0-Lp}, determine a solution of the semilinear problem \eqref{Darcy-Brinkamn problem} in the space $B_{p,{p^*}}^{1+\frac{1}{p}}({\Omega}_+,\mathbb{R}^{n})\times B_{p,{ p^*}}^{\frac{1}{p}}({\Omega}_+)$.
In addition, since the solution satisfies the condition ${\bf u}\in \mathbf{B}_{\eta }$, by inequality \eqref{estimate-D-B-F-new1-new1-D-Rn-Lp} we obtain the estimate
\begin{align}
\label{estimate-D-B-F-new1-new1-D-Rn1-Lp}
\|{\bf u}\|_{B_{p,{p^*}}^{1+\frac{1}{p}}({\Omega}_+,\mathbb{R}^{n})}+
\|\pi\|_{B_{p,{p^*}}^{\frac{1}{p}}({\Omega}_+)}
\leq C{\|\left(\mathbf f,{\mathbf h_{0}},\mathbf g_{0}\right)\|_{\mathcal Y_p}}
+\frac{1}{4}\|{\bf u}\|_{B_{p,{p^*}}^{1+\frac{1}{p}}({\Omega}_+,\mathbb{R}^{n})},
\end{align}
implying that
\begin{align}
\label{Newtonian-D-B-F-new4n-Lp}
\|{\bf u}\|_{B_{p,{p^*}}^{1+\frac{1}{p}}({\Omega}_+,\mathbb{R}^{n})}+
\|\pi\|_{B_{p,{p^*}}^{\frac{1}{p}}({\Omega}_+)}
\leq \frac{4}{3}C{\|\left(\mathbf f,{\mathbf h_{0}},\mathbf g_{0}\right)\|_{\mathcal Y_p}}, 
\end{align}
which is just the inequality \eqref{estimate-D-B-F-new1-new1-D-Rn2-Lp} with the constant
$C_*=\dfrac{4}{3}C=\dfrac{4}{3}\|{\mathcal A}_p^{-1}\|_{{\mathcal L}(\mathcal Y_p,\mathcal X_p)}$.
Similarly, \eqref{cond-pDF2} and \eqref{Newtonian-D-B-F-new4n-Lp} lead to \eqref{cond-pDF} with the constant $C_*'=\dfrac{4}{3}C'$.

Next, we prove the uniqueness of the semilinear mixed problem \eqref{Darcy-Brinkamn problem} solution $\left({\bf u},\pi \right)\in {\mathcal X}_p$, that satisfies inequality \eqref{inequality p2 2-Lp}, when the given data satisfy conditions \eqref{inequality p2 1}.
Assume that $\left({\bf u}',\pi '\right)\in {\mathcal X}_p$ is another solution of problem \eqref{Darcy-Brinkamn problem}, which satisfies inequality \eqref{inequality p2 2-Lp}, implying ${\bf u}'\in \mathbf{B}_{\eta _p}$.
Then ${\mathcal U}({\bf u}')\in \mathbf{B}_{\eta _p},$ where $\left({\mathcal U}({\bf u}'),{\mathcal P}({\bf u}')\right)$ are given by (\ref{solution-v0-Lp}) and satisfy (\ref{Newtonian-D-B-F-new2-D-Rn-Lp}) with ${\bf v}$ replaced by ${\bf u}'$.
Then by \eqref{Darcy-Brinkamn problem} and (\ref{Newtonian-D-B-F-new4n-Lp}) (both written in terms of $\left({\bf u}',{\pi}'\right)$) we obtain the linear mixed Dirichlet-Neumann problem
\begin{equation}
\label{uniqueness-mixed-Ap}
\left\{\begin{array}{lll}
\triangle \left({\mathcal U}({\bf u}')-{\bf u}'\right)-\alpha \left({\mathcal U}({\bf u}')-{\bf u}'\right)-\nabla \left({{\mathcal P}({\bf u}')}-\pi '\right)={\bf 0}\ \mbox{ in }\ {\Omega}_+,\\
\left({\gamma}_{+}\left({\mathcal U}({\bf u}')-{\bf u}'\right)\right)|_{S_D}={\bf 0}\ \mbox{ on }\ S_D,\\
\left(\mathbf t_{\alpha}^{+}\left({\mathcal U}({\bf u}')-{\bf u}',{{\mathcal P}({\bf u}')}-\pi '\right)\right)|_{S_N}={\bf 0}\ \mbox{ on }\ S_N,
\end{array}\right.
\end{equation}
and {${{\gamma}_{+}\left({\mathcal U}({\bf u}')-{\bf u}'\right)\in H^1_p(\partial {\Omega}_+,{\mathbb R}^{n})},\ {{\mathbf t}_\alpha^{+}\left({\mathcal U}({\bf u}')-{\bf u}',{{\mathcal P}({\bf u}')}-\pi '\right)\in L_p(\partial {\Omega}_+,{\mathbb R}^{n})}$}. This problem has only the trivial solution in the space ${\mathcal X}_p$ (see Theorem \ref{Poisson-mixed-Lp}), i.e., ${\mathcal U}({\bf u}')={\bf u}'$, ${\mathcal P}({\bf u}')= \pi '$. Thus, ${\bf u}'$ is a fixed point of ${\mathcal U}$. Since ${\mathcal U}:\mathbf{B}_{\eta _p}\to \mathbf{B}_{\eta _p}$ is a contraction, it has a unique fixed point in $\mathbf{B}_{\eta _p}$, which has been already denoted by ${\bf u}$. Consequently, ${\bf u}'={\bf u}$, and, in addition, $\pi '=\pi $.
\end{proof}

\section*{Appendices}
\appendix
\section{Besov spaces in ${\mathbb R}^n$}

Let ${\mu} =({\mu} _1,\ldots ,{\mu} _n)$ be an arbitrary multi-index in ${\mathbb Z}_{+}^n$ of length $|{\mu} |:={\mu} _1+\cdots +{\mu} _n$, and let
$\partial^{{\mu} }:=\dfrac{\partial^{|{\mu} |}}{\partial x_1^{{\mu} _1}\cdots \partial x_n^{{\mu}_n}}.$
Next we recall the definition of Besov spaces in ${\mathbb R}^n$ (cf., e.g., \cite[Section 11.1]{M-W}). By $\Xi$ one denotes the collection of all sets $\{\xi _j\}_{j=0}^{\infty}$ of Schwartz functions with the following property:
\begin{itemize}
\item[(i)] There are some constants $b,c,d> 0$ such that
\begin{equation}
{\rm{supp}}(\xi_{0})\subset \{x : |x|\leq b\},\
{\rm{supp}}(\xi_{j}) \subset \{x : 2^{j-1}c \leq |x| \le 2^{j+1}d \}, \ j = 1,2,\ldots
\end{equation}
\item[(ii)]
Let ${\mu}$ be an arbitrary multi-index in ${\mathbb R}^n$. Then there exists a constant $c_{\partial {\Omega}}>0$ such that
\begin{equation}
\sup_{x \in \mathbb{R}^{n}} \sup_{j \in \mathbb{N}} 2^{j|{\mu} |}|\partial^{{\mu}}\xi_{j}(x)|\leq  c_{\partial {\Omega} }.
\end{equation}
\item[(iii)] The following equality holds
\begin{equation}
\sum_{j=0}^{\infty} \xi_{j} (x) = 1,\ \forall \ x\in {\mathbb R}^n.
\end{equation}
\end{itemize}

Let $s\in \mathbb{R}$, $p,q\in (0, \infty)$. Then for a sequence $\{\xi _j\}_{j=0}^{\infty} \subset \Xi$, the Besov space $B^s_{p,q}(\mathbb{R}^{n})$ is defined by
\begin{equation}
B^s_{p,q}(\mathbb{R}^{n}):= \left\{f\in {\mathcal S}'(\mathbb{R}^{n}): \|f\|_{B^s_{p,q}(\mathbb{R}^{n})}:=\Big(\sum_{j = 1}^{\infty} \|2^{sj}\mathcal{F}^{-1}(\xi_{j} \mathcal{F}f)\|^{q}_{L_p(\mathbb{R}^{n})}\Big)^{\frac{1}{q}} < \infty \right\},
\end{equation}
where ${{\mathbf f}}$ is the Fourier transform and ${\mathcal S}'(\mathbb{R}^{n})$ denotes the space of tempered distributions in $\mathbb{R}^{n}$.
Note that the above definition of the Besov space $B^s_{p,q}(\mathbb{R}^{n})$ is independent of the choice of the set $\{\xi _j\}_{j=0}^{\infty} \subset \Xi$, which means that another sequence in $\Xi$ leads to the same space with an equivalent norm. In particular, for any $s\in {\mathbb R}$, the Besov space $B_{2,2}^s(\mathbb{R}^{n})$ coincides with the Sobolev space $H^s_2(\mathbb{R}^{n})$, i.e.,
$B_{2,2}^s(\mathbb{R}^{n})=H^s_2(\mathbb{R}^{n})$.
Moreover, denoting by {$W^{s}_{p}({\mathbb R}^n)$ the Sobolev-Slobodeckij spaces} {(defined in the classical way through their norms)}, we have the relations (see, e.g., \cite{Triebel}, \cite{Be})
\begin{align}
\label{Z2}
&{W^{s}_{p}({\mathbb R}^n)=B^{s}_{p,p}({\mathbb R}^n),\
s\in {\mathbb R}\setminus {\mathbb Z}},\\
\label{Z1}
&{W_p^k({\mathbb R}^n)=H_p^k({\mathbb R}^n),\ \ k\in {\mathbb Z}}.
\end{align}

{Let $s_0,s_1\in {\mathbb R}$, $1<p_0{\le}p_1<\infty $ be such that
$
s_1-\dfrac{n}{p_1}<s_0-\dfrac{n}{p_0},
$
and $0<q_0,q_1\leq \infty $. Then {the} embedding
\begin{align}
\label{embed-2}
B^{s_0}_{p_0,q_0}({\mathbb R}^n)\hookrightarrow B^{s_1}_{p_1,q_1}({\mathbb R}^n)
\end{align}
is continuous (cf. \cite[Theorem in Section 2.7.1 and  Proposition 2(ii) in Section 2.3.2]{Triebel},  {\cite[Remark 2 in Section 2.2.3]{Runst-Sickel1996}). Note that ${\mathbb R}^n$ in \eqref{embed-2} can be replaced by a domain $\Omega\in{\mathbb R}^n$.}

Let us also recall the following useful inclusions between Besov spaces and {Bessel potential} spaces. Assume that $1\leq q_1\leq q_2\leq \infty $, $1\leq p,q\leq \infty $ and $s_1<s<s_2$. Let $p'$ denote the conjugate exponent of $p$, i.e., $\frac{1}{p'}=1-\frac{1}{p}$. Then we have the {following continuous embeddings,}
\begin{align}
\label{embed-3}
&B^{s}_{p,q_1}({\mathbb R}^n)\hookrightarrow B^{s}_{p,q_2}({\mathbb R}^n),\
B^{s}_{p,\min \{p,p'\}}({\mathbb R}^n)\hookrightarrow H_p^s({\mathbb R}^n)
\hookrightarrow B^{s}_{p,\max\{p,p'\}}({\mathbb R}^n),\\
\label{embed-4}
&B^s_{2,2}({\mathbb R}^n)=H^s_2({\mathbb R}^n),\ B^{s_2}_{p,\infty }({\mathbb R}^n)\hookrightarrow H_p^s({\mathbb R}^n)\hookrightarrow B^{s_1}_{p,1}({\mathbb R}^n),
\end{align}
(cf., e.g., \cite[Chapter 6]{Bl}, \cite[(3.2)]{Toft}, \cite[(4.19)]{M-T}), which imply the continuity of the embedding
\begin{align}
\label{embed-5}
B^{s_2}_{p,q}({\mathbb R}^n)\hookrightarrow B^{s_1}_{p,q}({\mathbb R}^n).
\end{align}}
\noindent These {embeddings} hold also when ${\mathbb R}^n$ is replaced by a bounded Lipschitz domain (see \cite[Chapter 6]{Bl}, \cite[(8)]{Triebel-1}).


The scales of Bessel potential and Besov spaces can be obtained by the method of complex interpolation. Indeed, if $s_0,s_1\in {\mathbb R}$, $s_0\neq s_1$, $p_0,p_1\in (1,+\infty )$, $q_0,q_1\in (1,+\infty )$ and $\theta \in (0,1)$, then
(cf., e.g., \cite{Triebel}, \cite[Theorem 11.1.2]{M-W}, \cite[Theorem 3.1]{Be}):
\begin{align}
\label{complex-int}
\left[H^{s_0}_{p_0}({\mathbb R}^n),H^{s_1}_{p_1}({\mathbb R}^n)\right]_{\theta }=H^{s}_{p}({\mathbb R}^n),\ \ \left[B^{s_0}_{p_0,q_0}({\mathbb R}^n),B^{s_1}_{p_1,q_1}({\mathbb R}^n)\right]_{\theta }=B^{s}_{p,q}({\mathbb R}^n),
\end{align}
where $s=(1-\theta )s_0+\theta s_1$, $\frac{1}{p}=\frac{1-\theta }{p_0}+\frac{\theta }{p_1}$ and $\frac{1}{q}=\frac{1-\theta }{q_0}+\frac{\theta }{q_1}$.

Moreover, the scale of Besov spaces can be also obtained by using the method of {real interpolation of Sobolev spaces}.
Indeed, for {$p,q\in (1,+\infty )$, $s_0\neq s_1$, and $\theta \in (0,1)$, we have the following real interpolation property
\begin{align}
\label{real-int}
{\left(H^{s_1}_{p}({\mathbb R}^n),H^{s_2}_{p}({\mathbb R}^n)\right)_{\theta ,q}=B^{s}_{p,q}({\mathbb R}^n,{\mathbb R}^n),}
\end{align}
where $s=(1-\theta )s_0+\theta s_1$ (cf., e.g., \cite[Theorem 14.1.5]{Agr}, \cite[p. 329]{Fa-Me-Mi}, \cite{J-K1}, \cite[(5.38)]{M-M}, \cite{Triebel}, \cite[Theorem 3.1]{Be}).}

Formulas \eqref{complex-int} and \eqref{real-int} remain true if ${\mathbb R}^n$ is replaced by a Lipschitz domain (cf., e.g., \cite[Theorem 3.2, Remark 3.3]{Be}).

For the following property we refer the reader to, e.g., {\cite[relations (3.11)
and Proposition 4.2]{M-M}}.
\begin{lemma}
\label{complex-interpolation}
Let $\Omega \subset {\mathbb R}^n$ be a bounded Lipschitz domain. Let  $S\subset\partial\Omega$ be an admissible patch. If $p_0,p_1\in (1,\infty )$, $s_0,s_1\in [0,1]$ or $s_0,s_1\in [-1,0]$, and $\theta \in (0,1)$, then the following complex
{and real}
interpolation properties hold
\begin{align}
\label{complex-int-patch}
&[H_{p_0}^{s_0}(\partial\Omega),H_{p_1}^{s_1}(\partial\Omega)]_\theta =H_{p}^{s}(\partial\Omega),\quad
[H_{p_0}^{s_0}(S),H_{p_1}^{s_1}(S)]_\theta =H_{p}^{s}(S),\quad [\widetilde{H}_{p_0}^{s_0}(S),\widetilde{H}_{p_1}^{s_1}(S)]_\theta =\widetilde{H}_{p}^{s}(S)
,\\
\label{complex-int-patch-B}
&(H_{p_0}^{s_0}(\partial\Omega),H_{p_1}^{s_1}(\partial\Omega))_{\theta,q} =B_{p,q}^{s}(\partial\Omega),\quad
(H_{p_0}^{s_0}(S),H_{p_1}^{s_1}(S))_{\theta,q} =B_{p,q}^{s}(S),\quad [\widetilde{H}_{p_0}^{s_0}(S),\widetilde{H}_{p_1}^{s_1}(S)]_{\theta,q} =\widetilde{B}_{p,q}^{s}(S),
\end{align}
where $\frac{1}{p}=\frac{1-\theta }{p_0}+\frac{\theta }{p_1}$ and $s=(1-\theta )s_0+\theta s_1$.
{In \eqref{complex-int-patch-B} also $s_0\not=s_1$ and  $q\in (1,\infty]$.}
\end{lemma}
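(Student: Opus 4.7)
The plan is to reduce the interpolation identities on $\partial\Omega$ and on the admissible patch $S$ to the corresponding classical identities on model geometries (the Euclidean space $\mathbb{R}^{n-1}$ and Lipschitz subgraphs of $\mathbb{R}^{n-1}$), where \eqref{complex-int} and \eqref{real-int} have already been invoked. The admissibility hypotheses in Definition~\ref{Lipschity domain} and Definition~\ref{Lipschity-domain-1} are precisely what guarantee that the localisation and flattening procedure produces uniformly Lipschitz model problems, so that a retraction/coretraction argument in the sense of Triebel is available.

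First I would establish the identities on $\partial\Omega$. Fix an atlas $\{\mathcal C_k(r_k,h_k)\}_{k=1}^N$ of $\partial\Omega$ with associated Lipschitz graph functions $\varphi_k$, and a subordinate $C^{\infty}$ partition of unity $\{\eta_k\}$. For each $k$, the map sending $f\mapsto (\eta_k f)\circ \Phi_k$ (with $\Phi_k$ the local graph parametrisation) is bounded $H_p^s(\partial\Omega)\to H_p^s(\mathbb{R}^{n-1})$ and $B_{p,q}^s(\partial\Omega)\to B_{p,q}^s(\mathbb{R}^{n-1})$ for all admissible $s,p,q$, and the corresponding coretraction (multiplication and summing) is bounded in the reverse direction, yielding a retract of $H_p^s(\partial\Omega)$ (resp. $B_{p,q}^s(\partial\Omega)$) in $\bigoplus_k H_p^s(\mathbb{R}^{n-1})$ (resp. $\bigoplus_k B_{p,q}^s(\mathbb{R}^{n-1})$). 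Since interpolation commutes with retracts, the identities \eqref{complex-int-patch} and \eqref{complex-int-patch-B} on $\partial\Omega$ follow from the corresponding identities on $\mathbb{R}^{n-1}$ already recalled in \eqref{complex-int} and \eqref{real-int}.

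Next I would treat the patch $S$. Property \eqref{e3} in Definition~\ref{Lipschity-domain-1} shows that, in the local coordinates produced by the atlas above, $S$ corresponds to a Lipschitz subgraph of $\mathbb{R}^{n-1}$ of the form $\{x_1>\Psi(x'')\}$, while $\widetilde H_p^s(S)$ corresponds to its zero-extension subspace in $H_p^s(\mathbb{R}^{n-1})$. For such subgraphs, Rychkov's universal extension operator furnishes a bounded right inverse of the restriction, simultaneously for all $H_p^s$ and $B_{p,q}^s$ scales in the stated ranges of $s$; this again produces a retract structure, so the local model identities pass to $S$. For $\widetilde H_p^s(S)$ and $\widetilde B_{p,q}^s(S)$ I would either identify them as closed subspaces of $H_p^s(\partial\Omega)$, $B_{p,q}^s(\partial\Omega)$ cut out by a support condition (which is stable under both the complex and real interpolation functors) or invoke duality $\widetilde H_p^{-s}(S)=(H_{p'}^{s}(S))'$, $\widetilde B_{p,q}^{-s}(S)=(B_{p',q'}^{s}(S))'$, combined with the fact that the $[\,\cdot\,,\,\cdot\,]_\theta$ and $(\,\cdot\,,\,\cdot\,)_{\theta,q}$ functors commute with duality on reflexive spaces. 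The end result coincides with the formulas already stated in \cite[(3.11) and Proposition 4.2]{M-M}.

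The main technical obstacle is the behaviour at the endpoint values $s\in\{-1,0,1\}$: the retraction method yields interpolation of the local Euclidean scales, but transferring this to the spaces $H_p^s(S)$ and $\widetilde H_p^s(S)$ requires the extension operator to be simultaneously bounded on $H_p^{s_0}$ and $H_p^{s_1}$ for the two endpoint indices, and this is exactly where the admissibility of the patch (Lipschitz interface $\partial S$, transversal meeting of $S$ and $\partial\Omega\setminus\overline S$) is needed in order to apply Rychkov's construction uniformly. Once this point is handled, the restriction $s_0,s_1\in[0,1]$ or $s_0,s_1\in[-1,0]$ in the statement is just what keeps one strictly inside the range where the duality identities and the Rychkov extension are known to be valid, so no further obstacle arises.
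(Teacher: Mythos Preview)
The paper does not actually give a proof of this lemma; it simply states the result and refers the reader to \cite[relations (3.11) and Proposition 4.2]{M-M}. Your sketch via localisation/flattening, the retraction--coretraction (complemented subspace) mechanism, and Rychkov's universal extension operator is the standard route and is essentially what underlies the cited reference, so your approach is appropriate.

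One small point worth tightening: the remark that the tilde spaces $\widetilde H_p^s(S)$ are ``closed subspaces \dots\ cut out by a support condition (which is stable under both the complex and real interpolation functors)'' is not automatic --- closedness alone does not guarantee that interpolation of the subspaces equals the subspace of the interpolation space. What makes it work is precisely the complemented (retract) structure coming from a common bounded extension operator for the two endpoint spaces, or alternatively the duality $\widetilde H_p^{-s}(S)=(H_{p'}^{s}(S))'$ that you mention. Either of these is fine, but the bare ``support condition'' justification should be dropped in favour of one of them.
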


\section{Some general {assertions on} interpolation theory and continuous operators}
Let us consider two compatible couples of Banach spaces,  $X_{0}, X_{1}$ and $Y_{0}, Y_{1}$. Let $X_{\theta}$ and $Y_{\theta}$ be interpolation spaces with respect to $X_{0}, X_{1}$ and $Y_{0}, Y_{1}$, according to \cite[Definition 2.4.1]{Bl}.
If $A_j:X_{j} \to Y_{j}$, $j=0,1$ are linear continuous compatible operators
(i.e., $A_0|_{X_0\cap X_1}=A_1|_{X_0\cap X_1}$) then
they induce the operator $A_+:X_0+X_1 \to Y_0+Y_1$, such that $A_+x:=A_0x_0+A_1x_1$, for any $x\in X_0+X_1$, where $x=x_0+x_1$, $x_j\in X_j$, and $\|A_+\|\le\max(\|A_0\|,\|A_1\|)$, cf. \cite[Section 2.3, Eq. (3)]{Bl}.
Further, $X_\theta\subset X_0+X_1$ and the operator $A_\theta:=A_+|_{X_\theta}$ is linear and continuous.
In the following assertion we consider some cases when the interpolation preserves isomorphism properties of operators.
\begin{lemma}
\label{Interp-Isom}
Let $X_{0}, X_{1}$ and $Y_{0}, Y_{1}$ be two compatible couples of Banach spaces. Let $X_{\theta}$ and $Y_{\theta}$ be interpolation spaces with respect to $X_{0}, X_{1}$ and $Y_{0}, Y_{1}$.
Let $A_j:X_{j} \to Y_{j}$, $j=0,1$, be linear continuous compatible operators that  are isomorphisms.
Let $A_{\theta}:X_{\theta} \rightarrow Y_{\theta}$ be the operator induced by $A_j$.

\begin{itemize}
\item[$(i)$]
If the operators $R_j :Y_{j} \to X_{j}$, inverse to the operators $A_j:X_{j} \to Y_{j}$,  $j=0,1$, respectively, are compatible (i.e., $R_0|_{Y_0\cap Y_1}=R_1|_{Y_0\cap Y_1}$),
then $A_{\theta}:X_{\theta} \rightarrow Y_{\theta}$ is an isomorphism.

\item[$(ii)$]
If $X_0 \subset X_1$,
then $A_{\theta}:X_{\theta} \rightarrow Y_{\theta}$ is an isomorphism.

\item[$(iii)$]
If there exist linear subspaces $X_*\subset X_0\cap X_1$ and $Y_*\subset Y_0\cap Y_1$ such that $Y_*$ is dense in $Y_0\cap Y_1$ and the operator $A_*:=A_0|_{X_*}=A_1|_{X_*}:X_*\to Y_*$ is an isomorphism,
then $A_{\theta}:X_{\theta} \rightarrow Y_{\theta}$ is an isomorphism.
\end{itemize}
\end{lemma}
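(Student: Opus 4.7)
The strategy is to prove item (i) directly by constructing an inverse for $A_\theta$ via the interpolation property, and then to reduce items (ii) and (iii) to (i) by verifying that the inverses $R_j := A_j^{-1}:Y_j\to X_j$ are automatically a compatible pair.

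For (i), I would first note that compatibility of $R_0,R_1$ makes the sum operator $R_+:Y_0+Y_1\to X_0+X_1$, defined by $R_+(y_0+y_1):=R_0y_0+R_1y_1$ for $y_j\in Y_j$, well defined: if $y_0+y_1=y_0'+y_1'$, then $y_0-y_0'=y_1'-y_1\in Y_0\cap Y_1$ and the compatibility $R_0|_{Y_0\cap Y_1}=R_1|_{Y_0\cap Y_1}$ yields $R_0y_0+R_1y_1=R_0y_0'+R_1y_1'$. The interpolation property of $(X_\theta,Y_\theta)$ with respect to the pairs $(X_j,Y_j)$ then produces a continuous operator $R_\theta:=R_+|_{Y_\theta}:Y_\theta\to X_\theta$. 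The identities $R_\theta A_\theta=\mathrm{id}_{X_\theta}$ and $A_\theta R_\theta=\mathrm{id}_{Y_\theta}$ reduce to the componentwise identities $R_jA_j=\mathrm{id}_{X_j}$ and $A_jR_j=\mathrm{id}_{Y_j}$ via the extensions $A_+$ and $R_+$.

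For (ii), I would derive compatibility of $R_0,R_1$ from the inclusion $X_0\subset X_1$. Since $X_0\cap X_1=X_0$, the original compatibility of $A_0,A_1$ gives $A_1|_{X_0}=A_0$, so for any $y\in Y_0$ the element $R_0y\in X_0\subset X_1$ satisfies $A_1(R_0y)=A_0(R_0y)=y$, which forces $y\in Y_1$ and, by injectivity of $A_1$, $R_1y=R_0y$. Hence $R_0=R_1|_{Y_0}$, the compatibility hypothesis of (i) holds, and the conclusion follows.

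For (iii), the inverse $R_*:Y_*\to X_*$ of $A_*$ exists by assumption. For any $y\in Y_*$ the element $R_*y\in X_*\subset X_0\cap X_1$ satisfies $A_j(R_*y)=A_*(R_*y)=y$, so injectivity of $A_j$ gives $R_jy=R_*y$ and therefore $R_0|_{Y_*}=R_1|_{Y_*}$. To pass from $Y_*$ to $Y_0\cap Y_1$, pick $y\in Y_0\cap Y_1$ and, by density, a sequence $\{y_n\}\subset Y_*$ with $y_n\to y$ in $Y_0\cap Y_1$ (hence in each $Y_j$). Continuity of $R_j:Y_j\to X_j\hookrightarrow X_0+X_1$ then gives $R_0y=\lim R_0y_n=\lim R_1y_n=R_1y$ in $X_0+X_1$, so $R_0,R_1$ are compatible and (i) applies.

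The only genuinely nontrivial point is the density-and-continuity step in (iii); once compatibility of the inverses is secured in each of (ii) and (iii), everything reduces to the algebraic verification carried out in (i). No further interpolation-theoretic input is needed beyond the defining property of $X_\theta,Y_\theta$ as interpolation spaces, which is already built into the hypothesis.
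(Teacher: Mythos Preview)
Your proposal is correct and follows essentially the same route as the paper's proof: prove (i) by constructing $R_+$ and restricting to the interpolation space, then reduce (ii) and (iii) to (i) by showing the inverses $R_j$ are compatible (via $R_1|_{Y_0}=R_0$ in (ii), and via $R_0|_{Y_*}=R_1|_{Y_*}$ plus a density-and-continuity argument in (iii)). The only cosmetic difference is that the paper phrases the verification of $R_\theta A_\theta=\mathrm{id}$ and $A_\theta R_\theta=\mathrm{id}$ slightly more explicitly on decompositions, while you appeal directly to the componentwise identities; the substance is the same.
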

\begin{proof}
Let us prove item (i).
Since the inverse operators $R_j :Y_{j} \to X_{j}$ are compatible,  they induce a continuous operator $R_+ :Y_0+Y_1 \to X_0+X_1$, such that $R_+ y:=R_0 y_0+R_1 y_1$, for any $y\in Y_0+Y_1$, where $y=y_0+y_1$, $y_j\in Y_j$,  and continuous operator $R_\theta =R_+ |_{Y_\theta}:Y_\theta \to X_\theta$.
Let us show that the operator $R_\theta $ is inverse to $A_\theta$.
Indeed, any $x\in X_\theta$ can be represented as $x=x_0+x_1$, where  $x_j\in X_j$, and then
$$
R_\theta A_\theta x=R_+ A_+ x=R_+ A_+ (x_0+x_1)=R_+ (A_0 x_0+A_1 x_1)=R_0 A_0 x_0+R_1 A_1 x_1=
x_0+x_1=x.
$$
Similarly, any $y\in Y_\theta$ can be represented as $y=y_0+y_1$, where  $y_j\in Y_j$, and then
$$
A_\theta R_\theta  y=A_+ R_+ y=A_+R_+  (y_0+y_1)=A_+(R_0  y_0+R_1  y_1)=A_0R_0  y_0+A_1R_1  y_1=
y_0+y_1=y.
$$
This proves that $R_\theta :Y_\theta \to X_\theta$ is the operator inverse to $A_{\theta}:X_{\theta} \rightarrow Y_{\theta}$ and hence the latter one is an isomorphism.

To prove item (ii) we remark that the inclusion $X_0 \subset X_1$, the compatibility of the operators  $A_j:X_{j} \to Y_{j}$, $j=0,1$, and the invertibility of the operator $A_0:X_{0} \to Y_{0}$ imply that $Y_0\subset Y_1$. Then the invertibility of the operator $A_1:X_{1} \to Y_{1}$ implies $R_1|_{Y_0}=R_0$, i.e., the compatibility of the inverse operators to the operators $A_j:X_{j} \to Y_{j}$, $j=0,1$, which reduces item (ii) to item (i).

Let us prove item (iii).
If $A_j:X_{j} \to Y_{j}$, $j=0,1$, are isomorphisms then there exist continuous inverse operators $R_j :Y_{j} \to X_{j}$, $j=0,1$.
Let us prove that $R_j$ are compatible operators.
Let $R_*:Y_*\to X_*$ be the inverse to the operator $A_*:=A_0|_{X_*}=A_1|_{X_*}:X_*\to Y_*$.
Then for any $\psi\in Y_*$, there exists $\phi\in X_*$ such that $\psi=A_*\phi=A_0\phi=A_1\phi$.
Hence $R_*\psi=\phi=R_0\psi=R_1\psi$, i.e., $R_*=R_0|_{Y_*}=R_1|_{Y_*}$.

Due to the density of $Y_*$ in $Y_0\cap Y_1$, for any $y\in Y_0\cap Y_1$ there exists a sequence $\{{\psi}^i\}_{i=1}^\infty\subset Y_*$ converging to $y$ in $Y_0\cap Y_1$ and hence in $Y_0$ and in $Y_1$.
Then $R_*{\psi}^i\in X_*\subset X_0\cup X_1$ and due to continuity of the operators $R_j :Y_{j} \to X_{j}$, $j=0,1$,
$\lim_{i\to\infty}R_*{\psi}^i=\lim_{i\to\infty}R_j{\psi}^i=R_j y$ in $ X_{j}$ for $j=0,1$, which implies $R_1|_{Y_0\cap Y_1}=R_2|_{Y_0\cap Y_1}$, i.e., the inverse operators $R_j :Y_{j} \to X_{j}$, $j=0,1$ are compatible.

Employing now item (i) concludes the proof of item (iii).
\hfill\end{proof}

Note that  item (iii) of Lemma \ref{Interp-Isom} is available in \cite[Lemma 8.4]{Fa-Me-Mi} for the cases, when the image and domain spaces coincide, i.e, $X_j=Y_j$, under the additional assumptions that $X_*=Y_*$ is a Banach space.
}

Let us give the following useful result in the complex interpolation theory (cf., e.g., {\cite[Theorem 2.7, Corollary 2.8]{Cao}}
{and} the references therein, see also \cite[Appendix B]{Lean}).
\begin{lemma}
\label{Prop1.4}
Let $X_{0}, X_{1}$ and $Y_{0}, Y_{1}$ be two compatible couples of Banach spaces and $A_j:X_{j} \to Y_{j}$, $j=0,1$, be two continuous compatible linear operators.
Let $X_{\theta}:= [X_{0}, X_{1}]_{\theta}$ and $Y_{\theta}:= [Y_{0}, Y_{1}]_{\theta}$ {denote the complex interpolation spaces of $X_{0}, X_{1}$ and $Y_{0}, Y_{1}$, respectively}, for each $\theta \in (0,1)$.
If there exists a number $\theta_{0}\in (0,1)$ such that $A_{\theta_0}:X_{\theta_0} \rightarrow Y_{\theta_0}$ is an isomorphism, then there exists $\varepsilon >0$ such that the operator $A_{\theta}: X_{\theta} \rightarrow Y_{\theta}$ is an isomorphism as well, for any $ \theta \in (\theta_{0}-\varepsilon, \theta_{0}+\varepsilon)$.
\end{lemma}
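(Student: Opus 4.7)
The statement is a version of \v{S}ne\u{\i}berg's stability theorem for complex interpolation, and the plan is to outline its proof in four steps based on the Calderón framework, with a Schwarz-lemma-type factorization as the core quantitative ingredient.

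\textbf{Step 1 (Setup).} First I would recall the Calderón description of the complex interpolation functor: let $S=\{z\in\mathbb C:0<\Re z<1\}$ and $\mathcal F(X_0,X_1)$ denote the space of bounded continuous functions $f:\overline S\to X_0+X_1$ that are holomorphic on $S$ and satisfy $f(j+it)\in X_j$ with $\sup_t\|f(j+it)\|_{X_j}<\infty$ for $j=0,1$, normed by the maximum of these two suprema. Then $X_\theta$ consists of all values $f(\theta)$ for $f\in\mathcal F(X_0,X_1)$, with norm given by the infimum of $\|f\|_{\mathcal F}$ over such lifts; analogously for $Y_\theta$. Since $A_0$ and $A_1$ are compatible, they induce a bounded operator $A:X_0+X_1\to Y_0+Y_1$ and hence a bounded lift $\mathbf A:\mathcal F(X_0,X_1)\to\mathcal F(Y_0,Y_1)$ with $\|\mathbf A\|\le\max(\|A_0\|,\|A_1\|)$, whose fiber at $\theta$ is exactly $A_\theta$.

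\textbf{Step 2 (Approximate inverse at $\theta$).} Denote $B_0:=A_{\theta_0}^{-1}$ and $M:=\|B_0\|_{Y_{\theta_0}\to X_{\theta_0}}$. Given $y\in Y_\theta$ and $\varepsilon>0$, choose $g\in\mathcal F(Y_0,Y_1)$ with $g(\theta)=y$ and $\|g\|_{\mathcal F}\le\|y\|_{Y_\theta}+\varepsilon$; then $B_0 g(\theta_0)\in X_{\theta_0}$, so pick a near-optimal lift $h\in\mathcal F(X_0,X_1)$ with $h(\theta_0)=B_0 g(\theta_0)$ and $\|h\|_{\mathcal F}\le M\|g\|_{\mathcal F}+\varepsilon$. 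The residual $r:=g-\mathbf A h\in\mathcal F(Y_0,Y_1)$ vanishes at $\theta_0$ by construction, and $\|r\|_{\mathcal F}\le (1+M\|\mathbf A\|)\|g\|_{\mathcal F}$.

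\textbf{Step 3 (Schwarz factorization).} Using a conformal equivalence $\psi:\overline S\to\overline{\mathbb D}$ with $\psi(\theta_0)=0$, define a Blaschke-type function $\phi(z):=\psi(z)$ on $\overline S$, which is holomorphic on $S$, has modulus $1$ on $\partial S$, vanishes at $\theta_0$, and satisfies $|\phi(\theta)|\to 0$ as $\theta\to\theta_0$. Because $r(\theta_0)=0$ and $\phi$ has no zeros in $\overline S\setminus\{\theta_0\}$, the quotient $\tilde r(z):=r(z)/\phi(z)$ extends to a function in $\mathcal F(Y_0,Y_1)$ with $\|\tilde r\|_{\mathcal F}\le\|r\|_{\mathcal F}$ (since $|\phi|=1$ on the boundary). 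Evaluating at $\theta$ yields the key quantitative estimate
\begin{equation*}
\|r(\theta)\|_{Y_\theta}\le|\phi(\theta)|\cdot\|\tilde r\|_{\mathcal F}\le|\phi(\theta)|\,(1+M\|\mathbf A\|)(\|y\|_{Y_\theta}+\varepsilon).
\end{equation*}

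\textbf{Step 4 (Neumann-series iteration).} The map $R_\theta:y\mapsto h(\theta)$ is a bounded linear operator $Y_\theta\to X_\theta$ (letting $\varepsilon\to 0$ and using that near-optimal lifts can be chosen linearly up to arbitrarily small loss, or by a standard iteration where the losses are absorbed geometrically). It satisfies $A_\theta R_\theta=I_{Y_\theta}-T_\theta$ with $\|T_\theta\|_{Y_\theta\to Y_\theta}\le C|\phi(\theta)|$. Choose $\delta>0$ so that $C|\phi(\theta)|<\tfrac12$ for $|\theta-\theta_0|<\delta$; then $I-T_\theta$ is invertible and $R_\theta(I-T_\theta)^{-1}$ is a bounded right inverse of $A_\theta$. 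Running the same construction with the roles of $A$ and a left-inverse interchanged (equivalently, applying the argument to the adjoint couple, or symmetrically lifting preimages) yields a bounded left inverse, and the two coincide so that $A_\theta$ is an isomorphism.

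\textbf{Main obstacle.} The delicate point is Step 3: producing a holomorphic $\phi$ on $\overline S$ vanishing at $\theta_0$, bounded by $1$ on $\partial S$, and with $|\phi(\theta)|\to 0$ controllably as $\theta\to\theta_0$, \emph{and} simultaneously ensuring that division by $\phi$ preserves the Calderón norm up to an explicit constant. One has to pin down the conformal map $\psi$ (for instance via $z\mapsto\tanh(\pi(z-\tfrac12)/2)$ composed with a disk automorphism sending $\psi(\theta_0)$ to $0$) and verify carefully that $\tilde r(j+it)$ still lies in $Y_j$ with controlled norm — this is where the precise geometry of the strip and the boundary behavior of the Blaschke factor enter. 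Once this quantitative factorization is established, the remaining steps are a routine application of the open mapping theorem and a Neumann series.
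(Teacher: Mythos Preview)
The paper does not actually prove this lemma: it is stated with a citation to Cao--Sagher \cite[Theorem 2.7, Corollary 2.8]{Cao} and McLean \cite[Appendix B]{Lean}, and no argument is given. Your outline is the classical \v{S}ne\u{\i}berg proof via the Calder\'on $\mathcal F$-space and a Blaschke/Schwarz factorization on the strip, which is essentially the content of those references, so in that sense you are supplying what the paper outsources.

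One point to tighten is Step~4. The assignment $y\mapsto h(\theta)$ is not a bounded \emph{linear} operator as written, because near-optimal lifts $g$ and $h$ are not chosen canonically. You acknowledge this parenthetically, and the standard fix is exactly the iteration you allude to: Steps~2--3 show that for every $y\in Y_\theta$ there exists $x\in X_\theta$ with $\|x\|_{X_\theta}\le C\|y\|_{Y_\theta}$ and $\|y-A_\theta x\|_{Y_\theta}\le q\|y\|_{Y_\theta}$ for some $q<1$ when $|\theta-\theta_0|$ is small; iterating and summing gives surjectivity of $A_\theta$ together with a bound on a right inverse. Injectivity (a lower bound $\|A_\theta x\|_{Y_\theta}\ge c\|x\|_{X_\theta}$) follows by the symmetric argument, lifting $x$ rather than $y$ and using $A_{\theta_0}$ itself rather than $B_0$. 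With that adjustment the outline is correct.
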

\begin{rem}
{The extension of Lemma \ref{Prop1.4} to the case of two compatible couples of quasi-Banach spaces, $X_{0}, X_{1}$ and $Y_{0}, Y_{1}$, such that $X_0+X_1$ and $Y_0+Y_1$ are analytically convex can be found in \cite[Theorem 11.9.24]{M-W} and the references therein. Note that any Banach space is analytically convex $($cf., e.g., \cite[p. 223]{M-W} $)$.}
\end{rem}
Finally, let us mention the following useful result (cf, e.g., \cite[Lemma 11.9.21]{M-W}).
\begin{lemma}
\label{Lem2 Fredholm}
Let $ X_{1}, X_{2} $ and $ Y_{1}, Y_{2} $, be Banach spaces such that the embeddings $ X_{1} \hookrightarrow X_{2} $ and $ Y_{1} \hookrightarrow Y_{2} $ are continuous, and also that the embedding $ Y_{1} \hookrightarrow Y_{2} $ has dense range. Assume that $T:X_{1}\rightarrow Y_{1}$  and  $T : X_{2} \rightarrow Y_{2} $ are Fredholm operators with the same index, ${\rm{ind}}\left(T : X_{1}\rightarrow Y_{1}\right)= {\rm{ind}}\left(T : X_{2} \rightarrow Y_{2}\right)$. Then
${\rm{Ker}}\{T : X_{1}\rightarrow Y_{1}\}={\rm{Ker}}\{T : X_{2} \rightarrow Y_{2}\}.$
\end{lemma}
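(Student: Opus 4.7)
Let $N_i:=\mathrm{Ker}\{T:X_i\to Y_i\}$ and $R_i:=\mathrm{Range}\{T:X_i\to Y_i\}$ for $i=1,2$. The continuous embedding $X_1\hookrightarrow X_2$ immediately yields $N_1\subseteq N_2$, so it suffices to prove $\dim N_1=\dim N_2$; since both are finite by the Fredholm hypothesis, combining with $N_1\subseteq N_2$ will then force $N_1=N_2$.

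The plan is to compare the cokernels via the embedding $Y_1\hookrightarrow Y_2$. Because this embedding is continuous and $R_1\subseteq R_2$ (the inclusion is again inherited from $X_1\hookrightarrow X_2$ together with compatibility of $T$ on $X_1$ and on $X_2$), the embedding descends to a well-defined bounded linear map $\iota:Y_1/R_1\to Y_2/R_2$. I would then show that $\iota$ has dense range: given $[y]\in Y_2/R_2$ with $y\in Y_2$, the hypothesis that $Y_1\hookrightarrow Y_2$ has dense range furnishes a sequence $\{y_n\}\subset Y_1$ with $y_n\to y$ in $Y_2$, and hence $\iota([y_n])=[y_n]\to[y]$ in $Y_2/R_2$. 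Now the Fredholm hypothesis on $T:X_2\to Y_2$ guarantees that $R_2$ is closed in $Y_2$ and that $Y_2/R_2$ is finite-dimensional, hence Hausdorff; a dense subspace of a finite-dimensional normed space is the whole space, so $\iota$ is actually surjective. This yields the inequality $\mathrm{codim}_{Y_2}R_2\le\mathrm{codim}_{Y_1}R_1$.

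The equal-index assumption reads
\begin{equation*}
\dim N_1-\mathrm{codim}_{Y_1}R_1=\dim N_2-\mathrm{codim}_{Y_2}R_2,
\end{equation*}
which rearranges to $\dim N_2-\dim N_1=\mathrm{codim}_{Y_2}R_2-\mathrm{codim}_{Y_1}R_1\le 0$. Thus $\dim N_2\le\dim N_1$, and the reverse inequality already follows from $N_1\subseteq N_2$, so $\dim N_1=\dim N_2$. Finite-dimensionality of $N_2$ combined with the inclusion then gives $N_1=N_2$, as desired.

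The subtle point will be the passage to the quotient and the use of closedness: the argument uses the Fredholm hypothesis on $T:X_2\to Y_2$ specifically to ensure that $Y_2/R_2$ is finite-dimensional and Hausdorff, so that dense range of $\iota$ upgrades to surjectivity. Notably, no corresponding closedness or finite codimension of $R_1$ in $Y_1$ is needed for this implication, which is why the hypothesis of density is imposed only on the embedding $Y_1\hookrightarrow Y_2$ and not on the embedding $X_1\hookrightarrow X_2$.
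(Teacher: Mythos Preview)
Your argument is correct. The paper does not actually supply a proof of this lemma; it merely records the statement and cites \cite[Lemma 11.9.21]{M-W}, so there is no ``paper's own proof'' to compare against. Your proof is the standard one: the inclusion $N_1\subseteq N_2$ is immediate, the density of $Y_1$ in $Y_2$ forces the induced map $Y_1/R_1\to Y_2/R_2$ to be surjective (finite-dimensional target), and then the equal-index hypothesis closes the dimension count. One small quibble with your closing remark: you \emph{do} use the Fredholm hypothesis on $T:X_1\to Y_1$, since the equality $\mathrm{ind}(T:X_1\to Y_1)=\dim N_1-\mathrm{codim}_{Y_1}R_1$ requires the cokernel $Y_1/R_1$ to be finite-dimensional; without this the index equation you invoke would not make sense.
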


\end{document}